\providecommand\@dotsep{5}
\def\listtodoname{List of Todos}
\def\listoftodos{\@starttoc{tdo}\listtodoname}
\numberwithin{equation}{section}
\newtheorem{theorem}{Theorem}[section]
\newtheorem{proposition}[theorem]{Proposition}
\newtheorem{lemma}[theorem]{Lemma}
\newtheorem{corollary}[theorem]{Corollary}
\newtheorem{remark}{Remark}
\begin{document}
	
	
	\title[ Existence of positive solution for a class of nonlinear Schr\"odinger equations]{Existence of positive solution for a  class of quasilinear Schr\"odinger equations with potential vanishing at infinity on nonreflexive Orlicz-Sobolev spaces}

	\author[Lucas da Silva]{Lucas da Silva$^\ast$}
	\address{\indent Unidade Acad\^emica de Matem\'atica
		\newline\indent{Universidade Federal de Campina Grande,}
		\newline\indent
		58429-970, Campina Grande - PB - Brasil}
	\email{{ls3@academico.ufpb.br}}

	\author[Marco Souto]{Marco Souto}
	\address{\indent Unidade Acad\^emica de Matem\'atica
		\newline\indent 
		Universidade Federal de Campina Grande,
		\newline\indent
		58429-970, Campina Grande - PB - Brasil}
	\email{{marco@dme.ufcg.edu.br}}

	\pretolerance10000

\thanks{M. Souto was partially supported by CNPq/Brazil 309.692/2020-2}
\thanks{$^\ast$Corresponding author}
\subjclass[2020]{Primary: 35J15, 35J20, 35J62; Secondary: 46E30.}
\keywords{{Orlicz-Sobolev spaces; Variational methods; Quasilinear Elliptic Problems; $\Delta_{2}-$condition}}

	\begin{abstract}
		\noindent In this paper we investigate the existence of positive solution for a class of quasilinear problem on an Orlicz-Sobolev space that can be nonreflexive
		$$
		- \Delta_{\Phi} u +V(x)\phi(|u|)u= K(x)f(u)\mbox{ in } \mathbb{R}^{N},
		$$
		where $ N \geq 2 $, $ V, K $ are nonnegative  continuous functions and $f$ is
		a continuous function with a quasicritical growth. Here we extend the Hardy-type inequalities presented in \cite{AlvesandMarco} to nonreflexive Orlicz spaces. Through inequalities together with a variational method for non-differentiable functionals we will obtain a ground state solution. We analyze also the problem with $V=0$. 
		
	\end{abstract}

	\maketitle
	
	\section{Introduction}

This paper concerns the existence of a positive solution for a class of quasilinear elliptic problem of the type
\begin{equation*}
\left\{\;
\begin{aligned}
-\Delta_{\Phi}u+V(x)\phi(|u|)u=K(x)f(u),\text{ in }\; \mathbb{R}^{N} \\
 u\in D^{1,\Phi}(\mathbb{R}^{N}),\;u\geq 0 ,\text{ in }\; \mathbb{R}^{N}\;\;\;\;\;\;\;
\end{aligned}
\right.
\leqno{(P)}
\end{equation*}
for $ N \geq 2$ and assuming that $V,K:\mathbb R^N \longrightarrow \mathbb R$ and $f:\mathbb{R} \longrightarrow \mathbb R$ are continuous functions with $V$, $K$ being
nonnegative functions and $f$ having a quasicritical growth. It is important to recall that
\begin{equation*}
\Delta_{\Phi}u=\text{div}(\phi(|\nabla u|)\nabla u),
\end{equation*}
where  $\Phi:\mathbb{R}\longrightarrow\mathbb{R}$ is a $N$-function of the form
\begin{equation}\label{*}
\Phi(t)=\int_{0}^{|t|}s\phi(s)ds,
\end{equation}  		
and $\phi:(0,\infty)\longrightarrow(0,\infty)$ is a $C^1$
function verifying some technical assumptions. This type of problem  driven by a $N$-function $\Phi$ appears in a lot of physical
applications, such as Nonlinear Elasticity, Plasticity, Generalized Newtonian Fluid, Non-Newtonian Fluid and Plasma Physics. The reader can find more details about this subject in \cite{DB}, \cite{Figueiredo}, \cite{FN1} and their references.

Recently, motivated by above applications, these types of problems have been studied frequently, we would like to cite Bonanno, Bisci and Radulescu [\citenum{Bonanno1}, \citen{Bonanno2}], Cerny \cite{Cerny}, Clément, Garcia-Huidobro and Manásevich \cite{PH}, Donaldson \cite{Donald}, Fuchs and Li \cite{Li},
Fuchs and Osmolovski \cite{Fuchs}, Fukagai, Ito and Narukawa \cite{FN}, Gossez \cite{Gossez}, Le and Schmitt \cite{Le}, Mihailescu and Radulescu [\citenum{Mihailescu1}, \citenum{Mihailescu2}], Mihailescu and Repovs \cite{Mihailescu3}, Mihailescu, Radulescu and
Repovs \cite{Mihailescu4}, Mustonen and Tienari \cite{Mustonen}, Orlicz \cite{W.Orlicz} and their references. In all of these works, the so-called $\Delta_{2}$-condition was assumed on $\Phi$ and $\tilde{\Phi}$ (Complementary function of $\Phi$), which ensures that the Orlicz-Sobolev space $W^{1,\Phi}(\Omega ) $ and $D ^{1,\Phi} (\Omega)$ are reflexive Banach spaces. This assertion is used several times in order to get a nontrivial solution for elliptic problems taking into account the weak topology and the classical variational methods to $C^{1}$ functionals.

In recent years, problems without the $\Delta_2$-condition of the function $ \tilde{\Phi} $ are being studied. This type of problem brings us many difficulties when we intend to apply variational methods. For example, in our article, the energy functional associated with the problem might not be $C^1$, so classical minimax type results cannot be used here. To get around this difficulty, some recent articles suggest the use of the minimax theory developed by Szulkin \cite{Szulkin}, we cite \cite{AlvesandLeandro}. In this paper, Alves and Carvalho study a class of problem
\begin{equation*}
\left\{\;
\begin{aligned}
-\Delta_{\Phi}u+V(x)\phi(u)u&=f(u),\; \text{ in }\mathbb{R}^{N}& \\
u\in W^{1,\Phi}(\mathbb{R}^{N}) &\text{ with } N\geq 1&
\end{aligned}
\right.
\end{equation*}
when $V$ is $\mathbb{Z}^N$-periodic and $f$ is a continuous function satisfying some technical conditions. The $\Delta_{2}$-condition of $N$-function $\tilde{\Phi}$ has not been required.

Another work in which the $\Delta_{2}$-condition of $N$-function $\tilde{\Phi}$ can be relaxed, is found in \cite{Edcarlos}. Silva, Carvalho, Silva and Gonçalves study a class of problem
\begin{equation*}
\left\{\;
\begin{aligned}
-\Delta_{\Phi}u&=g(x,u),\; \text{ in }\Omega& \\
 u&=0, \text{ on } \partial \Omega &
\end{aligned}
\right.
\end{equation*}
where $\Omega\subset \mathbb{R}^{N}$, $N \geq 2$, is a bounded domain with smooth boundary. 

In this article, to overcome the lack of differentiability of the functional $J$ we present a new approach to work with problems where the $\Delta_{2}$-condition of $N$-function $\tilde{\Phi}$ is relaxed. Here, we will use a weaker version of the mountain pass theorem for functionals that are Gateaux differentiable. Another important point that comes up in this paper is that we cannot use the standard analysis because $D^{1,\Phi}(\mathbb{ R}^N)$ might not be reflexive. This difficulty brings us many problems in order to apply variational methods. In order to overcome these difficulties, we consider the weak$^*$ topology recovering some compactness required in variational methods and one of the results involving the weak$^*$ topology is obtained in Lemma \ref{0.1}.

%
%
Based on the papers above, we assume that $\phi:(0,\infty)\longrightarrow(0,\infty)$ is $C^1$ and satisfies the following hypotheses:
\begin{equation*}
t\longmapsto t\phi(t)\;\text{ is increasing for }\;t>0.
\leqno{(\phi_1)}
\end{equation*}	\vspace{-0.6cm}
\begin{equation*}
\displaystyle\lim_{t\rightarrow0^{+}}t\phi(t)=0 \;\;\text{ and }\;\; \displaystyle\lim_{t\rightarrow+\infty}t\phi(t)=+\infty.
\leqno{(\phi_2)}
\end{equation*}
\begin{equation*}
	1\leq\ell= \inf_{t>0}{\dfrac{\phi(t)t^{2}}{\Phi(t)}}\leq \sup_{t>0}{\dfrac{\phi(t)t^{2}}{\Phi(t)}}=m<N,\;\;\;m < \ell^{*}\text{ and } m\neq1.
\leqno{(\phi_3)}
\end{equation*}	\vspace{-0.3cm}
\begin{equation*}
t\longmapsto\dfrac{\phi(t)}{t^{m-2}} \text{ is nonincreasing for } t > 0.
\leqno{(\phi_4)}
\end{equation*}
\vspace{-0.2cm}

Hereafter, we will say that $\Phi\in\mathcal{C}_m$, if there is a constant $C>0$ satisfying
\begin{equation*}
\Phi(t)\geq C|t|^{m},\;\text{ for all }\;t\in\mathbb{R} .
\leqno{(\mathcal{C}_m)}
\end{equation*}

Next, we show some examples of functions $\Phi$ that can be considered in the present paper.
If $\ell > 1$, we can consider
\begin{enumerate}
	\item[$i)$] $\Phi(t)=|t|^p/p,$ for $p>1$,\vspace*{0.1cm}
	\item[$ii)$] $\Phi(t)=|t|^p/p+|t|^q/q,$ where $1<p<q<N$ with $q\in (p,p^*)$,\vspace*{0.1cm}
	\item[$iii)$] $\Phi(t)=(1+t)^{\alpha}-1,$ $\alpha\in(1,\frac{N}{N-2})$\vspace*{0.1cm}
	\item[$iv)$] $\Phi(t)=t^p\ln(1+t),$ $1< \frac{-1+\sqrt{1+4N}}{2}<p<N-1$, $N\geq 3$.
\end{enumerate}
To the best of our knowledge, there seems to be no legitimate example of a $N$-function that satisfies the conditions $(\phi_{1})-(\phi_{3})$ with $\ell=1$ described in the literature. Lacking an illustrative example, we present the function
\begin{align*}
\Phi_{\alpha}(t)=|t |\ln(|t|^\alpha +1)\; \text{ for } 0<\alpha<\frac{N}{N-1}-1.
\end{align*}
as an example of $N$-function that satisfies $(\phi_{1})-(\phi_{3})$ for the case $\ell=1$. 
%

Here we would like to point out that if $1 < \ell \leq  m < +\infty$,  then function $\Phi$ and its complementary
function $\tilde{\Phi }$ given by
$$\tilde{\Phi}_\alpha(s)=\max_{t\geq 0}\{st-\Phi_\alpha(t)\},~~\text{ for }\; t\geq 0.$$
satisfy the $\Delta_2$-condition. It is well known in the literature that $D^{1,\Phi}(\mathbb{R}^N)$ is reflexive when $\Phi$ and $\tilde{\Phi }$ satisfy the $\Delta_{2}$-condition. Thus, in our paper the space $D^{1,\Phi}(\mathbb{R}^N)$ can be nonreflexive, because we are also considering the case $\ell = 1$

The particular case of the problem $(P)$ with $ \Phi (t) =|t|^2 /2$ (The Laplacian Case) is a study by Alves and Souto \cite{AlvesandMarco}, where the authors show the existence of ground state solutions to obtain a positive solution for the following Schr\"odinger equation 
\begin{equation*}
\left\{\;\begin {aligned}
- \Delta u + V (x) & u = K (x) f (u), & \; \mathbb{R}^{N} \\
& u \in D^{1,2} (\mathbb{R}^{N}) &
\end{aligned}
\right.
\end{equation*}
Also in this paper, the authors impose some conditions under the potentials $V$ and $K$ so that it would be possible to show that the space
\begin{align*}
E = \Big \{u \in D^{1,2} (\mathbb{R}^{N}) / \int_{\mathbb{R}^{N}} V(x) | u |^2dx <+ \infty \Big \} \end{align*}
with norm
\begin{align*}
\lVert u \lVert_{E}^2 =\int_{\mathbb {R}^{N}}(|\nabla u|^2+V(x)|u|^2)dx
\end{align*}
is compactly embedded in the weighted Lebesgue space $ L^{p}_K (\mathbb{R}^{N}) $ for some $ p \in (2, 2^*) $, where
\begin{align*}
L^{p}_K(\mathbb{R}^{N}):= \left \{u: \mathbb{R}^{N} \longrightarrow \mathbb{R}: u\text{ is measurable and} \int_{\mathbb{R}^{N}} K(x) | u |^pdx <+ \infty \right \}
\end{align*}
	
	The purpose of this new paper is to extend the results presented by \cite{AlvesandMarco} to Orlicz-Sobolev space where the conditions can be relaxed. For that, we introduce the following assumptions on the potential $V$ and the coefficient $K$:
\vspace*{0.2cm}\\
\noindent{\it$(K_0)$} $V>0$, $K\in L^{\infty}(\mathbb{R}^{N})$ and $K$ is positive almost everywhere.\vspace*{0.2cm}\\
\noindent{(I)} If $\{A_n\}\subset\mathbb{R}^{N}$ is a sequence of Borelian sets such that $\displaystyle\sup_{n}|A_n|< +\infty$, then
\begin{equation*}
\lim_{r\rightarrow+\infty}\int_{A_n\cap B_r^{c} (0)} K(x) dx=0,\;\text{ uniformly in }n\in\mathbb {N}.
\eqno{(K_1)}
\end{equation*}
\noindent{(II)} One of the below conditions occurs:
\begin{equation*}
\dfrac{K}{V}\in L^{\infty}(\mathbb{ R}^N)
\eqno{(K_2)}
\end{equation*}
or there are $a_1,a_2 \in (m,\ell^*)$ and a $N$-function $A(t)=\int_{0}^{|t|}sa(s) ds$ verifying the following properties:
\begin{align}\label{5.04}
a_1\leq \dfrac{a(t)t^2}{A(t)}\leq a_2
\end{align} 
and 
\begin{equation*}
\textcolor{black}{\dfrac{K(x)}{H(x)}\longrightarrow 0 \text{ when } |x|\rightarrow+\infty}
\eqno{(K_3)}
\end{equation*} 
where $\displaystyle H(x)=\min_{s>0} \left\{V(x)\dfrac{\Phi(s)}{A(s)}+\dfrac{\Phi_{*}(s)}{A(s)} \right\}$.\vspace*{0.2cm}
\newline
\hspace*{0.5cm}Hereafter, we say that $(V, K) \in \mathcal{K}_1$ if $(K_0),(K_1)$ and $(K_2)$ hold. When $(K_0),(K_1)$ and $(K_3)$ hold, then we say that $(V, K) \in \mathcal{K}_2$.

At this point, it is very important to note that $(K_1)$ is weaker than any one of the below conditions:\vspace*{0.1cm}\\
\noindent{${(a)}$} There are $r\geq1$ and $\rho\geq0$ such that $K \in L^r (\mathbb{R}^N \setminus B_{\rho}(0))$;\vspace*{0.1cm}\\
\noindent{${(b)}$} $K(x) \rightarrow 0$ as $|x|\rightarrow \infty$;\vspace*{0.1cm}\\
\noindent{${(c)}$} $K = H_1 + H_2$, with $H_1$ and $H_2$ verifying $(a)$ and $(b)$ respectively.

Now, for every $n\in \mathbb{N }$, fix $z_n=(n,0,\cdots,0)$. Consider $\left\{B_{\frac{1}{2^n}}(z_n)\right\}_{n\in\mathbb{N}}$ the disjoint sequence of open balls in $\mathbb{R}^{N}$ and the nonnegative function $ H_1:\mathbb{R}^{N}\longrightarrow\mathbb{R}$ given by
{\small\begin{align*}
0\leq H_1(x)\leq 1,\;\forall x\in\mathbb{R}^{N},\;\;H_1(z_n)=1,\;\forall n\in\mathbb{N},\;\;\;	H_1\equiv 0\;\text{ in }\;\mathbb{R}^{N}\setminus\Big\{\bigcup_{n\in\mathbb{N}}B_{\frac{1}{2^n}}(z_n)\Big\}
\end{align*}}
and 
\begin{align*}
\int_{B_{\frac{1}{2^n}} (z_n)}H_1(x)dx\leq \dfrac{1}{2^n},\;\;\;\forall n\in\mathbb{N} .
\end{align*}
Without any difficulties, we can see that the functions
\begin{align}\label{00.0}
V(x)=K(x)=H_1(x)+\dfrac{1}{\ln(2+|x|)}
\end{align}
satisfies the item $(c)$ and consequently verifies the condition $(K_1)$. In addition, clearly $V$ and $K$ satisfy the conditions $(K_0)$ and $(K_2)$. However, these functions do not verify the condition $(K_3)$.

Now consider the functions
\begin{equation}\label{00.1}
{K(x)=H_1(x)+\dfrac{1}{ln(2+|x|)}}
\end{equation}
and
\begin{align}\label{00.2}
\begin{split}
V(x)=\left(|x|H_1(x)\right)^{\frac{m^*-m}{m^*-a_1}}+\left(\dfrac{1}{ln(2+|x|)}\right)^{\frac{m^*-m}{m^*-a_2}}+\left(|x|H_1(x)\right)^{\frac{\ell^*-\ell}{\ell^*-a_2}}+\left(\dfrac{1}{ln(2+|x|)}\right)^{\frac{\ell^*-\ell}{\ell^*-a_1}}
\end{split}
\end{align}
As mentioned before, these functions verifies $(K_0)$ and $(K_1)$. Furthermore, the condition $(K_3)$ is satisfied. However, these functions do not verifies the condition $(K_2)$.
		

                                                                                                                                                           
This paper is organized as follows: in Section $2$, we review some properties of Orlicz and Orlicz-Sobolev spaces that will be used throughout this paper.

In Section $3$, we present the space and show some important properties involving the energy functional related to the problem $(P)$.

In Section 4, we will study the case $(V,K)\in \mathcal{K}_1$. In our first main result, by means of some conditions imposed on $\Phi$ and $f$, we will show that the problem $(P)$ has a $C^{1,\alpha}_{loc}(\mathbb{R}^N)$ positive ground state solution. More specifically, we will assume that $\Phi\in \mathcal{C}_m$ and $f:\mathbb{R} \longrightarrow\mathbb{R}$ satisfies the following conditions
\begin{equation*}
	\displaystyle \lim_{t \to 0} \dfrac{f(t)}{ t\phi(t)}=0 \;\;\; \text{ and }\;\;\;\displaystyle\limsup_{t \to \infty} \dfrac{f(t)}{t \phi_{*}(t)}=0.
	\leqno{(f_1)}
\end{equation*}
%
\noindent{\it$(f_2)$} $s^{1-m}f(s)$ is an increasing function in $(0,+\infty)$.\vspace*{0.1cm}\\
\noindent{\it$(f_3)$} $F(t)=\int_{0}^{t}f(s)ds$ is $m$-superlinear at infinity, that is,
\begin{equation*}
\lim_{|t|\rightarrow+\infty} \dfrac{F(t)}{|t|^{m}}=+\infty.
\end{equation*}

%

%
%
%
%
Under these conditions, our first main result can be stated as follows.

\begin{theorem}\label{Teo1}
	 Assume that $(V,K)\in \mathcal{K}_1$ and $\Phi\in \mathcal{C}_m$. Suppose that $(\phi_{1})- (\phi_{4})$ and $(f_{1})-(f_{3})$ hold. Then problem $(P)$ possesses a nonnegative solutions that are locally bounded.
\end{theorem}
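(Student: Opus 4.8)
The plan is to obtain the solution variationally as a (non-smooth) mountain pass critical point of the energy functional associated with $(P)$, working on the subspace of $D^{1,\Phi}(\mathbb{R}^N)$ on which the potential term is finite, and then to prove regularity and nonnegativity by standard elliptic techniques. Concretely, I would first set up the Banach space
\[
X=\Bigl\{u\in D^{1,\Phi}(\mathbb{R}^N):\int_{\mathbb{R}^N}V(x)\Phi(|u|)\,dx<+\infty\Bigr\},
\]
with a Luxemburg-type norm built from $\|\nabla u\|_{\Phi}$ and the $V$-weighted modular, and record (citing the properties promised for Section 3) that the functional
\[
J(u)=\int_{\mathbb{R}^N}\Phi(|\nabla u|)\,dx+\int_{\mathbb{R}^N}V(x)\Phi(|u|)\,dx-\int_{\mathbb{R}^N}K(x)F(u)\,dx
\]
is well defined on $X$, is Gateaux differentiable, and has the mountain pass geometry. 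The geometry is exactly where $(f_1)$ enters: the first limit in $(f_1)$ together with $\Phi\in\mathcal{C}_m$ and $(\phi_3)$ controls $F$ near $0$ by $\varepsilon\,\Phi(|u|)$ plus a higher power, giving a strictly positive value of $J$ on a small sphere, while the $m$-superlinearity $(f_3)$ (with $\Phi\in\mathcal{C}_m$, so $\Phi(|tu|)\le C|t|^m\int|u|^m$ along a ray and $F(tu)/|t|^m\to\infty$) produces a point where $J$ is negative.

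The heart of the argument is the compactness needed to pass from a Palais–Smale (or Cerami) sequence at the mountain pass level $c$ to an actual critical point, and this is where the weak$^*$ topology and Lemma \ref{0.1} are indispensable, since $D^{1,\Phi}(\mathbb{R}^N)$ may fail to be reflexive. I would argue as follows: a bounded $(PS)_c$ sequence $\{u_n\}$ admits, by Lemma \ref{0.1}, a subsequence converging weak$^*$ (in the appropriate dual pairing) to some $u\in X$; the conditions $(K_0)$, $(K_1)$, and $(K_2)$ are designed precisely so that the embedding $X\hookrightarrow L^{\Psi}_K(\mathbb{R}^N)$ into a suitable Orlicz space with weight $K$ is compact — here $(K_1)$ kills the mass escaping to infinity uniformly over sets of bounded measure, and $(K_2)$, i.e. $K/V\in L^\infty$, dominates the weighted term by the $V$-modular so that the tail of $K F(u_n)$ is uniformly small. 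The second limit in $(f_1)$ (quasicritical growth relative to $\phi_*$) ensures $F(u_n)$ is uniformly integrable against $K$ away from infinity, so $\int K F(u_n)\to\int K F(u)$ and likewise for the derivative term. Then, using the weak$^*$ lower semicontinuity of the modular part and a Brezis–Lieb / convexity argument (the monotonicity $(\phi_1)$, $(\phi_4)$ give the needed $(S_+)$-type property), one concludes $u_n\to u$ strongly enough to pass to the limit in $J'(u_n)=0$, so $J'(u)=0$ and $u\not\equiv 0$ (the mountain pass level is positive). Establishing boundedness of the $(PS)_c$ sequence itself uses $(f_2)$–$(f_3)$ in the usual Ambrosetti–Rabinowitz-free way: combining $J(u_n)-\frac1m\langle J'(u_n),u_n\rangle$ with $(\phi_3)$ (which gives $\Phi(t)\le \frac1\ell\phi(t)t^2$ and $\le \frac1m\cdots$ appropriately) and the fact that $t\mapsto \frac1m f(t)t-F(t)\ge 0$ is nondecreasing by $(f_2)$.

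For the final conclusions of the theorem — that the solution is nonnegative and locally bounded — I would first replace $f$ by $f(t)=0$ for $t\le 0$ from the outset (a standard truncation compatible with all of $(f_1)$–$(f_3)$), test the equation with $u^-=\min\{u,0\}$, and use $(\phi_1)$ to conclude $u^-\equiv 0$, hence $u\ge 0$. Local boundedness follows from a Moser/De Giorgi iteration adapted to the $\Phi$-Laplacian: the quasicritical condition $\limsup_{t\to\infty} f(t)/(t\phi_*(t))=0$ gives, for any $\varepsilon>0$, a bound $|f(u)u|\le \varepsilon\,\Phi_*(|u|)+C_\varepsilon$, and feeding this into the Orlicz–Sobolev inequality $\|u\|_{\Phi_*}\le C\|\nabla u\|_\Phi$ with truncated test functions $u_k=\min\{|u|,k\}$ yields $L^\infty_{loc}$ bounds exactly as in \cite{FN} or \cite{Edcarlos}.

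The step I expect to be the main obstacle is the compactness/strong-convergence argument in the nonreflexive setting: one must be careful that weak$^*$ limits interact correctly with the nonlinear operator $-\Delta_\Phi+V\phi(|\cdot|)\cdot$, since the usual reflexive machinery (weak convergence of gradients, $(S_+)$ property in $W^{1,\Phi}$) is not directly available, and one has to lean on Lemma \ref{0.1} together with the precise form of the weighted embeddings coming from $\mathcal{K}_1$ to recover enough convergence to identify the limit as a genuine solution rather than merely a critical point of a relaxed functional.
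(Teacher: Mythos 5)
Your scaffolding matches the paper's: the weighted space $E$, the mountain pass geometry, the compact embedding furnished by $\mathcal{K}_1$ (Proposition~\ref{22.0}), boundedness of the Cerami sequence from $(f_2)$--$(f_3)$ (the monotonicity of $s\mapsto sf(s)-mF(s)$ and of $s\mapsto m\Phi(s)-\phi(s)s^2$), nonnegativity by testing with $u^-$, and a De Giorgi-type iteration for $L^\infty_{loc}$. The gap is precisely in the step you flag as the main obstacle. You propose to establish strong convergence $u_n\to u$ via an $(S_+)$-type property of $Q'$ and then conclude $J'(u)=0$ classically. That route is exactly what is \emph{not} available here: since $\tilde\Phi$ may fail $\Delta_2$, $Q$ is only Gateaux differentiable, $Q'$ is only norm-to-weak$^*$ continuous, and $D^{1,\Phi}(\mathbb{R}^N)$ may be nonreflexive, so neither weak convergence of gradients nor the standard $(S_+)$ machinery is at hand.

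The paper instead avoids strong convergence altogether by working with a convexity-based notion of critical point: $u$ is called critical when
\[
Q(v)-Q(u)\;\geq\;\int_{\mathbb{R}^N}K(x)f(u)(v-u)\,dx\qquad\text{for all }v\in E,
\]
and Proposition~\ref{0.4} shows any such $u$ is a weak solution of $(P)$. For the Cerami sequence, $J'(u_n)(v-u_n)=o_n(1)$ and convexity of $\Phi$ give this inequality at level $n$ (equation~\eqref{7.2}); then one passes to the limit on both sides using weak$^*$ lower semicontinuity of $\int\Phi(|\nabla\cdot|)$ (Lemma~\ref{0.2}, which is an Ekeland--Temam result, not Brezis--Lieb), Fatou for the $V$-term, and Lemma~\ref{6.7} for the $K$-term. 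Nontriviality is then obtained not from $J'(u)=0$ but by squeezing $Q(u_n)\to Q(u)$ (taking $v=\varphi_k\to u$ in the same inequality), which forces $J(u)=c>0$. If you retain your $(S_+)$ route, you would still owe a proof that such a property holds for the merely Gateaux-differentiable $Q'$ in the weak$^*$ setting, and that is the very difficulty the non-smooth framework is designed to circumvent.
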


To study the regularity of the solutions provided by Theorem \ref{Teo1}, we add the following assumptions:

\noindent{\it$(\phi_5)$} There are $0<\delta<1$, $C_1, C_2>0$ and $1<\beta\leq\ell^*$ such that 
\begin{equation*}
C_1t^{\beta -1}\leq t\phi(t)\leq C_2t^{\beta -1},\;\text{ for }\;t\in[0,\delta].
\end{equation*}
\noindent{\it$(\phi_6)$} There are constants $\delta_0>0$ and $\delta_1>0$ such that 
\begin{equation*}
\delta_0\leq\dfrac{(\phi(t)t)'}{\phi(t)}\leq \delta_1\;\text{ for}\;t>0.
\end{equation*}

We are in position to state the following regularity result:

\begin{theorem}\label{Teo11} Supoose that $\Phi$ satisfies $(\phi_{5})- (\phi_{6})$.
	Under the assumptions of Theorem \ref{Teo1}, the problem $(P)$ possesses a $C^{1,\alpha}_{loc}(\mathbb{R}^N)$ positive ground state solution.
\end{theorem}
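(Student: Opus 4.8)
The plan is to treat the regularity statement as a purely local matter, so that neither the possible nonreflexivity of $D^{1,\Phi}(\mathbb{R}^N)$ nor the behaviour of $V$ and $K$ at infinity intervenes. Let $u$ be the nonnegative, locally bounded solution furnished by Theorem \ref{Teo1}. Fix an arbitrary ball $B=B_{2R}(x_0)$. Since $\Phi\in\mathcal{C}_m$ one has $W^{1,\Phi}(B)\hookrightarrow W^{1,m}(B)$, hence $u\in W^{1,m}_{loc}(\mathbb{R}^N)\cap L^\infty_{loc}(\mathbb{R}^N)$ is a local weak solution of
\begin{equation*}
-\Delta_\Phi u = h \quad\text{in }\mathbb{R}^N,\qquad h(x):=K(x)f(u(x))-V(x)\phi(|u(x)|)u(x).
\end{equation*}

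First I would check that $h\in L^\infty_{loc}(\mathbb{R}^N)$. On $B$ write $M:=\|u\|_{L^\infty(B)}$. By $(\phi_1)$–$(\phi_2)$ the map $t\mapsto t\phi(t)$ is increasing and vanishes at $0^+$, so $0\le\phi(|u|)|u|\le M\phi(M)$ on $B$; since $V$ is continuous it is bounded on $B$, whence $V(x)\phi(|u|)u\in L^\infty(B)$. For the reaction term, $(f_1)$ and the continuity of $f$ give $|f(t)|\le C_M$ for $|t|\le M$, and $K\in L^\infty(\mathbb{R}^N)$ by $(K_0)$, so $K f(u)\in L^\infty(B)$. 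As $B$ is arbitrary, $h\in L^\infty_{loc}(\mathbb{R}^N)$.

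Next I would invoke the interior $C^{1,\alpha}$ theory for quasilinear operators of Orlicz type. The assumptions $(\phi_5)$–$(\phi_6)$ are precisely the structural conditions under which Lieberman's generalization of the Ladyzhenskaya–Ural'tseva estimates (and the DiBenedetto–Tolksdorf machinery) applies: writing $g(t)=t\phi(t)$, condition $(\phi_6)$ says $\delta_0\le tg'(t)/g(t)\le\delta_1$, which makes $\operatorname{div}\!\big(g(|\nabla u|)\nabla u/|\nabla u|\big)$ uniformly elliptic in Lieberman's sense, while $(\phi_5)$ controls the behaviour of $g$ near the origin. With $u\in L^\infty_{loc}\cap W^{1,\Phi}_{loc}$ and $h\in L^\infty_{loc}$, the interior gradient estimate yields $u\in C^{1,\alpha}(\overline{B_R(x_0)})$ for some $\alpha\in(0,1)$ depending only on $N,\ell,m,\delta_0,\delta_1,\beta$; letting $x_0,R$ vary gives $u\in C^{1,\alpha}_{loc}(\mathbb{R}^N)$. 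The hard part here will be matching $(\phi_5)$–$(\phi_6)$ exactly to the hypotheses of the cited regularity result — in particular the interplay of the near-zero exponent $\beta$ in $(\phi_5)$ with the global bounds $\ell,m<N$ of $(\phi_3)$ — and making sure the $L^\infty_{loc}$ bound on $h$ is the right input (so that no preliminary De Giorgi–Nash–Moser iteration beyond the local boundedness of Theorem \ref{Teo1} is needed).

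Finally, positivity follows from a strong maximum principle. Since $u$ is a ground state, $u\not\equiv0$, and $u\ge0$. The sign conditions on $f$ implied by $(f_1)$–$(f_2)$ give $f(u)\ge0$, so $-\Delta_\Phi u + c(x)u = K(x)f(u)\ge0$ with $c(x)=V(x)\phi(|u|)\ge0$ locally bounded. The power-type behaviour of $t\phi(t)$ near $0$ encoded in $(\phi_5)$ ensures the Vázquez-type integrability condition, so the strong maximum principle for the $\Phi$-Laplacian (Vázquez, Pucci–Serrin) applies and forces $u>0$ in all of $\mathbb{R}^N$. Combining the three steps, the solution of Theorem \ref{Teo1} is in fact a positive $C^{1,\alpha}_{loc}(\mathbb{R}^N)$ ground state solution.
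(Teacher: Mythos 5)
Your regularity step is essentially the same as the paper's: both cite Lieberman's interior $C^{1,\alpha}$ theory (the paper's Lemma after Lemma \ref{22.21}, via [\citenum{O.A}, Theorem 1.7]), with $(\phi_6)$ supplying the uniform ellipticity ratios and the $L^{\infty}_{loc}$ bound coming from Theorem \ref{Teo1}. That part is fine.

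The positivity step, however, has two concrete gaps and misses the paper's main device.

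First, you assert that $(f_1)$--$(f_2)$ force $f(u)\ge 0$. This does not follow. $(f_2)$ only says $s^{1-m}f(s)$ is increasing on $(0,\infty)$, and $(f_1)$ says $f(t)=o(t\phi(t))$ as $t\to 0$. When $\ell<m$ one may have $\phi(t)/t^{m-2}\to +\infty$ as $t\to 0^+$ (consistent with $(\phi_4)$), in which case $t\phi(t)$ dominates $t^{m-1}$ near $0$ and one can take, say, $f(t)\sim -t^{\ell-1}/\ln(1/t)$ near $0$: then $f<0$ near the origin, $f(t)/(t\phi(t))\to 0$, and $s^{1-m}f(s)=-s^{\ell-m}/\ln(1/s)$ is increasing near $0$. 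So $f(u)\ge 0$ is not a consequence of the stated hypotheses, and your reduction to $-\Delta_\Phi u + c(x)u \ge 0$ is not available in general.

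Second, even if $f\ge 0$ held, your coefficient $c(x)=V(x)\phi(|u(x)|)$ is not locally bounded: $\phi$ is only defined on $(0,\infty)$ and $\phi(t)$ may blow up as $t\to 0^+$ (e.g.\ $\phi(t)\sim t^{\ell-2}$ with $\ell<2$), so $c$ is unbounded precisely near the zero set of $u$, which is where the maximum-principle argument matters. The quantity that is controlled is $\phi(|u|)|u|$, not $\phi(|u|)$.

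Third, "$(\phi_5)$ ensures the Vázquez-type integrability condition" is asserted but not checked, and it is exactly at this point that the paper does something you do not: using the $C^{1}$ bound on $\nabla u$ over a bounded $\Omega$, the paper replaces $\phi$ by the modified function
\[
\varphi(t)=\begin{cases}\phi(t), & 0<t\le M_1,\\ \phi(M_1)M_1^{2-\beta}t^{\beta-2}, & t\ge M_1,\end{cases}
\]
so that the principal part $G(x,t,p)=\alpha_1^{-1}\varphi(|p|)p$ has exact power-$\beta$ structure, while $(\phi_5)$ together with \eqref{11.16} gives the two-sided bound $|B(x,t,p)|\le C(M)|t|^{\beta-1}$ on $\Omega\times(-M,M)\times\mathbb{R}^N$ for the full lower-order term (no sign condition on $f$ needed). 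This puts the equation in the structural class of Trudinger's Harnack inequality [\citenum{Trudinger}, Theorem 1.1], which then yields $u>0$. Your proposal, by contrast, tries to feed the original $\Phi$-Laplacian and a (possibly unbounded, possibly sign-changing) zeroth-order term directly into a Vázquez--Pucci--Serrin statement without verifying its hypotheses; as written, it does not close.
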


For the second main result, we consider the problem $(P)$ without condition $\mathcal{C}_m$. The removal of this condition on the $N$-function $\Phi$, forced us to present a more restricted growth condition that $(f_1)$, this constraint under the nonlinearity $f$, will be necessary to show that the nonnegative solutions of $(P)$, are positive. In this way, we will consider $B:\mathbb{R}\rightarrow[0,\infty)$ being a $N$-function given by
$B(t) =\int_{0}^{|t|} b(s)s ds$, where $b : (0, \infty) \rightarrow (0,\infty)$ is a function satisfying the following conditions:
\begin{equation*}
t\longmapsto tb(t)\;\text{ is increasing for }\;t>0,
\leqno{(B_1)}
\end{equation*}
\begin{equation*}
\displaystyle\lim_{t\rightarrow0^{+}}tb(t)=0 \;\;\text{ and }\;\; \displaystyle\lim_{t\rightarrow+\infty}tb(t)=+\infty,
\leqno{(B_2)}
\end{equation*}
and there exist $b_1 \in [m,\ell^*]$ such that\vspace*{-0.2cm}
\begin{equation*}
b_1=\inf_{t>0}\dfrac{b(t)t^2}{B(t)} \;\;\text{ and }\;\; \ell^*\geq\sup_{t>0}\dfrac{b(t)t^2}{B(t)},\;\;\forall t>0.\vspace*{-0.2cm}
\leqno{(B_3)}
\end{equation*} 
For this case, we assume that  $f:\mathbb{R} \longrightarrow\mathbb{R}$ satisfies the conditions $(f_2)$ and $(f_3)$. Moreover, we will consider the following growth condition
\begin{equation*}
\displaystyle \lim_{t \to 0} \dfrac{f(t)}{ t\phi(t)}=0 \;\;\; \text{ and }\;\;\;\displaystyle\limsup_{t \to \infty} \dfrac{|f(t)|}{t b(t)}=0.
\leqno{(f_4)}
\end{equation*}

Our second main result can be written in the following form.
\begin{theorem}\label{Teo1.1}
	Assume that $(V,K)\in \mathcal{K}_1$. Suppose that $(\phi_{1})- (\phi_{4})$, $(f_{2})$, $(f_{3})$ and $(f_{4})$ hold.  Then problem $(P)$ possesses a nonnegative solutions that are locally bounded. If $\Phi$ also satisfies $(\phi_{5})$ and $(\phi_{6})$, the solutions for the problem $(P)$ are $C^{1,\alpha}_{loc }(\mathbb{R}^N)$ positive ground state solution.
\end{theorem}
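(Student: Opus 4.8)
For Theorem~\ref{Teo1.1} the plan is to reproduce the architecture of the proofs of Theorems~\ref{Teo1} and \ref{Teo11}, with the auxiliary $N$-function $B$ and hypothesis $(f_4)$ taking over the roles that $\mathcal{C}_m$ and $(f_1)$ played there. One works in the energy space $E=\{u\in D^{1,\Phi}(\mathbb{R}^N):\int_{\mathbb{R}^N}V(x)\Phi(|u|)\,dx<+\infty\}$ with its natural Luxemburg-type norm; since $\tilde{\Phi}$ need not satisfy the $\Delta_2$-condition, $E$ may fail to be reflexive, and the energy functional
\[
J(u)=\int_{\mathbb{R}^N}\Phi(|\nabla u|)\,dx+\int_{\mathbb{R}^N}V(x)\Phi(|u|)\,dx-\int_{\mathbb{R}^N}K(x)F(u)\,dx
\]
is in general only Gateaux differentiable. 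The first step is to check that $J$ is well defined and Gateaux differentiable on $E$: near the origin $(f_4)$ yields $|F(t)|\lesssim\Phi(|t|)$, while at infinity $(f_4)$ yields $|F(t)|\lesssim B(|t|)$, and $(B_3)$ together with the standard properties of the Sobolev conjugate $\Phi_*$ forces $B$ to grow no faster than $\Phi_*$ at infinity; hence $\int_{\mathbb{R}^N}K(x)F(u)\,dx$ is finite on $E$, and its continuity and differentiability will follow from the compact embedding established below.

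Next one verifies the geometry required by the weak form of the mountain pass theorem for Gateaux-differentiable functionals quoted in the Introduction: the first half of $(f_4)$ with $(\phi_3)$ makes $0$ a strict local minimum of $J$ on a small sphere, while the $m$-superlinearity $(f_3)$ together with the bound $\Phi(t)\le t^m\Phi(1)$ for $t\ge1$ (from $(\phi_3)$) produces $e\in E$ with $J(e)<0$. This yields a Cerami (or Palais--Smale) sequence $(u_n)$ at the minimax level $c>0$, whose boundedness in $E$ is obtained from $J(u_n)-\tfrac1m\langle J'(u_n),u_n\rangle=o(1)(1+\|u_n\|)$, $(\phi_3)$, $(\phi_4)$, and the Nehari-type monotonicity encoded in $(f_2)$ (which gives $\tfrac1m f(t)t-F(t)\ge0$), thereby controlling $\int_{\mathbb{R}^N}\Phi(|\nabla u_n|)\,dx+\int_{\mathbb{R}^N}V(x)\Phi(|u_n|)\,dx$.

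The compactness step is where the conditions on $(V,K)$ and the weak$^*$ topology enter. Up to a subsequence, $u_n\overset{*}{\rightharpoonup}u$ in $E$ in the sense of Lemma~\ref{0.1}; the pair $(K_0)$, $(K_1)$ gives the compact embedding of $E$ into the weighted Orlicz space with $N$-function $B$ and weight $K$ --- an Orlicz-Sobolev version of the Hardy-type inequalities of \cite{AlvesandMarco} --- so that $\int K(x)F(u_n)\to\int K(x)F(u)$ and $\int K(x)f(u_n)v\to\int K(x)f(u)v$ for every $v\in E$. Passing to the limit in $\langle J'(u_n),v\rangle=o(1)$ shows $u$ is a weak solution of $(P)$, and a Brezis--Lieb / lower semicontinuity argument, combined with the characterization of $c$ through the Nehari manifold attached to $(f_2)$, identifies $u$ as a nontrivial ground state. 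Working with the truncation that sets $f(t)=0$ for $t\le0$ and testing the equation with $u^-$ forces $u\ge0$, and a Moser-type iteration in the Orlicz-Sobolev framework, using $(K_0)$ and the genuinely subcritical growth contained in $(f_4)$, gives $u\in L^{\infty}_{loc}(\mathbb{R}^N)$; this already establishes the first assertion of the theorem.

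Finally, under the extra hypotheses $(\phi_5)$, $(\phi_6)$, the regularity theory for quasilinear operators of Lieberman and Fukagai--Narukawa type applies to the locally bounded $u$ and upgrades it to $C^{1,\alpha}_{loc}(\mathbb{R}^N)$, after which a strong maximum principle / Harnack inequality for $-\Delta_\Phi$ yields $u>0$. I expect this positivity step to be the main obstacle. With $\mathcal{C}_m$ available, the lower bound $\Phi(t)\ge C|t|^m$ permits a direct comparison argument (as in Theorem~\ref{Teo11}); once it is dropped one must instead rewrite $(P)$ as $-\Delta_\Phi u+c(x)\,\phi(|u|)u\ge0$ with $c(x)=V(x)-K(x)f(u(x))/(\phi(|u(x)|)u(x))$ and show that $c\in L^{\infty}_{loc}(\mathbb{R}^N)$, which is exactly the point where the restricted growth $(f_4)$ --- through the $N$-function $B$ with $b_1\ge m$ --- is needed near the set $\{u=0\}$; a Harnack inequality on compact sets and a connectedness argument then propagate strict positivity. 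A secondary difficulty, already present in Theorems~\ref{Teo1} and \ref{Teo11}, is that every compactness and deformation argument must be carried out in the weak$^*$ topology rather than the weak topology, since $E$ is merely a dual space.
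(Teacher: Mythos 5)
Your overall architecture is the paper's: extend the Hardy-type compact embedding to a weighted Orlicz space attached to the $N$-function $B$, set up mountain pass geometry for a merely Gateaux-differentiable $J$ on the possibly nonreflexive $E$, extract a Cerami sequence, work in the weak$^*$ topology to pass to the limit and get a nonnegative nontrivial critical point, then run a De Giorgi iteration for local boundedness followed by Lieberman/Trudinger regularity and Harnack-type positivity. This matches Section~4 of the paper closely.

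Two points deviate from what the paper actually does, and they are worth flagging. First, the boundedness of the Cerami sequence is not obtained by the direct Ambrosetti--Rabinowitz computation $J(u_n)-\tfrac1m J'(u_n)u_n$; since $(f_2)$ is only a Nehari-type monotonicity, the paper instead proves (Lemma~\ref{5}) that $\max_{t\in[0,1]}J(tu_n)$ is uniformly bounded --- precisely using the monotonicity of $s\mapsto m\Phi(s)-\phi(s)s^2$ from $(\phi_4)$ and of $\mathcal{H}(s)=sf(s)-mF(s)$ from $(f_2)$ --- and then derives boundedness by contradiction (Proposition~\ref{5.}), normalizing $w_n=u_n/\|u_n\|_E$, showing $w_n\xrightharpoonup{\ast}0$ and using Lemma~\ref{6.7} together with $(f_3)$ to contradict Lemma~\ref{5}. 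Second, and more substantively, you locate the ``cost'' of dropping $\mathcal{C}_m$ in the positivity step, whereas in the paper it sits entirely in the \emph{local boundedness} step: Lemma~\ref{22.00} replaces the estimate $\int_{\mathcal{A}_{k,t}}|\nabla u|^m\,dx\lesssim\ldots$ of Lemma~\ref{22.6} by the analogous estimate with exponent $\ell$ and right-hand-side exponent $\ell^*$, using the universal lower bound $\Phi(t)\geq\Phi(1)\min\{t^\ell,t^m\}$ of Lemma~\ref{0.3} in place of $\mathcal{C}_m$, and the De~Giorgi iteration then runs with $\ell$ instead of $m$ (Lemma~\ref{22.21} with $m\mapsto\ell$). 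The positivity argument (Corollary~\ref{22.2}) is the \emph{same} in Theorems~\ref{Teo11} and~\ref{Teo1.1}: it uses $(\phi_5)$ to build a $|p|^{\beta-1}$-homogeneous truncation of $\phi(|p|)p$ on a compact set, verifies Trudinger's structure conditions for the pair $(G,B)$ and applies his Harnack inequality --- it does not invoke $\mathcal{C}_m$. Your alternative proposal (rewriting as $-\Delta_\Phi u + c(x)\phi(|u|)u\geq 0$ and proving $c\in L^\infty_{loc}$) is a plausible route, but it is not what the paper does, and the paper does not need it. Finally, minor: the paper's iteration is De~Giorgi-type, not Moser-type.
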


In Section 5, we will study the case that $(V,K)\in \mathcal{K}_2$. As in section 4, we initially analyzed the problem $(P)$ with $\Phi\in \mathcal{C}_m$. In this sense, we assume that $f:\mathbb{R} \longrightarrow\mathbb{R}$ satisfies $(f_2)$ and $(f_3)$. Moreover, we will consider the following condition
\begin{equation*}
\displaystyle \limsup_{t \to 0} \dfrac{|f(t)|}{ ta(t)}<\infty \;\;\; \text{ and }\;\;\;\displaystyle\lim_{t \to \infty} \dfrac{f(t)}{t \phi_{*}(t)}=0.
\leqno{(f_5)}
\end{equation*}

Our first main result of this fifth section can be stated as follows.
\begin{theorem}\label{Teo3}
	Assume that $(V,K)\in \mathcal{K}_2$ and $\Phi\in \mathcal{C}_m$. Suppose that $(\phi_{1})- (\phi_{4})$, $(f_{2})$, $(f_{3})$ and $(f_{5})$ hold.
	  Then problem $(P)$ possesses a nonnegative solutions that are locally bounded. If $\Phi$ also satisfies $(\phi_{5})$ and $(\phi_{6})$, the solutions for the problem $(P)$ are $C^{1,\alpha}_{loc }(\mathbb{R}^N)$ positive ground state solution.
\end{theorem}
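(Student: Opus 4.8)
The plan is to run the variational scheme already used for Theorems~\ref{Teo1}--\ref{Teo1.1}, adapted to the weight structure of the class $\mathcal{K}_2$. I would work on the space
\[
E=\Big\{u\in D^{1,\Phi}(\mathbb{R}^N):\ \int_{\mathbb{R}^N}V(x)\Phi(|u|)\,dx<\infty\Big\},
\]
normed via $\int_{\mathbb{R}^N}\big(\Phi(|\nabla u|)+V(x)\Phi(|u|)\big)\,dx$ in the Luxemburg sense, and consider $J(u)=\int_{\mathbb{R}^N}\big(\Phi(|\nabla u|)+V(x)\Phi(|u|)\big)\,dx-\int_{\mathbb{R}^N}K(x)F(u)\,dx$. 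Since $\widetilde{\Phi}$ need not satisfy $\Delta_2$, $J$ is only G\^ateaux differentiable (along a dense subspace), so the weaker mountain pass theorem for G\^ateaux differentiable functionals quoted in the introduction replaces the classical one, and the possible non-reflexivity of $E$ is handled through the weak$^*$ topology via Lemma~\ref{0.1}.

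The essential analytic ingredient is the compact embedding of $E$ into a weighted Orlicz space (an interpolation space between $L^m_K$ and $L^{\ell^*}_K$, built from the $N$-function $A$), which is where the Hardy-type inequalities extended to non-reflexive Orlicz spaces enter. I would split $\mathbb{R}^N=B_r(0)\cup B_r(0)^c$: on $B_r(0)$ use the local compact Orlicz--Sobolev embedding together with $(K_0)$; on $B_r(0)^c$ use $(K_3)$, which for large $|x|$ gives $K(x)\le\varepsilon\,H(x)$ with $H(x)=\min_{s>0}\{V(x)\Phi(s)/A(s)+\Phi_*(s)/A(s)\}$. The very definition of $H$ is tailored so that $K(x)A(|u|)\le\varepsilon\big(V(x)\Phi(|u|)+\Phi_*(|u|)\big)$ pointwise on $B_r(0)^c$, and the right-hand side is controlled by $\|u\|_E$ through the Orlicz--Sobolev inequality $\Phi_*(|u|)\in L^1$. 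Condition $(f_5)$ is then exactly what makes $u\mapsto\int K(x)F(u)\,dx$ well defined, weak$^*$ sequentially continuous and compact on $E$: near $0$ one has $|f(t)|\le C\,ta(t)$, which is absorbed by the weight via the above inequality, while $f(t)=o(t\phi_*(t))$ at infinity gives the quasicritical control.

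With this in hand I would verify the mountain pass geometry: $(f_5)$ near $0$ and at infinity yields $J\ge\rho>0$ on a small sphere of $E$, while $(f_3)$ ($m$-superlinearity of $F$) combined with $\Phi\in\mathcal{C}_m$ gives $J(tu_0)\to-\infty$ for suitable $u_0$; hence there is a Cerami sequence $(u_n)$ at the minimax level $c>0$. Boundedness of $(u_n)$ in $E$ comes from $(f_2)$ playing the role of an Ambrosetti--Rabinowitz-type condition (the map $s\mapsto s^{1-m}f(s)$ increasing forces $\tfrac{1}{m}f(s)s-F(s)\ge0$) together with $(\phi_3)$. Using weak$^*$ compactness of bounded sets in $E$ (Lemma~\ref{0.1}) I extract $u_n\overset{*}{\rightharpoonup}u$; the compact embedding passes the nonlinear term to the limit and $c>0$ forces $u\ne 0$. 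The main obstacle, where I expect to spend the most effort, is passing to the limit in the quasilinear term $\int\phi(|\nabla u_n|)\nabla u_n\cdot\nabla v$ with only weak$^*$ convergence in a possibly non-reflexive space: I would establish $\nabla u_n\to\nabla u$ a.e.\ by a Boccardo--Murat/Brezis--Lieb type argument, testing with a truncation of $u_n-u$ and exploiting the strict monotonicity of $t\mapsto t\phi(t)$ from $(\phi_1)$, to conclude that $u$ is a nontrivial G\^ateaux critical point of $J$, i.e.\ a weak solution of $(P)$.

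Finally, $u$ is a ground state because the minimax level $c$ coincides with the infimum of $J$ over the associated Nehari set and this infimum is attained: here $(f_2)$ yields the usual uniqueness of the fibering-map maximum, so every nontrivial critical point lies on the Nehari set with energy $\ge c$. Nonnegativity follows by testing the equation with $u^-$ (or by first replacing $f(t)$ by $0$ for $t\le 0$), which gives $u^-\equiv 0$. Local boundedness of $u$ follows from a Moser/De~Giorgi iteration adapted to Orlicz--Sobolev spaces, using $\Phi\in\mathcal{C}_m$, $m<\ell^*$ and the quasicritical bound in $(f_5)$. If in addition $(\phi_5)$--$(\phi_6)$ hold, then $u\in C^{1,\alpha}_{loc}(\mathbb{R}^N)$ by the regularity theory for operators of $\Phi$-Laplacian type (Lieberman, Fukagai--Narukawa), and the Harnack inequality / strong maximum principle upgrade $u\ge 0$ to $u>0$, completing the proof.
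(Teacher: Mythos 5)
Your overall scaffolding matches the paper's: extend the Hardy-type compact embedding to the $\mathcal{K}_2$ class by splitting $\mathbb{R}^N$ into a ball and its complement (using $(K_3)$ and the structure of $H$ outside, local compactness and Strauss inside), run the mountain pass for a G\^ateaux-differentiable functional via Ghoussoub--Preiss, show boundedness of the Cerami sequence using $(f_2)$, $(f_3)$, $(\phi_4)$, extract a weak$^*$ limit, and then finish with De Giorgi iteration for $L^\infty_{loc}$ (via the Caccioppoli estimate and $\Phi\in\mathcal{C}_m$) and Lieberman/Trudinger for $C^{1,\alpha}_{loc}$ regularity and positivity. Those pieces are all present in the paper.

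The crucial divergence is in how you pass to the limit in the quasilinear term, and I think this is where your plan has a real gap. You propose a Boccardo--Murat type argument to obtain $\nabla u_n\to\nabla u$ a.e., and then conclude by pointwise continuity of $\phi(|\cdot|)(\cdot)$. But that device relies on testing with truncations of $u_n-u$ and on being able to pass to the limit in $\langle Q'(u_n)-Q'(u),T_k(u_n-u)\rangle$, which implicitly uses the monotone-operator machinery in a reflexive duality framework; here, since $\tilde\Phi$ may fail $\Delta_2$, $E$ is nonreflexive, $Q'$ is only norm-to-weak$^*$ continuous, and the pairing with a non-$C_0^\infty$ test function is delicate. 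The paper avoids a.e.\ gradient convergence entirely: it exploits the \emph{convexity} of $Q$ to convert criticality into a variational inequality. Specifically, $J'(u_n)(v-u_n)=o_n(1)$ plus convexity of $\Phi$ give
\begin{align*}
Q(v)-Q(u_n)\geq\int_{\mathbb{R}^N}K(x)f(u_n)(v-u_n)\,dx+o_n(1),
\end{align*}
and then Lemma~\ref{0.2} (weak$^*$ lower semicontinuity of $\int\Phi(|\nabla\cdot|)$, itself obtained from the Ekeland--Temam convexity theorem after localizing), Fatou for the $V$-term, and the compactness of $\mathcal{F}$ (Lemma~\ref{6.77}) let one pass to the limit directly to get $Q(v)-Q(u)\geq\int K(x)f(u)(v-u)\,dx$ for all $v\in E$; this is exactly the critical-point notion of Proposition~\ref{0.4}, and Proposition~\ref{0.4} then yields that $u$ weakly solves $(P)$. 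The same convexity-plus-lsc trick, tested against smooth approximants of $u$ itself, gives $\lim_n Q(u_n)=Q(u)$ and hence $J(u)=c>0$, so $u\neq0$. This is much lighter than a.e.\ gradient convergence and is precisely what makes the nonreflexive case tractable. I'd recommend replacing the Boccardo--Murat plan with this convexity argument, or at minimum justifying carefully how the truncation scheme survives the lack of $\Delta_2$ on $\tilde\Phi$.

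Two smaller remarks. First, the paper's norm on $E$ is the \emph{sum} $\|u\|_{D^{1,\Phi}}+\|u\|_{V,\Phi}$ of two Luxemburg norms, not the Luxemburg norm of the combined modular; they are equivalent, but the estimates in Lemmas~\ref{4.9}--\ref{4.10} and Proposition~\ref{5.} are written for the sum norm. Second, the paper does not run an explicit Nehari-manifold argument for the ground-state claim; what it actually proves is that $J(u)=c$ equals the mountain pass level, and the monotonicity $(f_2)$ is used (through the function $\mathcal{H}(s)=sf(s)-mF(s)$ in Lemma~\ref{5}) to control the Cerami sequence rather than to characterize $c$ as the Nehari infimum. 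Your Nehari remark is consistent with this but is not how the paper phrases it.
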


In a second moment, as in Theorem \ref{Teo1.1}, we relax the condition $\Phi\in \mathcal{C}_m$ and present a more restricted growth condition that $(f_5)$. More precisely, $f:\mathbb{R} \longrightarrow\mathbb{R}$ satisfies  
\begin{equation*}
\displaystyle \limsup_{t \to 0} \dfrac{|f(t)|}{t a(t)}<\infty \;\;\; \text{ and }\;\;\;\displaystyle\lim_{t \to \infty} \dfrac{f(t)}{t b(t)}=0.
\leqno{(f_6)}
\end{equation*}
Furthermore, we will assume that $f$ satisfies $(f_2)$ and $(f_3)$.

Our second main result of this fifth section can be written in the following form.

\begin{theorem}\label{Teo3.1}
	Assume that $(V,K)\in \mathcal{K}_2$. Suppose that $(\phi_{1})- (\phi_{4})$, $(f_{2})$, $(f_{3})$ and $(f_{6})$ hold.  Then problem $(P)$ possesses a nonnegative solutions that are locally bounded. If $\Phi$ also satisfies $(\phi_{5})$ and $(\phi_{6})$, the solutions for the problem $(P)$ are $C^{1,\alpha}_{loc }(\mathbb{R}^N)$ positive ground state solution.
\end{theorem}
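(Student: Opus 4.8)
The plan is to run the same variational scheme used for Theorems~\ref{Teo3} and~\ref{Teo1.1}, combining the $\mathcal{K}_2$--compactness coming from $(K_3)$ with the auxiliary $N$--function $B$, which here plays the role that the hypothesis $\Phi\in\mathcal{C}_m$ played in Theorem~\ref{Teo3}. Concretely, I would work in
$$
E=\Big\{u\in D^{1,\Phi}(\mathbb{R}^N):\ \int_{\mathbb{R}^N}V(x)\Phi(|u|)\,dx<\infty\Big\},
$$
equipped with the Luxemburg-type norm attached to the modular $u\mapsto\int_{\mathbb{R}^N}\Phi(|\nabla u|)\,dx+\int_{\mathbb{R}^N}V(x)\Phi(|u|)\,dx$, and with the energy functional
$$
J(u)=\int_{\mathbb{R}^N}\Phi(|\nabla u|)\,dx+\int_{\mathbb{R}^N}V(x)\Phi(|u|)\,dx-\int_{\mathbb{R}^N}K(x)F(u)\,dx .
$$
Replacing $f$ by $t\mapsto f(t)\chi_{[0,\infty)}(t)$ from the outset ensures that any solution produced satisfies $-\Delta_\Phi u+V(x)\phi(|u|)u\ge 0$, which is what is needed later for nonnegativity. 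Since $\widetilde{\Phi}$ may fail the $\Delta_2$--condition, $J$ is only G\^ateaux differentiable and $E$ need not be reflexive, so the classical critical point theory does not apply; instead I would invoke the weak mountain pass theorem for G\^ateaux-differentiable functionals recalled in the preliminary sections. The two analytic ingredients are the continuous Sobolev embedding $D^{1,\Phi}(\mathbb{R}^N)\hookrightarrow L^{\Phi_*}(\mathbb{R}^N)$ and, from $(K_0)$, $(K_1)$ and $(K_3)$, the compact embedding $E\hookrightarrow\hookrightarrow L^{A}_{K}(\mathbb{R}^N)$ --- the extension, obtained via Lemma~\ref{0.1}, of the Hardy-type inequalities of \cite{AlvesandMarco}.

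\smallskip
\emph{Mountain pass geometry.} From $(f_6)$ one gets $|f(t)|\le Ct\,a(t)=C A'(t)$ near $0$, hence $|F(t)|\le CA(t)$ near $0$ and, using $(\phi_3)$ together with $a_1>m$, also $f(t)/(t\phi(t))\to 0$ as $t\to 0$; from the second part of $(f_6)$ and the fact that $B\prec\Phi_*$ at infinity one gets $|F(t)|\le\varepsilon\Phi_*(t)$ for $|t|$ large. Combining, for each $\varepsilon>0$ there is $C_\varepsilon>0$ with $|F(t)|\le C_\varepsilon A(t)+\varepsilon\Phi_*(t)$ for all $t$. The $A$--term is absorbed by the embedding $E\hookrightarrow L^A_K$ and the $\Phi_*$--term by the Sobolev embedding and $K\in L^\infty$; since $a_1>m$ and $m<\ell^*$ (from $(\phi_3)$ and $(K_3)$) both contributions are of higher order than the leading term of $J$ near the origin, so $J(u)\ge\rho_0>0$ on a small sphere of $E$. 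On the other hand, $(f_2)$ and $(f_3)$ give $F(t)/|t|^m\to+\infty$, while $\Phi(t)\le C|t|^m$ for $|t|\ge 1$ (a standard consequence of $m=\sup_{t>0}\phi(t)t^2/\Phi(t)$), so $J(t u_0)\to-\infty$ as $t\to+\infty$ for any fixed $u_0\in E\setminus\{0\}$ with $u_0\ge 0$. Thus the mountain pass geometry holds and the weak mountain pass theorem yields a Cerami sequence $(u_n)\subset E$ at the level $c=\inf_{\gamma}\max_{s\in[0,1]}J(\gamma(s))>0$.

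\smallskip
\emph{Bounded Cerami sequence, weak$^{*}$ limit and a nontrivial solution.} Condition $(f_2)$ gives $0\le mF(s)\le sf(s)$ for $s>0$; since $(u_n)$ is a Cerami sequence, $\langle J'(u_n),u_n\rangle\to 0$, and feeding this into $J(u_n)-\tfrac{1}{m}\langle J'(u_n),u_n\rangle$ together with $(\phi_3)$ one excludes $\|u_n\|_E\to\infty$ (using the $m$--superlinearity $(f_3)$ if necessary) and concludes that $(u_n)$ is bounded in $E$. Because $E$ may be nonreflexive, I would pass to a weak$^{*}$ convergent subsequence $u_n\rightharpoonup^{*}u$ by Lemma~\ref{0.1}. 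To pass to the limit in $\int_{\mathbb{R}^N}K(x)f(u_n)\varphi\,dx$ and $\int_{\mathbb{R}^N}K(x)F(u_n)\,dx$ I would split $\mathbb{R}^N=\{|u_n|\le R\}\cup\{|u_n|>R\}$: on the first set the compactness $E\hookrightarrow\hookrightarrow L^A_K$ together with the control of $f$ by $A$ near $0$ applies, while on the second set $|f(u_n)u_n|,|F(u_n)|\le\varepsilon B(u_n)\le\varepsilon C\,\Phi_*(u_n)$ with $\int_{\mathbb{R}^N}K(x)\Phi_*(u_n)\,dx\le\|K\|_\infty\int_{\mathbb{R}^N}\Phi_*(u_n)\,dx$ uniformly bounded; letting first $n\to\infty$ and then $R\to\infty$ yields the convergence. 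Hence $u$ is a weak solution of $(P)$ with $f$ replaced by its positive part; testing the equation with $u^{-}$ and using $(\phi_1)$ forces $u\ge 0$, and $c>0$ forces $u\not\equiv 0$. Finally, a Nehari/fibering-type argument --- legitimate here thanks to the monotonicity hypotheses $(\phi_1)$--$(\phi_4)$ and $(f_2)$ --- identifies $c$ with $\inf_{\mathcal N}J$, $\mathcal N=\{v\in E\setminus\{0\}:\langle J'(v),v\rangle=0\}$, so that $u$ is a ground state.

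\smallskip
\emph{Regularity, positivity and the main difficulty.} Local boundedness of $u$ follows from a Moser-type iteration for the $\Phi$--Laplacian, using the quasicritical bound on $f$ from $(f_6)$ and $b_1\le\ell^*$; this gives the first assertion. If, in addition, $(\phi_5)$ and $(\phi_6)$ hold, Lieberman-type interior gradient estimates upgrade $u$ to $C^{1,\alpha}_{loc}(\mathbb{R}^N)$. For positivity, observe that $(f_6)$ forces $f(t)/(t\phi(t))\to 0$ as $t\to 0$, so near the zero set of $u$ one can write $K(x)f(u)=c(x)\phi(|u|)u$ with $c\in L^\infty_{loc}$, and the strong maximum principle / Harnack inequality for the $\Phi$--Laplacian (valid under $(\phi_1)$--$(\phi_6)$) then yields $u>0$ in $\mathbb{R}^N$. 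I expect the two genuinely delicate points to be: (i) the passage to the limit for a merely G\^ateaux-differentiable functional on a possibly nonreflexive space, where Lemma~\ref{0.1} and the level-set splitting above are essential to recover the needed compactness; and (ii) the step from $u\ge 0$ to $u>0$ in the absence of $\mathcal{C}_m$ --- which is precisely why the more restrictive growth condition $(f_6)$, rather than the weaker $(f_5)$, must be imposed.
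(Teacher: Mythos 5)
Your high-level plan matches the paper's: set up $E$ with the Luxemburg norm, use the compact embedding $E\hookrightarrow\hookrightarrow L^A_K(\mathbb{R}^N)$ from $(K_3)$ (Proposition \ref{44.01}), obtain a Cerami sequence at the mountain-pass level, pass to a weak$^*$ limit via Lemma~\ref{0.1}, test with $u^-$ to get nonnegativity, and show $J(u)=c>0$ to get nontriviality; then handle regularity and positivity by De~Giorgi/Lieberman/Trudinger-type arguments, relying on $(f_6)$ rather than $(f_5)$ because $\mathcal{C}_m$ is dropped. Two places in your sketch, however, are not merely imprecise but would not work as stated.

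First, for the boundedness of the Cerami sequence you propose to use the classical Ambrosetti--Rabinowitz-type estimate $J(u_n)-\tfrac{1}{m}\langle J'(u_n),u_n\rangle$. Under $(\phi_3)$ we have $\phi(t)t^2\le m\Phi(t)$, so the gradient and potential contributions in this combination are $\ge 0$ but have no coercivity; and under $(f_2)$ the nonlinearity contributes $\ge 0$, so the whole expression is merely nonnegative and yields no bound on $\|u_n\|_E$. The paper instead runs a Jeanjean-type argument: Lemma~\ref{5} shows $\sup_{t\in[0,1]}J(tu_n)\le M$ using the monotonicity of $s\mapsto sf(s)-mF(s)$ (forced by $(f_2)$) and of $s\mapsto m\Phi(s)-\phi(s)s^2$ (forced by $(\phi_4)$), and Proposition~\ref{5.} then derives a contradiction from $\|u_n\|_E\to\infty$ by normalizing $w_n=u_n/\|u_n\|_E$, using $(f_3)$ and Lemma~\ref{6.77} to show $w=0$ and $\int K F(Mw_n)\to 0$, and rescaling back to violate the bound $M$. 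Your parenthetical ``using $(f_3)$ if necessary'' is precisely where this nontrivial argument lives, and it needs to be spelled out.

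Second, for local boundedness the paper does not simply rerun the same iteration; the crucial adjustment, and the real reason $(f_6)$ (growth controlled by $B$ with $\sup b(t)t^2/B(t)\le\ell^*$) replaces $(f_5)$, is that without $\mathcal{C}_m$ the Caccioppoli-type estimate of Lemma~\ref{44.10} must be written for the exponents $\ell,\ell^*$ instead of $m,m^*$: one gets $\int_{\mathcal{A}_{k,t}}|\nabla u|^\ell\,dx\le C\bigl(\int_{\mathcal{A}_{k,s}}|(u-k)/(s-t)|^{\ell^*}\,dx+(k^{\ell^*}+1)|\mathcal{A}_{k,s}|\bigr)$, using $\Phi(t)\ge\Phi(1)t^\ell$ for $t\ge1$ (always available from $(\phi_3)$) in place of $\Phi\ge C|t|^m$, and the $B$-control of $f$ so the right-hand side grows like $t^{\ell^*}$. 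Your description as a generic ``Moser-type iteration using the quasicritical bound from $(f_6)$'' does not register this change of scale, which is what makes the De~Giorgi iteration close. Finally, for positivity the paper does not invoke a Harnack inequality for the $\Phi$-Laplacian abstractly: it truncates $\phi$ at the level of $\|\nabla u\|_{L^\infty}$ and uses $(\phi_5)$ to verify the power-type structure conditions of Trudinger \cite{Trudinger}, after which $u>0$ follows from Trudinger's theorem; your appeal to a strong maximum principle would need the same structural verification to be legitimate under $(\phi_1)$--$(\phi_6)$.
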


In Section 6, we will study the zero mass case, this is, $V=0$. In what follows, we assume that $f$ satisfies $(f_2)$ and $(f_3)$. Moreover, we will consider the following growth condition
\begin{equation*}
\displaystyle \lim_{t \to 0} \dfrac{f(t)}{ t\phi_{*}(t)}=0 \;\;\; \text{ and }\;\;\;\displaystyle\lim_{t \to \infty} \dfrac{f(t)}{t\phi_{*}(t)}=0.
\leqno{(f_7)}
\end{equation*}

Our main result stated in this section is described next.
\begin{theorem}\label{Teo2}
	 Suppose that $(\phi_{1})- (\phi_{4})$ and $(K_0)-(K_1)$ hold. Assume that $f$ satisfies $(f_2)$, $(f_{3})$, $(f_{7})$. If $V\equiv0$, then the problem $(P)$ has a nonnegative ground state solution.
\end{theorem}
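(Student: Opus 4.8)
The plan is to obtain the solution as a critical point of the energy functional
\[
J(u)=\int_{\mathbb{R}^{N}}\Phi(|\nabla u|)\,dx-\int_{\mathbb{R}^{N}}K(x)\overline{F}(u)\,dx,\qquad u\in D^{1,\Phi}(\mathbb{R}^{N}),
\]
where $\overline{f}(t)=f(t)$ for $t\ge0$, $\overline{f}(t)=0$ for $t<0$, and $\overline{F}(t)=\int_{0}^{t}\overline{f}(s)\,ds$; this truncation is what will force the solution to be nonnegative. First I would fix the functional framework: by the Orlicz--Sobolev inequality $\|u\|_{\Phi_{*}}\le C\|\nabla u\|_{\Phi}$ one has $D^{1,\Phi}(\mathbb{R}^{N})\hookrightarrow L^{\Phi_{*}}(\mathbb{R}^{N})$, and since $K\in L^{\infty}$ while $(f_{7})$ forces $\overline{F}(t)=o(\Phi_{*}(t))$ both at $t=0$ and at $t=+\infty$, the functional $J$ is well defined and Gateaux differentiable on $D^{1,\Phi}(\mathbb{R}^{N})$, with
\[
J'(u)[\varphi]=\int_{\mathbb{R}^{N}}\phi(|\nabla u|)\nabla u\cdot\nabla\varphi\,dx-\int_{\mathbb{R}^{N}}K(x)\overline{f}(u)\varphi\,dx .
\]
Because $\tilde{\Phi}$ may fail the $\Delta_{2}$-condition and $D^{1,\Phi}(\mathbb{R}^{N})$ may be nonreflexive, $J$ is in general not of class $C^{1}$, so I would work with the weak version of the mountain pass theorem for Gateaux differentiable functionals used in this paper, together with the weak$^{*}$ compactness furnished by Lemma~\ref{0.1}.

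Next I would verify the geometry. From $(f_{7})$ near the origin together with the embedding into $L^{\Phi_{*}}$ one gets $\int_{\mathbb{R}^{N}}K\overline{F}(u)\,dx=o(\|\nabla u\|_{\Phi}^{m})$ as $\|\nabla u\|_{\Phi}\to0$, where the inequality $m<\ell^{*}$ from $(\phi_{3})$ is decisive, so that $J(u)\ge\rho>0$ on a small sphere. Using that $t\mapsto\Phi(t)/t^{m}$ is nonincreasing (a consequence of $(\phi_{3})$), the $m$-superlinearity $(f_{3})$, and $\overline{f}\ge0$ (which follows from $(f_{2})$ and $(f_{7})$), Fatou's lemma gives $J(t_{0}e)<0$ for a fixed $e\in C_{c}^{\infty}(\mathbb{R}^{N})$, $e\ge0$, $e\not\equiv0$, and $t_{0}$ large. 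The weak mountain pass theorem then yields a Cerami sequence $(u_{n})$ at the minimax level $c>0$, which by the usual fibering argument based on $(f_{2})$ coincides with $\inf_{\mathcal{N}}J$, where $\mathcal{N}=\{u\ne0:J'(u)[u]=0\}$. Boundedness of $(u_{n})$ in $D^{1,\Phi}(\mathbb{R}^{N})$ is then obtained from $(\phi_{3})$, $(\phi_{4})$ and the consequences of $(f_{2})$--$(f_{3})$ (notably $mF(s)\le f(s)s$ and $\tfrac1m f(s)s-F(s)\to+\infty$), exactly as in the earlier sections.

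By Lemma~\ref{0.1}, along a subsequence $u_{n}\overset{*}{\rightharpoonup}u$ in $D^{1,\Phi}(\mathbb{R}^{N})$, $u_{n}\to u$ a.e., and $u_{n}\to u$ in $L^{B}_{\mathrm{loc}}(\mathbb{R}^{N})$ for every $N$-function $B$ increasing essentially more slowly than $\Phi_{*}$. The crucial point — and the step I expect to be the main obstacle — is the compactness of the lower order term:
\[
\int_{\mathbb{R}^{N}}K\overline{F}(u_{n})\,dx\to\int_{\mathbb{R}^{N}}K\overline{F}(u)\,dx,\qquad \int_{\mathbb{R}^{N}}K\overline{f}(u_{n})\varphi\,dx\to\int_{\mathbb{R}^{N}}K\overline{f}(u)\varphi\,dx
\]
for all $\varphi$, to be proved with $(f_{7})$ permitting quasicritical growth at \emph{both} ends — so that no global compact Sobolev embedding is available — and with only weak$^{*}$ convergence at hand. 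I would carry this out by the weighted splitting of Alves--Souto: given $\varepsilon>0$, decompose $\mathbb{R}^{N}$ into $\{|u_{n}|\le\delta\}$, $\{|u_{n}|\ge M\}$ and $\{\delta\le|u_{n}|\le M\}$; on the first two regions the integrand is dominated by $\varepsilon K\,\Phi_{*}(u_{n})$, hence uniformly small since $\int\Phi_{*}(u_{n})$ is bounded; the third region has uniformly bounded measure by Chebyshev, so $(K_{1})$ makes the part with $|x|>r$ uniformly small in $n$, while the local strong convergence disposes of $|x|\le r$. The same decomposition applied with $\varphi=u_{n}$ shows in addition that $\int K\overline{f}(u_{n})u_{n}\to0$ whenever $u\equiv0$.

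Finally I would conclude. Passing to the limit in $J'(u_{n})[\varphi]=o(1)$, using the convergences above, a standard strict-monotonicity ($(S_{+})$-type) argument for $-\Delta_{\Phi}$, and the convexity/weak$^{*}$ lower semicontinuity of $u\mapsto\int\Phi(|\nabla u|)$ from Lemma~\ref{0.1}, one obtains $J'(u)=0$. Testing with $u^{-}$ and using $\overline{f}(u)u^{-}\equiv0$ gives $\int\phi(|\nabla u^{-}|)|\nabla u^{-}|^{2}\,dx=0$, hence $u^{-}\equiv0$, so $u\ge0$ and $\overline{f}(u)=f(u)$: thus $u$ solves $(P)$. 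To see $u\not\equiv0$: if $u\equiv0$, the compactness step forces $\int K\overline{F}(u_{n})\to0$ and $\int K\overline{f}(u_{n})u_{n}\to0$, hence $\int\Phi(|\nabla u_{n}|)\to c$, while $J'(u_{n})[u_{n}]\to0$ combined with $\phi(t)t^{2}\ge\ell\,\Phi(t)$ forces $\int\Phi(|\nabla u_{n}|)\to0$, contradicting $c>0$. Therefore $u\in\mathcal{N}$, and by weak$^{*}$ lower semicontinuity together with the convergence of the nonlinear term, $c\le J(u)\le\liminf_{n}J(u_{n})=c$; hence $J(u)=c=\inf_{\mathcal{N}}J$, and $u$ is the desired nonnegative ground state solution of $(P)$.
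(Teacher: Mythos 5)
Your proposal follows essentially the same route as the paper: same truncated functional on $D^{1,\Phi}(\mathbb{R}^{N})$, same inequality $K|f(t)|\le C\,\phi_{*}(|t|)|t|$ coming from $(f_{7})$, Cerami sequence from the Ghoussoub--Preiss theorem, boundedness by contradiction with the normalized sequence $w_{n}=u_{n}/\|\nabla u_{n}\|_{\Phi}$, the Alves--Souto splitting (Chebyshev for the intermediate range, $(K_{1})$ for $|x|>r$, Strauss compactness for $|x|\le r$) to get compactness of the lower-order term, the test with $u^{-}$ for nonnegativity, and the nontriviality argument showing $J(u)=c>0$.

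The one technical inaccuracy worth flagging is your invocation of ``a standard strict-monotonicity ($(S_{+})$-type) argument for $-\Delta_{\Phi}$'' in the passage to the limit. The paper deliberately avoids any $(S_{+})$ reasoning, and for good reason: with only weak$^{*}$ convergence of $(u_{n})$ and no reflexivity (since $\tilde\Phi$ may fail $\Delta_{2}$), one cannot run the usual $(S_{+})$ machinery, which would need weak convergence of gradients plus a pairing inequality. What the paper does instead is exploit the convexity of $Q(u)=\int\Phi(|\nabla u|)\,dx$: from $J'(u_{n})(v-u_{n})=o_{n}(1)$ and $\Phi(|\nabla v|)-\Phi(|\nabla u_{n}|)\ge\phi(|\nabla u_{n}|)\nabla u_{n}\cdot(\nabla v-\nabla u_{n})$ one obtains
$Q(v)-Q(u_{n})\ge\int K f(u_{n})(v-u_{n})\,dx+o_{n}(1)$,
and then passing to the limit using the weak$^{*}$ lower semicontinuity $Q(u)\le\liminf Q(u_{n})$ (Lemma~\ref{0.2}) plus the convergence of the nonlinear terms, one lands directly on the critical point inequality $Q(v)-Q(u)\ge\int K f(u)(v-u)\,dx$ for all $v$, and Proposition~\ref{2.16} does the rest. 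So the correct mechanism is ``convexity inequality plus lsc,'' not $(S_{+})$. Your subsequent use of weak$^{*}$ lsc is fine; just drop the $(S_{+})$ reference, which is what would break in the nonreflexive case. The remaining small deviation — identifying $c$ with the Nehari infimum to get $J(u)=c$, rather than proving $Q(u_{n})\to Q(u)$ directly as the paper does via a second convexity inequality with $v=\varphi_{k}\to u$ — is legitimate and equivalent under $(f_{2})$.
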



\section{Basics On Orlicz-Sobolev Spaces}
In this section we recall some properties of Orlicz and Orlicz-Sobolev spaces, which can be found in [\citenum{Adms},\citenum{FN},\citenum{Rao},\citenum{Tienari}]. First of all, we recall that a continuous function $\Phi:\mathbb{R}\rightarrow [0,+\infty)$ is a $N$-function if:
\begin{itemize}
	\item[(i)] $\Phi$ is convex;
	\item[(ii)] $\Phi(t)=0\Leftrightarrow t=0$;
	\item[(iii)] $\Phi$ is even;
	\item[(iv)] $\displaystyle\lim_{t\rightarrow 0}\dfrac{\Phi(t)}{t}=0\text{ and }\lim_{t\rightarrow +\infty}\dfrac{\Phi(t)}{t}=+\infty$.
\end{itemize}\vspace*{0.2cm}

We say that a $N$-function $\Phi$ verifies the $\Delta_2$-condition, and we denote by $\Phi\in(\Delta_2)$, if there are constants $K>0,\; t_{0}>0$ such that $$\Phi(2t)\leq K\Phi(t),~~\forall t \geq t_0.$$ In the case of $|\Omega|=+\infty$, we will consider that $\Phi\in(\Delta_2)$ if $t_0=0$. For instance, it can be shown that $\Phi(t)=|t|^p /p$ for $p > 1$ satisfies the $\Delta_{2}$-condition, while $\Phi(t)=(e^{t^2}-1)/2$. 

If $\Omega$ is an open set of $\mathbb{R}^N$, where $N$ can be a natural number such that $N\geq 1$, and $\Phi$ a $N$-function then define the Orlicz space associated with $\Phi$ as $$L^{\Phi}(\Omega)=\left\{u\in L^1_{\text{loc}}(\Omega):~\int_\Omega \Phi\left(\frac{|u|}{\lambda}\right) dx<+\infty~\text{for some}~\lambda>0\right\}.$$ The space $L^{\Phi}(\Omega)$ is a Banach space endowed with the Luxemburg norm given by $$\|u\|_{L^{\Phi}(\Omega)}=\inf\left\{\lambda>0:\int_\Omega \Phi\left(\frac{|u|}{\lambda}\right) dx\leq 1\right\}.$$ In the case that $\Phi$ verifies $\Delta_2$-condition we have $$L^{\Phi}(\Omega)=\left\{u\in L^1_{\text{loc}}(\Omega):~\int_\Omega \Phi(|u|) dx<+\infty\right\}.$$
The complementary function $\tilde{\Phi}$ associated with $\Phi$ is given by the Legendre transformation, that is, $$\tilde{\Phi}(s)=\max_{t\geq 0}\{st-\Phi(t)\},~~\forall\; t\geq 0.$$
The functions $\Phi$ and $\tilde{\Phi}$ are complementary each other and satisfy the inequality below $$ \tilde{\Phi}(\Phi'(t))\leq \Phi(2t),\;\;\forall\; t>0.$$ Moreover, we also have a Young type inequality given by $$st\leq \Phi(t)+\tilde{\Phi}(s),~~~\forall s,t\geq 0.$$ Using the above inequality, it is possible to establish the following Holder type inequality: $$\left|\int_{\Omega}uvdx\right|\leq 2\|u\|_{L^{\Phi}(\Omega)}\|v\|_{{L^{\tilde{\Phi}}(\Omega)}},~~\text{for all}~~u\in L^{\Phi}(\Omega)~~ \text{and}~~ v\in L^{\tilde{\Phi}}(\Omega).$$

If $|\Omega|<\infty$, the space $E^{\Phi}(\Omega)$ denotes the closing of $L^{\infty}(\Omega)$ in $L^{\Phi}( \Omega)$ with respect to the norm $\lVert \cdot\lVert_{\Phi}.$ When $|\Omega|=\infty$, the space $E^{\Phi}(\Omega)$ denotes the closure of $C^{\infty}_{0}(\Omega)$ in $L^{\Phi}(\Omega)$ with respect to norm $\lVert \cdot\lVert_{\Phi}.$ In any of these cases, $L^{\Phi}(\Omega)$ is the dual space of $E^{\tilde{\Phi}}(\Omega)$, while $L^{\tilde{\Phi}}( \Omega)$ is the dual space of $E^{{\Phi}}(\Omega)$.  Moreover, $E^{{\Phi}}(\Omega)$ and $E^{\tilde{\Phi}}(\Omega)$ are separable and all continuous functional $M:E^{{\Phi} }(\Omega)\longrightarrow\mathbb{R}$ is of the form
\begin{align*}
M(v)=\int_{\Omega} v(x)g(x)dx,\;\;\;\;\text{for some function}\;\; g\in L^{\tilde{\Phi}}(\Omega).
\end{align*}

Another important function related to function $\Phi$, it is the Sobolev conjugate function ${\Phi}_*$
of $\Phi$ defined by
\begin{align*}
\Phi_{*}^{-1}(t)=\int_{0}^{t}\dfrac{\Phi^{-1}(s)}{s^{(N+1)/N}}ds\;\;\text{for}\;\;t>0\;\;\text{ when }\;\;\int_{1}^{+\infty}\dfrac{\Phi^{-1}(s)}{s^{(N+1)/N}}ds=+\infty.
\end{align*}

\begin{lemma}\label{0.3}
	Consider $\Phi$ a $N$-function of the form \eqref{*} and satisfying $(\phi_1)$ and $(\phi_2)$. Set $$\xi_0(t)=\min\{t^{\ell},t^{m}\}~~\text{and}~~\xi_1(t)=\max\{t^{\ell},t^{m}\},~~\forall t\geq 0.$$ Then $\Phi$ satisfies $$\xi_0(t)\Phi(\rho)\leq\Phi(\rho t)\leq\xi_1(t)\Phi(\rho),~~\forall \rho,t\geq0$$ and $$
	\xi_0(\|u\|_{\Phi})\leq \int_\Omega \Phi(u)dx\leq \xi_1(\|u\|_{\Phi}), \;\;\;\forall u \in L^{\Phi}(\Omega).$$
\end{lemma}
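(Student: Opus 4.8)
The plan is to reduce both assertions to the single scalar inequality $\xi_0(t)\Phi(\rho)\le\Phi(\rho t)\le\xi_1(t)\Phi(\rho)$, which I would obtain by a logarithmic differentiation argument, and then to pass to the modular estimate by homogeneity.

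First I would record that, by the representation \eqref{*}, $\Phi$ is $C^1$ on $\mathbb{R}$ with $\Phi'(t)=t\phi(t)$ for $t>0$, and that $\phi>0$ forces $\Phi(t)>0$ for $t>0$; hence $\gamma(t):=\dfrac{t\Phi'(t)}{\Phi(t)}=\dfrac{\phi(t)t^{2}}{\Phi(t)}$ is well defined on $(0,\infty)$ and, by the very definition of $\ell,m$ (i.e. $(\phi_3)$), satisfies $\ell\le\gamma(t)\le m$ for all $t>0$. Next, fixing $\rho>0$ and setting $g(t):=\ln\Phi(\rho t)$, I would compute $g'(t)=\dfrac{\rho\,\Phi'(\rho t)}{\Phi(\rho t)}=\dfrac{\gamma(\rho t)}{t}$, so that $\ell/t\le g'(t)\le m/t$. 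Integrating this on $[1,t]$ when $t\ge 1$ and on $[t,1]$ when $0<t<1$ gives $t^{\ell}\le\Phi(\rho t)/\Phi(\rho)\le t^{m}$ in the first case and $t^{m}\le\Phi(\rho t)/\Phi(\rho)\le t^{\ell}$ in the second, i.e. exactly $\xi_0(t)\Phi(\rho)\le\Phi(\rho t)\le\xi_1(t)\Phi(\rho)$; the degenerate cases $\rho=0$ or $t=0$ are immediate since $\Phi(0)=0$ and $\xi_0(0)=0$. This settles the first displayed inequality.

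For the modular estimate I would proceed as follows. The case $u=0$ is trivial, so let $\lambda:=\|u\|_{\Phi}>0$. Applying the scalar inequality just proved with $\rho=|u(x)|/\lambda$ and the fixed value $t=\lambda$, and integrating over $\Omega$, yields
$$\xi_0(\lambda)\int_{\Omega}\Phi\!\left(\frac{u}{\lambda}\right)dx\;\le\;\int_{\Omega}\Phi(u)\,dx\;\le\;\xi_1(\lambda)\int_{\Omega}\Phi\!\left(\frac{u}{\lambda}\right)dx .$$
It then remains to identify $\int_{\Omega}\Phi(u/\lambda)\,dx$ with $1$. Since $\gamma\le m<\infty$, taking $t=2$ in the scaling inequality gives $\Phi(2s)\le 2^{m}\Phi(s)$, so $\Phi\in(\Delta_2)$; under this condition the map $\mu\mapsto\int_{\Omega}\Phi(u/\mu)\,dx$ is finite and continuous, hence equals $1$ at $\mu=\|u\|_{\Phi}$, and the claimed bounds $\xi_0(\|u\|_{\Phi})\le\int_{\Omega}\Phi(u)\,dx\le\xi_1(\|u\|_{\Phi})$ follow.

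The computation is essentially routine; the only points needing care are (i) checking that $\Phi$ is differentiable with $\Phi'=t\phi$ and strictly positive on $(0,\infty)$, so that $\gamma$ makes sense, (ii) keeping track of which of $t^{\ell},t^{m}$ is the smaller/larger according to whether $t$ is below or above $1$ — packaged cleanly by $\xi_0,\xi_1$ — and (iii) the equality $\int_{\Omega}\Phi(u/\|u\|_{\Phi})\,dx=1$, which is the one genuinely nontrivial input and is precisely where the $\Delta_2$-condition (available here because $m<\infty$) is used; without it one would only obtain $\le 1$, and the lower modular bound could fail.
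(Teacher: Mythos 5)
Your proof is correct and follows the standard argument from the references the paper cites for this lemma (Fukagai--Ito--Narukawa \cite{FN}, Rao--Ren \cite{Rao}); the paper itself states Lemma \ref{0.3} without proof. Both steps --- bounding $\gamma(t)=\phi(t)t^2/\Phi(t)$ between $\ell$ and $m$ and integrating $\frac{d}{dt}\ln\Phi(\rho t)=\gamma(\rho t)/t$, and then passing to the modular estimate via the equality $\int_\Omega\Phi(u/\|u\|_\Phi)\,dx=1$ (which, as you correctly flag, is where $\Phi\in(\Delta_2)$, guaranteed by $m<\infty$, is genuinely needed for the lower bound) --- are exactly the arguments in those sources.
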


\begin{lemma}\label{0.5}
	If $\Phi$ is an N-function of the form \eqref{*} satisfying $(\phi_1)$, $(\phi_2)$ and $(\phi_{3})$, then
	\begin{equation}\label{0.22}
		\xi_2 (t) \Phi_*(\rho)\leq \Phi_*(\rho t) \leq \xi_{3}(t)\Phi_*(\rho),\;\;\; \forall\rho, t >0
	\end{equation}
	and
	\begin{align*}
	\xi_{2}(\Arrowvert u \Arrowvert_{\Phi_*}) \leq \int_{\Omega} \Phi_*(u)dx \leq \xi_{3} (\Arrowvert u\Arrowvert_{\Phi_* }), \;\;\;\forall u \in L^{\Phi_*}(\Omega),
	\end{align*}
	where $$ \xi_2 (t) = \min \{t^{\ell^*} , t^{m^*}\} \hspace{0.1cm} \text{ and } \hspace{0.1cm} \xi_{3}(t) = \max\{t^{\ell^*}, t^{m^*}\}, \;t \geq 0.$$
\end{lemma}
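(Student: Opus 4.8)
The plan is to establish the two displayed inequalities in sequence, deriving the second from the first exactly as in Lemma \ref{0.3}. First I would verify the pointwise estimate \eqref{0.22}. The key is that, under $(\phi_1)$--$(\phi_3)$, the conjugate function $\Phi_*$ inherits a two-sided power control: one shows that
\begin{equation*}
\ell^* \leq \frac{t\,\Phi_*'(t)}{\Phi_*(t)} \leq m^*, \qquad \forall\, t>0.
\end{equation*}
To see this, recall the definition of $\Phi_*$ via $\Phi_*^{-1}(t)=\int_0^t \Phi^{-1}(s)\,s^{-(N+1)/N}\,ds$, so that $\bigl(\Phi_*^{-1}\bigr)'(t) = \Phi^{-1}(t)\,t^{-(N+1)/N}$. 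From Lemma \ref{0.3} (or rather its consequence for $\Phi^{-1}$), the condition $(\phi_3)$ gives the bounds $m^{-1}\leq t(\Phi^{-1})'(t)/\Phi^{-1}(t)\leq \ell^{-1}$ after inverting, and a direct computation with the integral representation of $\Phi_*^{-1}$ then yields $(m^*)^{-1}\leq t(\Phi_*^{-1})'(t)/\Phi_*^{-1}(t)\leq (\ell^*)^{-1}$, which upon inversion is the claimed two-sided bound on the logarithmic derivative of $\Phi_*$. The algebraic identities $1/\ell - 1/\ell^* = 1/N$ and $1/m - 1/m^* = 1/N$ are what make the exponents come out as $\ell^*$ and $m^*$.

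Once the differential inequality $\ell^* \leq t\Phi_*'(t)/\Phi_*(t)\leq m^*$ is in hand, the pointwise estimate \eqref{0.22} follows by a standard integration argument: fixing $\rho>0$ and setting $g(t)=\Phi_*(\rho t)$, one integrates $d/ds\,[\log g(s)]$ from $1$ to $t$ (for $t\geq 1$) and from $t$ to $1$ (for $0<t\leq 1$), obtaining $t^{m^*}\leq \Phi_*(\rho t)/\Phi_*(\rho)\leq t^{\ell^*}$ when $t\geq 1$ and the reversed bounds when $t\leq 1$; combining the two cases gives precisely $\xi_2(t)\Phi_*(\rho)\leq \Phi_*(\rho t)\leq \xi_3(t)\Phi_*(\rho)$ for all $\rho,t>0$.

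For the integral estimates, I would argue exactly as one does for Lemma \ref{0.3}. If $u\neq 0$ in $L^{\Phi_*}(\Omega)$, write $\lambda=\|u\|_{\Phi_*}$ and apply \eqref{0.22} with $\rho = |u(x)|/\lambda$ and $t=\lambda$ pointwise, then integrate over $\Omega$:
\begin{equation*}
\xi_2(\lambda)\int_\Omega \Phi_*\!\left(\frac{|u|}{\lambda}\right)dx \leq \int_\Omega \Phi_*(|u|)\,dx \leq \xi_3(\lambda)\int_\Omega \Phi_*\!\left(\frac{|u|}{\lambda}\right)dx.
\end{equation*}
Since $\Phi_*$ also satisfies the $\Delta_2$-condition here (because $m^* < \infty$), the Luxemburg norm is attained in the sense that $\int_\Omega \Phi_*(|u|/\lambda)\,dx = 1$, and the two inequalities reduce to $\xi_2(\|u\|_{\Phi_*})\leq \int_\Omega \Phi_*(u)\,dx \leq \xi_3(\|u\|_{\Phi_*})$, as claimed. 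The case $u=0$ is trivial.

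The main obstacle is the first step: transferring the control $(\phi_3)$ on $\phi(t)t^2/\Phi(t)$ through the inverse function $\Phi^{-1}$ and then through the singular integral defining $\Phi_*^{-1}$ to obtain the clean bound $\ell^*\leq t\Phi_*'(t)/\Phi_*(t)\leq m^*$. One must be careful that $(\phi_3)$ guarantees $m<N$, which is exactly what ensures $\int_1^\infty \Phi^{-1}(s)s^{-(N+1)/N}\,ds=+\infty$ so that $\Phi_*$ is well-defined as an $N$-function on all of $[0,\infty)$; and one must check that the resulting conjugate exponents satisfy $0<\ell^*\leq m^*<\infty$, which again uses $m<N$. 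Everything after that is the routine integration-and-substitution bookkeeping already illustrated by Lemma \ref{0.3}.
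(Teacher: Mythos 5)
Your proposal is correct and follows the standard route that the paper itself signals (the lemma immediately following Lemma \ref{0.5} records precisely the equivalence between \eqref{0.22} and $\ell^*\leq t\Phi_*'(t)/\Phi_*(t)\leq m^*$, and the paper cites standard references rather than proving it). Two small remarks for precision: first, in the case $t\geq 1$ the integration of $\ell^*/s\leq (\log\Phi_*)'(s)\leq m^*/s$ from $\rho$ to $\rho t$ gives $t^{\ell^*}\leq\Phi_*(\rho t)/\Phi_*(\rho)\leq t^{m^*}$, the opposite order from what you wrote — since $\ell^*\leq m^*$, the exponents in your displayed chain are transposed, though your final $\xi_2\leq\cdot\leq\xi_3$ conclusion is unaffected. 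Second, your identification $\int_\Omega\Phi_*(|u|/\|u\|_{\Phi_*})\,dx=1$ is the one place where $\Phi_*\in\Delta_2$ is genuinely used (via continuity of $\alpha\mapsto\int_\Omega\Phi_*(|u|/\alpha)\,dx$), and it would be worth stating explicitly that $m<N$ is what guarantees $m^*<\infty$ and hence $\Phi_*\in\Delta_2$; without this the lower bound $\xi_2(\|u\|_{\Phi_*})\leq\int_\Omega\Phi_*(u)\,dx$ would not follow, since the unit-modular identity can fail.
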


\begin{lemma}\label{}
	\eqref{0.22} is equivalent to
		\begin{equation*}
{\ell^*}\leq \dfrac{\Phi'_*(t)t^2}{\Phi_*(t)} \leq {m^*},\;\;\; \forall t >0
	\end{equation*}
\end{lemma}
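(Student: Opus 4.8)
The statement is the $\Phi_*$-version of the familiar equivalence between a two-sided power control of an $N$-function and a bound on its index $t\mapsto t\Phi_*'(t)/\Phi_*(t)$; for $\Phi_*$ and the exponents $\ell^*\le m^*$ it plays the same role that $(\phi_3)$ plays for $\Phi$ and the inequality of Lemma \ref{0.3}. First I would record that, under $(\phi_1)$--$(\phi_3)$, the Sobolev conjugate $\Phi_*$ is again an $N$-function of the form $\Phi_*(t)=\int_0^{|t|}s\,\phi_*(s)\,ds$ with $\phi_*$ continuous and positive on $(0,\infty)$ (see \cite{FN}); in particular $\Phi_*\in C^1(0,\infty)$ with $\Phi_*'(t)=t\phi_*(t)>0$ for $t>0$, so that $t\Phi_*'(t)/\Phi_*(t)=\phi_*(t)t^2/\Phi_*(t)$. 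I would also note at the outset that $\ell^*\le m^*$ (immediate from $\ell\le m$), so $\xi_2(h)=h^{\ell^*}$, $\xi_3(h)=h^{m^*}$ when $h\ge1$, with the two roles interchanged for $0<h\le1$; this is the only structure of $\Phi_*$ the proof uses.

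For ``index bound $\Rightarrow$ \eqref{0.22}'' I would fix $\rho>0$ and study $g_{m^*}(t)=\Phi_*(\rho t)\,t^{-m^*}$ and $g_{\ell^*}(t)=\Phi_*(\rho t)\,t^{-\ell^*}$ on $(0,\infty)$. Since $g_{m^*}'(t)=t^{-m^*-1}\bigl(\rho t\,\Phi_*'(\rho t)-m^*\Phi_*(\rho t)\bigr)\le0$ by the hypothesis applied at the point $\rho t$, $g_{m^*}$ is nonincreasing; likewise $g_{\ell^*}$ is nondecreasing. Comparing $g_{m^*}(t),g_{\ell^*}(t)$ with their values at $t=1$ gives $\Phi_*(\rho t)\le t^{m^*}\Phi_*(\rho)$ and $\Phi_*(\rho t)\ge t^{\ell^*}\Phi_*(\rho)$ for $t\ge1$, and the reverse inequalities for $0<t\le1$; feeding these into the descriptions of $\xi_2,\xi_3$ (splitting into the cases $t\ge1$ and $0<t\le1$) yields $\xi_2(t)\Phi_*(\rho)\le\Phi_*(\rho t)\le\xi_3(t)\Phi_*(\rho)$ throughout.

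For the converse I would fix $t>0$ and $h>1$; then $\xi_2(h)=h^{\ell^*}$, $\xi_3(h)=h^{m^*}$, and applying \eqref{0.22} with $\rho=t$, subtracting $\Phi_*(t)$ and dividing by $(h-1)t>0$ gives
\[
\frac{h^{\ell^*}-1}{h-1}\cdot\frac{\Phi_*(t)}{t}\ \le\ \frac{\Phi_*(ht)-\Phi_*(t)}{ht-t}\ \le\ \frac{h^{m^*}-1}{h-1}\cdot\frac{\Phi_*(t)}{t}.
\]
Letting $h\to1^+$, the middle quotient tends to $\Phi_*'(t)$ and $(h^s-1)/(h-1)\to s$ for $s=\ell^*,m^*$, so $\ell^*\le t\Phi_*'(t)/\Phi_*(t)\le m^*$. (Equivalently, one can run the same difference quotients with $0<h<1$, where $h-1<0$ and the roles of $\xi_2,\xi_3$ swap, so that differentiability of $\Phi_*$ is not even needed.)

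I do not expect a real obstacle: once the integral representation and $C^1$ regularity of $\Phi_*$ are quoted, the rest is a short monotonicity-and-limit computation. The points deserving a little care are (i) citing correctly that the Sobolev conjugate is again an $N$-function of the stated form, so $\Phi_*'$ exists, is continuous and equals $t\phi_*(t)$; (ii) tracking which of $t^{\ell^*},t^{m^*}$ is $\xi_2(t)$ and which is $\xi_3(t)$ according as $t\ge1$ or $0<t\le1$; and (iii) the elementary limit $\lim_{h\to1}(h^s-1)/(h-1)=s$.
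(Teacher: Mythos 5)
Your proof is correct, and in fact the paper states this lemma without giving any argument, so you are supplying a proof the paper omits. The forward direction (index bound $\Rightarrow$ \eqref{0.22}) via monotonicity of $t\mapsto\Phi_*(\rho t)t^{-m^*}$ and $t\mapsto\Phi_*(\rho t)t^{-\ell^*}$ and comparison at $t=1$, and the converse via the two-sided difference quotient with $h\to1$, are exactly the standard mechanism; both halves are complete once one cites, as you do, that $\Phi_*$ is again an $N$-function of the integral form $\Phi_*(t)=\int_0^{|t|}s\phi_*(s)\,ds$ with $\phi_*$ continuous and positive, so $\Phi_*'(t)=t\phi_*(t)$.

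One point worth flagging explicitly, which you address only implicitly: as printed, the lemma's quantity $\Phi_*'(t)t^2/\Phi_*(t)$ is dimensionally wrong (it grows like $t$ and cannot be bounded by $m^*$). Given that $(\phi_3)$ uses $\phi(t)t^2/\Phi(t)=t\Phi'(t)/\Phi(t)$, the intended quantity is clearly $\phi_*(t)t^2/\Phi_*(t)=t\Phi_*'(t)/\Phi_*(t)$, which is what your argument actually controls. It would be cleaner to state up front that you are reading the lemma with this correction. Your closing observation that the converse can be run with one-sided difference quotients ($h>1$ and $h<1$ separately) so that even $C^1$ regularity of $\Phi_*$ is not needed is a nice bonus, though under $(\phi_1)$--$(\phi_3)$ that regularity is available anyway.
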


\begin{lemma}\label{0.51}
	If $\Phi$ is an N-function of the form \eqref{*} satisfying $(\phi_1)$, $(\phi_2)$ and $(\phi_{3})$, then
	\begin{equation*}
\tilde{\Phi } (\rho t) \leq t^{\frac{m}{m-1}}\tilde{\Phi }(\rho),\; \text{ for all }\;\rho>0\;\text{ and }\; 0\leq t<1.
	\end{equation*}
\end{lemma}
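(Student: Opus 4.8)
The plan is to deduce the estimate from the dual form of the right-hand inequality in $(\phi_3)$: I will first establish that the complementary function obeys $\tilde\Phi'(s)\,s\geq \frac{m}{m-1}\,\tilde\Phi(s)$ for every $s>0$, and then integrate this differential inequality to obtain the claimed homogeneity-type bound on $0\leq t<1$.

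\emph{Structure of $\tilde\Phi$ and the dual index estimate.} By $(\phi_1)$ the map $\Phi'(t)=t\phi(t)$ is increasing on $(0,\infty)$, and by $(\phi_2)$ it satisfies $\Phi'(0^+)=0$ and $\Phi'(t)\to+\infty$ as $t\to\infty$, so it is a homeomorphism of $[0,\infty)$ onto itself. Consequently $\tilde\Phi(s)=\int_0^s(\Phi')^{-1}(\sigma)\,d\sigma$, hence $\tilde\Phi\in C^1$ with $\tilde\Phi'=(\Phi')^{-1}$, and the supremum defining $\tilde\Phi$ is attained at $\tau=(\Phi')^{-1}(s)$; writing $s=\Phi'(t)$ this is the Young equality
\begin{equation*}
\tilde\Phi(\Phi'(t))=t\,\Phi'(t)-\Phi(t),\qquad t>0.
\end{equation*}
The right-hand inequality in $(\phi_3)$ reads $\Phi'(t)\,t\leq m\,\Phi(t)$, i.e. $\Phi(t)\geq \frac1m\Phi'(t)\,t$. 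Substituting into the Young equality gives $\tilde\Phi(\Phi'(t))\leq \frac{m-1}{m}\,t\,\Phi'(t)$ for all $t>0$. Putting $s=\Phi'(t)$ (so $t=\tilde\Phi'(s)$), which ranges over all of $(0,\infty)$, we obtain
\begin{equation*}
\tilde\Phi(s)\leq \frac{m-1}{m}\,s\,\tilde\Phi'(s),\qquad\text{that is,}\qquad \frac{s\,\tilde\Phi'(s)}{\tilde\Phi(s)}\geq \frac{m}{m-1}\quad\text{for all }s>0.
\end{equation*}
Note $m\neq1$ by $(\phi_3)$, so $\frac{m}{m-1}$ is finite; only the index $m$ enters here, so the admissible case $\ell=1$ is irrelevant for this lemma.

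\emph{Integration.} Fix $\rho>0$, set $q=\frac{m}{m-1}$ and $g(t)=t^{-q}\,\tilde\Phi(\rho t)$ for $t>0$. Then
\begin{equation*}
g'(t)=t^{-q-1}\big[\rho t\,\tilde\Phi'(\rho t)-q\,\tilde\Phi(\rho t)\big]\geq 0
\end{equation*}
by the dual estimate applied with $s=\rho t$. Hence $g$ is nondecreasing on $(0,\infty)$, so $g(t)\leq g(1)$ for all $0<t\leq1$, i.e. $\tilde\Phi(\rho t)\leq t^{m/(m-1)}\,\tilde\Phi(\rho)$; the value $t=0$ is trivial since $\tilde\Phi(0)=0$. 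This is the assertion.

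The only genuinely delicate point is the first step — checking that $\Phi'$ is a bijection of $[0,\infty)$ so that $\tilde\Phi$ has the stated integral representation with $\tilde\Phi'=(\Phi')^{-1}$ and Young's inequality is an equality at the matching point; this is the classical Orlicz-space setup and follows directly from $(\phi_1)$ and $(\phi_2)$. The remaining steps are a short computation, entirely parallel to the $\tilde\Phi$-analogue of the index bounds already recorded in Lemmas \ref{0.3} and \ref{0.5}.
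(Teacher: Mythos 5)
Your proof is correct. The paper states Lemma~\ref{0.51} without proof, treating it as a standard fact about $N$-functions (in the spirit of the references cited at the top of Section~2), so there is no in-paper argument to compare against; your derivation is the classical one. The key move — using that $\Phi'$ is a homeomorphism of $[0,\infty)$ (from $(\phi_1)$--$(\phi_2)$) to identify $\tilde\Phi'=(\Phi')^{-1}$, substituting the upper index bound $\Phi'(t)t\leq m\Phi(t)$ of $(\phi_3)$ into Young's equality to obtain the dual lower-index bound $s\,\tilde\Phi'(s)\geq\frac{m}{m-1}\tilde\Phi(s)$, and then integrating the logarithmic-derivative inequality — is exactly right, and your remark that only $m>1$ (ensured by $m\neq1$ in $(\phi_3)$) is used, so the estimate survives the nonreflexive case $\ell=1$, is an accurate and relevant observation.
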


\begin{lemma}
If $\Phi$ is $N$-function and $ (\int_{\Omega}\Phi (|u_n|)dx)$ is a bounded sequence, then $(u_n)$ is a bounded sequence in $L^{\Phi}(\Omega)$. When $\Phi\in(\Delta_{2})$, the equivalence is valid.
\end{lemma}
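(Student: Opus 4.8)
The plan is to treat the two implications separately. For the forward implication I would use only the convexity of $\Phi$, and for the converse a finite iteration of the $\Delta_2$-inequality.

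First, for the forward implication, suppose $\int_{\Omega}\Phi(|u_n|)\,dx\leq M$ for all $n$, with $M$ independent of $n$, and set $\lambda=\max\{M,1\}$. Since $\lambda\geq 1$, convexity of $\Phi$ together with $\Phi(0)=0$ gives the pointwise bound $\Phi(t/\lambda)\leq\lambda^{-1}\Phi(t)$ for every $t\geq 0$; applying this with $t=|u_n(x)|$ and integrating over $\Omega$ yields $\int_{\Omega}\Phi(|u_n|/\lambda)\,dx\leq\lambda^{-1}M\leq 1$. By the definition of the Luxemburg norm this forces $\|u_n\|_{L^{\Phi}(\Omega)}\leq\lambda=\max\{M,1\}$ for every $n$, which is exactly the boundedness of $(u_n)$ in $L^{\Phi}(\Omega)$.

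Next, for the converse under $\Phi\in(\Delta_2)$, assume $\|u_n\|_{L^{\Phi}(\Omega)}\leq C$ for all $n$. Because $\Phi$ is nondecreasing on $[0,\infty)$, the set $\{\lambda>0:\int_{\Omega}\Phi(|u_n|/\lambda)\,dx\leq 1\}$ is an interval unbounded above, so $\mu_n:=\|u_n\|_{L^{\Phi}(\Omega)}+1$ lies in it; thus $\int_{\Omega}\Phi(|u_n|/\mu_n)\,dx\leq 1$ with $1\leq\mu_n\leq C+1$. I would then fix an integer $k\geq 0$ with $2^k\geq C+1$ and iterate $\Phi(2t)\leq K\Phi(t)$ — which holds for all $t\geq 0$ when $|\Omega|=\infty$, since then $t_0=0$ — to get $\Phi(2^k s)\leq K^k\Phi(s)$ for all $s\geq 0$. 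Using monotonicity of $\Phi$ and $\mu_n\leq 2^k$, this gives the pointwise bound $\Phi(|u_n|)\leq\Phi\bigl(2^k|u_n|/\mu_n\bigr)\leq K^k\Phi(|u_n|/\mu_n)$, and integrating produces $\int_{\Omega}\Phi(|u_n|)\,dx\leq K^k$, a bound independent of $n$. For a bounded domain $\Omega$ one would additionally split off the set $\{|u_n|\leq t_0(C+1)\}$, on which $\Phi(|u_n|)\leq\Phi(t_0(C+1))$ contributes at most $\Phi(t_0(C+1))|\Omega|$, and run the same estimate on its complement.

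I do not expect a genuine obstacle: the forward direction is one convexity inequality, and the converse is a bounded iteration of the $\Delta_2$-condition after normalizing by $\mu_n$, a quantity comparable to $\|u_n\|_{L^{\Phi}(\Omega)}$. The only points that need a little care are the elementary remark that the sublevel set in the definition of the Luxemburg norm is upward closed — which lets me work with $\|u_n\|_{L^{\Phi}}+1$ and sidestep the borderline identity $\int_{\Omega}\Phi(|u_n|/\|u_n\|_{L^{\Phi}})\,dx\leq 1$ — and, in the bounded-domain case, the harmless splitting forced by the fact that then $\Delta_2$ is only assumed for $t\geq t_0$. (If $\Phi$ has the form \eqref{*} and satisfies $(\phi_1)$--$(\phi_2)$, the converse would also follow at once from the upper estimate $\int_{\Omega}\Phi(u)\,dx\leq\xi_1(\|u\|_{\Phi})$ in Lemma \ref{0.3}, but I would keep the argument within the stated generality of an arbitrary $N$-function.)
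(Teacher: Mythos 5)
Your proof is correct. The paper states this lemma without proof (it is a standard fact from the references \cite{Adms}, \cite{Rao}, \cite{Tienari}), so there is no argument to compare against; your two directions — the convexity estimate $\Phi(t/\lambda)\leq\lambda^{-1}\Phi(t)$ for $\lambda\geq 1$ in the forward implication, and the finite iteration of $\Delta_2$ after normalizing by $\mu_n=\|u_n\|_{\Phi}+1$ in the converse — are exactly the textbook route, and your handling of the two regimes of the $\Delta_2$-condition (with $t_0=0$ when $|\Omega|=\infty$, and the splitting over $\{|u_n|\leq t_0(C+1)\}$ when $|\Omega|<\infty$) matches the paper's stated convention and closes the argument cleanly.
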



\begin{lemma}
		If $\Phi$ is an N-function of the form \eqref{*} satisfying $(\phi_1)$, $(\phi_2)$ and $(\phi_{3})$ with $\ell =1$, then $\tilde{\Phi }\notin (\Delta_{2})$.
\end{lemma}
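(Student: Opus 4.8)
The plan is to argue by contradiction, exploiting the duality between the growth indices of $\Phi$ and those of $\tilde\Phi$ under the Legendre transform. First I would record the structural facts that are needed. By $(\phi_1)$ and $(\phi_2)$ the function $p(t):=\Phi'(t)=t\phi(t)$ is continuous on $[0,\infty)$, strictly increasing, with $p(0)=0$ and $p(t)\to+\infty$; hence $p$ is a homeomorphism of $[0,\infty)$ onto itself, $\tilde\Phi(s)=\int_0^s p^{-1}(\tau)\,d\tau$ is of class $C^1$, the supremum defining $\tilde\Phi(s)=\sup_{t\ge0}\{st-\Phi(t)\}$ is attained at the unique point $t=p^{-1}(s)$, and the envelope identity $\tilde\Phi'(s)=p^{-1}(s)$ holds. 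Moreover, since $p$ is strictly increasing with $p(0)=0$, one has $\Phi(t)=\int_0^t p(\tau)\,d\tau<t\,p(t)$ for every $t>0$, so that $\frac{t\Phi'(t)}{\Phi(t)}>1$ for all $t>0$, even though $(\phi_3)$ with $\ell=1$ asserts that the infimum of this quantity over $t>0$ equals $1$.

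Second, I would isolate a general fact: if an $N$-function $\Psi$ satisfies the (global, $t_0=0$) $\Delta_2$-condition, say $\Psi(2s)\le K\Psi(s)$ for all $s>0$, then $\sup_{s>0}\frac{s\Psi'(s)}{\Psi(s)}<\infty$. Indeed, convexity of $\Psi$ makes $\Psi'$ nondecreasing, so
$$s\,\Psi'(s)\le\int_s^{2s}\Psi'(\tau)\,d\tau=\Psi(2s)-\Psi(s)\le(K-1)\Psi(s),$$
which gives $\frac{s\Psi'(s)}{\Psi(s)}\le K-1$ for every $s>0$. Applying this with $\Psi=\tilde\Phi$, it suffices to prove that $\ell=1$ forces $\sup_{s>0}\frac{s\tilde\Phi'(s)}{\tilde\Phi(s)}=+\infty$.

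Third, for the blow-up, I would proceed as follows. For $t>0$ set $s=s(t):=\Phi'(t)$. Since the maximum in the Legendre transform is attained at $t$ and $\tilde\Phi'(s)=t$, we get $\tilde\Phi(s)=st-\Phi(t)$ and
$$\frac{s\,\tilde\Phi'(s)}{\tilde\Phi(s)}=\frac{st}{st-\Phi(t)}=\left(1-\frac{\Phi(t)}{t\,\Phi'(t)}\right)^{-1}.$$
By $(\phi_3)$ we have $\frac{t\Phi'(t)}{\Phi(t)}>1$ for all $t>0$ and $\inf_{t>0}\frac{t\Phi'(t)}{\Phi(t)}=\ell=1$, so we may choose $t_n>0$ with $\frac{\Phi(t_n)}{t_n\Phi'(t_n)}\to 1^{-}$, and then $\frac{s(t_n)\tilde\Phi'(s(t_n))}{\tilde\Phi(s(t_n))}\to+\infty$. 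This contradicts the uniform bound $K-1$ obtained from $\tilde\Phi\in(\Delta_2)$ in the previous step, and therefore $\tilde\Phi\notin(\Delta_2)$.

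The only point that requires genuine care is the justification of the Legendre-transform identities used above — that $\tilde\Phi$ is $C^1$, that the maximizer is $p^{-1}(s)$, and that $\tilde\Phi'=p^{-1}$ — under the hypotheses $(\phi_1)$–$(\phi_3)$ alone; this rests precisely on strict monotonicity of $p=\Phi'$ coming from $(\phi_1)$ together with $p(0)=0$ and $p(+\infty)=+\infty$ from $(\phi_2)$, and it is the step I would write out in full detail. Everything else is a short computation.
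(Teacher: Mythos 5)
Your argument is correct, and since the paper states this lemma without giving a proof there is nothing in the source to compare against; what you give is the standard Legendre--duality argument for the Simonenko/Matuszewska indices. The only points worth double--checking are exactly the ones you flag: $(\phi_1)$ makes $p(t)=\Phi'(t)=t\phi(t)$ strictly increasing and, together with $(\phi_2)$, a homeomorphism of $[0,\infty)$ onto itself, so that the maximizer in $\tilde\Phi(s)=\sup_{t\ge 0}\{st-\Phi(t)\}$ is the unique $t=p^{-1}(s)$ and $\tilde\Phi'=p^{-1}$; your computation that strict monotonicity of $p$ forces $\Phi(t)<t\,\Phi'(t)$ for every $t>0$, so the infimum $\ell=1$ is approached but never attained, is what guarantees that the quantity
\begin{equation*}
\frac{s\,\tilde\Phi'(s)}{\tilde\Phi(s)}=\Bigl(1-\frac{\Phi(t)}{t\,\Phi'(t)}\Bigr)^{-1},\qquad s=\Phi'(t),
\end{equation*}
is finite for each fixed $t$ but unbounded along a sequence $t_n$ realizing the infimum. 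Combined with your elementary observation that a global $\Delta_2$--condition $\Psi(2s)\le K\Psi(s)$, $s>0$, forces $\sup_{s>0}s\Psi'(s)/\Psi(s)\le K-1$ (the relevant form here, since the paper works on $\mathbb R^N$ where $|\Omega|=\infty$ and hence $t_0=0$), this gives the contradiction cleanly. An equivalent, slightly more telegraphic route would be to invoke the known index duality $\sup_{s>0}\tilde\Phi'(s)s/\tilde\Phi(s)=\ell/(\ell-1)$, which equals $+\infty$ when $\ell=1$; your proof in effect re-derives this identity, which is preferable in a self-contained write--up.
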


\begin{lemma}\label{1.0}
	Let $\Omega\subset\mathbb{R}^{N}$ be a domain, if $\Phi\in (\Delta_{2})$ and $(u_n)$ a sequence in $L^{\Phi}(\Omega)$ with $u_n\longrightarrow u$ in $L^{\Phi}(\Omega)$, there is $H\in L^{\Phi}(\Omega)$ and a subsequence $(u_{n_j})$ such that\vspace*{0.2cm}\\
	$i)\;\;\;|u_{n_j}(x)|\leq H(x)$ a.e. in $\Omega$.\vspace*{0.2cm}\\
 $ii)\;\;\;u_{n_j}(x)\longrightarrow u(x)$ a.e. in $\Omega$ and all $j\in\mathbb{N}$.
\end{lemma}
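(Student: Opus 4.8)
The plan is to run the classical Riesz--Fischer extraction argument from $L^p$ theory in the Orlicz setting, the only change being that the role of the $L^p$ norm is played by the Luxemburg norm, that we invoke the monotone convergence theorem, and that we use the elementary fact that for an $N$-function $\Phi$ the bound $\|w\|_{\Phi}\leq M$ with $M>0$ forces $\int_{\Omega}\Phi(w/M)\,dx\leq 1$. \emph{First}, since $(u_n)$ is convergent, hence Cauchy, in $L^{\Phi}(\Omega)$, I would pass to a subsequence with indices $n_1<n_2<\cdots$ chosen so that $\|u_{n_{j+1}}-u_{n_j}\|_{\Phi}\leq 2^{-j}$ for every $j$. \emph{Next}, I put
\[
g_k=|u_{n_1}|+\sum_{j=1}^{k}|u_{n_{j+1}}-u_{n_j}|,\qquad H=|u_{n_1}|+\sum_{j=1}^{\infty}|u_{n_{j+1}}-u_{n_j}|=\lim_{k\to\infty}g_k,
\]
the latter being the pointwise, possibly infinite, limit of the nondecreasing sequence $(g_k)$. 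By the triangle inequality for the Luxemburg norm, $\|g_k\|_{\Phi}\leq \|u_{n_1}\|_{\Phi}+\sum_{j\geq 1}2^{-j}=:M$ for all $k$, so $\int_{\Omega}\Phi(g_k/M)\,dx\leq 1$; the monotone convergence theorem then gives $\int_{\Omega}\Phi(H/M)\,dx\leq 1$, whence $H\in L^{\Phi}(\Omega)$ and, in particular, $H(x)<+\infty$ for a.e.\ $x\in\Omega$.

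From here the argument is essentially deterministic. Because $H<+\infty$ a.e., the series $\sum_{j}\big(u_{n_{j+1}}(x)-u_{n_j}(x)\big)$ converges absolutely for a.e.\ $x$, so $u_{n_j}(x)=u_{n_1}(x)+\sum_{i=1}^{j-1}\big(u_{n_{i+1}}(x)-u_{n_i}(x)\big)$ converges for a.e.\ $x$ to some $v(x)$, and $|u_{n_j}(x)|\leq |u_{n_1}(x)|+\sum_{i=1}^{j-1}|u_{n_{i+1}}(x)-u_{n_i}(x)|\leq H(x)$ for all $j$ and a.e.\ $x$; this already yields item $(i)$ and the a.e.\ convergence $u_{n_j}\to v$. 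It then remains to identify $v$ with $u$. For this I would observe that $L^{\Phi}$-norm convergence implies convergence in measure on finite-measure sets: writing $\lambda_n:=\|u_n-u\|_{\Phi}$, for $n$ large so that $\lambda_n<\varepsilon$,
\[
\big|\{\,x:\,|u_n(x)-u(x)|>\varepsilon\,\}\big|\;\leq\;\frac{1}{\Phi(\varepsilon/\lambda_n)}\int_{\Omega}\Phi\!\Big(\frac{|u_n-u|}{\lambda_n}\Big)\,dx\;\leq\;\frac{1}{\Phi(\varepsilon/\lambda_n)}\longrightarrow 0,
\]
since $\Phi(t)\to+\infty$ as $t\to+\infty$. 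Hence $u_{n_j}\to u$ in measure, while we already have $u_{n_j}\to v$ a.e.\ and hence in measure on each finite-measure subset; therefore $u=v$ a.e.\ there, and exhausting $\Omega$ by countably many sets of finite measure (e.g.\ $\Omega\cap B_k(0)$) gives $u=v$ a.e.\ in $\Omega$, which is $(ii)$.

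The hard part is really only the last identification: passing from norm convergence in the (possibly nonreflexive, since here $\tilde{\Phi}\notin(\Delta_2)$ is permitted) Orlicz space to convergence in measure, together with the $\sigma$-finite bookkeeping needed when $|\Omega|=+\infty$; everything else is routine. It is worth recording where the hypothesis $\Phi\in(\Delta_2)$ is used: it identifies $L^{\Phi}(\Omega)$ with the modular space $\{\,u:\int_{\Omega}\Phi(|u|)\,dx<+\infty\,\}$ and forces $L^{\Phi}(\Omega)=E^{\Phi}(\Omega)$, which is what keeps the monotone-convergence manipulations above transparent and free of any discrepancy between the Orlicz space and the closure of $C_0^{\infty}(\Omega)$.
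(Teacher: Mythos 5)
The paper records this lemma as a known fact and gives no proof, so there is no argument in the source to compare against; the statement is simply a Riesz--Fischer--type extraction for Orlicz spaces. Your proof is correct and is precisely the standard one: pass to a rapidly Cauchy subsequence, build the dominating function $H$ as the sum of absolute increments, bound $\|g_k\|_{\Phi}$ by a geometric series, pass to the limit in the modular via monotone convergence to get $H\in L^{\Phi}(\Omega)$ and hence $H<\infty$ a.e., deduce a.e.\ absolute convergence and the pointwise bound, then identify the a.e.\ limit with $u$ by upgrading norm convergence to convergence in measure through the Chebyshev--Markov estimate for the modular.

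One remark is inaccurate, though inconsequential for the validity of the proof. You state that $\Phi\in(\Delta_2)$ is ``used'' to identify $L^{\Phi}$ with the modular space and to force $L^{\Phi}=E^{\Phi}$, ``keeping the monotone-convergence manipulations transparent.'' In fact nothing in your argument invokes $\Delta_2$: the dominated function $H$ lies in $L^{\Phi}(\Omega)$ simply because $\int_{\Omega}\Phi(H/M)\,dx\le 1$ (membership in $L^{\Phi}$ requires integrability for \emph{some} scaling, not for all), the Chebyshev step only uses the defining inequality $\int_{\Omega}\Phi(|w|/\|w\|_{\Phi})\,dx\le 1$ together with $\Phi(t)\to\infty$, and the $\sigma$-finite exhaustion needs nothing about $\Phi$ at all. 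So the lemma as you have proved it holds for an arbitrary $N$-function; the hypothesis $\Phi\in(\Delta_2)$ in the statement is redundant for the proof and is presumably kept because it is part of the paper's standing assumptions (via $(\phi_3)$) and because the way the lemma is later applied already takes place in that regime. It would be cleaner either to delete the sentence about where $\Delta_2$ is used or to note explicitly that your argument does not require it.
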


For a $N$-function $\Phi$, the corresponding Orlicz-Sobolev space is defined as the Banach space $$W^{1, \Phi}(\Omega)=\left\{ u \in L^ {\Phi}(\Omega): \dfrac{\partial u}{\partial x_{i}} \in L^{\Phi}(\Omega), i=1,...,N \right\} ,$$ with the norm
\begin{align}\label{00}
	\lVert u\Arrowvert_{1, \Phi} = \Arrowvert \nabla u \Arrowvert_{\Phi} + \Arrowvert u \Arrowvert_{\Phi}.
\end{align}
%

If $\Phi\in(\Delta_{2})$, the space $D^{1,\Phi}(\mathbb{R}^{N})$ is defined to be the complement of the space $C^{ \infty}_{0}(\mathbb{R}^{N})$ with respect to the standard
\begin{align}\label{0.10}
| u|_{D^{1,\Phi}(\mathbb{R}^{N})}=\lVert u\lVert_{\Phi_{*}}+\lVert \nabla u\lVert_{\Phi}.
\end{align}
Since the Orlicz-Sobolev inequality
	\begin{align}\label{0.9}
	\lVert u\lVert_{\Phi_{*}}\leq S_N\lVert \nabla u\lVert_{\Phi},
	\end{align}
holds for $u\in D^{1,\Phi}(\mathbb{R}^{N})$ with a constant $S_N>0$, the norm \eqref{0.10} is equivalent to the norm
\begin{align}\label{0.11}
\| u\|_{D^{1,\Phi}(\mathbb{R}^{N})}=\lVert \nabla u\lVert_{\Phi},
\end{align}
on $D^{1,\Phi}(\mathbb{R}^{N})$. In this paper, we will use \eqref{0.11} as the norm of $D^{1,\Phi}(\mathbb{R}^{N})$. Clearly $$D^{1,\Phi}(\mathbb{R}^{N})\xhookrightarrow[cont\;]{} L^{\Phi_{*}}(\mathbb{R}^{N}).$$
\vspace*{-0.1cm}
The space $L^{\Phi}(\mathbb{R}^{N})$ is separable and reflexive when the $N$-functions $\Phi$ and $\tilde{\Phi}$ satisfy the $\Delta_{2}$-condition. Knowing that $D^{1,\Phi}(\mathbb{R}^{N})$ can be seen as a closed subspace of the space $L^{\Phi_{*}}(\mathbb{R}^{ N})\times (L^{\Phi}(\mathbb{R}^{N}))^{N}$, then $D^{1,\Phi}(\mathbb{R}^{N} )$ is reflexive when the $N$-functions $\Phi$, $\tilde{\Phi}$, $\Phi_*$ and $\tilde{\Phi}_*$ satisfy the $\Delta_{2}$-condition .

The following lemma is an immediate consequence of the Banach-Alaoglu-Bourbaki theorem \cite{Brezis}.

\begin{lemma}\label{0.6}
	Assume that $\Phi$ is an N-function of the form \eqref{*} satisfying $(\phi_1)$, $(\phi_2)$ and $(\phi_{3})$. If $(u_n)\subset D^{1,\Phi}(\mathbb{R}^{N})$ is a bounded sequence, then there exists a subsequence of $(u_n)$, which we will still denote by $(u_n )$, and $u\in D^{1,\Phi}(\mathbb{R}^{N})$ such that
	\begin{align}\label{0.23}
	u_n\xrightharpoonup[\quad]{\ast} u\;\;\;\text{ in }\;L^{\Phi_{*}}(\mathbb{R}^{N})\;\;\;\;\text{ and }\;\;\;\;	\dfrac{\partial u_n}{\partial x_i}\xrightharpoonup[\quad]{\ast} \dfrac{\partial u}{\partial x_i}\;\;\;\text{ in }\;L^{\Phi}(\mathbb{R}^{N})
	\end{align}
	or equivalently,
	\begin{align*}
	\int_{\mathbb{R}^N}u_nvdx\longrightarrow \int_{\mathbb{R}^{N}}uvdx,\;\;\forall v\in E^{\tilde{\Phi}_*}(\mathbb{R}^N)\end{align*}
and
	\begin{align*}
	\int_{\mathbb{R}^{N}}\dfrac{\partial u_n}{\partial x_i}wdx\longrightarrow\int_{\mathbb{R}^{N}}\dfrac{\partial u}{\partial x_i}wdx,\;\;\forall w \in E^{\tilde{\Phi}}(\mathbb{R}^{N}).
	\end{align*}
\end{lemma}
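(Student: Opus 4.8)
Because the excerpt flags this as ``an immediate consequence of the Banach--Alaoglu--Bourbaki theorem,'' the plan is to read the statement that way: transport the boundedness of $(u_n)$ in $D^{1,\Phi}(\mathbb{R}^N)$ to the two factor spaces $L^{\Phi_*}(\mathbb{R}^N)$ and $L^{\Phi}(\mathbb{R}^N)$, each of which is the topological dual of a \emph{separable} Banach space, namely $E^{\tilde\Phi_*}(\mathbb{R}^N)$ and $E^{\tilde\Phi}(\mathbb{R}^N)$ respectively. First I would note that, since the norm of $D^{1,\Phi}(\mathbb{R}^N)$ is \eqref{0.11}, the sequence $(\lVert\nabla u_n\rVert_{\Phi})$ is bounded; combining this with the Orlicz--Sobolev inequality \eqref{0.9} gives $\sup_n\lVert u_n\rVert_{\Phi_*}<+\infty$, while trivially $\sup_n\lVert\partial u_n/\partial x_i\rVert_{\Phi}<+\infty$ for every $i\in\{1,\dots,N\}$. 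Thus $(u_n)$ is bounded in $L^{\Phi_*}(\mathbb{R}^N)=\bigl(E^{\tilde\Phi_*}(\mathbb{R}^N)\bigr)^{*}$ and each $(\partial u_n/\partial x_i)$ is bounded in $L^{\Phi}(\mathbb{R}^N)=\bigl(E^{\tilde\Phi}(\mathbb{R}^N)\bigr)^{*}$.

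Next I would invoke Banach--Alaoglu--Bourbaki \cite{Brezis}: since $E^{\tilde\Phi_*}(\mathbb{R}^N)$ and $E^{\tilde\Phi}(\mathbb{R}^N)$ are separable, the closed balls of their duals are metrizable for the weak$^{*}$ topology, hence weak$^{*}$ sequentially compact. Passing first to a subsequence along which $(u_n)$ converges weakly$^{*}$ in $L^{\Phi_*}(\mathbb{R}^N)$ to some $u$, and then refining it finitely many times (one index $i$ at a time), I obtain functions $v_1,\dots,v_N\in L^{\Phi}(\mathbb{R}^N)$ such that $\partial u_n/\partial x_i$ converges weakly$^{*}$ to $v_i$ in $L^{\Phi}(\mathbb{R}^N)$ for each $i$. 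Unwinding these weak$^{*}$ convergences through the integral representation of the pairings $E^{\tilde\Phi_*}$--$L^{\Phi_*}$ and $E^{\tilde\Phi}$--$L^{\Phi}$ recalled before the lemma yields exactly the two families of integral limits in the statement, provided one identifies $v_i$ with $\partial u/\partial x_i$. That identification is elementary: for $\varphi\in C_0^{\infty}(\mathbb{R}^N)$ one has $\varphi\in E^{\tilde\Phi_*}(\mathbb{R}^N)$ and $\partial\varphi/\partial x_i\in E^{\tilde\Phi}(\mathbb{R}^N)$ (both are bounded with compact support), so integrating by parts for each $u_n$ and letting $n\to\infty$ in both pairings gives
\[
\int_{\mathbb{R}^N} v_i\,\varphi\,dx=\lim_{n\to\infty}\int_{\mathbb{R}^N}\frac{\partial u_n}{\partial x_i}\,\varphi\,dx=-\lim_{n\to\infty}\int_{\mathbb{R}^N} u_n\,\frac{\partial\varphi}{\partial x_i}\,dx=-\int_{\mathbb{R}^N} u\,\frac{\partial\varphi}{\partial x_i}\,dx,
\]
whence $v_i=\partial u/\partial x_i$ in $\mathcal{D}'(\mathbb{R}^N)$, so $u\in L^{\Phi_*}(\mathbb{R}^N)$ with $\nabla u\in\bigl(L^{\Phi}(\mathbb{R}^N)\bigr)^{N}$.

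The only genuinely delicate point is the very last one: concluding that $u$ actually lies in $D^{1,\Phi}(\mathbb{R}^N)$. A priori this space is the $\lVert\nabla\cdot\rVert_{\Phi}$-completion of $C_0^{\infty}(\mathbb{R}^N)$, whereas so far $u$ only belongs to $\{w\in L^{\Phi_*}(\mathbb{R}^N):\nabla w\in(L^{\Phi}(\mathbb{R}^N))^{N}\}$, and in the nonreflexive regime ($\ell=1$) one must be careful not to let the weak$^{*}$ limit escape $D^{1,\Phi}(\mathbb{R}^N)$. Here the lack of reflexivity turns out to be harmless, because $\Phi$ and $\Phi_{*}$ both satisfy the $\Delta_2$-condition (as $m<N$ and $m^{*}<\infty$ under $(\phi_3)$): a standard truncation-and-mollification argument then shows that the two descriptions of $D^{1,\Phi}(\mathbb{R}^N)$ coincide, so $u\in D^{1,\Phi}(\mathbb{R}^N)$ and \eqref{0.23} follows. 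Alternatively, one can verify directly that the subspace $\{(w,\nabla w):w\in D^{1,\Phi}(\mathbb{R}^N)\}$ is weak$^{*}$ closed in $L^{\Phi_*}(\mathbb{R}^N)\times(L^{\Phi}(\mathbb{R}^N))^{N}$ by the Krein--Smulian theorem together with the metrizability of weak$^{*}$-bounded sets. I expect this closure issue to be the main (in fact essentially the only) obstacle; everything else is a routine application of Banach--Alaoglu--Bourbaki and of integration by parts against test functions.
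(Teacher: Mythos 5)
Your proof is the natural fleshing-out of what the paper's one-line remark intends: by Banach--Alaoglu--Bourbaki and separability of the preduals $E^{\tilde\Phi_*}$ and $E^{\tilde\Phi}$, extract a weak$^*$-convergent subsequence of $(u_n)$ in $L^{\Phi_*}$ and of each $(\partial u_n/\partial x_i)$ in $L^{\Phi}$, then identify the limits of the derivatives as the distributional gradient of the limit $u$ by testing against $C_0^\infty$, and finally observe that $u$ lands back in $D^{1,\Phi}(\mathbb{R}^N)$. The only part that deserves a caveat is your ``alternative'' closure argument: invoking Krein--Smulian to show that $\{(w,\nabla w):w\in D^{1,\Phi}\}$ is weak$^*$-closed is circular, since checking weak$^*$-closedness on bounded sets is precisely the weak$^*$ sequential compactness one is trying to establish; by contrast, the route you actually lean on --- that $\Phi,\Phi_*\in(\Delta_2)$ under $(\phi_3)$, so a mollify-and-truncate argument identifies $D^{1,\Phi}(\mathbb{R}^N)$ with $\{w\in L^{\Phi_*}:\nabla w\in (L^{\Phi})^N\}$ --- is the correct and load-bearing step, and is the one to keep.
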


From now on, we denote the limit \eqref{0.23} by $u_n\xrightharpoonup[\quad]{\ast} u$ in $D^{1,\Phi}(\mathbb{R}^{N})$.
As an immediate consequence of the last lemma, we have the following corollary.

\begin{corollary}\label{0.8}
	If $(u_n)\subset D^{1,\Phi}(\mathbb{R}^{N})$ is a bounded sequence with $u_n\longrightarrow u$ in $L^{\Phi}_{loc}(\mathbb{R}^{N})$, then $ u\in D^{1,\Phi}(\mathbb{R}^{N})$.
\end{corollary}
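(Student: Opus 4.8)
The plan is to deduce this directly from Lemma \ref{0.6} together with a standard uniqueness-of-limits argument. First I would apply Lemma \ref{0.6} to the bounded sequence $(u_n)\subset D^{1,\Phi}(\mathbb{R}^{N})$: passing to a subsequence (not relabelled), there exists $w\in D^{1,\Phi}(\mathbb{R}^{N})$ with $u_n\xrightharpoonup[\quad]{\ast} w$ in $L^{\Phi_*}(\mathbb{R}^N)$ and $\partial_{x_i}u_n\xrightharpoonup[\quad]{\ast}\partial_{x_i}w$ in $L^{\Phi}(\mathbb{R}^N)$. The goal is then to identify $w$ with the given $L^{\Phi}_{loc}$-limit $u$, since once $w=u$ we get $u=w\in D^{1,\Phi}(\mathbb{R}^N)$, which is exactly the claim; and since every subsequence has a further subsequence converging weak$^*$ to the same limit $u$, no relabelling issue remains.

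To identify the limits I would test against an arbitrary $\varphi\in C^\infty_0(\mathbb{R}^N)$. On the one hand, since $\varphi\in E^{\tilde\Phi_*}(\mathbb{R}^N)$, the weak$^*$ convergence gives $\int_{\mathbb{R}^N} u_n\varphi\,dx\to\int_{\mathbb{R}^N} w\varphi\,dx$. On the other hand, $u_n\to u$ in $L^{\Phi}_{loc}(\mathbb{R}^N)$ means $u_n\to u$ in $L^{\Phi}(K)$ on the compact set $K=\operatorname{supp}\varphi$; using the Hölder-type inequality $\big|\int_K (u_n-u)\varphi\,dx\big|\le 2\|u_n-u\|_{L^{\Phi}(K)}\|\varphi\|_{L^{\tilde\Phi}(K)}\to 0$, so $\int_{\mathbb{R}^N}u_n\varphi\,dx\to\int_{\mathbb{R}^N}u\varphi\,dx$ as well. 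Hence $\int_{\mathbb{R}^N}(w-u)\varphi\,dx=0$ for all $\varphi\in C^\infty_0(\mathbb{R}^N)$, and by the fundamental lemma of the calculus of variations $w=u$ a.e. Therefore $u\in D^{1,\Phi}(\mathbb{R}^N)$.

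The only mildly delicate point is making sure the testing against $\varphi$ is legitimate on both sides: that $C^\infty_0$ functions indeed lie in $E^{\tilde\Phi_*}(\mathbb{R}^N)$ (true by definition of $E^{\tilde\Phi_*}$ as the $L^{\tilde\Phi_*}$-closure of $C^\infty_0$, together with the fact that a compactly supported bounded function belongs to every Orlicz space), and that $L^{\Phi}_{loc}$-convergence upgrades to $L^{\Phi}(K)$-convergence on compacts, which is immediate from the definition of the local space. Everything else is the routine uniqueness-of-limits wrap-up. I do not anticipate any genuine obstacle here; the corollary is essentially a bookkeeping consequence of Lemma \ref{0.6} and the duality $\big(E^{\tilde\Phi_*}\big)'=L^{\Phi_*}$ recorded in Section 2.
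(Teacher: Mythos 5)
Your proposal is correct and is exactly the argument the paper has in mind when it says the corollary is ``an immediate consequence of the last lemma'': invoke Lemma~\ref{0.6} to extract a weak$^*$ limit $w\in D^{1,\Phi}(\mathbb{R}^N)$ along a subsequence, then identify $w$ with the given $L^{\Phi}_{\mathrm{loc}}$-limit $u$ by testing against $C^\infty_0$ and appealing to the duality $(E^{\tilde\Phi_*})^*=L^{\Phi_*}$ together with the H\"older-type inequality. The supporting details you flag (that $C^\infty_0\subset E^{\tilde\Phi_*}$, that compactly supported bounded functions lie in every Orlicz space, and that the subsequence extraction is harmless because the conclusion concerns only $u$) are all sound, so there is no gap.
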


The last lemma is crucial when the space $D^{1,\Phi}(\mathbb{R}^{N})$ can be nonreflexive. And, this happens, for example, when $\Phi_{\alpha}(t)=|t|\ln(|t|^\alpha+1)$, for  $0<\alpha<\frac{N}{N-1}-1$, because $\tilde{\Phi }_{\alpha}$ does not verify the $\linebreak\Delta_{2}$-condition. Here we emphasize that the condition $(\phi_{3})$ guarantees that $\Phi$ and $\tilde{\Phi}$ verifies the $\Delta_{2}$-condition when $\ell> 1$, for more details see Fukagai and Narukawa \cite{FN}.

\section{Preliminary results}
 This section focuses on preparing some preliminaries for proving Theorems \ref{Teo1}, \ref{Teo1.1}, \ref{Teo3} and \ref{Teo3.1}. Since the potential V may vanish at infinity, we cannot study equation $(P)$ on the Sobolev space $D^{1,\Phi}(\mathbb{R }^N)$ by variational methods. As
 in \cite{AlvesandMarco}, we work in the space $\linebreak E=\big\{u\in D^{1,\Phi}(\mathbb{R}^{N}): \int_{\mathbb{R}^{N}} V(x)\Phi(|u |)dx<+\infty\big\}$ with norm
\begin{align*}
\lVert u\lVert_{E}=\lVert u\lVert_{D^{1,\Phi}(\mathbb{R}^{N})} +\lVert u\lVert_{V,\Phi},
\end{align*}
where
\begin{align*}
\lVert u\lVert_{V,\Phi}=\inf\Big\{\alpha >0: \int_{\mathbb{R}^{N}} V(x)\Phi\left({| u|}/{\alpha}\right)dx\leq1\Big\}
\end{align*}
is the norm of Banach Space
\begin{align*}
L^{\Phi}_{V}(\mathbb{R}^{N})=\Big\{u:\mathbb{R}^{N}\longrightarrow\mathbb{R}\;\text{measurable }:\int_{\mathbb{R}^{N}}V(x)\Phi(|u|)dx<+\infty\Big\}.
\end{align*}
It is immediate that $E$ is continuously embedded in the spaces $D^{1,\Phi}(\mathbb{R}^{N}) $ and $L^{\Phi}_{V}(\mathbb{R }^{N})$.

Now let us list some properties involving the space $E$.

\begin{lemma}
	$(E,\lVert \cdot\lVert_{E})$ is a Banach space.
\end{lemma}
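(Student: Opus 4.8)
The plan is to view $E$ as the intersection of the two Banach spaces $D^{1,\Phi}(\mathbb{R}^{N})$ and $L^{\Phi}_{V}(\mathbb{R}^{N})$ (after the usual identification of functions coinciding a.e., both embed continuously in the space of measurable functions on $\mathbb{R}^{N}$), and to prove completeness of the intersection equipped with the sum norm. That $\lVert\cdot\rVert_{E}=\lVert\cdot\rVert_{D^{1,\Phi}(\mathbb{R}^{N})}+\lVert\cdot\rVert_{V,\Phi}$ is a norm is immediate: positive homogeneity and the triangle inequality are inherited from the two summands, and $\lVert u\rVert_{E}=0$ already forces $\lVert u\rVert_{D^{1,\Phi}(\mathbb{R}^{N})}=0$, whence $u=0$. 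Note also that $\Phi\in(\Delta_{2})$ here, since $(\phi_{3})$ gives $t^{2}\phi(t)/\Phi(t)\le m<\infty$, and likewise $\Phi_{*}\in(\Delta_{2})$, since Lemma \ref{0.5} yields $\Phi_{*}'(t)t^{2}/\Phi_{*}(t)\le m^{*}<\infty$; thus Lemma \ref{1.0} applies both to $L^{\Phi}$ and to $L^{\Phi_{*}}$.

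Let $(u_{n})\subset E$ be a Cauchy sequence for $\lVert\cdot\rVert_{E}$. Since $D^{1,\Phi}(\mathbb{R}^{N})$ is a Banach space (it is, by construction, the completion of $C_{0}^{\infty}(\mathbb{R}^{N})$ under \eqref{0.11}), $(u_{n})$, being Cauchy in $D^{1,\Phi}(\mathbb{R}^{N})$, converges to some $u\in D^{1,\Phi}(\mathbb{R}^{N})$. By the Orlicz--Sobolev inequality \eqref{0.9} we get $u_{n}\to u$ in $L^{\Phi_{*}}(\mathbb{R}^{N})$, and then Lemma \ref{1.0} furnishes a subsequence $(u_{n_{j}})$ with $u_{n_{j}}(x)\to u(x)$ a.e. in $\mathbb{R}^{N}$. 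It remains to show that $u\in E$ and that $\lVert u_{n}-u\rVert_{V,\Phi}\to 0$. Since $(u_{n})$ is also Cauchy for $\lVert\cdot\rVert_{V,\Phi}$, the modular--norm equivalence (the weighted analogue of Lemma \ref{0.3}, valid because $\Phi\in(\Delta_{2})$) gives, for every $\varepsilon\in(0,1)$, an index $n_{0}$ with $\int_{\mathbb{R}^{N}}V(x)\Phi(|u_{n}-u_{m}|)\,dx\le\varepsilon$ for all $n,m\ge n_{0}$. Fixing $n\ge n_{0}$ and applying Fatou's lemma along $u_{m_{j}}\to u$ a.e. (using continuity of $\Phi$),
\[
\int_{\mathbb{R}^{N}}V(x)\Phi(|u_{n}-u|)\,dx\ \le\ \liminf_{j\to\infty}\int_{\mathbb{R}^{N}}V(x)\Phi(|u_{n}-u_{m_{j}}|)\,dx\ \le\ \varepsilon .
\]
Hence $u_{n}-u\in L^{\Phi}_{V}(\mathbb{R}^{N})$, so $u=u_{n}-(u_{n}-u)\in L^{\Phi}_{V}(\mathbb{R}^{N})$, i.e. $u\in E$; and, again by the modular--norm equivalence, $\lVert u_{n}-u\rVert_{V,\Phi}\to 0$. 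Combining, $\lVert u_{n}-u\rVert_{E}=\lVert u_{n}-u\rVert_{D^{1,\Phi}(\mathbb{R}^{N})}+\lVert u_{n}-u\rVert_{V,\Phi}\to 0$, so $(u_{n})$ converges in $E$ and $E$ is complete.

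I expect the only genuine point requiring care to be the passage from norm convergence to a single almost-everywhere convergent subsequence — this is exactly where the embedding \eqref{0.9}, the $\Delta_{2}$-property of $\Phi$ and $\Phi_{*}$, and Lemma \ref{1.0} enter — together with the use of Fatou's lemma to transfer the Cauchy estimate for the weighted modular to the limit $u$. Everything else is routine bookkeeping. (Alternatively, one could invoke that $L^{\Phi}_{V}(\mathbb{R}^{N})$ is itself a Banach space, being the Orlicz space over the $\sigma$-finite measure $V(x)\,dx$, and that the intersection of two Banach spaces continuously embedded in a common Hausdorff topological vector space is a Banach space under the sum norm; the argument above is preferred only because it stays within the tools already introduced in the excerpt.)
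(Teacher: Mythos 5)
The paper states this lemma without supplying a proof, so there is no in-text argument to compare against. Your proposal is a correct and standard completeness argument, and it stays within the toolkit the paper has already set up.

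A few remarks on the steps. Your reduction to checking completeness is the right move, and both routes you mention are valid: the direct argument (Cauchy in $D^{1,\Phi}$ gives a limit $u$; $\Delta_2$ plus the Luxemburg-norm/modular comparison of Lemma~\ref{0.3} turns the $\lVert\cdot\rVert_{V,\Phi}$-Cauchy property into a modular Cauchy property; an a.e.\ convergent subsequence plus Fatou transfers the bound to $u$), and the abstract one (intersection of two Banach spaces continuously injecting into a common Hausdorff space, under the sum norm, is Banach). You are also right that the only genuinely nontrivial ingredient is producing an a.e.\ convergent subsequence — here via \eqref{0.9} and Lemma~\ref{1.0} applied in $L^{\Phi_*}$ — since the weighted modular estimate alone does not control pointwise behaviour. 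Two small points worth tightening if this were to go into the paper: (i) when you fix $n\ge n_0$ and run Fatou, you should explicitly pass to the already-extracted subsequence $(u_{n_j})$ with $n_j\ge n_0$, so the indices over which you take $\liminf$ lie in the tail where the Cauchy bound holds; (ii) Lemma~\ref{0.3} is stated for unweighted $L^\Phi(\Omega)$, so a one-line remark that its proof uses only the inequalities $\xi_0(t)\Phi(\rho)\le\Phi(\rho t)\le\xi_1(t)\Phi(\rho)$ and the definition of the Luxemburg norm — hence carries over verbatim to $L^\Phi_V$ with the measure $V(x)\,dx$ — would close the gap you flag as ``the weighted analogue.'' Neither point is a genuine error; the argument is sound.
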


\begin{lemma}
	$E=\overline{C^{\infty}_{0}(\mathbb{R}^{N})}^{\lVert\cdot\lVert_{E}}$
\end{lemma}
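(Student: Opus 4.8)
Since the preceding lemma gives that $(E,\lVert\cdot\lVert_E)$ is complete and $C^\infty_0(\mathbb R^N)\subset E$ (for $\varphi\in C^\infty_0$ the potential $V$ is bounded on $\mathrm{supp}\,\varphi$, so $\int_{\mathbb R^N}V(x)\Phi(|\varphi|)\,dx<+\infty$), the inclusion $\overline{C^\infty_0(\mathbb R^N)}^{\lVert\cdot\lVert_E}\subseteq E$ is automatic, and only density needs to be proved. Throughout I use that $(\phi_3)$ forces $\Phi\in(\Delta_2)$ (upper index $m<+\infty$) and $\Phi_*\in(\Delta_2)$ (Lemma \ref{0.5}), regardless of whether $\ell=1$ --- only $\tilde\Phi$ may fail $(\Delta_2)$, which is irrelevant here. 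Consequently $L^\Phi=E^\Phi$, $L^{\Phi_*}=E^{\Phi_*}$, modular convergence is equivalent to norm convergence in these spaces (Lemmas \ref{0.3}, \ref{0.5}), mollification converges in the $L^\Phi$- and $L^{\Phi_*}$-norms on bounded domains, and one has the continuous embeddings $E\hookrightarrow L^{\Phi_*}(\mathbb R^N)$ (via \eqref{0.9}) and $E\hookrightarrow L^{\Phi}_V(\mathbb R^N)$. The argument is the classical two-step truncation--cutoff--mollification scheme.

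\textbf{Step 1: reduction to bounded, compactly supported functions.} Given $u\in E$, set $u_{k,R}:=\eta_R\,T_k(u)$, where $T_k(s)=\max\{-k,\min\{k,s\}\}$ and $\eta_R(x)=\eta(x/R)$ with $\eta\in C^\infty_0(\mathbb R^N)$, $0\le\eta\le1$, $\eta\equiv1$ on $B_1$, $\mathrm{supp}\,\eta\subset B_2$, $|\nabla\eta|\le C$. Then $u_{k,R}\in W^{1,\Phi}_0(B_{2R})\hookrightarrow D^{1,\Phi}(\mathbb R^N)$, it is bounded with compact support, and $|u_{k,R}|\le|u|$, so $u_{k,R}\in E$. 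I claim $u_{k,R}\to u$ in $E$ as $k,R\to\infty$. For the $L^{\Phi_*}$-component and the weighted $L^{\Phi}_V$-component this is a dominated convergence argument: $u_{k,R}\to u$ a.e., $|u_{k,R}-u|\le 2|u|$, and for each fixed $\lambda>0$ the integrands $\Phi_*(|u_{k,R}-u|/\lambda)$ and $V(x)\Phi(|u_{k,R}-u|/\lambda)$ are dominated (Lemmas \ref{0.3}, \ref{0.5}) by fixed $L^1$-functions; passing to the limit and then letting $\lambda\to0$ yields convergence of the Luxemburg norms. For the gradient, $\nabla u_{k,R}=\eta_R\,\chi_{\{|u|<k\}}\nabla u+T_k(u)\nabla\eta_R$; the first term converges to $\nabla u$ in $L^{\Phi}$ by the same argument (it is dominated by $|\nabla u|$), while the second term, supported in the annulus $A_R:=B_{2R}\setminus B_R$ and bounded by $(C/R)|u|\chi_{A_R}$, is the point where all the difficulty is concentrated.

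\textbf{Step 2: the cutoff term (main obstacle).} I must show $\bigl\lVert (1/R)\,|u|\chi_{A_R}\bigr\rVert_{\Phi}\to0$. The plan is to combine $\lVert\nabla\eta_R\rVert_\infty\le C/R$ with a H\"older-type inequality in Orlicz spaces that uses the structure of the Sobolev conjugate: from $\Phi_*^{-1}(t)=\int_0^t\Phi^{-1}(s)\,s^{-(N+1)/N}\,ds$ together with $(\phi_3)$ (which pins the growth of $\Phi^{-1}$ between the powers $t^{1/m}$ and $t^{1/\ell}$, so that the defining integral is comparable to its value near the upper endpoint), one obtains a constant $c_N>0$ with $\Phi_*^{-1}(t)\le c_N\,t^{-1/N}\Phi^{-1}(t)$ for all $t>0$; equivalently, for every Borel set $\Omega$ of finite measure, $\lVert w\rVert_{L^{\Phi}(\Omega)}\le c_N\,|\Omega|^{1/N}\lVert w\rVert_{L^{\Phi_*}(\Omega)}$. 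Applying this with $\Omega=A_R$, so that $|A_R|^{1/N}\le CR$, gives $\bigl\lVert(1/R)|u|\chi_{A_R}\bigr\rVert_{\Phi}\le c\,\lVert u\rVert_{L^{\Phi_*}(A_R)}$, and the right-hand side tends to $0$ since $u\in L^{\Phi_*}(\mathbb R^N)$ makes the tails vanish. The exact cancellation of the powers of $R$ is the Orlicz analogue of the standard estimate in $D^{1,2}(\mathbb R^N)$. This finishes Step 1.

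\textbf{Step 3: mollification, and conclusion.} Fix $v=u_{k,R}$, bounded with $\mathrm{supp}\,v\subset B_{2R}$, and let $v_\varepsilon=v*\rho_\varepsilon\in C^\infty_0(B_{2R+1})$ for $0<\varepsilon<1$. Since $\nabla v\in L^{\Phi}(\mathbb R^N)$, $v\in L^{\Phi_*}(\mathbb R^N)$ and $\Phi,\Phi_*\in(\Delta_2)$, one has $\lVert\nabla v_\varepsilon-\nabla v\rVert_{\Phi}=\lVert(\nabla v)*\rho_\varepsilon-\nabla v\rVert_{\Phi}\to0$ and $\lVert v_\varepsilon-v\rVert_{\Phi_*}\to0$; and as $V$ is continuous, hence bounded by some $M$ on the compact set $\overline{B_{2R+1}}$, and $\mathrm{supp}(v_\varepsilon-v)\subset B_{2R+1}$, we get $\int_{\mathbb R^N}V(x)\Phi(|v_\varepsilon-v|/\lambda)\,dx\le M\int_{B_{2R+1}}\Phi(|v_\varepsilon-v|/\lambda)\,dx\to0$, whence $\lVert v_\varepsilon-v\rVert_{V,\Phi}\to0$. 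Thus $v_\varepsilon\to u_{k,R}$ in $E$. Combining with Step 1 by a standard diagonal extraction produces a sequence in $C^\infty_0(\mathbb R^N)$ converging to $u$ in $E$; since $u\in E$ was arbitrary, $E=\overline{C^\infty_0(\mathbb R^N)}^{\lVert\cdot\lVert_E}$. I expect the only genuinely delicate point to be Step 2 --- namely the sharp embedding $L^{\Phi_*}(\Omega)\hookrightarrow L^{\Phi}(\Omega)$ with the dependence $|\Omega|^{1/N}$ on the measure, i.e.\ the pointwise bound $\Phi_*^{-1}(t)\le c_N\,t^{-1/N}\Phi^{-1}(t)$ --- everything else reducing to routine dominated-convergence and mollification arguments, made possible precisely because $\Phi$ and $\Phi_*$ (unlike $\tilde\Phi$) satisfy the $\Delta_2$-condition.
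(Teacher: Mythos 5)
The paper gives no argument for this lemma at all: it simply asserts that the result ``follows the same ideas as Theorem 8.21'' of Adams--Fournier and omits the proof. Your proposal supplies the missing content, and it is correct. The structure is the classical truncation--cutoff--mollification scheme, which is exactly what the cited Adams theorem does for unweighted Orlicz--Sobolev spaces; you additionally have to handle the weighted term $\lVert\cdot\rVert_{V,\Phi}$, and you do so correctly in Step~1 (dominated convergence in the modular, using $\Phi\in(\Delta_2)$) and Step~3 (local boundedness of the continuous potential $V$ on the compact support of the mollification error). The one genuinely non-routine point, which you correctly single out, is the cutoff estimate in Step~2: the naive bound $|T_k(u)\nabla\eta_R|\le Ck/R$ on the annulus $A_R$ fails because $\Phi(Ck/R)|A_R|\sim R^{N-\ell}\not\to 0$, so one really needs the Orlicz analogue of the $|\Omega|^{1/N}$-weighted embedding $L^{\Phi_*}(\Omega)\hookrightarrow L^{\Phi}(\Omega)$. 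Your derivation of the pointwise bound $\Phi_*^{-1}(t)\le c_N t^{-1/N}\Phi^{-1}(t)$ is correct (it follows from the defining integral of $\Phi_*^{-1}$ together with $\Phi^{-1}(\tau t)\le \tau^{1/m}\Phi^{-1}(t)$ for $\tau\le 1$, which in turn uses $m<N$ from $(\phi_3)$), and the passage from this pointwise bound to the norm inequality $\lVert w\rVert_{L^\Phi(\Omega)}\le c\,|\Omega|^{1/N}\lVert w\rVert_{L^{\Phi_*}(\Omega)}$ does hold, though the constant comes out as a fixed multiple of $c_N$ rather than $c_N$ itself --- your ``equivalently'' is a slight overstatement, but the substance and the exponent $1/N$ are right, and that is all the cancellation against $\lVert\nabla\eta_R\rVert_\infty\le C/R$ requires. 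One small point worth flagging: to assert $u_{k,R}\in E$ in Step~1 you implicitly need $u_{k,R}\in D^{1,\Phi}(\mathbb{R}^N)$, which by the paper's definition means membership in the closure of $C^\infty_0$; this is exactly what Step~3 establishes for bounded compactly supported functions with $L^\Phi$-gradient, so the logic is not circular, but the order of presentation could make it clearer that Step~3 is logically prior.
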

\noindent{\it{Proof:}}
It follows the same ideas as Theorem 8.21, which can be found in \cite{Adms}. For that reason, we will omit your proof.

\qed

\begin{lemma}\label{0.12}
	$E$ is compactly embedded in $L^{\Phi}_{loc}(\mathbb{R}^{N})$.
\end{lemma}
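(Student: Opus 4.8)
The plan is to exploit the local embedding $E \hookrightarrow W^{1,\Phi}(B_R)$ for every ball $B_R = B_R(0)$, and then invoke the Rellich--Kondrachov-type compactness for Orlicz--Sobolev spaces on bounded domains together with a diagonal argument to pass from local compactness to compactness in $L^\Phi_{loc}(\mathbb{R}^N)$. So first I would fix $R>0$ and show that the restriction map $E \to W^{1,\Phi}(B_R)$ is bounded: by \eqref{0.9} we have $\|u\|_{L^{\Phi_*}(\mathbb{R}^N)} \le S_N \|\nabla u\|_\Phi$, and since $|B_R|<\infty$ and $\Phi_*$ grows faster than $\Phi$ near infinity (because $m < m^*$, $\ell \le \ell^*$), the embedding $L^{\Phi_*}(B_R) \hookrightarrow L^{\Phi}(B_R)$ is continuous; hence $\|u\|_{L^\Phi(B_R)} \le C_R \|u\|_{L^{\Phi_*}(\mathbb{R}^N)} \le C_R S_N \|\nabla u\|_\Phi \le C_R S_N \|u\|_E$, and likewise $\|\nabla u\|_{L^\Phi(B_R)} \le \|\nabla u\|_\Phi \le \|u\|_E$. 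Thus $\|u\|_{W^{1,\Phi}(B_R)} \le C(R)\|u\|_E$.

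Next I would apply the compact embedding $W^{1,\Phi}(B_R) \hookrightarrow\hookrightarrow L^{\Phi}(B_R)$, which holds on bounded smooth domains under $(\phi_1)$--$(\phi_3)$ (this is the Orlicz--Sobolev analogue of Rellich--Kondrachov; see \cite{FN}, \cite{Adms}). Concretely, given a bounded sequence $(u_n) \subset E$, say $\|u_n\|_E \le M$, its restrictions form a bounded sequence in $W^{1,\Phi}(B_1)$, so a subsequence converges in $L^\Phi(B_1)$; passing to a further subsequence, the restrictions converge in $L^\Phi(B_2)$; and so on. A standard diagonal extraction then yields a single subsequence $(u_{n_j})$ and a function $u$ such that $u_{n_j} \to u$ in $L^\Phi(B_R)$ for every $R \in \mathbb{N}$, i.e. $u_{n_j} \to u$ in $L^\Phi_{loc}(\mathbb{R}^N)$. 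By Corollary \ref{0.8}, the limit $u$ lies in $D^{1,\Phi}(\mathbb{R}^N)$; one can moreover check $u \in E$ using Fatou's lemma applied to $\int V(x)\Phi(|u_{n_j}|)\,dx$ along an a.e.-convergent sub-subsequence (via Lemma \ref{1.0}, valid since $\Phi \in (\Delta_2)$). This establishes the claimed compact embedding.

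The main obstacle I anticipate is justifying the compact embedding $W^{1,\Phi}(B_R) \hookrightarrow\hookrightarrow L^{\Phi}(B_R)$ in the present generality, in particular when $\ell = 1$ so that $\tilde\Phi \notin (\Delta_2)$ and $W^{1,\Phi}(B_R)$ need not be reflexive. The reflexive case is classical (Donaldson--Trudinger, Adams), but without reflexivity one must argue more carefully — e.g. by first establishing compactness into $L^\Psi(B_R)$ for some $N$-function $\Psi$ with $\Psi \ll \Phi_*$ (essentially-less-growth), then using that $\Phi$ itself satisfies $\Phi \ll \Phi_*$ so that $L^{\Phi_*}(B_R) \hookrightarrow\hookrightarrow L^\Phi(B_R)$ by the finite-measure De la Vallée-Poussin / Vitali argument, combined with the \emph{bounded} (non-compact) embedding $W^{1,\Phi}(B_R) \hookrightarrow L^{\Phi_*}(B_R)$. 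A cleaner route avoiding reflexivity entirely: since $\Phi \in (\Delta_2)$ and $(\phi_3)$ gives $\Phi(t) \le C(t^\ell + t^m)$, one has $W^{1,\Phi}(B_R) \hookrightarrow W^{1,\ell}(B_R) \hookrightarrow\hookrightarrow L^q(B_R)$ for suitable $q$ by the classical Sobolev theorem, and then an interpolation/growth comparison transfers this to compactness in $L^\Phi(B_R)$. I would adopt whichever of these the earlier sections make available; the rest of the proof (boundedness of the local restriction, diagonalization) is routine.
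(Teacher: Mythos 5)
The paper states Lemma \ref{0.12} without any proof whatsoever, so there is no in-paper argument to compare against; I can only assess your proposal on its own merits.

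Your proof is correct. The chain \emph{local boundedness in} $W^{1,\Phi}(B_R)$ $\Rightarrow$ \emph{compactness into} $L^{\Phi}(B_R)$ $\Rightarrow$ \emph{diagonal extraction} is the right strategy, and the estimate showing the restriction map $E\to W^{1,\Phi}(B_R)$ is bounded is carried out correctly (using \eqref{0.9}, the finite measure of $B_R$, and $m<\ell^*\leq m^*$ to get $L^{\Phi_*}(B_R)\hookrightarrow L^{\Phi}(B_R)$). One remark: the reflexivity worry you flag for the step $W^{1,\Phi}(B_R)\hookrightarrow\hookrightarrow L^{\Phi}(B_R)$ is less serious than you suggest. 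The Donaldson--Trudinger compactness theorem on bounded domains (Adams \cite{Adms}, the compact Orlicz--Sobolev embedding) requires only the cone property, finite measure, and that the target $N$-function increase essentially more slowly than $\Phi_*$; it does not invoke weak compactness and hence never needs $\tilde\Phi\in(\Delta_2)$. Since $\Phi\prec\prec\Phi_*$ holds here (because $m<\ell^*$), your first route already closes without the hedging. Your fallback route $W^{1,\Phi}(B_R)\hookrightarrow W^{1,\ell}(B_R)\hookrightarrow\hookrightarrow L^{q}(B_R)\hookrightarrow L^{\Phi}(B_R)$ with $q\in[m,\ell^*)$ is also valid under $(\phi_3)$ (note that even when $\ell=1$, Rellich--Kondrachov for $W^{1,1}$ gives compactness into $L^{q}$ for $q<1^*=\ell^*$, and $m<\ell^*$), and has the advantage of relying only on classical Sobolev theory. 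Either version is a legitimate proof.
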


\begin{lemma}\label{0.1}
	Suppose $(u_n)\subset E$ is a bounded sequence, then there is $u\in E$ such that $u_n \xrightharpoonup[\quad]{\ast} u$ in $D^{1,\Phi}(\mathbb{R}^{N})$.
\end{lemma}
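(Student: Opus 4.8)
The plan is to bootstrap from Lemma \ref{0.6} (the Banach--Alaoglu-type statement on $D^{1,\Phi}(\mathbb{R}^N)$) together with Lemma \ref{0.12} (the compact embedding $E \hookrightarrow L^\Phi_{loc}(\mathbb{R}^N)$), and then to verify that the weak$^*$ limit actually lies in $E$, i.e. that it has finite weighted energy $\int_{\mathbb{R}^N} V(x)\Phi(|u|)\,dx < +\infty$. First I would observe that since $(u_n)$ is bounded in $E$ and $E$ is continuously embedded in $D^{1,\Phi}(\mathbb{R}^N)$, the sequence $(u_n)$ is bounded in $D^{1,\Phi}(\mathbb{R}^N)$; hence Lemma \ref{0.6} yields a subsequence (not relabeled) and some $u \in D^{1,\Phi}(\mathbb{R}^N)$ with $u_n \xrightharpoonup[\quad]{\ast} u$ in $D^{1,\Phi}(\mathbb{R}^N)$. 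By Lemma \ref{0.12}, after passing to a further subsequence we may also assume $u_n \to u$ in $L^\Phi_{loc}(\mathbb{R}^N)$ and, by Lemma \ref{1.0} applied on balls together with a diagonal argument, $u_n(x) \to u(x)$ a.e. in $\mathbb{R}^N$.

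The remaining — and main — point is to show $u \in L^\Phi_V(\mathbb{R}^N)$. Here I would use that $(u_n)$ is bounded in $L^\Phi_V(\mathbb{R}^N)$ (continuous embedding $E \hookrightarrow L^\Phi_V(\mathbb{R}^N)$), so by the last lemma of Section~2 the quantities $\int_{\mathbb{R}^N} V(x)\Phi(|u_n|)\,dx$ are bounded, say by a constant $M$. Since $u_n \to u$ a.e. and $\Phi$ is continuous with $\Phi(|u_n|) \to \Phi(|u|)$ a.e., Fatou's lemma applied to the nonnegative integrands $V(x)\Phi(|u_n(x)|)$ gives
\begin{align*}
\int_{\mathbb{R}^N} V(x)\Phi(|u|)\,dx \leq \liminf_{n\to\infty} \int_{\mathbb{R}^N} V(x)\Phi(|u_n|)\,dx \leq M < +\infty,
\end{align*}
so $u \in L^\Phi_V(\mathbb{R}^N)$. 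Combined with $u \in D^{1,\Phi}(\mathbb{R}^N)$ from the previous step, this gives $u \in E$, and the weak$^*$ convergence $u_n \xrightharpoonup[\quad]{\ast} u$ in $D^{1,\Phi}(\mathbb{R}^N)$ is exactly the assertion of the lemma.

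The step I expect to require the most care is the a.e.\ convergence: Lemma \ref{1.0} is stated for convergence in $L^\Phi(\Omega)$ with $\Phi \in (\Delta_2)$, so one applies it on each ball $B_k(0)$ (where $\Phi_*$, hence $\Phi$ on the relevant scale, behaves well and $E \hookrightarrow\hookrightarrow L^\Phi(B_k(0))$ by Lemma \ref{0.12}) to extract a.e.\ convergent subsequences, and then passes to a diagonal subsequence so that a single subsequence converges a.e.\ on all of $\mathbb{R}^N$; one should also check this diagonal subsequence is consistent with the one carrying the weak$^*$ convergence, which is automatic since weak$^*$ limits are unique. Everything else is a routine combination of the cited lemmas, boundedness, and Fatou.
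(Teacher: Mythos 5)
Your proposal is correct and follows essentially the same route as the paper's proof: bootstrap from Lemma \ref{0.6} for weak$^*$ convergence in $D^{1,\Phi}(\mathbb{R}^N)$, obtain a.e.\ convergence via the local compact embedding, bound the modulars $\int V(x)\Phi(|u_n|)\,dx$ using $\Phi\in(\Delta_2)$, and apply Fatou to conclude $u\in L^\Phi_V(\mathbb{R}^N)$. Your treatment of the a.e.\ convergence step (Lemma \ref{1.0} on balls plus a diagonal argument) is somewhat more explicit than the paper's brief citation of Lemma \ref{0.12} and Corollary \ref{0.8}, but the substance is the same.
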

\noindent {\it{Proof:}}
Since $(u_n)$ is a bounded sequence in $E$, then $(u_n)$ is a bounded sequence in $D^{1,\Phi}(\mathbb{R}^{N}) $ and by the Lemma \ref{0.6}, there is $ u\in D^{1,\Phi}(\mathbb{R}^{N}) $ such that $ u_n\xrightharpoonup[\quad]{\ast} u $ in $D^{1,\Phi}(\mathbb{R}^{N})$. Let us show that $u\in E$, because by Lemma \ref {0.12} and Corollary \ref {0.8}, we can conclude that less than one subsequence
\begin{align*}
u_n (x) \longrightarrow u(x), \; \; \; \; a.e. \; \; \; \text{in} \; \; \; \mathbb{R}^{N}.
\end{align*}
By Fatou's Lemma
\begin{align*}
\int_{\mathbb{R}^{N}} V(x) \Phi(|u(x)|)dx \leq \liminf_{n \rightarrow \infty} \int_{\mathbb{R}^{N }} V(x) \Phi(|u_{n}(x)|)dx.
\end{align*}
Since $(u_n)$ is bounded in $E$, then $(u_n)$ is bounded in $L^{\Phi}_{V}(\mathbb{R}^{N})$. As $\Phi\in(\Delta_{2})$, there is $C>0$ such that
\begin{align*}
\int_{\mathbb{R}^{N}} V(x) \Phi(|u_{n}(x)|)dx\leq C,\;\;\;\;\forall n\in\mathbb{N}
\end{align*}
Therefore,
\begin{align*}
\int_{\mathbb{R}^{N}}V(x)\Phi(|u(x)|)dx<+\infty,
\end{align*}
showing that $u\in E$, and the proof is complete.

\qed

Now, we consider the functional $Q:E\longrightarrow\mathbb{R}$ which is given by
$$
Q(u)=\int_{\mathbb{R}^{N}} \Phi(|\nabla u|)dx+\int_{\mathbb{R}^{N}}V(x)\Phi(|u|)dx.
$$
It is well known in the literature that $Q\in C^1(E,\mathbb R)$ when $\Phi$ and $\tilde{\Phi}$ satisfy the condition $(\Delta_2)$ and this occurs when we have the condition satisfied to $\ell>1$. When $\ell=1$, we know that $\tilde{\Phi}\notin (\Delta_{2})$ and therefore cannot guarantee the differentiability of functional $Q$. However, we will show that the functional $Q$ is continuous and Gateaux-differentiable with derivative $Q': E\longrightarrow E^*$ defined by
\begin{align*}
Q'(u)v=\int_{\mathbb{R}^{N}} \phi(|\nabla u|)\nabla u\nabla v dx+\int_{\mathbb{R}^{N} } V(x)\phi(|u|)uv dx,\;\;\;\forall u,v\in E
\end{align*}
is continuous from the norm topology of $E$ to the weak$^*$-topology of $E^*$.  



\begin{lemma}
	The functional $Q$ is Gateaux differentiable, that is, $ Q'(u)v$ exists for every $u,v\in E$ with
	\begin{align*}
	Q'(u)v=\int_{\mathbb{R}^{N}} \phi(|\nabla u|)\nabla u\nabla v dx+\int_{\mathbb{R}^{N} } V(x)\phi(|u|)uv dx.
	\end{align*}
\end{lemma}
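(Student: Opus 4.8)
The plan is to verify directly from the definition of the Gateaux derivative that, for fixed $u,v\in E$, the limit
\[
\lim_{s\to 0}\frac{Q(u+sv)-Q(u)}{s}
\]
exists and equals the claimed expression. Write $Q=Q_1+Q_2$ where $Q_1(u)=\int_{\mathbb R^N}\Phi(|\nabla u|)\,dx$ and $Q_2(u)=\int_{\mathbb R^N}V(x)\Phi(|u|)\,dx$; it suffices to treat each summand separately, and the two are completely analogous (the second merely carries the nonnegative weight $V$ and replaces $\nabla u$ by $u$). First I would fix the pointwise computation: since $\Phi(t)=\int_0^{|t|}\tau\phi(\tau)\,d\tau$ is $C^1$ with $\Phi'(t)=t\phi(t)$ (extended oddly), the map $w\mapsto\Phi(|w|)$ is $C^1$ on $\mathbb R^N$ with gradient $\phi(|w|)w$, so for a.e.\ $x$,
\[
\frac{d}{ds}\Phi(|\nabla u+s\nabla v|)\Big|_{s}=\phi(|\nabla u+s\nabla v|)(\nabla u+s\nabla v)\cdot\nabla v,
\]
and by the mean value theorem the difference quotient $\big(\Phi(|\nabla u+s\nabla v|)-\Phi(|\nabla u|)\big)/s$ equals this derivative evaluated at some intermediate point $\theta_s(x)\in(0,s)$, hence converges pointwise a.e.\ to $\phi(|\nabla u|)\nabla u\cdot\nabla v$ as $s\to0$.

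The heart of the argument is then an application of the dominated convergence theorem, and finding the dominating function is the main obstacle since we only have integrability of $\Phi(|\nabla u|)$ and $\Phi(|\nabla v|)$ to work with. For $|s|\le 1$, I would bound the difference quotient pointwise: using $(\phi_1)$ (so $t\mapsto t\phi(t)$ is increasing) together with convexity of $\Phi$, one has
\[
\left|\frac{\Phi(|\nabla u+s\nabla v|)-\Phi(|\nabla u|)}{s}\right|
\le \Phi'(|\nabla u|+|\nabla v|)\,|\nabla v|
= \big(|\nabla u|+|\nabla v|\big)\phi\big(|\nabla u|+|\nabla v|\big)\,|\nabla v|,
\]
where the first inequality comes from the mean value theorem applied to the convex increasing function $r\mapsto\Phi(r)$ on $[0,\infty)$ together with $|\nabla u+s\nabla v|\le|\nabla u|+|\nabla v|$. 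It remains to see that the right-hand side lies in $L^1(\mathbb R^N)$. For this I would use the Young-type inequality $ab\le\Phi(a)+\tilde\Phi(b)$ from Section 2 with $a=|\nabla v|$ and $b=\Phi'(|\nabla u|+|\nabla v|)$, giving
\[
\Phi'(|\nabla u|+|\nabla v|)\,|\nabla v|\le \Phi(|\nabla v|)+\tilde\Phi\big(\Phi'(|\nabla u|+|\nabla v|)\big)\le \Phi(|\nabla v|)+\Phi\big(2(|\nabla u|+|\nabla v|)\big),
\]
using $\tilde\Phi(\Phi'(t))\le\Phi(2t)$. Since $\ell\le m$ and $\Phi\in(\Delta_2)$ (guaranteed here by $(\phi_3)$, which forces $m<N<\infty$ and hence the $\Delta_2$-condition on $\Phi$ itself, independently of whether $\tilde\Phi\in(\Delta_2)$), Lemma \ref{0.3} yields $\Phi\big(2(|\nabla u|+|\nabla v|)\big)\le 2^m\xi_1(2)\big(\Phi(|\nabla u|)+\Phi(|\nabla v|)\big)$ up to a constant, which is integrable because $u,v\in E\hookrightarrow D^{1,\Phi}(\mathbb R^N)$. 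Thus the dominating function is in $L^1$.

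With pointwise convergence a.e.\ and an $L^1$ dominant in hand, the dominated convergence theorem gives
\[
\frac{d}{ds}Q_1(u+sv)\Big|_{s=0}=\int_{\mathbb R^N}\phi(|\nabla u|)\nabla u\cdot\nabla v\,dx.
\]
The same scheme applied to $Q_2$ — with the extra weight $V(x)\ge0$ absorbed harmlessly into every inequality, and using $\int_{\mathbb R^N}V(x)\Phi(|u|)\,dx<\infty$, $\int_{\mathbb R^N}V(x)\Phi(|v|)\,dx<\infty$ which hold since $u,v\in E$ — gives $\frac{d}{ds}Q_2(u+sv)|_{s=0}=\int_{\mathbb R^N}V(x)\phi(|u|)uv\,dx$. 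Adding the two and checking (again via the Young/H\"older inequalities of Section 2) that the resulting linear functional $v\mapsto Q'(u)v$ is bounded on $E$, so that $Q'(u)\in E^*$, completes the proof. I expect no difficulty beyond the domination step; the continuity of $Q$ and the weak$^*$ continuity of $Q'$ claimed in the surrounding text are separate assertions not part of this lemma.
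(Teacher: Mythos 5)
Your argument is correct. The paper itself does not give a proof of this lemma --- it simply refers the reader to Lemma 4.1 of Alves--Carvalho \cite{AlvesandLeandro} --- so there is nothing in this paper's text to compare against, but your direct dominated-convergence proof is the standard one and every step checks out: the mean value theorem and monotonicity of $t\mapsto t\phi(t)$ give the pointwise bound $\Phi'(|\nabla u|+|\nabla v|)\,|\nabla v|$; Young's inequality, the estimate $\tilde\Phi(\Phi'(t))\le\Phi(2t)$, and the $\Delta_2$-condition on $\Phi$ (which does follow from $m<\infty$ in $(\phi_3)$, since $\Phi'(t)t\le m\Phi(t)$ yields $\Phi(2t)\le 2^m\Phi(t)$) produce an $L^1$ dominant controlled by $\Phi(|\nabla u|)$ and $\Phi(|\nabla v|)$, both integrable because $u,v\in D^{1,\Phi}(\mathbb R^N)$ and $\Phi\in(\Delta_2)$; the $V$-weighted term is handled identically. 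Your closing remark on $Q'(u)\in E^*$ is also right and worth keeping: $\int\tilde\Phi\big(\Phi'(|\nabla u|)\big)\,dx\le\int\Phi(2|\nabla u|)\,dx<\infty$ shows $\Phi'(|\nabla u|)\in L^{\tilde\Phi}(\mathbb R^N)$ without needing $\tilde\Phi\in(\Delta_2)$, and H\"older's inequality then gives boundedness of the linear map $v\mapsto Q'(u)v$.
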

\noindent {\it{Proof:}}
See Lemma 4.1 in \cite{AlvesandLeandro}.

\qed
\begin{lemma}\label{1.1}
	Let $\Phi$ an N-function of the form \eqref{*} satisfying $(\phi_1)$, $(\phi_2)$ and $(\phi_{3})$. If $u_n\longrightarrow u$ in $L^{\Phi}_{V}(\mathbb{R}^{N})$, then there exists $H_1\in L^{\Phi}_{V}(\mathbb{R}^{N})$ and a subsequence $ \{u_{n_j}\}$ such that\vspace*{0.2cm}\\
	$i)\;\;|u_{n_j}(x)|\leq H_1(x)$ for every $x\in\mathbb{R}^{N}$ and every $j\in\mathbb{N}$\vspace*{0.2cm}\\
	$ii)\;\;u_{n_j}(x)\longrightarrow u(x)$ $a.e.$ in $\mathbb{R}^{N}$ and every $j\in\mathbb{N}$.
\end{lemma}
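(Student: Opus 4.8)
\textbf{Proof proposal for Lemma \ref{1.1}.}

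The plan is to mimic the classical proof of the ``converse'' of the dominated convergence theorem (the Riesz–Fischer type argument), adapted to the weighted Orlicz space $L^{\Phi}_{V}(\mathbb{R}^{N})$, and then to invoke Lemma \ref{1.0} applied to the right $N$-function on the right measure space. The key observation is that the norm $\lVert\cdot\rVert_{V,\Phi}$ on $L^{\Phi}_{V}(\mathbb{R}^{N})$ is exactly the Luxemburg norm of the Orlicz space built from $\Phi$ over the measure space $(\mathbb{R}^{N},\mu)$, where $d\mu = V(x)\,dx$. Since $\Phi$ satisfies $(\phi_1)$–$(\phi_3)$ with $\ell\ge 1$, we have $\Phi\in(\Delta_2)$ (in the sense used in the paper, with $t_0=0$ because $|\mathbb{R}^N|=\infty$), so Lemma \ref{1.0} applies verbatim with $\Omega$ replaced by the measure space $(\mathbb{R}^N,\mu)$: convergence $u_n\to u$ in $L^{\Phi}(\mu)$ yields a subsequence $(u_{n_j})$ and a dominating function $H_1\in L^{\Phi}(\mu)=L^{\Phi}_V(\mathbb{R}^N)$ with $|u_{n_j}|\le H_1$ $\mu$-a.e.\ and $u_{n_j}\to u$ $\mu$-a.e.

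Concretely, the steps I would carry out are: (i) From $u_n\to u$ in $L^{\Phi}_V$, extract a subsequence $(u_{n_j})$ with $\lVert u_{n_{j+1}}-u_{n_j}\rVert_{V,\Phi}\le 2^{-j}$. (ii) Set $H_1 := |u_{n_1}| + \sum_{j\ge 1}|u_{n_{j+1}}-u_{n_j}|$; then the partial sums are Cauchy in $L^{\Phi}_V$, hence $H_1\in L^{\Phi}_V(\mathbb{R}^N)$, and by monotonicity of $\Phi$ together with $\Phi\in(\Delta_2)$ one controls $\int V\Phi(|H_1|)\,dx<\infty$; in particular $H_1(x)<\infty$ for a.e.\ $x$. (iii) For each fixed $k\le j$, $|u_{n_j}| \le |u_{n_k}| + \sum_{i\ge k}|u_{n_{i+1}}-u_{n_i}| \le H_1$ pointwise a.e., giving the domination $(i)$. (iv) The telescoping sum shows $(u_{n_j}(x))_j$ is Cauchy in $\mathbb{R}$ for a.e.\ $x$, so it converges a.e.\ to some limit, and since $u_{n_j}\to u$ in $L^{\Phi}_V$ implies convergence in measure (on sets of finite $\mu$-measure), the a.e.\ limit must coincide with $u$ a.e., giving $(ii)$.

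The one point that needs a little care — and the main obstacle — is that $d\mu=V(x)\,dx$ need not be $\sigma$-finite in the most favorable way, or rather that $V$ is only nonnegative and continuous, so ``$\mu$-a.e.''\ needs to be reconciled with ``$dx$-a.e.''\ on the set $\{V=0\}$. Under $(K_0)$ we have $V>0$, so this set is empty and the two notions coincide; alternatively, one restricts attention to $\{V>0\}$ where everything goes through and notes that the statement on $\{V=0\}$ is vacuous for the $L^{\Phi}_V$-structure. The other routine check is that convergence in the Luxemburg norm of $L^{\Phi}_V$ implies convergence in $\mu$-measure on $\mu$-finite sets: this follows from the norm estimate $\xi_0(\lVert w\rVert_{V,\Phi})\le \int V\Phi(|w|)\,dx$ of Lemma \ref{0.3} (applied over $\mu$) together with Chebyshev's inequality, exactly as in the unweighted case. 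Once these measure-theoretic identifications are in place, the proof is identical to that of Lemma \ref{1.0}, so I would simply say: ``The argument is the same as in Lemma \ref{1.0}, applied to the $N$-function $\Phi\in(\Delta_2)$ over the measure space $(\mathbb{R}^N, V\,dx)$,'' and spell out only the extraction of the dominating series if full detail is wanted.
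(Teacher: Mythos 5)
Your proposal is correct and takes the same route the paper does: the paper simply cites \cite{Tienari} (Lemma 2.5), which is precisely the Riesz--Fischer type argument you spell out, applied over the weighted measure $d\mu=V(x)\,dx$, for which $\lVert\cdot\rVert_{V,\Phi}$ is indeed the Luxemburg norm. The extraction of a rapidly Cauchy subsequence, the dominating series $H_1=|u_{n_1}|+\sum_j|u_{n_{j+1}}-u_{n_j}|$, the use of $\Phi\in(\Delta_2)$ to pass from finiteness of $\lVert H_1\rVert_{V,\Phi}$ to $\int V\Phi(H_1)\,dx<\infty$, and the reconciliation via $(K_0)$ (so $V>0$ and $\mu$-a.e.\ coincides with Lebesgue-a.e.) are exactly the ingredients of that standard proof; your only deviation is cosmetic, in how you identify the a.e.\ limit with $u$ (convergence in measure vs.\ a one-line Fatou argument), and either works.
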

\noindent {\it{Proof:}} It is enough to repeat the same argument explored in [\citenum{Tienari}, Lemma $2.5$].

\qed

As an immediate consequence of the Lemma \ref{1.1}, we have the following result.

\begin{lemma}
	The functional $Q:E\longrightarrow\mathbb{R}$ is continuous in the norm topology.
\end{lemma}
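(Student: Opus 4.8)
The plan is to combine a routine subsequence argument with the dominated convergence theorem, using Lemmas~\ref{1.0} and \ref{1.1} to supply a.e.\ convergence and an integrable dominating function along a suitable subsequence. First I would reduce to a subsequential statement: since a sequence of reals converges iff every subsequence has a further subsequence converging to the same limit, to prove $Q(u_n)\to Q(u)$ whenever $u_n\to u$ in $E$ it suffices to show that \emph{every} subsequence of $(u_n)$ admits a further subsequence along which $Q$ converges to $Q(u)$. So I fix $u_n\to u$ in $E$, which by the definition of $\lVert\cdot\rVert_E$ unpacks into $\partial u_n/\partial x_i\to\partial u/\partial x_i$ in $L^{\Phi}(\mathbb{R}^N)$ for $i=1,\dots,N$ and $u_n\to u$ in $L^{\Phi}_V(\mathbb{R}^N)$.

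Next I would extract a single good subsequence. Since $(\phi_3)$ forces $\Phi\in(\Delta_2)$, Lemma~\ref{1.0} applies to each of the $N$ sequences $\partial u_n/\partial x_i$; extracting successively over $i$ produces one subsequence $(u_{n_j})$ together with nonnegative functions $H_1,\dots,H_N\in L^{\Phi}(\mathbb{R}^N)$ such that $|\partial_i u_{n_j}(x)|\le H_i(x)$ a.e.\ and $\partial_i u_{n_j}(x)\to\partial_i u(x)$ a.e.\ for every $i$. Passing to a further subsequence and applying Lemma~\ref{1.1} to $u_n\to u$ in $L^{\Phi}_V(\mathbb{R}^N)$, I also obtain $\overline H\in L^{\Phi}_V(\mathbb{R}^N)$ with $|u_{n_j}(x)|\le\overline H(x)$ a.e.\ and $u_{n_j}(x)\to u(x)$ a.e.

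Then I would pass to the limit inside both integrals defining $Q$. With $G:=\sum_{i=1}^N H_i\in L^{\Phi}(\mathbb{R}^N)$ one has $|\nabla u_{n_j}|\le G$ pointwise, and the continuity of $\Phi$ together with $\partial_i u_{n_j}(x)\to\partial_i u(x)$ a.e.\ gives $\Phi(|\nabla u_{n_j}(x)|)\to\Phi(|\nabla u(x)|)$ a.e., while $\Phi(|\nabla u_{n_j}|)\le\Phi(G)\in L^1(\mathbb{R}^N)$ because $\Phi\in(\Delta_2)$. Similarly $V(x)\Phi(|u_{n_j}(x)|)\to V(x)\Phi(|u(x)|)$ a.e.\ with $V(x)\Phi(|u_{n_j}|)\le V(x)\Phi(\overline H)$, and $x\mapsto V(x)\Phi(\overline H(x))$ is integrable by the very definition of $L^{\Phi}_V(\mathbb{R}^N)$. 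Two applications of the dominated convergence theorem yield $\int\Phi(|\nabla u_{n_j}|)\to\int\Phi(|\nabla u|)$ and $\int V\Phi(|u_{n_j}|)\to\int V\Phi(|u|)$, hence $Q(u_{n_j})\to Q(u)$; this closes the subsequential step and completes the proof.

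The argument is essentially mechanical. The two points needing a little care are the bookkeeping of the $N$ successive subsequence extractions for the partial derivatives together with the one for the $L^{\Phi}_V$ part, so that a single common subsequence serves all of them, and the integrability of the dominating functions, where the $\Delta_2$-condition on $\Phi$ (guaranteed by $(\phi_3)$) is what lets us pass from $G\in L^{\Phi}(\mathbb{R}^N)$ to $\Phi(G)\in L^1(\mathbb{R}^N)$; this is exactly the content of the identity $L^{\Phi}(\Omega)=\{u\in L^1_{\mathrm{loc}}:\int_\Omega\Phi(|u|)\,dx<+\infty\}$ recorded earlier for $\Phi\in(\Delta_2)$.
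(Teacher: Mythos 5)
Your argument is correct and is exactly the dominated-convergence argument the paper has in mind when it calls the lemma ``an immediate consequence of Lemma~\ref{1.1}.'' The one cosmetic simplification worth noting (and the one the paper itself uses a few lines later in the proof of Lemma~\ref{Lema1}) is that you need not apply Lemma~\ref{1.0} componentwise and perform $N$ successive extractions: since $\bigl||\nabla u_n|-|\nabla u|\bigr|\le|\nabla u_n-\nabla u|$, the convergence $\lVert\nabla u_n-\nabla u\rVert_\Phi\to0$ already gives $|\nabla u_n|\to|\nabla u|$ in $L^{\Phi}(\mathbb{R}^N)$, and a single application of Lemma~\ref{1.0} to this scalar sequence yields one dominating function $H\in L^{\Phi}(\mathbb{R}^N)$ with $|\nabla u_{n_j}|\le H$ and $|\nabla u_{n_j}|\to|\nabla u|$ a.e.; this replaces your $G=\sum_i H_i$ and the bookkeeping over $i$. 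Everything else — the reduction to subsequences, the appeal to $\Phi\in(\Delta_2)$ to get $\Phi(H)\in L^1$ and $V\Phi(\overline H)\in L^1$, and the two applications of dominated convergence — matches the intended proof.
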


\begin{lemma}\label{Lema1}
	The Gateaux derivative $Q':E\longrightarrow E^*$ is continuous from the norm topology of $E$ to the weak$^*$ topology of $E^*$.
\end{lemma}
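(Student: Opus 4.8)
**Proof proposal for Lemma \ref{Lema1} (continuity of $Q'$ from the norm topology to the weak$^*$ topology).**

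The plan is to show that if $u_n \to u$ in $E$, then $Q'(u_n)v \to Q'(u)v$ for every fixed $v \in E$; since $E = \overline{C_0^\infty(\mathbb{R}^N)}^{\|\cdot\|_E}$ and the operators $Q'(u_n)$, $Q'(u)$ are uniformly bounded (because $\phi(|\cdot|)|\cdot| \in L^{\tilde\Phi}$ with norm controlled by $Q(u_n)$, which converges), a standard density argument will upgrade this to testing against all $v \in E$. Writing $Q'(u_n)v - Q'(u)v$ as the sum of the gradient term $\int_{\mathbb{R}^N}\big(\phi(|\nabla u_n|)\nabla u_n - \phi(|\nabla u|)\nabla u\big)\cdot \nabla v\,dx$ and the potential term $\int_{\mathbb{R}^N} V(x)\big(\phi(|u_n|)u_n - \phi(|u|)u\big)v\,dx$, it suffices to treat each separately. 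I will carry out the potential term in detail; the gradient term is handled identically after replacing $u_n$ by $\nabla u_n$, $V\,dx$ by $dx$, and $\Phi$ by $\Phi$ again (using that $\nabla u_n \to \nabla u$ in $L^\Phi(\mathbb{R}^N)$, which is part of convergence in $E$).

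For the potential term, the first step is to pass to a subsequence and invoke Lemma \ref{1.1}: from $u_n \to u$ in $L^\Phi_V(\mathbb{R}^N)$ there is $H_1 \in L^\Phi_V(\mathbb{R}^N)$ and a subsequence $(u_{n_j})$ with $|u_{n_j}(x)| \le H_1(x)$ and $u_{n_j}(x) \to u(x)$ a.e. Then the integrand $g_j(x) := V(x)\big(\phi(|u_{n_j}|)u_{n_j} - \phi(|u|)u\big)v(x)$ converges to $0$ a.e. by continuity of $t \mapsto t\phi(|t|)$, and I will dominate it: using $(\phi_1)$--$(\phi_3)$ one has the pointwise bound $|\phi(|s|)s| \le C\,\Phi'(|s|)$ (indeed $\phi(t)t^2 \le m\Phi(t)$ and $\Phi'(t) = t\phi(t)$, so $\phi(t)t = \Phi'(t)$ directly from \eqref{*}), hence by the Young-type inequality $st \le \Phi(t) + \tilde\Phi(s)$ and Lemma \ref{0.51},
\begin{align*}
V(x)\,|\phi(|u_{n_j}|)u_{n_j}|\,|v(x)| &\le V(x)\big(\tilde\Phi(\Phi'(|u_{n_j}|)) + \Phi(|v|)\big) \le V(x)\big(\Phi(2|u_{n_j}|) + \Phi(|v|)\big)\\
&\le V(x)\big(2^m\Phi(H_1(x)) + \Phi(|v(x)|)\big) \in L^1(\mathbb{R}^N),
\end{align*}
where the last step uses $\Phi \in (\Delta_2)$ (Lemma \ref{0.3} gives $\Phi(2t) \le 2^m\Phi(t)$), that $H_1 \in L^\Phi_V$, and $v \in E \subset L^\Phi_V$. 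The same domination applies to the $\phi(|u|)u$ piece. Thus the dominated convergence theorem gives $\int g_j \to 0$, and since every subsequence of $(u_n)$ has a further subsequence along which the integral tends to $0$, the whole sequence converges.

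The main obstacle is purely bookkeeping: making sure the domination is genuinely uniform in $n$, for which Lemma \ref{1.1} is the crucial tool (it is precisely the substitute for the Lebesgue-dominated-convergence step that would be automatic in a reflexive setting), and being careful that convergence in $E$ entails \emph{both} $\nabla u_n \to \nabla u$ in $L^\Phi$ \emph{and} $u_n \to u$ in $L^\Phi_V$, so that Lemma \ref{1.1} can be applied to each piece. Finally, to pass from "tested against $v \in C_0^\infty$" to "tested against all $v \in E$" in the weak$^*$ sense, I note that $\|Q'(u_n)\|_{E^*}$ is bounded: by Hölder's inequality in Orlicz spaces $|Q'(u_n)v| \le 2\|\phi(|\nabla u_n|)\nabla u_n\|_{\tilde\Phi}\|\nabla v\|_\Phi + 2\|\phi(|u_n|)u_n\|_{\tilde\Phi,V}\|v\|_{V,\Phi}$, and the $\tilde\Phi$-norms are controlled via the inequality $\tilde\Phi(\Phi'(t)) \le \Phi(2t) \le 2^m\Phi(t)$ together with Lemmas \ref{0.3} and the convergence of $Q(u_n) \to Q(u)$; an equicontinuity-plus-density argument then closes the proof.
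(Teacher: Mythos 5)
Your proposal is correct and follows essentially the same route as the paper: reduce to sequential continuity (for a fixed $v$, show $Q'(u_n)v\to Q'(u)v$), invoke Lemma~\ref{1.1} (and its unweighted analogue Lemma~\ref{1.0} for the gradient term) to extract a subsequence with a dominating function, and conclude by dominated convergence together with the subsequence-of-a-subsequence principle. Your Young-inequality step $V\,|\phi(|u_{n_j}|)u_{n_j}|\,|v|\le V\bigl(\tilde\Phi(\Phi'(|u_{n_j}|))+\Phi(|v|)\bigr)\le V\bigl(2^m\Phi(H_1)+\Phi(|v|)\bigr)$ makes explicit why the dominating function is integrable, a point the paper's proof (which just writes $\phi(H)H|\nabla v|$ as the majorant) glosses over; that is a genuine, if small, improvement in rigor. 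The scaffolding at the start and end about uniform boundedness of $\|Q'(u_n)\|_{E^*}$ and a density argument passing from $C_0^\infty$ to $E$ is superfluous: your domination argument already works for an arbitrary fixed $v\in E$, so there is no gap to bridge, and you can drop those passages entirely.
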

\noindent {\it{Proof:}} By the Proposition $3.2$ of \cite{Brezis} is sufficient prove that, any sequence $(u_n)\subset E $ such that $u_n\longrightarrow u$ in $E$, implies
$$\langle Q'(u_n),v\rangle\longrightarrow \langle Q'(u),v\rangle,\;\;\;\forall v\in E.$$

Consider $(u_n)\subset E$ such that $u_n\longrightarrow u$ in $E$, then
\begin{equation*}
|\nabla u_n|\longrightarrow|\nabla u|\;\;\text{ in }\;\;L^{\Phi}(\mathbb{R}^{N})\;\;\text{ and }\;\;
u_n\longrightarrow u \;\text{ in }\;L^{\Phi}_{V}(\mathbb{R}^{N}).
\end{equation*}
By Lemma \ref{1.1}, there are $H_1\in L^{\Phi}_{V}(\mathbb{R}^{N})$, $H_2\in L^{\Phi}_{V}(\mathbb{R}^{N})$ and a subsequence $\{u_{n_j}\}\subset\{u_n\}$ such that \\
{\it{i)}} $| u_{n_j}(x)|\leq H_1(x),\;$ for $x\in\mathbb{R}^{N}$ and $j\in\mathbb{N}$ \\
{\it{ii)}} $| \nabla u_{n_j}(x)|\leq H_2(x),\;$ for $x\in\mathbb{R}^{N}$ and $j\in\mathbb{N}$ \\
{\it{iii)}} $u_{n_j}(x) \rightarrow u(x),\;$ $a.e.$ in $ \mathbb {R}^{N} $ and $j\in\mathbb{N}$ \\
{\it{iv)}} $|\nabla u_{n_j}(x)|\longrightarrow|\nabla u(x)|,\;$ $a.e.$ in $ \mathbb {R}^{N} $ and $j\in\mathbb{N}$.

Set $v\in E$ arbitrary. By the continuity of the function $\phi$, it follows that
\begin{equation*}
\phi(|\nabla u_{n_j}(x)|)\nabla u_{n_j}(x)\nabla v(x)\longrightarrow	\phi(|\nabla u(x)|)\nabla u(x)\nabla v(x),\;a.e. \text{ in } \mathbb {R}^{N}.
\end{equation*}
Also, by $(\phi_1)$ the function $\phi(t)t$ is increasing for every $t>0$, thus
\begin{align*}
\phi(|\nabla u_{n_j}(x)|)\nabla u_{n_j}(x)\nabla v(x)|\leq 	\phi(|\nabla u_{n_j}(x)|)|\nabla u_{n_j}(x)||\nabla v(x)|\leq 	\phi(|H(x)|)| H(x)|\nabla v(x)|.
\end{align*}
Hence by Lebesgue dominated convergence theorem
\begin{align*}
\int_{\mathbb{R}^{N}}	\phi(|\nabla u_{n_j}|)\nabla u_{n_j}\nabla v dx\longrightarrow
\int_{\mathbb{R}^{N}}	\phi(|\nabla u|)\nabla u\nabla v dx,
\end{align*}
with this, we ensure that
\begin{align*}
\int_{\mathbb{R}^{N}}	\phi(|\nabla u_{n}|)\nabla u_{n}\nabla v dx\longrightarrow
\int_{\mathbb{R}^{N}}	\phi(|\nabla u|)\nabla u\nabla v dx.
\end{align*}
Similarly, we have
\begin{align*}
\int_{\mathbb{R}^{N}}	\phi(| u_{n}|) u_{n}v dx\longrightarrow
\int_{\mathbb{R}^{N}}	\phi(|u|)u v dx.
\end{align*}
Therefore,
\begin{align*}
\langle Q'(u_n),v\rangle\longrightarrow \langle Q'(u),v\rangle.
\end{align*} 
By the arbitrariness of $v\in E$, we conclude the results.

\qed

\section{Proof of Theorems \ref{Teo1} and \ref{Teo1.1}}\label{1}
%
%

Initially, note that the condition $(f_4)$ implies that
$\displaystyle\lim_{t\rightarrow+\infty}\frac{f(t)}{\phi_{*}(t)t}=0$. Then, by the conditions $(f_1)$ or $(f_4)$, given $\varepsilon>0 $ there exists $\delta_0>0$, $ \delta_1>0$ and $C_{\varepsilon}>0$ such that
\begin{align}\label{11.16}
K(x)|f(t)|\leq \varepsilon C_1\big(V(x)t\phi(t)+t\phi_{*}(t)\big)+C_{\varepsilon}K(x)t\phi_{*}(t)\chi_{[\delta_0,\delta_1]}(t),
\end{align}
for every $t\geq0$ and $x\in\mathbb{R}^N$, where $C_1=\max\big\{\lVert{K}\lVert_{\infty}, \big\lVert\frac{K}{V}\big\lVert_{\infty}   \big\}$. This inequality yields that the functional $\mathcal{F}:E\longrightarrow\mathbb{R}$ given by
\begin{equation}
\mathcal{F}(u)=\int_{\mathbb{R}^{N}}K(x)F(u)dx
\end{equation}
is well defined and belongs to $ C^1(E,\mathbb R)$ with derivative
$$
\mathcal{F}'(u)v=
\int_{\mathbb R^N}K(x)f(u)vdx, \quad \forall u,v \in E.
$$
From the results presented in the previous section, we can conclude that the energy functional $J:E\longrightarrow\mathbb{R}$ associated with the problem $(P)$, which is given by
$$
J(u)=\int_{\mathbb{R}^{N}} \Phi(|\nabla u|)dx+\int_{\mathbb{R}^{N}}V(x)\Phi(|u|)dx-\int_{\mathbb{R}^{N}}K(x)F(u)dx.
$$
is a continuous and Gateaux-differentiable functional such that $J':E\longrightarrow E^*$ given by
\begin{align*}
J'(u)v=\int_{\mathbb{R}^{N}} \phi(|\nabla u|)\nabla u\nabla v dx+\int_{\mathbb{R}^{N} } V(x)\phi(|u|)uv dx-
\int_{\mathbb R^N}K(x)f(u)vdx
\end{align*}
is continuous from the norm topology of	$E$ to the weak$^*$-topology of $E^*$.

 Once that we intend to find nonnegative solutions for the problem $(P)$, we will assume that
\begin{align}\label{00.22}
f(s)=0,\;\;\;\forall s\in (-\infty,0].
\end{align}

The convexity of the functional $Q$ together with the Gateaux-differentiability of the functional $J$ allows us to present a definition of a critical point for $J$. In this sense, we will say that $u\in E$ is a critical point for the functional $J$ if
\begin{align}\label{0.13}
Q(v)-Q(u)\geq\int_{\mathbb{R}^{N}}{K(x)f(u)(v-u)}dx,\;\;\;\;\forall v\in E.
\end{align}

Our next lemma establishes that a critical point $u$ in the sense \eqref{0.13} is a weak solution for $(P)$.

\begin{proposition} \label{0.4}
	If $u\in E$ is a critical point of $J$ in $E$, then u is a weak solution to $(P)$.
\end{proposition}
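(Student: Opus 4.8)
The plan is to exploit the variational inequality \eqref{0.13} by testing it along line segments issuing from $u$. Fix an arbitrary $w\in E$ and, for $t>0$, insert $v=u+tw$ into \eqref{0.13}; this is legitimate because $E$ is a vector space, so $u+tw\in E$. Since $\mathcal{F}\in C^{1}(E,\mathbb{R})$ with $\mathcal{F}'(u)$ linear, the right-hand side of \eqref{0.13} is exactly $t\int_{\mathbb{R}^{N}}K(x)f(u)w\,dx$, whence
\begin{align*}
Q(u+tw)-Q(u)\geq t\int_{\mathbb{R}^{N}}K(x)f(u)w\,dx .
\end{align*}
Dividing by $t>0$ and letting $t\to0^{+}$, and using that $Q$ is Gateaux differentiable with derivative $Q'(u)$ (the Gateaux-differentiability lemma proved above), the difference quotient on the left converges to $Q'(u)w$, so that
\begin{align*}
Q'(u)w=\int_{\mathbb{R}^{N}}\phi(|\nabla u|)\nabla u\cdot\nabla w\,dx+\int_{\mathbb{R}^{N}}V(x)\phi(|u|)uw\,dx\;\geq\;\int_{\mathbb{R}^{N}}K(x)f(u)w\,dx .
\end{align*}

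Next I would run the same computation with $-w$ in place of $w$ (equivalently, test \eqref{0.13} with $v=u-tw$), obtaining $Q'(u)(-w)\geq-\int_{\mathbb{R}^{N}}K(x)f(u)w\,dx$, i.e. $Q'(u)w\leq\int_{\mathbb{R}^{N}}K(x)f(u)w\,dx$. Combining the two inequalities yields
\begin{align*}
Q'(u)w=\int_{\mathbb{R}^{N}}K(x)f(u)w\,dx,\qquad\forall\,w\in E,
\end{align*}
which says precisely that $J'(u)w=0$ for every $w\in E$, i.e. $u$ is a weak solution of $(P)$; restricting $w$ to $C_{0}^{\infty}(\mathbb{R}^{N})$, which is dense in $E$, recovers the distributional form of the equation.

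I do not expect a substantial obstacle here: the argument uses only the Gateaux differentiability of $Q$ (convexity of $Q$ is not needed for this implication — it enters only in motivating why \eqref{0.13} is the appropriate notion of critical point when $J$ fails to be $C^{1}$) together with the linearity of $\mathcal{F}'(u)$. The only points requiring a word of care are that the perturbations $u\pm tw$ remain in $E$, which is immediate, and that the one-sided difference quotient of $Q$ at $u$ in the direction $w$ converges to $Q'(u)w$, which is exactly the content of the preceding lemma. Regularity and positivity of $u$ are not part of this statement and are treated separately.
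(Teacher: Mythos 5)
Your argument is correct, and it is the standard one for this kind of Szulkin-type critical point: test the variational inequality \eqref{0.13} along $v=u\pm tw$, divide by $t$, and pass to the limit $t\to0^{+}$ using the Gateaux differentiability of $Q$ established earlier in the section. The paper does not spell out the argument — it simply refers to Lemma~4.1 of \cite{AlvesandLeandro} — but the mechanism there is the same, and your identification of exactly which hypotheses are used (Gateaux differentiability of $Q$ and linearity of $\mathcal{F}'(u)$, with convexity of $Q$ relevant only for the converse direction that motivates the definition) is accurate. No gaps.
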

\noindent {\it{Proof:}} See Lemma 4.1 in \cite{AlvesandLeandro}.

\qed

Now, let us check that $J$ also satisfies the mountain pass geometry

\begin{lemma}\label{4.9}
	There are $\rho,\eta>0$ such that $J(u)\geq \eta$ if $\lVert u\lVert_{E} = \rho$.
\end{lemma}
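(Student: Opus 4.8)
The plan is to show that $J$ is bounded below by a positive constant on a small sphere $\lVert u\lVert_E=\rho$ by estimating the quadratic-type term $Q(u)$ from below and the nonlinear term $\mathcal F(u)=\int K(x)F(u)\,dx$ from above, using the growth estimate \eqref{11.16} together with the equivalence-of-norms/modular inequalities from Lemmas \ref{0.3} and \ref{0.5}. First I would integrate \eqref{11.16} in $t$ from $0$ to $u(x)$ (recall $F(t)=\int_0^t f$) to obtain a pointwise bound of the form
\begin{align*}
K(x)F(u)\leq \varepsilon C_1\bigl(V(x)\Phi(|u|)+\Phi_*(|u|)\bigr)+C_\varepsilon\, K(x)\,\Phi_*(|u|)\chi_{\{\delta_0\leq|u|\leq\delta_1\}},
\end{align*}
using that $\int_0^s t\phi(t)\,dt=\Phi(s)$ and $\int_0^s t\phi_*(t)\,dt=\Phi_*(s)$ (up to the constant implicit in the sign conventions). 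Integrating over $\mathbb R^N$ gives
\begin{align*}
\mathcal F(u)\leq \varepsilon C_1\Bigl(\int_{\mathbb R^N}V(x)\Phi(|u|)\,dx+\int_{\mathbb R^N}\Phi_*(|u|)\,dx\Bigr)+C_\varepsilon\int_{\mathbb R^N}K(x)\Phi_*(|u|)\chi_{\{\delta_0\leq|u|\leq\delta_1\}}\,dx.
\end{align*}

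Next I would control each piece. By the Orlicz--Sobolev embedding \eqref{0.9} and Lemma \ref{0.5}, the term $\int\Phi_*(|u|)\,dx\leq \xi_3(\lVert u\lVert_{\Phi_*})\leq \xi_3(S_N\lVert\nabla u\lVert_\Phi)$, which for $\lVert u\lVert_E=\rho$ small behaves like $\rho^{\ell^*}$ (the smaller exponent dominates near $0$). The term $\int V\Phi(|u|)\,dx\leq\xi_1(\lVert u\lVert_{V,\Phi})\lesssim\rho^{\ell}$ for $\rho$ small, again by Lemma \ref{0.3} applied to the modular in $L^\Phi_V$. For the last ("truncated") term, on the set $\{\delta_0\leq|u|\leq\delta_1\}$ we have $\Phi_*(|u|)\leq C\,\Phi_*(\delta_1/\delta_1)\cdot(\text{something})$; more cleanly, since $\delta_0\leq|u|$ on that set, $\Phi_*(|u|)\leq \Phi_*(\delta_1)\,(|u|/\delta_0)^{?}$ — but the efficient route is to note $\chi_{\{|u|\geq\delta_0\}}\Phi_*(|u|)\leq (\Phi_*(\delta_1)/\Phi_*(\delta_0))\,\Phi_*(|u|)$ is false in general, so instead bound $\Phi_*(|u|)\chi_{\{\delta_0\leq|u|\leq\delta_1\}}\leq \Phi_*(\delta_1)$ and then $\int K(x)\Phi_*(\delta_1)\chi_{\{|u|\geq\delta_0\}}\,dx$; using $\chi_{\{|u|\geq\delta_0\}}\leq \Phi_*(|u|)/\Phi_*(\delta_0)$ (valid since $\Phi_*$ is increasing) and $K\in L^\infty$, this piece is again $\lesssim_{\varepsilon,\delta_0,\delta_1}\lVert K\lVert_\infty\int\Phi_*(|u|)\,dx\lesssim \rho^{\ell^*}$ for small $\rho$. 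Meanwhile from below, $Q(u)=\int\Phi(|\nabla u|)\,dx+\int V\Phi(|u|)\,dx\geq \xi_0(\lVert\nabla u\lVert_\Phi)+\xi_0(\lVert u\lVert_{V,\Phi})\gtrsim \rho^m$ for $\rho$ small (here the \emph{larger} exponent $m$ governs the lower bound of $\xi_0$ near $0$), so that
\begin{align*}
J(u)=Q(u)-\mathcal F(u)\geq c_1\rho^m-\varepsilon c_2\rho^{\ell}-\varepsilon c_2'\rho^{\ell^*}-C_\varepsilon c_3\rho^{\ell^*}.
\end{align*}

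Finally I would choose the parameters in the right order: since $m<\ell^*$ always and we only need a single small sphere, fix $\rho\in(0,1)$ small; the dominant small-$\rho$ term on the positive side is $c_1\rho^m$, and on the negative side the terms of order $\rho^\ell$ and $\rho^{\ell^*}$ — wait, here is the subtlety: if $\ell<m$ then $\rho^\ell$ dominates $\rho^m$ for small $\rho$, so this naive splitting fails. The correct resolution (and the main obstacle) is that one must use a sharper lower bound on $Q$ matching the $\rho^\ell$ scale, or rather absorb the $\varepsilon c_2\rho^\ell$ term by choosing $\varepsilon$ small relative to the corresponding lower-bound contribution: indeed the $V\Phi(|u|)$ part of $Q$ contributes $\gtrsim\rho^m$ but also the inequality $\int V\Phi(|u|)\le \xi_1(\|u\|_{V,\Phi})$ pairs with a genuine lower bound, so the correct pairing is $\varepsilon C_1\int V\Phi(|u|)\le \varepsilon C_1\int V\Phi(|u|)$ trivially, hence one should simply write $J(u)\ge (1-\varepsilon C_1)\int V\Phi(|u|)\,dx+\int\Phi(|\nabla u|)\,dx-\varepsilon C_1\int\Phi_*(|u|)\,dx-C_\varepsilon(\cdots)$ and then use $\int\Phi_*(|u|)\le \xi_3(S_N\|\nabla u\|_\Phi)$ to absorb $\varepsilon C_1\int\Phi_*$ into $\int\Phi(|\nabla u|)$ for $\varepsilon$ small and $\rho$ small, leaving $J(u)\ge \tfrac12\int\Phi(|\nabla u|)\,dx - C_\varepsilon(\text{truncated term})$. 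The truncated term, being supported on $\{|u|\ge\delta_0\}$, is $o(\|\nabla u\|_\Phi^m)$ as $\rho\to0$ by interpolation (it vanishes faster because it "costs" at least $\delta_0$ pointwise), so for $\rho=\rho(\varepsilon,\delta_0,\delta_1)$ sufficiently small one gets $J(u)\ge \tfrac14\xi_0(\rho)=:\eta>0$. Thus the key technical point to get right is the order of quantifiers — first $\varepsilon$ small to absorb the quasicritical term via the Sobolev inequality, then $\rho$ small to kill the truncated remainder — and the bookkeeping of which of $\xi_0,\xi_1$ (i.e. which of the exponents $\ell,m,\ell^*,m^*$) is the relevant one on a ball of radius $<1$.
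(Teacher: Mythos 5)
Your final strategy is correct and is essentially the paper's own proof: absorb the $\varepsilon C_1 V(x)\Phi$ piece of the growth estimate \eqref{11.16} into the positive $\int V\Phi(|u|)\,dx$ part of $Q$ (rather than estimating that integral from above, which — as you correctly diagnose mid-proposal — would produce a fatal $\rho^\ell$ term beating $\rho^m$ when $\ell<m$), lump both $\Phi_*$ contributions (the $\varepsilon$ piece and the truncated piece, the latter bounded by $\lVert K\rVert_\infty\Phi_*$) together and control them by $\xi_3(\lVert u\rVert_{\Phi_*})\lesssim\rho^{\ell^*}$ via the Orlicz--Sobolev inequality, and bound $Q(u)$ below by $\xi_0$-terms $\gtrsim\rho^m$, concluding from $m<\ell^*$. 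The paper does exactly this but more directly (it collapses the right-hand side of \eqref{11.16} to $\frac12 V(x)\Phi(t)+C_\varepsilon\Phi_*(t)$ at the outset and never entertains the doomed $\xi_1(\lVert u\rVert_{V,\Phi})\lesssim\rho^\ell$ estimate), so while your write-up reaches the correct conclusion you could streamline it by starting from that merged inequality; also, the appeal to "interpolation" for the truncated term is unnecessary — it is already $\lesssim\rho^{\ell^*}=o(\rho^m)$ by the same $\xi_3$ bound.
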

\noindent {\it{Proof:}} Consider $0<\varepsilon<\frac{1}{2C_1} $ with $C_1=\big\lVert\frac{K}{V}\big\lVert_{\infty}$. By \eqref{11.16}, there is $C_{\varepsilon}>0$, such that
\begin{align}
K(x)|F(t)|\leq \frac{1}{2}V(x)\Phi(t)+C_{\varepsilon}\Phi_{*}(t),\;\;\forall t\geq 0 \;\text{ and }\; x\in\mathbb{R}^N.
\end{align}
Thus,
\begin{align*}
J(u)&\geq \int_{\mathbb{R}^{N}} \Phi(|\nabla u|)dx+\frac{1}{2}\int_{\mathbb{R}^{N}} V(x)\Phi(|u|)dx-C_{\varepsilon}\int_{\mathbb{R}^{N}}\Phi_{*}(|u|)dx\\
&\geq C_2\big(\xi_{0}(\lVert \nabla u\lVert_{\Phi})+\xi_{0}(\lVert u\lVert_{V,\Phi})\big)-C_2\xi_{3}(\lVert u\lVert_{\Phi_{*}}),
\end{align*}
for some $C_2>0$, where $\xi_{0}(t)=\min\{t^{\ell},\,t^{m}\}$ and $\xi_{3}(t)=\max\{t^{\ell^*},\,t^{m^{*}}\}.$
Choose $\rho>0$ such that
\begin{align*}
\lVert u\lVert_{E}=\lVert u\lVert_{D^{1,\Phi}(\mathbb{R}^{N})}+\lVert u\lVert_{V,\Phi}=\rho<1.
\end{align*}
As $E$ is continuously embedded in $L^{\Phi_{*}}(\mathbb{R}^{N})$, we get $\lVert u\lVert_{\Phi^{*}}\leq1$. Furthermore,
\begin{align*}
J(u)\geq C_2\big(\lVert \nabla u\lVert_{\Phi}^{m}+\lVert u\lVert_{V,\Phi}^{m}\big)-C_2\lVert u\lVert_{\Phi_{*}}^{\ell^*}.
\end{align*}
Using classical inequality
\begin{align*}
(x+y)^{\alpha}\leq 2^{\alpha-1}(x^{\alpha}+y^{\alpha}),\;\;x,y\geq 0\;\; \text{ with }\;\;\;\alpha>1,
\end{align*}
we concluded that
\begin{align*}
J(u)	\geq C_3\lVert u\lVert_{E}^{m}-C_3\lVert u\lVert_{E}^{\ell^*},
\end{align*}
for some positive constant $C_3$. Since $ 0 <m <\ell^*$, there is $\eta> 0 $ such that 
\begin{align*}
J(u)\geq \eta \;\;\;\text{for all}\;\;\lVert u\lVert_{E}=\rho.
\end{align*}
%

\qed

\begin{lemma}\label{4.10}
	There is $e\in E$ with $\lVert e\lVert_{E}>\rho$ and $J(e)<0$.
\end{lemma}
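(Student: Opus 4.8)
The plan is to test $J$ along the ray $t\mapsto tu_{0}$, $t>0$, for a fixed $u_{0}\in C^{\infty}_{0}(\mathbb{R}^{N})$ with $u_{0}\geq 0$ and $u_{0}\not\equiv 0$ (such a $u_{0}$ belongs to $E$, since $E=\overline{C^{\infty}_{0}(\mathbb{R}^{N})}^{\|\cdot\|_{E}}$), and to show that $J(tu_{0})\to-\infty$ as $t\to+\infty$; then $e:=t_{0}u_{0}$ will work for every sufficiently large $t_{0}$. The two ingredients are an upper bound on the convex part $Q$ coming from the scaling estimate of Lemma \ref{0.3}, and a lower bound on the nonlinear part coming from the $m$-superlinearity $(f_{3})$.

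First I would control $Q(tu_{0})$. For $t\geq 1$ one has $\xi_{1}(t)=\max\{t^{\ell},t^{m}\}=t^{m}$, so Lemma \ref{0.3} gives $\Phi(t|\nabla u_{0}|)\leq t^{m}\Phi(|\nabla u_{0}|)$ and $\Phi(t|u_{0}|)\leq t^{m}\Phi(|u_{0}|)$ a.e.; integrating the latter against the weight $V$ and adding yields
\begin{align*}
Q(tu_{0})\leq t^{m}Q(u_{0}),\qquad t\geq 1,
\end{align*}
and $Q(u_{0})<\infty$ because $u_{0}$ has compact support and $V$, $\Phi$ are continuous. Next, from $(f_{3})$, for every $M>0$ there is $R>0$ with $F(s)\geq Ms^{m}$ for $s\geq R$, and hence (using continuity of $F$ on $[0,R]$) a constant $C_{M}\geq 0$ with $F(s)\geq Ms^{m}-C_{M}$ for all $s\geq 0$. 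Since $tu_{0}\geq 0$, this gives
\begin{align*}
\int_{\mathbb{R}^{N}}K(x)F(tu_{0})\,dx\geq Mt^{m}\int_{\mathbb{R}^{N}}K(x)u_{0}^{m}\,dx-C_{M}\int_{\operatorname{supp}u_{0}}K(x)\,dx .
\end{align*}

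Putting the two bounds together, for $t\geq 1$,
\begin{align*}
J(tu_{0})=Q(tu_{0})-\int_{\mathbb{R}^{N}}K(x)F(tu_{0})\,dx\leq\big(Q(u_{0})-M\kappa\big)\,t^{m}+C_{M}c_{0},
\end{align*}
where $\kappa:=\int_{\mathbb{R}^{N}}K(x)u_{0}^{m}\,dx$ and $c_{0}:=\int_{\operatorname{supp}u_{0}}K(x)\,dx$. Choosing $M$ so large that $M\kappa>Q(u_{0})$ makes the coefficient of $t^{m}$ negative, whence $J(tu_{0})\to-\infty$ as $t\to+\infty$; fixing $t_{0}\geq 1$ large enough that simultaneously $J(t_{0}u_{0})<0$ and $\|t_{0}u_{0}\|_{E}=t_{0}\|u_{0}\|_{E}>\rho$ finishes the proof with $e=t_{0}u_{0}$. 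The only point that is not purely mechanical is verifying $0<\kappa<\infty$ and $c_{0}<\infty$, and both are immediate from $(K_{0})$: the inclusion $K\in L^{\infty}(\mathbb{R}^{N})$ together with $u_{0}$ bounded and compactly supported gives finiteness, while $K>0$ a.e.\ and $u_{0}\not\equiv 0$ give $\kappa>0$, which is exactly what allows the choice of $M$. Note that this argument uses only Lemma \ref{0.3} and $(f_{3})$, so it applies verbatim in the settings of Theorems \ref{Teo1.1}, \ref{Teo3} and \ref{Teo3.1} as well, where $\mathcal{C}_m$ is dropped but $(f_{3})$ is still assumed.
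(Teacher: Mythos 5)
Your proposal is correct and follows essentially the same route as the paper: fix a compactly supported test function, use the scaling bound from Lemma \ref{0.3} to get $Q(t\psi)\leq t^{m}\cdot(\text{const})$ for $t\geq 1$, use $(f_3)$ to get $F(s)\geq Ms^{m}-C_{M}$, and let $t\to\infty$. Your version is in fact slightly more careful: where the paper fixes the constant $C_1$ via the condition \eqref{4.6} (which implicitly assumes $\int_{\mathbb{R}^N}K|\psi|^{m}\,dx$ is not too small), you explicitly introduce $\kappa=\int K u_0^m\,dx>0$ and choose $M$ so that $M\kappa>Q(u_0)$, which makes the sign of the $t^m$-coefficient unambiguous.
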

\noindent {\it{Proof:}} Consider $\psi\in C^{\infty}_{0}(\mathbb{R}^{N})\setminus\{0\}$ and $C_{1}\in\mathbb{R}$ such that
\begin{align}\label{4.6}
C_1>\xi_{1}(\lVert \psi\lVert_{D^{1,\Phi}(\mathbb{R}^{N})})+\xi_1(\lVert \psi\lVert_{V,\Phi}).
\end{align}
By $(f_3)$, there exists $C_2>0$ satisfying
\begin{align*}
F(t)\geq C_1|t|^{m}-C_2,\;\;\;\;\forall t\in\mathbb{R}.
\end{align*}
Thus
\begin{align*}
K(x)F(t)\geq C_1 K(x)|t|^{m}-C_2 K(x),\;\;\;\;\forall t\in\mathbb{R}\;\;\text{ and }\;\;x\in\mathbb{R}^{N}.
\end{align*}
That said, considering $t>0$, we have
\begin{align*}
J(t\psi)\leq &\int_{\mathbb{R}^{N}}\Phi(t|\nabla \psi|)dx+\int_{\mathbb{R}^{N}}V(x)\Phi(t|\psi|)dx-C_{1}t^{m}\int_{\mathbb{R}^{N}}K(x)|\psi|^{m}dx+C_{3}|supp(\psi)|,
\end{align*}
before that, it follows from the Lemma \ref{0.3} that
\begin{align*}
J(t\psi)\leq &\xi_{1}(t)\big( \xi_{1}(\lVert \psi\lVert_{D^{1,\Phi}(\mathbb{R}^{N})})+\xi_1(\lVert \psi\lVert_{V,\Phi})\big)-C_{1}t^{m}\int_{\mathbb{R}^{N}}K(x)|\psi|^{m}dx+C_{3}|supp(\psi)|,
\end{align*}
therefore, for $t>1$,
\begin{align}\label{4.7}
\begin{split}
J(t\psi)\leq t^{m}\big( \xi_{1}(\lVert \psi\lVert_{D^{1,\Phi}(\mathbb{R}^{N})})+\xi_1(\lVert \psi\lVert_{V,\Phi})\big)-C_{1}t^{m}\int_{\mathbb{R}^{N}}K(x)|\psi|^{m}dx+C_{3}|supp(\psi)|.
\end{split}
\end{align}
By \eqref{4.6} and \eqref{4.7}, we can conclude that
\begin{align*}
J(t\psi)\longrightarrow-\infty\;\;\text{ as }\;\;t\rightarrow+\infty. 
\end{align*}
The last limit guarantees the existence of $t>0$ large enough that the result is verified with $e = t \psi $. 

\qed

In what follows, let us denote by $c>0$ the mountain pass level associated with $J$, that is,
\begin{align*}
c = \inf_{\gamma \in \Gamma} \max_{t \in [0,1]} J (\gamma(t))
\end{align*}
where
\begin{align*}
\Gamma = \{\gamma \in C([0,1], X) : \; \gamma(0) = 0 \; \text{ and } \; \gamma(1) = e \}.
\end{align*}
Associated with $c$, we have a Cerami sequence $(u_n)\subset E$, that is,
\begin{align}\label{01.39}
J(u_n)\longrightarrow c\;\;\;\;\text{ and }\;\;\;\;
(1+\lVert u_n\lVert)\lVert J'(u_n)\lVert_{*}\longrightarrow 0.
\end{align}
The above sequence is obtained from the Ghoussoub-Preiss theorem, see [\citenum{Motreanu}, Theorem $5.46$].

To show that the sequences obtained in \eqref{01.39}, let us prove a Hardy Type Inequality.

\begin{proposition}[Hardy Type Inequality]\label{22.0}
	Suppose that $(V,K)\in\mathcal{K}_1$, then $E$ is compactly embedded in $L^{Z}_{K}(\mathbb{R}^{N})$, where $ Z(t)= \int_{0}^{|t|}sz(s)ds$ is a $N$-function satisfying
	\begin{equation}\label{22.1}
	0<z_1\leq \dfrac{t^{2}z(t)}{Z(t)}\leq z_2,\;\;\;\;\forall t\geq 0,
	\end{equation}
	where $m<z_1\leq z_2<\ell^{*}$. 
\end{proposition}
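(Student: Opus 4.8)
The plan is to mimic the classical Hardy-type embedding argument from \cite{AlvesandMarco}, adapted to the Orlicz-Sobolev setting using the weak$^*$ compactness furnished by Lemma \ref{0.1}. Let me sketch the two ingredients: continuity of the embedding $E \hookrightarrow L^Z_K(\mathbb{R}^N)$, and then compactness.

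\textbf{Step 1: Continuity of the embedding.} Given $u \in E$, I want to bound $\int_{\mathbb{R}^N} K(x) Z(|u|)\,dx$ by a function of $\|u\|_E$. Since $m < z_1 \le z_2 < \ell^*$, the $N$-function $Z$ has growth strictly between $\Phi$ and $\Phi_*$, so there is an interpolation-type inequality: using $(K_0)$ and $(K_2)$ (i.e. $K/V \in L^\infty$), I estimate pointwise $K(x) Z(t) \le C\big(V(x)\Phi(t) + \Phi_*(t)\big)$ for all $t \ge 0$. This follows because $Z(t)/\Phi(t)$ is controlled near $0$ and $Z(t)/\Phi_*(t)$ is controlled near $\infty$ (by the respective growth exponents $z_1 > m$ and $z_2 < \ell^*$ together with Lemmas \ref{0.3} and \ref{0.5}), so splitting into the regions where $t$ is small and large and absorbing constants gives the bound; the $V(x)\Phi(t)$ term handles the region using $K \le \|K/V\|_\infty V$, and the $\Phi_*(t)$ term handles the region using $K \in L^\infty$. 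Integrating, $\int K(x) Z(|u|)\,dx \le C(Q(u) + \int \Phi_*(|u|)\,dx) < \infty$ because $u \in E \hookrightarrow D^{1,\Phi} \hookrightarrow L^{\Phi_*}$, and combined with Lemma \ref{0.3}, Lemma \ref{0.5} and the norm-modular relations this yields $\|u\|_{L^Z_K} \le C\|u\|_E$ on the unit ball, hence continuity.

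\textbf{Step 2: Compactness.} Let $(u_n) \subset E$ be bounded, $\|u_n\|_E \le M$. By Lemma \ref{0.1} there is $u \in E$ with $u_n \xrightharpoonup[\quad]{\ast} u$ in $D^{1,\Phi}(\mathbb{R}^N)$, and by Lemma \ref{0.12} (compact embedding into $L^\Phi_{loc}$), passing to a subsequence, $u_n \to u$ in $L^\Phi_{loc}(\mathbb{R}^N)$ and a.e.\ in $\mathbb{R}^N$. I must show $\int_{\mathbb{R}^N} K(x) Z(|u_n - u|)\,dx \to 0$. Split $\mathbb{R}^N = B_r(0) \cup B_r^c(0)$. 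On $B_r(0)$: compactness of $E \hookrightarrow L^\Phi_{loc}$ plus $K \in L^\infty$ and the interpolation bound from Step 1 restricted to a ball (where $\Phi_*$-mass is also controlled via local compactness, using Lemma \ref{0.5} and $\ell^* > z_2$) give $\int_{B_r} K(x) Z(|u_n-u|)\,dx \to 0$ for each fixed $r$. On $B_r^c(0)$: this is where $(K_1)$ enters. The tail estimate $K(x) Z(|v|) \le \varepsilon C_1(V(x)\Phi(|v|) + \Phi_*(|v|)) + C_\varepsilon K(x)\Phi_*(|v|)\chi_{[\delta_0,\delta_1]}(|v|)$ (an analogue of \eqref{11.16}) splits the tail integral into an $\varepsilon$-small part, uniformly bounded by $\varepsilon C_1(Q(u_n-u) + \int\Phi_*(|u_n-u|)\,dx) \le \varepsilon C_1 C M^{\text{something}}$ using boundedness in $E$ and in $L^{\Phi_*}$, plus a remainder where $|u_n - u| \in [\delta_0,\delta_1]$; on that set $\Phi_*(|u_n-u|) \le \Phi_*(\delta_1)$ is bounded, so the remainder is $\le C_\varepsilon \Phi_*(\delta_1) \int_{A_n \cap B_r^c} K(x)\,dx$ where $A_n = \{x : \delta_0 \le |u_n(x) - u(x)| \le \delta_1\}$. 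Crucially $|A_n|$ is uniformly bounded: by Chebyshev and Lemma \ref{0.3}, $\delta_0^{\ell^*}|A_n| \lesssim \int \Phi_*(|u_n - u|)\,dx \le C$ (using boundedness in $L^{\Phi_*}$), so $\sup_n |A_n| < \infty$ and $(K_1)$ forces $\int_{A_n \cap B_r^c} K(x)\,dx \to 0$ uniformly in $n$ as $r \to \infty$. Choosing first $\varepsilon$ small, then $r$ large, then $n$ large completes the argument.

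\textbf{Main obstacle.} The delicate point is the tail estimate on $B_r^c(0)$: one needs the decomposition of $K(x)Z(t)$ that isolates a term proportional to $K(x)$ times a \emph{bounded} function supported on a set of controlled measure, so that $(K_1)$ can be invoked with uniformity in $n$. Establishing the interpolation inequality $K Z(t) \le \varepsilon C_1(V\Phi(t) + \Phi_*(t)) + C_\varepsilon K \Phi_*(t)\chi_{[\delta_0,\delta_1]}(t)$ rigorously — correctly using the strict inequalities $m < z_1 \le z_2 < \ell^*$ together with the $\xi_i$ comparison functions of Lemmas \ref{0.3}–\ref{0.5} to absorb the small-$t$ and large-$t$ behaviours of $Z$ into $\Phi$ and $\Phi_*$ respectively — is the technical heart, and it is exactly the nonreflexive analogue of the Hardy inequality in \cite{AlvesandMarco}. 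Everything else (Fatou for lower semicontinuity, Chebyshev for the measure bound, diagonal subsequence extraction) is routine. The case $(V,K)$ satisfying $(K_2)$ is as above; if instead only a weaker structural condition held one would argue slightly differently, but here $\mathcal{K}_1$ includes $(K_2)$, so the $K/V \in L^\infty$ bound is available throughout.
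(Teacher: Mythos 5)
Your overall strategy matches the paper's: the key interpolation inequality $K(x)Z(t)\leq \varepsilon C\big(V(x)\Phi(t)+\Phi_*(t)\big)+C_\varepsilon K(x)\Phi_*(t)\chi_{[\delta_0,\delta_1]}(t)$ (the analogue of \eqref{11.16}), a Chebyshev bound on $|A_n|$, condition $(K_1)$ for the tail uniformly in $n$, and convergence on bounded balls. The tail argument on $B_r^c(0)$ is essentially what the paper does.

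However, there is a genuine gap in your treatment of the ball region. You claim that $\int_{B_r}K(x)Z(|u_n-u|)\,dx\to0$ follows from ``compactness of $E\hookrightarrow L^{\Phi}_{loc}$'' together with the interpolation bound, with the parenthetical that ``$\Phi_*$-mass is also controlled via local compactness.'' This is not correct: compact embedding into $L^{\Phi}_{loc}(\mathbb{R}^N)$ gives you $u_n\to u$ in $L^{\Phi}(B_r)$ and a.e., but it does \emph{not} give convergence of $\int_{B_r}\Phi_*(|u_n-u|)\,dx$ to zero — the critical embedding $D^{1,\Phi}\hookrightarrow L^{\Phi_*}$ is never locally compact, so the $\Phi_*$-modular on $B_r$ stays merely \emph{bounded}, not small. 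Plugging the interpolation bound into $\int_{B_r}$ therefore only yields a bounded right-hand side, not one tending to zero. The correct ingredient, which the paper invokes, is the Strauss compactness lemma: set $P(t)=Z(|t|)$, $Q(t)=\Phi_*(|t|)$, use $z_2<\ell^*$ to get $P(t)/Q(t)\to0$ as $|t|\to\infty$, and combine a.e.\ convergence $w_n\to0$ on $B_r$ with $\sup_n\int_{\mathbb{R}^N}Q(w_n)\,dx<\infty$ to conclude $\int_{B_r}P(w_n)\,dx\to0$; then $K\in L^\infty$ gives $\int_{B_r}K(x)Z(|w_n|)\,dx\to0$. You have all the ingredients for this but do not assemble them correctly, and the route you describe does not work. (Also, a small slip: your Chebyshev estimate should read $\Phi_*(\delta_0)|A_n|\leq\int\Phi_*(|u_n-u|)\,dx$, using Lemma \ref{0.5} for $\Phi_*$, not Lemma \ref{0.3}.)
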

\begin{remark}\label{5.01}
	The inequality \eqref{22.1} implies the following inequalities
	$$\xi_{0,Z}(t)Z(\rho)\leq Z(\rho t)\leq\xi_{1,Z}(t)Z(\rho),~~\forall \rho,t\geq0$$ 
	when
	$$\xi_{0,Z}(t)=\min\{t^{z_1},t^{z_2}\}~~\text{and}~~\xi_{1,Z}(t)=\max\{t^{z_1},t^{z_2}\},~~\forall t\geq 0.$$ 
	In the same way of remark \ref{5.02}, we have
	\begin{align*}
	\lim_{t\rightarrow0}\dfrac{Z(|t|)}{\Phi(|t|)}=0\;\;\;\;\text{ e }\;\;\;\;\lim_{t\rightarrow\infty}\dfrac{Z(|t|)}{\Phi_{*}(|t|)}=0.
	\end{align*}
\end{remark}
\noindent {\bf{Proof of Proposition \ref{22.0}:}} We will assume that $(K_2)$ is true. In this case, by Remark \ref{5.01}, given $\varepsilon>0$, there are $0<s_0<s_1$ and $C>0$, such that
\begin{align}\label{5.0}
K(x)Z(|s|)\leq \varepsilon C(V(x)\Phi(|s|)+\Phi_{*}(|s|))+CK(x)\chi_{[s_0, s_1]}(|s|)\Phi_{*}(|s|),
\end{align}
for all $s\in\mathbb{R}$ and $x\in \mathbb{R}^{N}$. Thus, for $r>0$ large enough,
\begin{align}\label{5.1}
\int_{B_r (0)^{c}}K(x)|(|u|)dx\leq \varepsilon CQ(u)+C\Phi_{*}(s_1)\int_{A_u\cap B_r (0)^{c}}K( x)dx,\;\;\;\;\forall u\in E,
\end{align}
where
\begin{align*}
Q(u)=\int_{\mathbb{R}^{N}}V(x)\Phi(|u|)dx+\int_{\mathbb{R}^{N}}\Phi_{*}(| u|)dx
\end{align*}
and
\begin{align*}
A_u=\{x\in\mathbb{R}^{N}:\;s_0\leq |u(x)|\leq s_1\}.
\end{align*}

Consider $ (v_n) $ a bounded sequence in $ E $. To see that the operator $
i: E\longrightarrow L^{B}_{K} (\mathbb{R}^{N})$
is compact just prove that $(v_n)$ has a convergent subsequence on $L^{B}_{K}(\mathbb{R}^{N})$. By Lemma \ref{0.1}, there is $ v \in E $ such that $ v_{n}\xrightharpoonup[\quad]{\ast}v$ in $D ^{1,\Phi}(\mathbb{R}^{N})$, or equivalently $w_{n}\xrightharpoonup[\quad]{\ast}0$ in $D ^{1,\Phi}(\mathbb{R}^{N})$, where $w_n = v_n-v$. Still because of the boundedness $M_1>0$ verifying
\begin{align*}
\int_{\mathbb{R}^{N}} V(x) \Phi(| w_n |) dx \leq M_1 \; \; \; \; \text{ and }\;\;\;\;
\int_{\mathbb{R}^{N}}\Phi_{*}(|w_n|)dx\leq M_1,\;\;\;\;\forall n\in\mathbb{N},
\end{align*}
implying that $(Q(w_n))$ is bounded.

On the other hand, defining
\begin{align*}
A_{n}=\{x\in\mathbb{R}^{N}:\;s_0\leq |w_n(x)|\leq s_1\}
\end{align*}
the last inequality implies that
\begin{align*}
\Phi_{*}(s_0)|A_n|\leq \int_{A_n}\Phi_{*}(|w_n|)dx\leq M_1,\;\;\;\;\forall n\in\mathbb{N }.
\end{align*}
With this, we can guarantee that $\sup_{n\in\mathbb{N}}\,|A_n|<+\infty$. Therefore, by $K_1)$
\begin{align}\label{5.2}
\int_{A_n\cap B_r (0)^{c}}K(x)dx<\dfrac{\varepsilon}{\Phi_{*}(s_1)},\;\;\;\;\forall n\in\mathbb{N}.
\end{align}
From \eqref{5.1} and \eqref{5.2},
\begin{align*}
\int_{B_r (0)^{c}}K(x)Z(|w_n|)dx&\leq \varepsilon C M_1+\Phi_{*}(s_1)\int_{A_n\cap B_r (0)^{c }}K(x)dx\\
&\leq \varepsilon(C M_1+1),\;\;\;\;\forall n\in\mathbb{N},
\end{align*}
and hence,
\begin{align}\label{5.3}
\limsup_{n \to \infty} \int_{B_r (0)^{c}}K(x)Z(|w_n|)dx\leq \varepsilon(C M_1+1).
\end{align}
Since $E$ is compactly embedded in $L^{\Phi}_{loc}(\mathbb{R}^{N})$, so by the Corollary \ref{0.8}, there is a subsequence of $(v_n)$, still denoted by itself, that such
\begin{align*}
v_n\longrightarrow v\;\;\;\;\text{ in }\;\;\;\;L^{\Phi}(B_{r}(0)).
\end{align*}
Thus,  there is a subsequence of $(v_n)$, still denoted by itself, verifying
\begin{align*}
v_n(x)\longrightarrow v(x)\;\;\;\;a.e.\;\text{ in}\;B_{r}(0),
\end{align*}
that is,
\begin{align*}
w_n(x)\longrightarrow 0\;\;\;\;a.e.\;\text{ in }\;B_{r}(0).
\end{align*}
Consider the functions $P:\mathbb{R}\longrightarrow\mathbb{R}$ and $Q:\mathbb{R}\longrightarrow\mathbb{R}$ given by
\begin{align*}
P(t)=Z(|t|)\;\;\;\;\text{ and }\;\;\;\;Q(t)=\Phi_{*}(|t|).
\end{align*}
Of course $ P $ and $ Q $ are continuous in addition
\begin{align*}
\lim_{|t|\rightarrow+\infty}\dfrac{P(t)}{Q(t)}=0.
\end{align*}
Finally, it follows from the boundedness of $ (v_n) $ in $ E $ and from the fact that $ \Phi_{*} \in (\Delta_2) $ the existence of a constant $ C_1> 0 $, so that
\begin{align*}
\int_{\mathbb{R}^{N}} Q(w_n)dx\leq \int_{\mathbb{R}^{N}}\Phi_{*}(|w_n|)dx<C_1,\;\;\;\;\forall n\in\mathbb{N}.
\end{align*}
Invoking the Lemma Strauss
\begin{align*}
\lim_{n \rightarrow \infty}\int_{B_r (0)}P(w_n)dx=0.
\end{align*}
Therefore,
\begin{align}\label{5.4}
\limsup_{n\rightarrow\infty}\int_{B_r (0)}K(x)Z(|w_n|)dx=0.
\end{align} 
According to \eqref{5.3} and \eqref{5.4}, we get
\begin{align*}
\limsup_{n \to \infty} \int_{\mathbb{R}^{N}}K(x)Z(|w_n|)dx \leq&\limsup_{n\rightarrow\infty}\int_{B_r ( 0)}K(x)Z(|w_n|)dx\\
&+\limsup_{n \to \infty} \int_{B_r (0)^{c}}K(x)Z(|w_n|)dx\\
\leq& \varepsilon(C M_1+1).
\end{align*}
By the arbitrariness of $\varepsilon>0$, it follows that
\begin{align*}
\limsup_{n\rightarrow\infty} \int_{\mathbb{R}^{N}}K(x)Z(|w_n|)dx=0.
\end{align*}
As $ Z$ verifies $\Delta_{2}$-condition, we have that
\begin{align*}
w_n\longrightarrow0\;\;\;\;\text{ in }\;\;\;\;L^{Z}_{K}(\mathbb{R}^{N}),
\end{align*}
in other words
\begin{align*}
v_n\longrightarrow v\;\;\;\;\text{ in }\;\;\;\;L^{Z}_{K}(\mathbb{R}^{N}),
\end{align*}
showing the result for the case $(K_2)$.

\qed


Next lemma is an important step to prove that the Cerami sequence obtained in \eqref{01.39} is bounded.

\begin{lemma}\label{6.7}
	Let $(v_n)$ be a bounded sequence in $E$ such that $v_{n}\xrightharpoonup[\quad]{\ast} v$ in $D^{1,\Phi}(\mathbb{R}^{N})$ . Suppose that $f$ satisfies $(f_1)$ or $(f_4)$, then
	\begin{align}\label{44.100}
	\lim_{n\rightarrow\infty}\int_{\mathbb{R}^{N}}K(x)F(v_n)dx=\int_{\mathbb{R}^{N}}K(x)F (v)dx,
	\end{align}
	\begin{align}\label{44.}
	\lim_{n\rightarrow\infty}\int_{\mathbb{R}^{N}}K(x)f(v_n)v_ndx=\int_{\mathbb{R}^{N}}K(x)f (v)vdx
	\end{align}
	and
	\begin{align}\label{44.1,}
	\lim_{n\rightarrow\infty}\int_{\mathbb{R}^{N}}K(x)f(v_n)\psi dx=\int_{\mathbb{R}^{N}}K(x)f (v)\psi dx,\;\;\;\forall \psi \in C^{\infty}_{0}(\mathbb{R}^{N}).
	\end{align}
\end{lemma}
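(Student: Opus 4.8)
The plan is to obtain all three limits from a single mechanism built on \eqref{11.16} and the compact embedding of Proposition \ref{22.0}. Let $Z$ be the $N$-function supplied by that proposition (its relevant properties, including $Z\in(\Delta_2)$, are recorded in Remark \ref{5.01}). First I would recast \eqref{11.16}: integrating it produces a bound for $K(x)|F(t)|$, while multiplying it by $|t|$ and invoking $(\phi_3)$ (so that $t^2\phi(t)\le m\Phi(|t|)$ and $t^2\phi_*(t)\le m^*\Phi_*(|t|)$) produces a bound for $K(x)|f(t)t|$; in both cases the term carrying $\chi_{[\delta_0,\delta_1]}$ is absorbed into $C\,Z(|t|)$ since $Z$ is positive and increasing, and $(K_0)$, $(K_2)$ let one replace $K$ by $\|K\|_\infty$ and by $\|K/V\|_\infty V$. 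The outcome is: for each $\varepsilon>0$ there is $C_\varepsilon>0$ with
\begin{equation*}
K(x)|F(t)|+K(x)|f(t)t|\ \le\ \varepsilon\,C_0\big(V(x)\Phi(|t|)+\Phi_*(|t|)\big)+C_\varepsilon\,K(x)Z(|t|),\qquad\forall\,t\in\mathbb{R},\ x\in\mathbb{R}^N,
\end{equation*}
where $C_0=\max\{\|K\|_\infty,\|K/V\|_\infty\}$; and an analogous estimate for $K(x)|f(t)\psi(x)|$ holds for fixed $\psi\in C^\infty_0(\mathbb{R}^N)$, carrying an extra term $\varepsilon\,c_\psi\,\chi_{\mathrm{supp}\,\psi}(x)$ on the right (harmless, since it integrates to $\varepsilon\,c_\psi|\mathrm{supp}\,\psi|$), using that $V$ is bounded on $\mathrm{supp}\,\psi$. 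I would also record that $v\in E$ (Lemma \ref{0.1}), so that $V\Phi(|v|),\,\Phi_*(|v|),\,K Z(|v|)\in L^1(\mathbb{R}^N)$, the last by Proposition \ref{22.0}, and that boundedness of $(v_n)$ in $E$ gives $M>0$ with $\int_{\mathbb{R}^N}V(x)\Phi(|v_n|)\,dx\le M$ and $\int_{\mathbb{R}^N}\Phi_*(|v_n|)\,dx\le M$ for all $n$.

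By the subsequence principle it suffices to prove \eqref{44.100}--\eqref{44.1,} along a subsequence of an arbitrary subsequence of $(v_n)$. A given such subsequence is bounded in $E$, so by Proposition \ref{22.0}, Lemma \ref{0.12}, Corollary \ref{0.8} and Lemma \ref{1.0} I may extract a further subsequence, still written $(v_n)$, with $v_n\to v$ in $L^Z_K(\mathbb{R}^N)$ and $v_n\to v$ a.e.\ in $\mathbb{R}^N$ (the limit is $v$ because $v_n\xrightharpoonup[\quad]{\ast}v$ in $D^{1,\Phi}(\mathbb{R}^N)$). Since $Z\in(\Delta_2)$, norm convergence in $L^Z_K$ also gives $\int_{\mathbb{R}^N}K(x)Z(|v_n|)\,dx\to\int_{\mathbb{R}^N}K(x)Z(|v|)\,dx$.

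Now write $g$ for any one of $F$, $t\mapsto f(t)t$, or $t\mapsto f(t)\psi$, and set
\begin{equation*}
\Psi_n:=\Big(K(x)\big|g(v_n)-g(v)\big|-\varepsilon\,C_0\big(V(x)\Phi(|v_n|)+\Phi_*(|v_n|)\big)\Big)^{+}
\end{equation*}
(adding $\varepsilon\,c_\psi\,\chi_{\mathrm{supp}\,\psi}$ inside in the $f\psi$ case). Because $v_n\to v$ a.e., we have $K|g(v_n)-g(v)|\to0$ a.e., whereas the subtracted quantity tends a.e.\ to the \emph{nonnegative} limit $\varepsilon C_0(V\Phi(|v|)+\Phi_*(|v|))$; hence $\Psi_n\to0$ a.e. Moreover the growth estimate yields $0\le\Psi_n\le C_\varepsilon\,K(x)Z(|v_n|)+K(x)|g(v)|=:G_n$, with $G_n\to C_\varepsilon K Z(|v|)+K|g(v)|$ a.e.\ and $\int_{\mathbb{R}^N}G_n\,dx\to\int_{\mathbb{R}^N}\big(C_\varepsilon K Z(|v|)+K|g(v)|\big)\,dx$. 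The generalized dominated convergence theorem then gives $\int_{\mathbb{R}^N}\Psi_n\,dx\to0$, and therefore
\begin{equation*}
\int_{\mathbb{R}^N}K(x)\big|g(v_n)-g(v)\big|\,dx\ \le\ \int_{\mathbb{R}^N}\Psi_n\,dx+\varepsilon\,C_0\!\int_{\mathbb{R}^N}\!\big(V\Phi(|v_n|)+\Phi_*(|v_n|)\big)\,dx\ \le\ \int_{\mathbb{R}^N}\Psi_n\,dx+2M C_0\,\varepsilon
\end{equation*}
(plus $\varepsilon\,c_\psi|\mathrm{supp}\,\psi|$ in the $f\psi$ case). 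Thus $\limsup_n\int_{\mathbb{R}^N}K|g(v_n)-g(v)|\,dx\le 2M C_0\varepsilon$ for every $\varepsilon>0$, so this $\limsup$ is $0$; since $\big|\int_{\mathbb{R}^N}Kg(v_n)-\int_{\mathbb{R}^N}Kg(v)\big|\le\int_{\mathbb{R}^N}K|g(v_n)-g(v)|\,dx$, the choices $g=F$, $g(t)=f(t)t$ and $g(t)=f(t)\psi$ give \eqref{44.100}, \eqref{44.} and \eqref{44.1,} respectively.

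The main obstacle — and the reason the usual argument must be modified — is that, since $D^{1,\Phi}(\mathbb{R}^N)$ may fail to be reflexive, the sequences $\{V\Phi(|v_n|)\}$ and $\{\Phi_*(|v_n|)\}$ are merely bounded in $L^1$ and need not be equi-integrable, so $K F(v_n)$ cannot be dominated outright. The positive-part truncation $\Psi_n$ is the device that handles this: it pushes the non-equi-integrable part into an $O(\varepsilon)$ error and leaves a genuinely dominated remainder, the domination coming from the compact embedding $E\hookrightarrow\hookrightarrow L^Z_K(\mathbb{R}^N)$ of Proposition \ref{22.0} together with $Z\in(\Delta_2)$. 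The only other point to watch is that the \emph{same} intermediate $N$-function $Z$ must serve both in the growth estimate and in the compactness step, which is exactly what Proposition \ref{22.0} (and Remark \ref{5.01}) provide.
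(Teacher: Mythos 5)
Your proof is correct, but it follows a genuinely different route from the paper's. The paper localizes: it fixes a large ball $B_{r_0}(0)$, controls the exterior integral using the growth estimate together with a uniform tail bound on $\int_{B^c_{r_0}}K\,Z(|v_n|)\,dx$ coming from the $L^Z_K$-convergence of Proposition~\ref{22.0}, and passes to the limit on the ball via the Strauss compactness lemma; each of the three limits is then handled by the same splitting. You instead work globally with a positive-part truncation
\[
\Psi_n=\Big(K\,|g(v_n)-g(v)|-\varepsilon C_0\big(V\Phi(|v_n|)+\Phi_*(|v_n|)\big)\Big)^{+}
\]
and the generalized (Pratt) dominated convergence theorem, the dominating sequence $C_\varepsilon K Z(|v_n|)+K|g(v)|$ converging in $L^1$ precisely because of Proposition~\ref{22.0} and $Z\in(\Delta_2)$. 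The two arguments share the same engine (the compact embedding $E\hookrightarrow\hookrightarrow L^Z_K$), but your truncation lets a single dominated-convergence step replace the paper's ball/complement decomposition and Strauss lemma, and it treats all three assertions by one uniform mechanism. Two small points worth making explicit in a final write-up: (i) the passage from $v_n\to v$ in $L^Z_K$ to $\int K Z(|v_n|)\to\int K Z(|v|)$ uses the $\Delta_2$-property of $Z$ together with the Orlicz dominated-subsequence lemma (the analogue of Lemma~\ref{1.1} for $L^Z_K$); (ii) the version of the growth estimate you need for $g(t)=f(t)\psi$ is not literally obtained by ``multiplying by $\psi$,'' since $V$ is unbounded in general, so the extra $\varepsilon\,c_\psi\,\chi_{\mathrm{supp}\,\psi}$ term you introduce (using that $V$ is bounded on the compact $\mathrm{supp}\,\psi$ and that $t\phi(t)\to0$ as $t\to0$) should be spelled out rather than merely alluded to. With those details filled in, the argument is sound and arguably cleaner than the paper's.
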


\noindent {\bf{Proof:}} As in \eqref{11.16}, given $\varepsilon>0$, there exists $\delta_0>0$, $ \delta_1>0$ and $C_\varepsilon>0$ such that
\begin{align}\label{6.1}
K(x)f(t)\leq \varepsilon C_1\big(V(x)t\phi(t)+t\phi_{*}(t)\big)+C_\varepsilon K(x)tb(t),\;\;\;\forall t\geq0\text{ and }x\in\mathbb{R}^N
\end{align}
where $C_1=\max\{\lVert{K}\lVert_{\infty}, \big\lVert\frac{K}{V}\big\lVert_{\infty}   \}$. Hence,
\begin{align}\label{6.0}
K(x)F(t)\leq \varepsilon C_{1}(V(x)\Phi(t)+\Phi_{*}(t))+C_\varepsilon B(t),\;\;\; \forall t\geq0\;\text{ and }\;x\in\mathbb{R}^N.
\end{align}
From Proposition \eqref{22.0},
\begin{align}
	\int_{\mathbb {R}^{N}} K(x)B(v_n)dx\rightarrow	\int_{\mathbb {R}^{N}} K(x)B(v)dx
\end{align}
then there is $r_0>0$, so that
\begin{align}\label{22.3}
\int_{B_{{r_{0}}}^c (0)} K(x)B(v_n)dx<\frac{\varepsilon}{C_\varepsilon}, \;\;\;\forall n\in\mathbb{N}.
\end{align}
Moreover, as $(v_{n})$ is bounded in $E$, there is a constant $M_1>0$ satisfying
\begin{align*}
\int_{\mathbb{R}^{N}}V(x)\Phi(|v_{n}|)dx\leq M_1\;\;\;\;\text{ and }\;\;\;\;
\int_{\mathbb{R}^{N}}\Phi_{*}(|v_{n}|)dx\leq M_1,\;\;\;\;\forall n\in\mathbb{N}.
\end{align*}
Combining the last inequalities with \eqref{6.0} and \eqref{22.3},
\begin{align*}
\left|\int_{B_{r_0}^c ( 0)}K(x)F(v_n)dx\right|\leq \varepsilon (C_1 M_1+1),
\end{align*}
for all $n\in\mathbb{N}$. Therefore 
\begin{align}\label{6.2}
\limsup_{n\rightarrow+\infty} \int_{B_{r_0}^c (0)}K(x)F(v_{n})dx\leq \varepsilon (C_1 M_1+1).
\end{align}

On the other hand, using $(f_2)$ and the compactness lemma of Strauss [\citenum{Strauss}, Theorem A.I, p. 338], it follows that
\begin{align}\label{6.3}
\lim_{n\rightarrow+\infty} \int_{B_{r_0} (0)}K(x)F(v_{n})dx=\int_{B_{r_0} (0)}K(x)F(v)dx.
\end{align}
In light of this, we can conclude that
\begin{align*}
\lim_{n\rightarrow+\infty}\int_{\mathbb{R}^{N}}K(x)F(v_{n})dx=\int_{\mathbb{R}^{n}}K(x)F (v)dx.
\end{align*}

To show \eqref{44.}, consider $r_0>0$ given in \eqref{22.3}.
By \eqref{6.1}, 
\begin{align*}
\int_{B_{r_0}^c ( 0)}K(x)f(v_n)v_ndx\leq& \varepsilon C_1\left( m\int_{B_{r_0}^c ( 0)}V(x)\Phi(|v_n|)dx+m^*\int_{B_{r_0}^c ( 0)}\Phi_{*}(|v_n|)dx\right)\\
&+ C_{\varepsilon}z_2\int_{ B_{r_0}^c ( 0)}K(x)Z(v_n)dx\\
\leq& \varepsilon ((m^*+m)C_1M+z_2),
\end{align*}
for all $n\in\mathbb{N}$. Therefore 
\begin{align}\label{44.11}
\limsup_{n\rightarrow+\infty} \int_{B_{r_0}^c (0)}K(x)f(u_{n})u_n dx\leq \varepsilon ((m^*+m)C_1M+z_2),.
\end{align}
On the other hand, using $(f_1)$ or $(f_4)$ and the compactness lemma of Strauss [\citenum{Strauss}, Theorem A.I, p. 338], we can conclude that
\begin{align}\label{44.12}
\lim_{n\rightarrow+\infty} \int_{B_{r_0} (0)}K(x)f(u_{n})u_n dx=0.
\end{align}
Thus, the limit \eqref{44.100} is obtained from \eqref{44.11} and \eqref{44.12}. Related the limit \eqref{44.1,}, it follows directly from the condition $(f_1)$ or $(f_4)$ together with a version of the compactness lemma of Strauss for non-autonomous problem.

\qed

Now, we can prove that the Cerami sequence $(u_n)$ obtained is bounded.

\begin{lemma}\label{5}
	Let $(u_n)$ the Cerami sequence given in \eqref{01.39}. There is a constant $M>0$ such that $J(t u_n)\leq M$ for every $t\in [0,1]$ and $n\in\mathbb{N}$.
\end{lemma}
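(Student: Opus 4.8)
I would start from the observation that, $J$ being continuous on $E$, for each $n$ the map $t\mapsto J(tu_n)$ is continuous on the compact interval $[0,1]$ and hence attains its maximum at some $t_n\in[0,1]$. The cases $t_n=0$ (maximum $=J(0)=0$) and $t_n=1$ (maximum $=J(u_n)$, bounded since $J(u_n)\to c$) are immediate, so the work is confined to $t_n\in(0,1)$. There, since $J$ is Gateaux differentiable, $t\mapsto J(tu_n)$ is differentiable with derivative $t\mapsto J'(tu_n)(u_n)$ (its difference quotient at $t_0$ is the directional derivative of $J$ at $t_0u_n$ along $u_n$), and an interior maximum forces $J'(t_nu_n)(u_n)=0$, hence $\langle J'(t_nu_n),t_nu_n\rangle=0$ by linearity.

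The plan is then to compare $J(t_nu_n)$ with $J(u_n)$ through the elementary identity, valid for every $v\in E$,
\begin{align*}
J(v)-\frac{1}{m}\langle J'(v),v\rangle=\int_{\mathbb{R}^N}\Psi(|\nabla v|)\,dx+\int_{\mathbb{R}^N}V(x)\,\Psi(|v|)\,dx+\int_{\mathbb{R}^N}K(x)\,G(v)\,dx,
\end{align*}
where $\Psi(\rho):=\Phi(\rho)-\frac{1}{m}\rho^{2}\phi(\rho)$ for $\rho\ge0$ and $G(s):=\frac{1}{m}sf(s)-F(s)$ for $s\in\mathbb{R}$ (so $G\equiv0$ on $(-\infty,0]$ by \eqref{00.22}, and all integrals are finite because $v\in E$ and $(\phi_3)$ holds). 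Taking $v=t_nu_n$ and using $\langle J'(t_nu_n),t_nu_n\rangle=0$ yields
\begin{align*}
J(t_nu_n)=\int_{\mathbb{R}^N}\Psi(|t_n\nabla u_n|)\,dx+\int_{\mathbb{R}^N}V(x)\,\Psi(|t_nu_n|)\,dx+\int_{\mathbb{R}^N}K(x)\,G(t_nu_n)\,dx.
\end{align*}

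The one step that is not purely formal is to show that $\Psi$ is nondecreasing on $[0,\infty)$ and $G$ is nondecreasing on $\mathbb{R}$; this is precisely where $(\phi_4)$ and $(f_2)$ enter. By $(\phi_4)$ the function $p(\rho):=\rho^{2-m}\phi(\rho)$ is nonincreasing, so $\Psi(\rho)=\int_0^\rho s^{m-1}p(s)\,ds-\frac{1}{m}\rho^{m}p(\rho)$, and for $0\le a\le b$, using $p(s)\ge p(b)$ on $[a,b]$,
\begin{align*}
\Psi(b)-\Psi(a)\ \ge\ p(b)\,\frac{b^{m}-a^{m}}{m}-\frac{b^{m}p(b)-a^{m}p(a)}{m}\ =\ \frac{a^{m}}{m}\bigl(p(a)-p(b)\bigr)\ \ge\ 0.
\end{align*}
A completely analogous computation, using that $s\mapsto s^{1-m}f(s)$ is nondecreasing by $(f_2)$, shows $G$ is nondecreasing on $[0,\infty)$; together with $G\equiv0$ on $(-\infty,0]$ this gives monotonicity on all of $\mathbb{R}$. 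Since $t_n\in(0,1)$ we have $|t_n\nabla u_n|\le|\nabla u_n|$ and $|t_nu_n|\le|u_n|$ pointwise and $G(t_nu_n(x))\le G(u_n(x))$ for a.e.\ $x$ (by monotonicity where $u_n\ge0$, trivially where $u_n<0$); since $V,K\ge0$, monotonicity of $\Psi$ and $G$ then gives
\begin{align*}
J(t_nu_n)\ \le\ \int_{\mathbb{R}^N}\Psi(|\nabla u_n|)\,dx+\int_{\mathbb{R}^N}V(x)\,\Psi(|u_n|)\,dx+\int_{\mathbb{R}^N}K(x)\,G(u_n)\,dx\ =\ J(u_n)-\frac{1}{m}\langle J'(u_n),u_n\rangle.
\end{align*}

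To conclude, since $(u_n)$ is the Cerami sequence of \eqref{01.39}, $J(u_n)\to c$ and $|\langle J'(u_n),u_n\rangle|\le(1+\|u_n\|)\,\|J'(u_n)\|_*\to0$, so the right-hand side above converges to $c$ and is in particular bounded above uniformly in $n$; combined with the cases $t_n\in\{0,1\}$ this produces a constant $M>0$ with $\max_{t\in[0,1]}J(tu_n)\le M$ for every $n$, which is the assertion. I expect the main obstacle to be the monotonicity of $\Psi$ and $G$, i.e.\ the step invoking $(\phi_4)$ and $(f_2)$; everything else is bookkeeping together with the elementary identity above.
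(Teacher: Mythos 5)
Your proposal is correct and follows essentially the same route as the paper: pick $t_n$ maximizing $t\mapsto J(tu_n)$, dispose of $t_n\in\{0,1\}$, use $J'(t_nu_n)u_n=0$ at an interior maximizer, and compare $J(t_nu_n)=J(t_nu_n)-\tfrac1m J'(t_nu_n)(t_nu_n)$ with the analogous quantity at $u_n$ via the monotonicity of $\rho\mapsto m\Phi(\rho)-\rho^2\phi(\rho)$ (from $(\phi_4)$) and of $s\mapsto sf(s)-mF(s)$ on $(0,\infty)$ (from $(f_2)$), concluding with the Cerami bounds. The only difference is cosmetic normalization by $1/m$ and that you actually carry out the monotonicity computations for $\Psi$ and $G$, which the paper merely asserts.
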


\noindent{\bf{Proof:}} Let $t_n\in [0,1]$ be such that $J(t_n u_n)=\displaystyle\max_{t\in[0,1]} J(tu_n)$. If $t_n=0$ and $t_n=1$, we are done. Thereby, we can assume $t_n\in(0,1)$ ,and so $J'(t_n u_n)u_n=0$. From this,
\begin{align*}
mJ(t_n u_n)=&mJ(t_n u_n)-J'(t_n u_n)(t_n u_n)\\
=&\int_{\mathbb{R}^{N}}\big(m\Phi(|\nabla(t_nu_n)|)-\phi(|\nabla(t_n u_n)|)|\nabla (t_n u_n) |^{2}\big)dx\\
+\int_{\mathbb{R}^{N}}V(x)&\big(m\Phi(|t_nu_n|)-\phi(|t_n u_n|)|t_n u_n|^{2}\big) dx+\int_{\mathbb{R}^{N}}K(x)\mathcal{H}(t_nu_n)dx,
\end{align*}
where $\mathcal{H}(s)=sf(s)-mF(s)$. By $(f_{2})$ the a function $\mathcal{H}(s)$  is increasing for all $s>0$ and decreasing for $s<0,$ and by $(\phi_4)$ the function $ s\longmapsto m\Phi(s)-\phi(s)s^{2}$ is increasing for $s>0$. Thus,
\begin{align*}
mJ(t u_n)\leq mJ(t_n u_n)\leq mJ(u_n)-J'(u_n)u_n=mJ(u_n)-o_n(1),\;\;\forall t\in [0,1].
\end{align*}
Since $(J(u_n))$ is bounded, there is $M>0$ such that 
\begin{align*}
J(t u_n)\leq M,\;\;\forall t\in [0,1]\;\text{ and } \;n\in\mathbb{N}.
\end{align*}

\qed

\begin{proposition}\label{5.}
	The Cerami sequence $(u_n)$ given in \eqref{01.39} is bounded.
\end{proposition}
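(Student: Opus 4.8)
The plan is to argue by contradiction: suppose that, after passing to a subsequence, $\|u_n\|_E\to+\infty$. First I would perform a sign reduction showing $\|u_n^-\|_E\to0$. Since $f$ vanishes on $(-\infty,0]$ (see \eqref{00.22}), testing $J'(u_n)$ against $u_n^-$ (which lies in $E$, because $|\nabla u_n^-|\le|\nabla u_n|$ and $|u_n^-|\le|u_n|$, whence also $\|u_n^-\|_E\le\|u_n\|_E$) gives
$$J'(u_n)u_n^-=-\int_{\mathbb{R}^{N}}\phi(|\nabla u_n^-|)|\nabla u_n^-|^2\,dx-\int_{\mathbb{R}^{N}}V(x)\phi(|u_n^-|)|u_n^-|^2\,dx.$$
By \eqref{01.39} the left-hand side is $o_n(1)$, and since $(\phi_3)$ yields $\phi(t)t^2\ge\ell\,\Phi(t)\ge\Phi(t)$, we obtain $\int_{\mathbb{R}^{N}}\Phi(|\nabla u_n^-|)\,dx\to0$ and $\int_{\mathbb{R}^{N}}V(x)\Phi(|u_n^-|)\,dx\to0$; by Lemma~\ref{0.3} (applied to $dx$ and to the measure $V\,dx$) this forces $\|u_n^-\|_E\to0$.

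Next I would set $v_n=u_n/\|u_n\|_E$, so $\|v_n\|_E=1$. By Lemma~\ref{0.1}, along a subsequence $v_n\xrightharpoonup[\quad]{\ast}v$ in $D^{1,\Phi}(\mathbb{R}^{N})$ for some $v\in E$, and by Lemma~\ref{0.12} and Corollary~\ref{0.8} one may also assume $v_n\to v$ in $L^{\Phi}_{loc}(\mathbb{R}^{N})$ and $v_n(x)\to v(x)$ a.e. Since $\|v_n^-\|_E=\|u_n^-\|_E/\|u_n\|_E\to0$, we get $v\ge0$ a.e. I would then split into two cases according to whether $v\equiv0$ or not.

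Suppose first $v\ne0$. Then $\Omega_0:=\{v>0\}$ has positive measure, and $u_n(x)=\|u_n\|_E v_n(x)\to+\infty$ for a.e. $x\in\Omega_0$. Using the identity from the proof of Lemma~\ref{5},
\begin{align*}
mJ(u_n)-J'(u_n)u_n=&\int_{\mathbb{R}^{N}}\big(m\Phi(|\nabla u_n|)-\phi(|\nabla u_n|)|\nabla u_n|^2\big)\,dx\\
&+\int_{\mathbb{R}^{N}}V(x)\big(m\Phi(|u_n|)-\phi(|u_n|)|u_n|^2\big)\,dx+\int_{\mathbb{R}^{N}}K(x)\mathcal{H}(u_n)\,dx,
\end{align*}
where $\mathcal{H}(s)=sf(s)-mF(s)$; the left-hand side equals $mc+o_n(1)$ by \eqref{01.39}, and the first two integrals on the right are nonnegative by $(\phi_3)$, so $\int_{\mathbb{R}^{N}}K(x)\mathcal{H}(u_n)\,dx\le mc+o_n(1)$. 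On the other hand, $\mathcal{H}\ge0$ on $\mathbb{R}$, and $(f_2)$ and $(f_3)$ make $\mathcal{H}$ increasing and unbounded on $(0,\infty)$: if $sf(s)-mF(s)\le C_0$ for all $s>0$ then $(s^{-m}F(s))'=s^{-m-1}(sf(s)-mF(s))\le C_0 s^{-m-1}$, which upon integration gives $F(s)\le C_1 s^{m}$ and contradicts $(f_3)$; hence $\mathcal{H}(s)\to+\infty$ as $s\to+\infty$. Since $K>0$ a.e. by $(K_0)$, Fatou's lemma yields $\liminf_n\int_{\mathbb{R}^{N}}K(x)\mathcal{H}(u_n)\,dx\ge\int_{\Omega_0}K(x)\lim_n\mathcal{H}(u_n(x))\,dx=+\infty$, a contradiction.

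Suppose now $v=0$, and fix $R\ge2$. For $n$ large enough $R/\|u_n\|_E\in(0,1)$, so Lemma~\ref{5} gives $M\ge J\big((R/\|u_n\|_E)u_n\big)=J(Rv_n)$. Applying Lemma~\ref{6.7} to the bounded sequence $(Rv_n)$, which satisfies $Rv_n\xrightharpoonup[\quad]{\ast}0$, gives $\int_{\mathbb{R}^{N}}K(x)F(Rv_n)\,dx\to0$, and combined with Lemma~\ref{0.3} we get $J(Rv_n)\ge\xi_0(R\|\nabla v_n\|_\Phi)+\xi_0(R\|v_n\|_{V,\Phi})-o_n(1)$. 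As $\|\nabla v_n\|_\Phi+\|v_n\|_{V,\Phi}=1$, along a further subsequence one of these two norms is $\ge1/2$; since $\ell\le m$ gives $\xi_0(t)=t^{\ell}$ for $t\ge1$, it follows that $M\ge J(Rv_n)\ge(R/2)^{\ell}-o_n(1)$, hence $M\ge(R/2)^{\ell}$ for every $R\ge2$, which is absurd. Both cases being impossible, $(u_n)$ is bounded. I expect the case $v\ne0$ to be the main obstacle, and in particular the realization that the sign reduction $v\ge0$ is needed so that $\Omega_0$ is non-null: this is what lets $\mathcal{H}(u_n)$ blow up on a positive-measure set while $\int_{\mathbb{R}^{N}}K(x)\mathcal{H}(u_n)\,dx$ remains bounded.
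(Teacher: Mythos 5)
Your proof is correct, and it differs from the paper's in several substantive ways that are worth noting. The paper argues in two stages: first it shows that the weak$^*$ limit $w$ of $w_n=u_n/\|u_n\|_E$ must vanish, using $(f_3)$ directly through the bound $1+C\geq\tau\int_{\Omega\cap\{|u_n|\geq\xi\}}K|w_n|^m\,dx$ and Fatou; then, with $w=0$ in hand, it shows $\liminf_n J(t_nu_n)\geq MC$ for every $M\geq1$ via $J(Mw_n)\geq MQ(w_n)-\int KF(Mw_n)$ and $Q(w_n)\geq C\|w_n\|_E^m=C$, contradicting Lemma~\ref{5}. (It also splits the contradiction hypothesis into three cases depending on which of the two norm components blows up, though only case iii) is written out.) You instead organize the argument as a dichotomy $v=0$ versus $v\neq0$, and handle the $v\neq0$ branch by the Ambrosetti--Rabinowitz-type identity $mJ(u_n)-J'(u_n)u_n$ together with the observation that $\mathcal{H}(s)=sf(s)-mF(s)$ is nonnegative, increasing, and tends to $+\infty$; the $v=0$ branch is then essentially the paper's second stage, with $\xi_0(R\|\nabla v_n\|_\Phi)+\xi_0(R\|v_n\|_{V,\Phi})\geq(R/2)^\ell$ playing the role of $MQ(w_n)\geq MC$. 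Two features of your version are genuine improvements. First, you avoid the three-way case split entirely, since normalizing by $\|u_n\|_E$ and using $\|\nabla v_n\|_\Phi+\|v_n\|_{V,\Phi}=1$ treats all cases uniformly. Second, your sign-reduction step ($\|u_n^-\|_E\to0$, hence $v\geq0$) closes a small gap in the paper's argument: because $f\equiv0$ on $(-\infty,0]$, condition $(f_3)$ can only give $F(s)/|s|^m\geq\tau$ for $s\geq\xi$, not for $|s|\geq\xi$, so the paper's estimate is really only valid on $\{u_n\geq\xi\}$ and therefore only yields $w\leq0$ a.e.; your sign reduction supplies the complementary information $v\geq0$ needed to conclude. (One could alternatively patch the paper's proof by noting that $w\leq0$ already forces $\int KF(Mw_n)\to\int KF(Mw)=0$, but that observation is not made there.) Your $\mathcal{H}$-based Fatou argument is also a fine, slightly more self-contained alternative to invoking $(f_3)$ directly, though it leans on $(f_2)$ as well as $(f_3)$.
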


\noindent{\bf{Proof:}} Suppose for contradiction that, up to a subsequence, $\lVert u_n\lVert_{E}\longrightarrow\infty$ when $n\rightarrow+\infty$, then we have the following cases:\vspace*{0.1cm}\\
$i)\;\;\lVert \nabla u_n\lVert_{\Phi}\longrightarrow +\infty$ and $(\lVert u_n\lVert_{V,\Phi})$ is bounded\vspace*{0.1cm}\\
$ii)\;\;\lVert u_n\lVert_{V,\Phi}\longrightarrow \infty$ and $(\lVert \nabla u_n\lVert_{\Phi})$ is bounded\vspace*{0.1cm}\\
$iii)\;\;\lVert \nabla u_n\lVert_{\Phi}\longrightarrow +\infty$ and $\lVert u_n\lVert_{V,\Phi}\longrightarrow +\infty$.\vspace*{0.1cm}

In the case $iii) $, consider
\begin{align*}
w_n = \dfrac{ u_n}{\lVert u_n \lVert_{E}}, \; \; \; \; \forall n \in \mathbb{N}.
\end{align*}
Since $ \lVert w_n \lVert_{E} = 1$, by Lemma \ref{0.1}, there exists $w\in E$ such that $w_n\xrightharpoonup[\quad]{\ast}w$ in $D^{1,\Phi}(\mathbb{R}^{N} )$. Now, let us show that $w=0$.
 Before that, as $J(u_n)\longrightarrow c$, we have $J(u_n)\geq 0$, for every $n$ large enough. Thus, there is $n_0\in \mathbb{N}$ such that
\begin{align}\label{6.6}
\int_{\mathbb{R}^{N}}\Phi(|\nabla u_n|)dx+\int_{\mathbb{R}^{N}}V(x)\Phi(|u_n|)dx\geq \int_{\mathbb{R}^{N}} K(x)F(u_n)dx,\;\forall n\geq n_0.
\end{align}
 As $\lVert \nabla u_n\lVert_{\Phi}\geq 1$ and $\lVert u_n\lVert_{V,\Phi}\geq 1$ for every $n\geq n_1$, we have
\begin{align*}
\int_{\mathbb{R}^{N}}\Phi(|\nabla u_n|)dx\leq \lVert \nabla u_n\lVert_{\Phi}^{m}\;\text{ and }\;\int_{\mathbb{R}^{N}}V(x)\Phi(|u_n|) dx\leq \lVert u_n\lVert_{V,\Phi}^{m},\;\forall n\geq n_2.
\end{align*}
So, by \eqref{6.6}
\begin{align*}
\lVert \nabla u_n\lVert_{\Phi}^{m} + \lVert u_n\lVert_{V,\Phi}^{m}\geq \int_{\mathbb{R}^{N}}K(x) F(u_n)dx,\;\;\;\;\forall n\geq n_2.
\end{align*}
Therefore, there is a constant $C>0$ such that
\begin{align*}
C(\lVert \nabla u_n\lVert_{\Phi}+ \lVert u_n\lVert_{V,\Phi})^{m}\geq \int_{\mathbb{R}^{N}}K(x)F (u_n)dx,\;\;\;\;\forall n\geq n_2,
\end{align*}
or equivalently
\begin{align*}
C(\lVert u_n\lVert_{E})^{m}\geq \int_{\mathbb{R}^{N}}K(x)F(u_n)dx,\;\;\;\;\forall n\geq n_2
\end{align*}
where $n_2=\max\{n_0,n_1\}$. Thus,
\begin{align*}
C\geq \int_{\mathbb{R}^{N}} K(x)\dfrac{F(u_n)}{\lVert u_n\lVert_{E}^{m}}dx\geq \int_{\mathbb {R}^{N}} K(x)\dfrac{F(u_n)}{\lvert u_n\lvert^{m}}|w_n|^{m}dx.
\end{align*}
The condition $(f_4)$ implies that for every $\tau >0$, there is $\xi>0$ sufficiently large such that
\begin{align*}
\dfrac{F(s)}{|s|^{m}}\geq \tau,\;\;\;\;\forall\;|s|\geq \xi.
\end{align*}
So
\begin{align*}
1+C\geq\int_{\Omega\cap\{|u_n|\geq \xi\}} K(x)\dfrac{F(u_n)}{\lvert u_n\lvert^{m}}|w_n| ^{m}dx\geq \tau \int_{\Omega\cap\{|u_n|\geq \xi\}} K(x)|w_n|^{m}dx,
\end{align*}
where $\Omega=\{x\in\mathbb{R}^{N}:\;w(x)\neq 0\}$. By Fatou's Lemma,
\begin{align*}
1+C\geq\tau\int_{\Omega} K(x)|w(x)|^{m}dx,\;\;\;\;\forall\;\tau>0.
\end{align*}
Therefore,
\begin{align*}
\int_{\Omega} K(x)|w(x)|^{m}dx=0.
\end{align*}
As $K(x)>0$ for almost everywhere in $\mathbb{R}^{N}$, we have $w=0$.

Note that for every $M>1$, there is $n_0\in \mathbb{N}$ such that, $\dfrac{M}{\lVert \nabla u_n\lVert_{\Phi}}\in[0,1]$, for all $n\geq n_0.
$
Given this, we get
\begin{align*}
J(t_n u_n)\geq& J( \dfrac{M}{\lVert \nabla u_n\lVert_{\Phi}} u_n)= J( \dfrac{M}{\lVert \nabla u_n\lVert_{\Phi}} |u_n|)=J(Mw_n)\\
=&\int_{\mathbb{R}^{N}}\Phi(M|\nabla w_n|)dx+\int_{\mathbb{R}^{N}}V(x)\Phi(M|w_n| )dx-\int_{\mathbb{R}^{N}}K(x)F(Mw_n)dx\\
\geq&MQ(u_n)-\int_{\mathbb{R}^{N}}K(x)F(Mw_n)dx.
\end{align*}
By definition of the sequence $(w_n)$, we have
$
\lVert \nabla w_n\lVert_{\Phi}\leq 1$ and $
\lVert w_n\lVert_{V,\Phi}\leq 1,$ for all $n\in\mathbb{N}.
$
Then,
\begin{align*}
\int_{\mathbb{R}^{N}}\Phi(|\nabla w_n|)dx\geq \lVert \nabla w_n\lVert_{\Phi}^{m},\text{ and }
\int_{\mathbb{R}^{N}}V(x)\Phi(|w_n|)dx\geq \lVert w_n\lVert_{V,\Phi}^{m},\;\forall\;n\in\mathbb{N}.
\end{align*}
So there is $C>0$ such that
\begin{align*}
Q(w_n)\geq \lVert \nabla w_n\lVert_{\Phi}^{m}+\lVert w_n\lVert_{V,\Phi}^{m}\geq C(\lVert \nabla w_n\lVert_{\Phi}+\lVert w_n\lVert_{V,\Phi})^{m},\;\;\;\;\forall\;n\in\mathbb{N}.
\end{align*}
Thus
\begin{align*}
J(t_n u_n)&\geq M C(\lVert w_n\lVert_{E})^{m}-\int_{\mathbb{R}^{N}}K(x)F(Mw_n)dx\\
&= M C- \int_{\mathbb{R}^{N}}K(x)F(Mw_n)dx.
\end{align*}
By Lemma \ref{6.7}, 
\begin{align*}
\lim_{n \rightarrow \infty} \int_{\mathbb{R}^{N}}K(x)F(Mw_n)dx=0,
\end{align*}
therefore,
\begin{align*}
\liminf_{n \rightarrow \infty} J(t_n u_n) \geq M, \; \; \; \; \forall \; M \geq 1.
\end{align*}
which constitutes a contradiction with Lemma \ref{5}, once that $(J(t_n u_n))$ is bounded from above. Therefore $(u_n)$ is bounded.

The cases $i)$ and $ii)$ are analogous to the case $iii)$.

\qed

Since that the Cerami sequence $(u_n)$ given in \eqref{2.1} is bounded in  $E$, by Lemma \ref{0.6}, we can assume that for some subsequence, there is $u \in   E$ such that
\begin{align}\label{6.4}
u_{n}\xrightharpoonup[\quad]{\ast} u\;\;\text{ in }D^{1,\Phi}(\mathbb{R}^{N}),\;\;\;\text{ and }\;\;\;
u_{n}(x)\longrightarrow u(x)\;\;a.e.\;\;\mathbb{R}^{N}.
\end{align}

\begin{lemma}\label{0.2}
	The Cerami sequence $(u_n)$ given in \eqref{2.1} satisfies $$ \int_{\mathbb{R}^{N}} \Phi(|\nabla u|)dx\leq\displaystyle \liminf_{n\rightarrow\infty }\int_{\mathbb{R}^{N}} \Phi(|\nabla u_n|)dx.$$
\end{lemma}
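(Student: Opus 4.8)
The statement is a weak$^*$ lower semicontinuity property for the convex functional $u \mapsto \int_{\mathbb{R}^N} \Phi(|\nabla u|)\,dx$. The plan is to exploit the convexity of $\Phi$ together with the weak$^*$ convergence $u_n \xrightharpoonup[\quad]{\ast} u$ in $D^{1,\Phi}(\mathbb{R}^N)$, which by Lemma \ref{0.6} means precisely that $\partial u_n/\partial x_i \xrightharpoonup[\quad]{\ast} \partial u/\partial x_i$ in $L^{\Phi}(\mathbb{R}^N)$, i.e. $\int_{\mathbb{R}^N} \frac{\partial u_n}{\partial x_i} w\,dx \to \int_{\mathbb{R}^N} \frac{\partial u}{\partial x_i} w\,dx$ for every $w \in E^{\tilde\Phi}(\mathbb{R}^N)$. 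First I would invoke convexity of $\Phi$ in the vector variable $\xi \mapsto \Phi(|\xi|)$ (which is convex since $\Phi$ is convex and nondecreasing on $[0,\infty)$ and $|\cdot|$ is a norm), to write the subgradient inequality
\begin{align*}
\Phi(|\nabla u_n|) \geq \Phi(|\nabla u|) + \phi(|\nabla u|)\nabla u \cdot (\nabla u_n - \nabla u)\quad\text{a.e. in }\mathbb{R}^N.
\end{align*}
Integrating this over $\mathbb{R}^N$ gives
\begin{align*}
\int_{\mathbb{R}^N}\Phi(|\nabla u_n|)\,dx \geq \int_{\mathbb{R}^N}\Phi(|\nabla u|)\,dx + \int_{\mathbb{R}^N}\phi(|\nabla u|)\nabla u\cdot(\nabla u_n-\nabla u)\,dx,
\end{align*}
so the result follows if the last integral tends to $0$ as $n\to\infty$.

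The key step is therefore to justify that $\int_{\mathbb{R}^N}\phi(|\nabla u|)\nabla u\cdot(\nabla u_n-\nabla u)\,dx \to 0$. This is exactly the pairing of $\nabla u_n - \nabla u \xrightharpoonup[\quad]{\ast} 0$ against the fixed vector field $g := \phi(|\nabla u|)\nabla u$, so it suffices to show each component $g_i = \phi(|\nabla u|)\,\partial u/\partial x_i$ lies in $E^{\tilde\Phi}(\mathbb{R}^N)$, the predual appearing in Lemma \ref{0.6}. Since $u \in D^{1,\Phi}(\mathbb{R}^N)$ we have $\partial u/\partial x_i \in L^\Phi(\mathbb{R}^N)$, and by the standard inequality $\tilde\Phi(\Phi'(t)) \leq \Phi(2t)$ recorded in Section 2 together with $\Phi\in(\Delta_2)$ (which holds here because $\Phi$ satisfies $(\phi_1)$–$(\phi_3)$, cf. Lemma \ref{0.3}), one gets $\int_{\mathbb{R}^N}\tilde\Phi(|\phi(|\nabla u|)\,|\nabla u|\,|)\,dx \leq \int_{\mathbb{R}^N}\tilde\Phi(\Phi'(|\nabla u|))\,dx \leq \int_{\mathbb{R}^N}\Phi(2|\nabla u|)\,dx < \infty$, so $g_i \in L^{\tilde\Phi}(\mathbb{R}^N)$. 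To upgrade membership from $L^{\tilde\Phi}$ to $E^{\tilde\Phi}$ I would use that $\tilde\Phi \in (\Delta_2)$ is false in general here (the case $\ell=1$), so I instead argue that $g_i$ is the a.e. limit of truncations $g_i\chi_{\{|\nabla u|\le k, |x|\le k\}}$, which are bounded with compact support hence in $E^{\tilde\Phi}$, and these converge to $g_i$ in $L^{\tilde\Phi}$-norm by dominated convergence in the modular (valid because $\tilde\Phi(|g_i|)\in L^1$); since $E^{\tilde\Phi}$ is closed, $g_i\in E^{\tilde\Phi}(\mathbb{R}^N)$.

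Once $g_i\in E^{\tilde\Phi}(\mathbb{R}^N)$, the weak$^*$ convergence in Lemma \ref{0.6} gives $\int_{\mathbb{R}^N}\frac{\partial u_n}{\partial x_i}g_i\,dx \to \int_{\mathbb{R}^N}\frac{\partial u}{\partial x_i}g_i\,dx$ for each $i=1,\dots,N$; summing over $i$ yields $\int_{\mathbb{R}^N}\phi(|\nabla u|)\nabla u\cdot\nabla u_n\,dx \to \int_{\mathbb{R}^N}\phi(|\nabla u|)|\nabla u|^2\,dx$, hence the remainder term vanishes and we conclude
\begin{align*}
\int_{\mathbb{R}^N}\Phi(|\nabla u|)\,dx \leq \liminf_{n\to\infty}\int_{\mathbb{R}^N}\Phi(|\nabla u_n|)\,dx.
\end{align*}
I expect the main obstacle to be the identification of the correct predual: one must be careful that weak$^*$ convergence in $L^\Phi = (E^{\tilde\Phi})^*$ only tests against $E^{\tilde\Phi}$ (not all of $L^{\tilde\Phi}$), so the technical heart is verifying $\phi(|\nabla u|)\nabla u \in (E^{\tilde\Phi}(\mathbb{R}^N))^N$ rather than merely in $(L^{\tilde\Phi})^N$ — and the $\Delta_2$-condition on $\Phi$ (not on $\tilde\Phi$) is exactly what makes the modular estimate $\int\tilde\Phi(\Phi'(|\nabla u|))\,dx<\infty$ work.
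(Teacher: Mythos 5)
Your approach is genuinely different from the paper's, and there is a real gap in the step where you upgrade $g_i:=\phi(|\nabla u|)\,\partial u/\partial x_i$ from $L^{\tilde\Phi}(\mathbb{R}^N)$ to $E^{\tilde\Phi}(\mathbb{R}^N)$. You claim the truncations $g_i^{(k)}=g_i\chi_{\{|\nabla u|\le k,\,|x|\le k\}}$ converge to $g_i$ in $L^{\tilde\Phi}$-norm ``by dominated convergence in the modular (valid because $\tilde\Phi(|g_i|)\in L^1$).'' But modular convergence $\int\tilde\Phi(|g_i-g_i^{(k)}|)\,dx\to 0$ does \emph{not} imply Luxemburg-norm convergence unless $\tilde\Phi\in(\Delta_2)$, and norm convergence requires $\int\tilde\Phi\bigl(|g_i-g_i^{(k)}|/\lambda\bigr)\,dx\to 0$ for \emph{every} $\lambda>0$. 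Your domination only covers $\lambda\ge 1$: the bound $\tilde\Phi(\Phi'(|\nabla u|))\le\Phi(2|\nabla u|)$ (equivalently $\tilde\Phi(\Phi'(t))=t\Phi'(t)-\Phi(t)\le(m-1)\Phi(t)$) does not give integrability of $\tilde\Phi(\Phi'(|\nabla u|)/\lambda)$ for $\lambda<1$ when $\ell=1$. Concretely, for $\Phi(t)\approx t\ln t$ one has $\Phi'(t)\approx\ln t$ and $\tilde\Phi(s)\approx e^{s}$, so $\tilde\Phi(\Phi'(t)/\lambda)\approx t^{1/\lambda}$, which is not controlled by $\Phi(ct)\approx ct\ln t$ for any $c$. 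Thus for suitable $|\nabla u|\in L^{\Phi}$ the dominating function $\tilde\Phi(|g_i|/\lambda)$ is not in $L^1$, the dominated convergence argument collapses, and $g_i$ may simply fail to lie in $E^{\tilde\Phi}(\mathbb{R}^N)$. This is precisely the borderline $\ell=1$ case the paper is designed to cover; for $\ell>1$ your argument is fine because then $L^{\tilde\Phi}=E^{\tilde\Phi}$.

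The paper avoids this obstruction entirely by working at the level of $L^1$ rather than $L^{\tilde\Phi}$: since $L^{\Phi}\hookrightarrow L^1_{loc}$, bounded compactly supported test functions $\varphi\omega_R$ \emph{do} belong to $E^{\tilde\Phi}(\mathbb{R}^N)$ (the dominating function is constant on a ball, so the modular is finite for every $\lambda$), and testing the weak$^*$ convergence against them shows $\partial u_n/\partial x_i\,\omega_R\rightharpoonup\partial u/\partial x_i\,\omega_R$ weakly in $L^1(\mathbb{R}^N)$. One then invokes the classical weak $L^1$ lower semicontinuity of $v\mapsto\int\Phi(|v|)\,dx$ for convex integrands (Ekeland--Temam, Theorem~2.1), and lets $R\to\infty$. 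Your subgradient idea is clean and would give a shorter proof in the reflexive case, but the change of venue to $L^1$ via cutoffs is exactly what makes the argument robust when $\tilde\Phi\notin(\Delta_2)$. If you want to save your approach, you would need either to prove $\phi(|\nabla u|)\nabla u\in(E^{\tilde\Phi})^N$ by some other means, or to first localize (replace $g_i$ by $g_i\omega_R$ truncated in value, which \emph{is} in $E^{\tilde\Phi}$), pass to the limit, and then let the truncation parameters go to infinity using Fatou on the lower-semicontinuity side -- which essentially reconstructs the paper's argument.
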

\noindent{\it{Proof:}}
Consider $\varphi\in L^{\infty}(\mathbb{R}^{N})$ arbitrary. For every $R>1$, define the function
\begin{align*}
\omega_R(t)=\left\{\begin{array}{lr}
1\;,& \text{se } \;x\in B_R(0)\\
0\;,&\text{ se } x\in B_R^c(0)
\end{array}\right..
\end{align*}
It is clear that $\omega_R\in E^{\tilde{\Phi}}(\mathbb{R}^{N})$, because $\omega_R\in L^{\infty }(\mathbb{R}^{N})$ and $ supp(\omega_R)\subset\subset \mathbb{R}^{N}$. As a consequence, we have $\varphi\omega_R\in E^{\tilde{\Phi}}(\mathbb{R}^{N})$. We know that $ L^{\Phi}(\mathbb{R}^{N})\xhookrightarrow[cont\;]{} L^{1}_{loc}(\mathbb{R}^{N})$, such as $u_n, u\in D^{1,\Phi}(\mathbb{R}^{N}),$ then
\begin{align*}
\dfrac{\partial u_n}{\partial x_i},\;\dfrac{\partial u}{\partial x_i}\in L^{1}_{loc}(\mathbb{R}^{N}), \;\;\;\;\forall i=1,2,\cdots,N
\end{align*}
and hence
\begin{align*}
\dfrac{\partial u_n}{\partial x_i}\omega_R,\;\dfrac{\partial u}{\partial x_i}\omega_R\in L^{1}_{loc}(\mathbb{R}^{ N}),\;\;\;\;\forall i=1,2,\cdots,N.
\end{align*}
By \eqref{6.4}, $u_n \xrightharpoonup[\quad]{\ast} u$ in $D^{1,\Phi}(\mathbb{R}^{N})$, thus
{\begin{align*}
	\int_{\mathbb{R}^{N}}\left(\dfrac{\partial u_n}{\partial x_i}\omega_R\right)\varphi dx=\int_{\mathbb{R}^{N}}\dfrac {\partial u_n}{\partial x_i}(\omega_R \varphi)dx\rightarrow\int_{\mathbb{R}^{N}}\dfrac{\partial u}{\partial x_i}(\omega_R \varphi)dx= \int_{\mathbb{R}^{N}}\left(\dfrac{\partial u}{\partial x_i}\omega_R\right )\varphi dx.
	\end{align*}}
By the arbitrariness of $\varphi$ in $L^{\infty}(\mathbb{R}^{N})$, 
\begin{align*}
\dfrac{\partial u_n}{\partial x_i}\omega_R \xrightharpoonup[\quad]{} \dfrac{\partial u}{\partial x_i}\omega_R,\;\;\text{ in }\;\; L^{1}(\mathbb{R}^{N})
\end{align*}
because $(L^{1}(\mathbb{R}^{N}))^*=L^{\infty}(\mathbb{R}^{N})$. Therefore, applying [\citenum{ET}, Theorem $2.1$], we can conclude that
\begin{align*}
\int_{B_R(0)}\Phi(|\nabla u|)dx\leq \liminf_{n\rightarrow\infty} \int_{\mathbb{R}^{N}} \Phi(|\nabla u_n| \omega_R)dx\leq \liminf_{n\rightarrow\infty} \int_{\mathbb{R}^{N}} \Phi(|\nabla u_n|)dx.
\end{align*}
Passing the limit at $R\rightarrow+\infty$, we get
\begin{align*}
\int_{\mathbb{R}^{N}}\Phi(|\nabla u|)dx\leq \liminf_{n\rightarrow\infty} \int_{\mathbb{R}^{N}} \Phi( |\nabla u_n|)dx,
\end{align*}
which completes the proof.

\qed

Fix $v\in C^{\infty}_0(\mathbb{R}^N)$. By boundedness of Cerami sequence $(u_n)$, we have $J'(u_n)(v-u_n)=o_n(1)$, hence, since $\Phi$ is a convex function, it is possible to show that
\begin{align}\label{7.2}
\begin{split}
Q(v)-Q(u_n)\geq \int_{\mathbb{R}^{N}}K(x)f(u_n)(v-u_n)dx+o_n(1).
\end{split}
\end{align}
For the Lemma \ref{0.2}, we have to
\begin{align*}
\liminf_{n\rightarrow\infty}\int_{\mathbb{R}^{N}}\Phi(|\nabla u_n|)dx\geq\int_{\mathbb{R}^{N}}\Phi( |\nabla u|)dx.
\end{align*}
Now, due to the Fatou's Lemma, we have
\begin{align*}
\liminf_{n\rightarrow\infty}\int_{\mathbb{R}^{N}}V(x)\Phi(| u_n|)dx\geq\int_{\mathbb{R}^{N}}V (x)\Phi(|u|)dx.
\end{align*}
Therefore,
\begin{align}\label{7.3}
\liminf_{n\rightarrow\infty} Q(u_n)\geq Q(u).
\end{align}
From \eqref{7.2} and \eqref{7.3} together with the Lemma \ref{6.7}, we get
\begin{align*}
Q(v)-Q(u)\geq\int_{\mathbb{R}^{N}}K(x)f(u)(v-u)dx,\;\;\;\;\forall\;v \in C^{\infty}_{0}(\mathbb{R}^{N}).
\end{align*}
As $E=\overline{C^{\infty}_{0}(\mathbb{R}^{N})}^{\lVert\cdot\lVert_{E}}$ and $\Phi\in( \Delta_{2})$, we conclude that
\begin{align}\label{7.4}
Q(v)-Q(u)\geq\int_{\mathbb{R}^{N}}K(x)f(u)(v-u)dx,\;\;\;\;\forall\;v \in E.
\end{align}
In other words, $u$ is a critical point of the $J$ functional. And from Proposition  \ref{0.4}, we can conclude that $u$ is a weak solution for $(P)$. Now, we substitute $v=u^+:=\max\{0,u(x)\}$ in \eqref{7.4} and use \eqref{00.22} to get
\begin{align*}
-\int_{\mathbb{R}^{N}}\Phi(|\nabla u^-|)dx-\int_{\mathbb{R}^{N}}V(x)\Phi(u^-)dx\geq\int_{\mathbb{R}^{N}}K(x)f(u)u^-dx=0,
\end{align*}
which leads to
\begin{align*}
\int_{\mathbb{R}^{N}}\Phi(|\nabla u^-|)dx=0\;\;\text{ and }\;\;\int_{\mathbb{R}^{N}}V(x)\Phi(u^-)dx=0
\end{align*}
whence it is readily inferred that $u^-=0$, therefore, $u$ is a weak nonnegative solution.

Note that $u$ is nontrivial. In the sense, consider a sequence $(\varphi_k)\subset C^{\infty}_{0}(\mathbb{R}^{N})$ such that $\varphi_k \rightarrow u$ in $D^{1 ,\Phi}(\mathbb{R}^{N})$. Since $(u_n)$ bounded, we get $J'(u_n)(\varphi_k-u_n)=o_n(1)\lVert \varphi_k\lVert-o_n(1)$. As $\Phi$ is convex, we can show that
\begin{align}\label{7.11}
Q (\varphi_k) -Q (u_n) \geq \int_{\mathbb{R}^{N}} K(x) f(u_n)(\varphi_k-u_n)dx + o_n(1)\lVert \varphi_k\lVert-o_n(1).
\end{align}
Since $(\lVert \varphi_k\lVert)_{k\in\mathbb{N}}$ is a bounded sequence, it follows from \eqref{7.11} and from Lemma \eqref{6.7} that
\begin{align*}
Q(\varphi_k)\geq \limsup_{n \to \infty}Q(u_n)+\int_{\mathbb{R}^{N}} K(x) f(u)(\varphi_k-u)dx.
\end{align*}
Now, note that being $\Phi\in (\Delta_{2})$ and $\varphi_k \rightarrow u$ in $E$, we conclude from the inequality above that
\begin{align} \label{7.19}
Q(u) \geq \limsup_{n \rightarrow \infty} Q(u_n).
\end{align}
From \eqref{7.3} and \eqref{7.19},
\begin{align}\label{7.20}
\lim_ {n \rightarrow \infty} Q(u_n)=Q(u).
\end{align}
By Lemma \ref{6.7}, we have 
\begin{align}\label{7.21}
\lim_{n\rightarrow\infty} \int_{\mathbb{R}^{N}}K(x)F(u_n)dx=\int_{\mathbb{R}^{N}}K(x)F(u)dx,
\end{align}
Therefore, by \eqref{01.39}, \eqref{7.20} and \eqref{7.21}, we conclude
\begin{align*}
0<c=\lim_{n\rightarrow\infty} J(u_n)=\int_{\mathbb{R}^{N}} \Phi(|\nabla u|)dx-\int_{\mathbb{R}^{N}} K(x)F(u)dx=J(u),
\end{align*}
that is, $u\neq 0$.

\subsection{\textbf{Proof of Theorem \ref{Teo1}}}\label{sec1} Assuming the assumptions of Theorem \ref{Teo1}, the above argument guarantees the existence of a nontrivial weak solution for problem $(P)$. This subsection makes heavy use of fact that $\Phi\in \mathcal{C}_m$ to study the regularity of nonnegative solutions of the problem $(P)$, thereby showing Theorem \ref{Teo1}.

We begin by presenting a technical result, which can be found in \cite{Ailton}.
\begin{lemma}\label{22.6}
	Let  $u\in E$ be a nonnegative solution of $(P)$, $x_0 \in  \mathbb{R}^N$ and $R_0>0$. Then 
	\begin{align*}
\int_{\mathcal{A}_{k,t}} |\nabla u|^m dx\leq C \left( \int_{\mathcal{A}_{k,s}}\left|\dfrac{u-k}{s-t}\right|^{m^*}dx +(k^{m^*}+1) |\mathcal{A}_{k,s}|\right)
\end{align*}
	where $0<t<s<R_0$, $k>1$, $ \mathcal{A}_{k,\rho}=\{x\in B_{\rho}(x_0): u(x)>k\}$ and $C>0$ is a constant that does not depend on $k$.
\end{lemma}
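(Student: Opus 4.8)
The plan is to derive a Caccioppoli (De Giorgi) inequality by testing the equation with a truncation of $u$, and then to remove the gradient term that survives on the right-hand side via a standard iteration lemma.

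\textbf{Test function.} Fix $x_0$, $R_0$ and $0<t<s<R_0$, and take $\eta\in C_0^\infty(B_s(x_0))$ with $\eta\equiv1$ on $B_t(x_0)$, $0\le\eta\le1$, $|\nabla\eta|\le 2/(s-t)$. The function $w:=\eta^m(u-k)^+$ lies in $E$: it is compactly supported, $\nabla w\in L^\Phi(\mathbb{R}^N)$, and $\int_{\mathbb{R}^N}V\Phi(w)\le\int_{B_s}V\Phi(u)<\infty$. Since $u$ solves $(P)$, i.e. is a critical point of $J$ in the sense of \eqref{0.13}, testing \eqref{0.13} with $v=u\pm\varepsilon w$, dividing by $\varepsilon$ and letting $\varepsilon\to0^+$ (using the Gateaux differentiability of $Q$) gives the identity
$$\int_{\mathbb{R}^N}\phi(|\nabla u|)\nabla u\cdot\nabla w\,dx+\int_{\mathbb{R}^N}V(x)\phi(|u|)u\,w\,dx=\int_{\mathbb{R}^N}K(x)f(u)\,w\,dx .$$

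\textbf{The Caccioppoli inequality.} Expanding $\nabla w=m\eta^{m-1}(u-k)^+\nabla\eta+\eta^m\nabla u\,\chi_{\{u>k\}}$ and discarding the nonnegative term $\int V\phi(|u|)u\,\eta^m(u-k)^+$, one gets
$$\int_{\mathbb{R}^N}\eta^m\phi(|\nabla u|)|\nabla u|^2\chi_{\{u>k\}}\,dx\le m\int_{\mathbb{R}^N}\eta^{m-1}(u-k)^+\phi(|\nabla u|)|\nabla u|\,|\nabla\eta|\,dx+\int_{\mathbb{R}^N}K\,|f(u)|\,\eta^m(u-k)^+\,dx .$$
On the left, $(\phi_3)$ and $\Phi\in\mathcal C_m$ give $\phi(|\nabla u|)|\nabla u|^2\ge\ell\Phi(|\nabla u|)\ge\ell C|\nabla u|^m$, so the left side dominates $\ell C\int_{\mathcal A_{k,t}}|\nabla u|^m$. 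For the first term on the right I write $\phi(|\nabla u|)|\nabla u|=\Phi'(|\nabla u|)$ and apply Young's inequality $xy\le\tilde\Phi(\delta x)+\Phi(\delta^{-1}y)$ for small $\delta$, using the convexity of $\tilde\Phi$ and $\tilde\Phi(\Phi'(\tau))=\tau\Phi'(\tau)-\Phi(\tau)\le(m-1)\Phi(\tau)$; this yields $m\delta(m-1)\int_{\mathcal A_{k,s}}\Phi(|\nabla u|)\,dx$ plus a term $C_\delta\int_{\mathcal A_{k,s}}\Phi\big(\eta^{m-1}(u-k)^+|\nabla\eta|\big)dx$ which, via $\Phi(\tau)\le C(\tau^{m^*}+1)$ (Lemma \ref{0.3}) and $|\nabla\eta|\le 2/(s-t)$, is bounded by $C_\delta\int_{\mathcal A_{k,s}}|(u-k)/(s-t)|^{m^*}dx+C_\delta|\mathcal A_{k,s}|$. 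For the $f$-term, the growth of $f$ together with Lemmas \ref{0.3} and \ref{0.5} give, on $\mathcal A_{k,s}$ where $u>k>1$, $|f(u)|(u-k)^+\le C(u^{m^*-1}+1)u\le C\big((u-k)^{m^*}+k^{m^*}+1\big)$, so $\int K|f(u)|\eta^m(u-k)^+\le C\int_{\mathcal A_{k,s}}(u-k)^{m^*}+C(k^{m^*}+1)|\mathcal A_{k,s}|$, while $(u-k)^{m^*}\le R_0^{m^*}|(u-k)/(s-t)|^{m^*}$. Finally $\Phi(\tau)\le C(\tau^m+1)$ bounds $\int_{\mathcal A_{k,s}}\Phi(|\nabla u|)$ by $C\int_{\mathcal A_{k,s}}|\nabla u|^m+C|\mathcal A_{k,s}|$. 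Collecting all this, dividing by $\ell C$ and taking $\delta$ small enough, I obtain
$$\int_{\mathcal A_{k,t}}|\nabla u|^m\,dx\le\theta\int_{\mathcal A_{k,s}}|\nabla u|^m\,dx+C\int_{\mathcal A_{k,s}}\Big|\frac{u-k}{s-t}\Big|^{m^*}dx+C(k^{m^*}+1)|\mathcal A_{k,s}|$$
for some $\theta\in(0,1)$ and $C$ independent of $k$.

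\textbf{Iteration and main obstacle.} Set $h(\rho):=\int_{\mathcal A_{k,\rho}}|\nabla u|^m\,dx$, which is nondecreasing and bounded on $[t,s]$. Applying the previous inequality to a generic pair $\rho_1<\rho_2$ in $[t,s]$ (and using $\mathcal A_{k,\rho_2}\subset\mathcal A_{k,s}$) gives $h(\rho_1)\le\theta h(\rho_2)+A(\rho_2-\rho_1)^{-m^*}+B$ with $A=C\int_{\mathcal A_{k,s}}(u-k)^{m^*}dx$ and $B=C(k^{m^*}+1)|\mathcal A_{k,s}|$. As $\theta<1$, the classical iteration lemma (for instance, the one in Giaquinta's book) yields $h(t)\le c(\theta,m^*)\big(A(s-t)^{-m^*}+B\big)$, which is exactly the claim. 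The crux is the Caccioppoli step: because $\phi(t)t^2$ is only comparable to $\Phi(t)$, and $\Phi(t)$ is not comparable to $|t|^m$ for small $t$, the $p$-Laplacian version of the argument is unavailable, and one must instead route the estimate through the complementary function $\tilde\Phi$ and the bound $\tilde\Phi(\Phi'(t))\le(m-1)\Phi(t)$; this is precisely what produces the term $\theta\int_{\mathcal A_{k,s}}|\nabla u|^m$ on the right, and hence the need for the iteration lemma rather than a direct estimate. A secondary point requiring care is that, since $u$ is only a critical point in the convexity sense \eqref{0.13}, the Euler--Lagrange identity against $\eta^m(u-k)^+$ must first be recovered as in the first step, and this test function must be checked to belong to $E$.
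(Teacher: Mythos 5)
Your proof is correct, and it establishes the Caccioppoli inequality by a route that is genuinely different from the paper's. Both arguments start from the same point: test the weak formulation (recovered from the variational inequality \eqref{0.13}) with $\eta^m(u-k)^+$, use $\ell\Phi(t)\le\phi(t)t^2$ on the principal part, handle the gradient cross-term by Young's inequality in the $(\Phi,\tilde\Phi)$-pair, and control $\tilde\Phi(\Phi'(|\nabla u|))$ via the identity $\tilde\Phi(\Phi'(\tau))=\tau\Phi'(\tau)-\Phi(\tau)\le(m-1)\Phi(\tau)$. Where you diverge is in how the surviving gradient term is absorbed. The paper keeps the cutoff $\zeta^{m-1}$ inside the argument of $\tilde\Phi$ and invokes Lemma~\ref{0.51}, $\tilde\Phi(\rho\,\zeta^{m-1}\tau)\le(\zeta^{m-1}\tau)^{m/(m-1)}\tilde\Phi(\rho)$; since $(\zeta^{m-1})^{m/(m-1)}=\zeta^{m}$, the resulting term is exactly a small multiple of $J=\int_{\mathcal A_{k,s}}\Phi(|\nabla u|)\zeta^{m}\,dx$ and is absorbed into the left-hand side in one step, with no iteration needed. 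You instead pull the cutoff out as $\eta\le1$, split the product by the $\delta$-scaled Young inequality, and use convexity ($\tilde\Phi(\delta x)\le\delta\tilde\Phi(x)$) to get a term $\theta\int_{\mathcal A_{k,s}}|\nabla u|^m$ on the right which sits on the larger ball and is therefore not directly absorbable; this forces you to close the estimate with the standard hole-filling iteration lemma (Giaquinta/Ladyzhenskaya--Ural'tseva). Both routes are legitimate and standard in De Giorgi--type arguments. The paper's version is slightly cleaner in that it needs one less ingredient; yours is arguably more robust, since it does not depend on the exponent $m/(m-1)$ matching the cutoff power $m$ exactly, and the iteration lemma is a familiar black box in this setting. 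A small presentational remark: you discard the $V$-term on the left instead of using it to cancel the $V\phi(t)t$ part of the growth bound for $Kf$ as the paper does; this is fine because on $\mathcal A_{k,s}$ one has $u>k>1$, so $(f_1)$ gives a pointwise bound $|f(u)|\le C(u^{m^*-1}+1)$ directly, which is what you use. The verification that $\eta^m(u-k)^+\in E$ and the passage from the variational inequality \eqref{0.13} to the Euler--Lagrange identity, which you spell out explicitly, are handled more tersely in the paper but are needed in either case.
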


\noindent{\bf{Proof:}}
Let $u\in E$ be a weak solution nonnegative of $(P)$ and $x_0\in \mathbb{R}^{N}$. Moreover, fix $0 < t < s < R_0 $ and $\zeta \in C^{\infty}_0(\mathbb{R}^N)$ verifying 
\begin{align*}
	0\leq \zeta \leq 1, \quad supp (\zeta)\subset B_{s}(x_0),\quad \zeta\equiv 1\;\text{ on }\; B_t(x_0)\;\;\text{ and }\;\; |\nabla \zeta|\leq \dfrac{2}{s-t}.
\end{align*}
For $k >1$, set $\varphi = \zeta^m(u-k)^{+}$ and
\begin{align*}
J=\int_{\mathcal{A}_{k,s}}\Phi(|\nabla u|)\zeta ^mdx,
\end{align*}
Using $\varphi$ as a test function and $\ell \Phi(t)\leq \phi(t)t^2$, we find
%
\begin{align*}
\ell J\leq m \int_{\mathcal{A}_{k,s}}\zeta^{m-1}(u-k)^+ \phi(|\nabla u|)|\nabla u| |\nabla\zeta| dx
-\int_{\mathcal{A}_{k,s}}V(x) \phi( u) u\zeta^m (u-k)^+ dx\\+ \int_{\mathcal{A}_{k,s}}K(x) f(u) \zeta^m (u-k)^+ dx.
\end{align*}
As in \eqref{11.16}, given $\eta > 0$, there exists $C_\varepsilon>0$ such that
\begin{align*}
K(x)f(t)\leq \eta C_1V(x)t\phi(t)+C_\varepsilon t\phi_{*}(t), \;\;\;\forall t\geq0\text{ and } x\in\mathbb{R}^N.
\end{align*}
where $C_1=\lVert{K}\lVert_{\infty}$. Thus, setting $\eta = \frac{1}{C_1}$, there exists a constant $C_2>0$ such that
\begin{align}\label{11.2}
\ell J\leq m \int_{\mathcal{A}_{k,s}}\zeta^{m-1}(u-k)^+ \phi(|\nabla u|)|\nabla u| |\nabla\zeta| dx
+C_2\int_{\mathcal{A}_{k,s}} \phi_*(| u|) u\zeta^m (u-k)^+ dx.
\end{align}
For each $\tau \in(0, 1)$, the Young’s inequalities gives
%
%
\begin{align}\label{11.0}
\begin{split}
\phi(|\nabla u|)|\nabla u||\nabla \zeta| \zeta^{m-1}(u-k)^+\leq \tilde{\Phi }( 	\phi(|\nabla u|)|\nabla u|\zeta^{m-1}\tau)+C_3\Phi\Big(\Big|\dfrac{u-k}{s-t}\Big|\Big).
\end{split}
\end{align}
It follows from Lemma \ref{0.51}, 
\begin{align}\label{11.1}
\tilde{\Phi }( 	\phi(|\nabla u|)|\nabla u|\zeta^{m-1}\tau)\leq C_4(\tau \zeta^{m-1})^\frac{m}{m-1}{\Phi }(	|\nabla u|).
\end{align}	
From \eqref{11.2}, \eqref{11.0} and \eqref{11.1}, 
\begin{align*}
\ell J\leq m  C_4\tau^\frac{m}{m-1}\int_{\mathcal{A}_{k,s}} \Phi (	|\nabla u|) \zeta^{m}+mC_3\int_{\mathcal{A}_{k,s}} \Phi\Big(\Big|\dfrac{u-k}{s-t}\Big|\Big)dx+C_2\int_{\mathcal{A}_{k,s}} u\phi_*( u) \zeta^m (u-k)^+ dx.
\end{align*}
Choosing $\tau\in (0,1)$  such that $0<m  C_4\tau^\frac{m}{m-1}<\ell$, we derive
\begin{align}\label{11.6}
J\leq C_5\int_{\mathcal{A}_{k,s}} \Phi\Big(\Big|\dfrac{u-k}{s-t}\Big|\Big)dx+C_5\int_{\mathcal{A}_{k,s}} u\phi_*(u) \zeta^m (u-k)^+ dx.
\end{align}
By Young’s inequalities,
\begin{align}\label{11.7}
u\phi_*( u)\zeta^m (u-k)^+ \leq C_6\Phi_*\left(\Big|\dfrac{u-k}{s-t}\Big| \right)+ C_6\Phi_*(k).
\end{align}
Therefore, a combination of \eqref{11.6} and \eqref{11.7}, yields 
\begin{align}\label{111}
J\leq C_{7}\int_{\mathcal{A}_{k,s}} \Phi\Big(\Big|\dfrac{u-k}{s-t}\Big|\Big)dx+C_{7}\int_{\mathcal{A}_{k,s}} \Phi_*\Big(\Big|\dfrac{u-k}{s-t}\Big|\Big)dx+ C_{7} \int_{\mathcal{A}_{k,s}}\Phi_*(k)dx.
\end{align}
Now, using that $\ell\leq m<m^* $ and applying the Lemmas \ref{0.3} and \ref{0.5} for functions $\Phi$ and $\Phi_{*},$ respectivamente, we get 
\begin{align*}
	\Phi\Big(\Big|\dfrac{u-k}{s-t}\Big|\Big)\leq \Phi(1)\Big( \Big|\dfrac{u-k}{s-t}\Big|^{m^*}+1 \Big),\;\Phi_*\Big(\Big|\dfrac{u-k}{s-t}\Big|\Big)\leq \Phi_*(1)\Big( \Big|\dfrac{u-k}{s-t}\Big|^{m^*}+1 \Big)\text{ and }\Phi_*(k)\leq(k^{m^*}+1).
\end{align*}
From \eqref{111} and the inequality above,
\begin{align*}
	J\leq C_{8}\left(\int_{\mathcal{A}_{k,s}} \Big|\dfrac{u-k}{s-t}\Big|^{m^*}dx + (k^{m^*}+1) |\mathcal{A}_{k,s}|\right)
\end{align*}
The result follows from the fact that $\Phi\in \mathcal{C}_m$.

\qed
%
%
%
%
\begin{lemma}\label{22.21}
Let $u \in E$ be a nonnegative solution of $(P)$. Then, $u \in L^\infty_{loc} (\mathbb{R}^N)$.
\end{lemma}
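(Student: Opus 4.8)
The plan is to run a De Giorgi--Stampacchia iteration, using Lemma~\ref{22.6} as the basic Caccioppoli-type estimate together with the classical Sobolev inequality $W^{1,m}\hookrightarrow L^{m^*}$ ($1<m<N$). First I would record the needed a priori integrability: since $\Phi\in\mathcal{C}_m$ we have $\Phi(t)\ge C|t|^m$, and (from the defining integral of the Sobolev conjugate together with $m<N$) also $\Phi_*(t)\ge C|t|^{m^*}$; hence $\nabla u\in L^m_{loc}(\mathbb{R}^N)$ and $u\in L^{m^*}_{loc}(\mathbb{R}^N)$, so $u\in W^{1,m}_{loc}(\mathbb{R}^N)$ and every truncation $(u-k)^+\zeta$ with $\zeta\in C^\infty_0(\mathbb{R}^N)$ lies in $W^{1,m}(\mathbb{R}^N)$ with $\nabla(u-k)^+=\chi_{\{u>k\}}\nabla u$.

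Now fix $x_0\in\mathbb{R}^N$ and $R_0>0$; it suffices to bound $u$ on $B_{R_0/2}(x_0)$. For a parameter $d>1$ to be chosen, set $k_j=d(2-2^{-j})$ and $r_j=\tfrac{R_0}{2}+\tfrac{R_0}{4}2^{-j}$ for $j\ge0$, so that $k_j\uparrow 2d$, $r_j\downarrow R_0/2$ and $r_0<R_0$; write $\mathcal{A}_{k,\rho}=\{x\in B_\rho(x_0):u(x)>k\}$ as in Lemma~\ref{22.6} and put
$$Y_j=\int_{\mathcal{A}_{k_j,r_j}}(u-k_j)^{m^*}\,dx.$$
Choosing intermediate radii $\bar r_j\in(r_{j+1},r_j)$ with $\bar r_j-r_{j+1}\sim r_j-\bar r_j\sim R_0\,2^{-j}$ and cut-offs $\zeta_j\in C^\infty_0(B_{\bar r_j})$ with $\zeta_j\equiv1$ on $B_{r_{j+1}}$, $0\le\zeta_j\le1$, $|\nabla\zeta_j|\le C\,2^j/R_0$, the Sobolev inequality applied to $v_j=(u-k_{j+1})^+\zeta_j$ gives
$$Y_{j+1}^{m/m^*}\le\Big(\int_{\mathbb{R}^N}v_j^{m^*}\,dx\Big)^{m/m^*}\le S^m\int_{\mathbb{R}^N}|\nabla v_j|^m\,dx\le C\Big(\int_{\mathcal{A}_{k_{j+1},\bar r_j}}|\nabla u|^m\,dx+\frac{2^{jm}}{R_0^m}\int_{\mathcal{A}_{k_{j+1},\bar r_j}}(u-k_{j+1})^m\,dx\Big).$$

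Next I would estimate the right-hand side by a multiple of $Y_j$. For the first term, Lemma~\ref{22.6} (with $t=\bar r_j$ and $s\in(\bar r_j,r_j)$) bounds $\int_{\mathcal{A}_{k_{j+1},\bar r_j}}|\nabla u|^m$ by $C\big((s-\bar r_j)^{-m^*}\int_{\mathcal{A}_{k_{j+1},s}}(u-k_{j+1})^{m^*}+(k_{j+1}^{m^*}+1)|\mathcal{A}_{k_{j+1},s}|\big)$; on $\mathcal{A}_{k_{j+1},\cdot}$ one has $u-k_{j+1}\le u-k_j$ and $1\le\big((u-k_j)/(k_{j+1}-k_j)\big)^{m^*}$, which (using $\mathcal{A}_{k_{j+1},s}\subset\mathcal{A}_{k_j,r_j}$) yields $\int_{\mathcal{A}_{k_{j+1},s}}(u-k_{j+1})^{m^*}\le Y_j$ and $|\mathcal{A}_{k_{j+1},s}|\le(k_{j+1}-k_j)^{-m^*}Y_j$, while the second integral above is controlled via Hölder's inequality by $|\mathcal{A}_{k_{j+1},\bar r_j}|^{1-m/m^*}Y_j^{m/m^*}$. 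Since $k_{j+1}\le 2d$ and $k_{j+1}-k_j=d\,2^{-j-1}$, all powers of $d$ cancel, and one arrives at $Y_{j+1}^{m/m^*}\le C\,b^j\,Y_j$, that is,
$$Y_{j+1}\le C^{m^*/m}\big(b^{m^*/m}\big)^{j}\,Y_j^{1+\alpha},\qquad \alpha:=\frac{m^*-m}{m}>0,$$
with $C=C(N,m,R_0)$ and $b=b(N,m)>1$ independent of $d\ge1$. Then the standard geometric-convergence lemma applies: if $Y_{j+1}\le\mathcal{C}\,\mathcal{B}^{j}Y_j^{1+\alpha}$ with $\alpha>0$, $\mathcal{B}>1$ and $Y_0\le\mathcal{C}^{-1/\alpha}\mathcal{B}^{-1/\alpha^2}$, then $Y_j\to0$. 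The threshold being independent of $d$, and $Y_0\le\int_{\{x\in B_{R_0}(x_0):\,u>d\}}(u-d)^{m^*}\,dx\to0$ as $d\to+\infty$ (dominated convergence, $u\in L^{m^*}_{loc}$), we can fix $d>1$ so large that $Y_0$ is below the threshold; hence $Y_j\to0$. Since $\mathcal{A}_{2d,R_0/2}\subset\mathcal{A}_{k_j,r_j}$ and $u-k_j\ge(u-2d)^+\ge0$ there, letting $j\to\infty$ gives $\int_{\mathcal{A}_{2d,R_0/2}}(u-2d)^{m^*}\,dx=0$, so $|\mathcal{A}_{2d,R_0/2}|=0$, i.e. $u\le 2d$ a.e. on $B_{R_0/2}(x_0)$; as $x_0,R_0$ were arbitrary, $u\in L^\infty_{loc}(\mathbb{R}^N)$.

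The delicate point is not the iteration lemma itself but arranging the recursion so that it is genuinely superlinear --- the exponent $1+\alpha$ with $\alpha=(m^*-m)/m$ arises precisely from the gap between $m^*$ and $m$ in the Sobolev step --- and, crucially, so that the constant in $Y_{j+1}\le\mathcal{C}\,\mathcal{B}^jY_j^{1+\alpha}$ is independent of the free level $d$; otherwise the choice of $d$ making $Y_0$ small would be circular. Ensuring this requires tracking precisely how the non-homogeneous term $(k^{m^*}+1)|\mathcal{A}_{k,s}|$ in Lemma~\ref{22.6} and the level sets $\mathcal{A}_{k_{j+1},s}$ scale in $k_j$, and exploiting the exact cancellation produced by the choice $k_j=d(2-2^{-j})$.
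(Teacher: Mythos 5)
Your proposal is correct and takes the same route as the paper: a De~Giorgi--Stampacchia level-set iteration built on Lemma~\ref{22.6} together with the Sobolev embedding $W^{1,m}\hookrightarrow L^{m^*}$, followed by the classical geometric-convergence lemma, with the level $d$ (the paper's $K$) chosen large via dominated convergence using $u\in L^{m^*}_{loc}$. The paper compresses the iterative estimate into the single sentence ``repeating the same arguments used in the proof of Lemma~3.6 in~\cite{Ailton}\ldots\ $J_{n+1}\le C D^n J_n^{1+\omega}$'' with $\omega=\frac{m^*}{m}-1$; you have spelled out precisely those computations, including the crucial bookkeeping showing the recursion constant is uniform in $d\ge 1$ (one minor imprecision: the power of $d$ coming from the H\"older step is $d^{m-m^*}$, which does not literally cancel but is bounded for $d\ge1$, which is all that is needed).
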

\noindent{\bf{Proof:}} To begin with, consider $\Lambda$ a compact subset on $\mathbb{R}^N$. Fix $R_1\in (0,1)$ and $x_0\in\Lambda$. Given $K>1$, define the sequences
\begin{align*}
\sigma_n=\dfrac{R_1}{2}+\dfrac{R_1}{2^{n+1}},\;\;\overline{\sigma}_n=\dfrac{\sigma_n +\sigma_{n+1}}{2}\;\;\text{ and }\;\; K_n=\dfrac{K}{2}\left(1-\dfrac{1}{2^{n+1}}\right).
\end{align*}
%
%
%
%
%
For every $n\in\mathbb{N}$, we consider
\begin{align*}
J_n=\int_{\mathcal{A}_{K_n,\sigma_n}}\big((u-K_n)^+\big)^{m^*}dx\;\;\text{ and }\;\;
\xi_n=\xi\left(\dfrac{2^{n+1}}{R_1}\Big(|x-x_0|-\dfrac{R_1}{2}\Big)\right),\;\;x\in\mathbb{R}^N
\end{align*}
with $\xi\in C^{1}(\mathbb{R})$ satisfying 
\begin{align*}
0\leq \xi\leq 1,\quad \xi(t)=1\;\text{ for }\; t\leq \dfrac{1}{2}\quad\text{ and }\quad  \xi(t)=0\;\text{ for }\; t\geq \dfrac{3}{4}..
\end{align*}
Repeating the same arguments used in the proof of Lemma 3.6 in \cite{Ailton},  there exists a constant $C=C(N,m,R_1)>0$ such that
\begin{align}\label{22.25}
\begin{split}
J_{n+1}\leq CD^n J_n^{1+\omega},\;\;n=0,1,2,\cdots
\end{split}
\end{align}
where $D=2^{(m+m^*){\frac{m^{*}}{m}}}$ and $\omega=\frac{m^*}{m}-1$.
Note that 
\begin{align}\label{22.16}
\begin{split}
J_0&=\int_{\mathcal{A}_{K_{0},{\sigma_0}}}\big((u-K_{0})^+\big)^{m^*}dx\leq \int_{B_{R_1}(x_0)}\big((u-K_{0})^+\big)^{m^*}dx
\end{split}
\end{align}
%
Then, by the Lebesgue’s Theorem, $\displaystyle\lim_{K\rightarrow\infty} J_0 =0$, from where it follows
that
\begin{align*}
J_0\leq C^{-\frac{1}{\omega}} D^{-\frac{1}{\omega^2}} , \;\;\text{ for all }\; K\geq K^*
\end{align*}
for some $K^*\geq 1$ that depends on $x_0$. Fix $K = K^*$. Thus, by [\citen{O.A}, Lemma 4.7], we deduce that
\begin{align*}
J_n \longrightarrow 0\;\;\text{ as}\;\;n\rightarrow \infty.
\end{align*}
On the other hand, 
\begin{align*}
\lim_{n\rightarrow\infty}J_n=\lim_{n\rightarrow\infty}\int_{\mathcal{A}_{K_n,\sigma_n}}\big((u-K_{0})^+\big)^{m^*}dx=\int_{\mathcal{A}_{\frac{K^*}{2},\frac{R_1}{2}}}\big((u-\frac{K^*}{2})^+\big)^{m^*}dx
\end{align*}
Hence,
\begin{align*}
\int_{\mathcal{A}_{\frac{K^*}{2},\frac{R_1}{2}}}\big((u-\frac{K^*}{2})^+\big)^{m^*}dx=0,
\end{align*}
leading to
\begin{align*}
u(x)\leq \frac{K^*}{2},\;\;\text{ a.e. }\text{in } B_{\frac{R_1}{2}}(x_0).
\end{align*}
Since $x_0$ is arbitrary and $\Lambda$ is a compact subset, the last inequality ensures that
\begin{align*}
u(x)\leq \frac{M}{2},\;\;\text{ a.e. }\text{in } \Lambda
\end{align*}
for some constant $M>0$. By the arbitrariness of $\Lambda$, we conclude that $u\in L^{\infty}_{loc}(\mathbb{R}^{N})$. 

\qed

This Lemmas guarantees that the Theorem \ref{Teo1} is valid. The proof of Theorem \ref{Teo11} will be divided into the following Lemmas:

\begin{lemma}
	$u\in C^{1,\alpha}_{loc} (\mathbb{ R}^N)$.
\end{lemma}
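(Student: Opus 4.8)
The plan is to localize the equation, absorb the potential term and the nonlinearity into a bounded right-hand side, and then invoke the interior $C^{1,\alpha}$ regularity theory for quasilinear operators with Orlicz-type growth (in the form used in \cite{Ailton}). Fix $x_0\in\mathbb{R}^{N}$ and $R>0$. By Lemma \ref{22.21} there is $M>0$ with $0\le u\le M$ on $B_{R}(x_0)$. Since $\phi\in C^1(0,\infty)$ and $\lim_{t\to0^{+}}t\phi(t)=0$ by $(\phi_2)$, the map $t\mapsto t\phi(t)$ extends continuously to $[0,M]$; hence, using the continuity of $V,K,f$, the function
\begin{align*}
h(x):=K(x)f(u(x))-V(x)\phi(|u(x)|)u(x)
\end{align*}
belongs to $L^{\infty}(B_{R}(x_0))$. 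By Proposition \ref{0.4}, $u$ is a weak solution of $(P)$, so on $B_{R}(x_0)$ it is a weak solution of
\begin{align*}
-\operatorname{div}\mathbf{a}(\nabla u)=h,\qquad \mathbf{a}(\xi):=\phi(|\xi|)\,\xi .
\end{align*}

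Next I would verify the structure conditions of the regularity theorem; this is exactly the role of the newly added hypotheses $(\phi_5)$ and $(\phi_6)$. Put $g(t):=t\phi(t)$, so $\mathbf{a}(\xi)=g(|\xi|)\,\xi/|\xi|$; off the origin $\mathbf{a}$ is $C^1$ and
\begin{align*}
D\mathbf{a}(\xi)=\phi(|\xi|)\,I+\phi'(|\xi|)\frac{\xi\otimes\xi}{|\xi|},
\end{align*}
whose eigenvalues are $\phi(|\xi|)$ (with multiplicity $N-1$) and $(\phi(t)t)'\big|_{t=|\xi|}$. Now $(\phi_6)$ says precisely that $\delta_0\,\phi(|\xi|)\le(\phi(t)t)'|_{t=|\xi|}\le\delta_1\,\phi(|\xi|)$, whence, with $c_0=\min\{1,\delta_0\}$, $c_1=\max\{1,\delta_1\}$,
\begin{align*}
c_0\,\frac{g(|\xi|)}{|\xi|}\,|\eta|^{2}\le\langle D\mathbf{a}(\xi)\eta,\eta\rangle\le c_1\,\frac{g(|\xi|)}{|\xi|}\,|\eta|^{2},\qquad \xi\ne0,\ \eta\in\mathbb{R}^{N},
\end{align*}
which is the ellipticity/growth requirement with gradient weight $g$. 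Condition $(\phi_3)$ supplies the global comparability $\ell\le t\Phi'(t)/\Phi(t)\le m$ (recall $\Phi'(t)=g(t)$), consistent with the above, while $(\phi_5)$ forces $\mathbf{a}(\xi)$ to be comparable to $|\xi|^{\beta-2}\xi$ for small $|\xi|$, so that the degeneracy/singularity at $\xi=0$ is of the standard $\beta$-Laplacian type covered by the theory.

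With these structure conditions and $h\in L^{\infty}(B_{R}(x_0))$ in hand, the interior $C^{1,\alpha}$ estimate for quasilinear equations with Orlicz growth (as in \cite{Ailton}) gives $u\in C^{1,\alpha}(\overline{B_{R/2}(x_0)})$ for some $\alpha\in(0,1)$ depending only on $N,\delta_0,\delta_1,\ell,m,\beta,M$ and $\|h\|_{L^{\infty}(B_{R}(x_0))}$. As $x_0$ and $R$ were arbitrary, $u\in C^{1,\alpha}_{loc}(\mathbb{R}^{N})$. The main (essentially only) obstacle is the bookkeeping in the second paragraph, namely transcribing $(\phi_1)$–$(\phi_6)$ into the exact form of the hypotheses demanded by the chosen black-box regularity result; the local boundedness from Lemma \ref{22.21} is what renders the perturbation $V(x)\phi(|u|)u$ and the nonlinearity $K(x)f(u)$ harmless, reducing $(P)$ to a $\Phi$-Laplacian equation with bounded right-hand side to which the estimate applies verbatim.
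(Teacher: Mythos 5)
Your proposal is correct and follows essentially the same route as the paper: the paper's proof is a one-liner that invokes the Lieberman-type interior $C^{1,\alpha}$ regularity theorem, noting that $(\phi_6)$ is what makes it applicable. You merely unpack the details the paper leaves implicit — local boundedness from Lemma \ref{22.21} renders the right-hand side bounded, and $(\phi_6)$ yields the two-sided bound on the eigenvalues of $D\mathbf{a}$ needed for the structure conditions — so this is the same argument, written out.
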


\begin{proof}
It is enough to apply the regularity theorem due to Lieberman [\citen{O.A}, Theorem 1.7]. And this is possible due to the condition $(\phi_{6})$.
\end{proof}

\begin{corollary}\label{22.2}
	 Let $u\in E$ be a nonnegative solution of $(P)$. Then, $u$ is positive solution.
\end{corollary}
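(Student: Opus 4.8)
\noindent\textbf{Proof plan for Corollary \ref{22.2}.}
The plan is to deduce strict positivity from a strong maximum principle of Pucci--Serrin / V\'azquez type for operators with the structure of $\Delta_{\Phi}$. By Theorem \ref{Teo1} and the lemmas above, the solution $u$ is nonnegative, belongs to $L^{\infty}_{loc}(\mathbb{R}^{N})\cap C^{1,\alpha}_{loc}(\mathbb{R}^{N})$, and is nontrivial because $J(u)=c>0$. Arguing by contradiction, suppose the closed set $Z=\{x\in\mathbb{R}^{N}:\,u(x)=0\}$ is nonempty, and fix $x_0\in Z$; I shall show that $Z$ is also open.

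First I would localise around $x_0$. Since $u$ is continuous and $u(x_0)=0$, there is $R>0$ with $0\le u\le\delta$ on $B_{2R}(x_0)$, where $\delta$ is the constant in $(\phi_5)$. The hypotheses on $f$ near the origin in the relevant theorem (namely $(f_1)$, $(f_4)$, $(f_5)$ or $(f_6)$) imply, using $(\phi_3)$ and, where the function $a$ appears, the relation $m<a_1$ together with Lemmas \ref{0.3} and \ref{0.5}, that $\limsup_{t\to 0^{+}}\frac{|f(t)|}{t\phi(t)}<\infty$; hence there is $C_0>0$ with $K(x)|f(u(x))|\le C_0\,u(x)\phi(u(x))$ on $B_{2R}(x_0)$. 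Since $V(x)\phi(|u|)u\ge 0$ and $V$ is locally bounded, equation $(P)$ gives, in the weak sense,
\begin{align*}
\Delta_{\Phi}u \;=\; V(x)\phi(|u|)u-K(x)f(u)\;\le\; \Lambda\,u\phi(u)\qquad\text{in }B_{2R}(x_0),
\end{align*}
where $\Lambda:=C_0+\|V\|_{L^{\infty}(B_{2R}(x_0))}$ and the right-hand side is continuous (by $(\phi_2)$ the map $t\mapsto t\phi(t)$ extends continuously by $0$ at $0$).

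Next I would apply the strong maximum principle on the connected ball $B_{2R}(x_0)$. Its hypotheses are met here: by $(\phi_6)$ the operator $w\mapsto \text{div}\big(\phi(|\nabla w|)\nabla w\big)$ satisfies the required ellipticity/structure conditions, while by $(\phi_5)$ we have $t\phi(t)\le C_2\,t^{\beta-1}$ on $[0,\delta]$ with $\beta>1$, so the right-hand side $g(t):=\Lambda\,t\phi(t)$ is continuous, nondecreasing (by $(\phi_1)$), vanishes only at $t=0$, and satisfies the Osgood-type condition
\begin{align*}
\int_{0^{+}}\frac{dt}{\big(t\,g(t)\big)^{1/\beta}}\;\ge\;(\Lambda C_2)^{-1/\beta}\int_{0^{+}}\frac{dt}{t}\;=\;+\infty .
\end{align*}
Consequently, on $B_{2R}(x_0)$ either $u>0$ or $u\equiv 0$; since $u(x_0)=0$ the latter holds, so $B_{2R}(x_0)\subset Z$ and $Z$ is open. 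Being nonempty, open and closed while $\mathbb{R}^{N}$ is connected, $Z=\mathbb{R}^{N}$, i.e. $u\equiv 0$, contradicting $u\not\equiv 0$. Therefore $u>0$ on $\mathbb{R}^{N}$ and $u$ is a positive solution.

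The step I expect to require the most care is making precise that $\Delta_{\Phi}$, with $\ell$ possibly equal to $1$, genuinely falls within the scope of the quoted strong maximum principle: this is exactly where $(\phi_5)$ (making the operator comparable near the origin to the $\beta$-Laplacian, for which the borderline growth $g(t)\sim t^{\beta-1}$ on the right is admissible) and $(\phi_6)$ (supplying uniform structure conditions, as in the regularity result invoked just above) enter. As an alternative route one could replace the strong maximum principle by the weak Harnack inequality for $C^{1,\alpha}_{loc}$ supersolutions of such operators, applied along chains of balls together with the local bound of Lemma \ref{22.21}; the quantitative control of the lower-order terms near $Z$ is routine in either case.
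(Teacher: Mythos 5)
Your proposal is correct, but it reaches positivity through machinery genuinely different from the paper's. The paper does not argue by contradiction: it fixes a bounded $\Omega$, truncates and extends $\phi$ to a function $\varphi$ that coincides with $\phi$ on $(0, M_1]$ (with $M_1 > \|\nabla u\|_{L^\infty(\overline\Omega)}$, so $u$ still solves the modified equation on $\Omega$) and has pure power growth $t^{\beta-1}$ beyond $M_1$; this makes the pair $(G,B)$ satisfy the \emph{global} structure conditions of Trudinger's Harnack-type Theorem 1.1 in \cite{Trudinger}, which then gives $u>0$ on $\Omega$ outright. You instead invoke the V\'azquez/Pucci--Serrin strong maximum principle, localizing near a putative zero where both $u$ and (since $u\ge 0$ is $C^1$ and the zero is a minimum) $|\nabla u|$ are small, so that $(\phi_5)$ alone governs the operator and no modification of $\phi$ is needed, then checking the Osgood condition and closing by an open--closed connectedness argument. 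Both routes rest on $(\phi_5)$ (comparison with the $\beta$-Laplacian) and the $C^{1,\alpha}_{loc}$ regularity furnished by $(\phi_6)$. Your version saves the explicit truncation-and-extension of $\phi$ --- the observation that only the near-origin behavior of $\phi$ is relevant because $\nabla u$ also vanishes at a zero of $u$ is a genuine simplification --- at the price of citing a strong maximum principle whose $A$-structure hypotheses you must verify cover a general Orlicz-type operator (the Pucci--Serrin framework does); the paper's choice of Trudinger keeps the reference base identical to the one already used for the regularity Lemma and makes the structure conditions hold globally by construction.
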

\noindent{\bf{Proof:}}  If  $\Omega\subset \mathbb{R}^N$ is a bounded domain, the Lemma \ref{22.21} implies that $u\in C^{1}(\overline{\Omega})$.
Using this fact, in
the sequel, we fix  $M_1>\max\big\{\lVert \nabla u\lVert_{L^{\infty}(\overline{\Omega})}, 1\big\}$ and
\begin{align*}
\varphi(t)=\left\{\begin{array}{lr}
\;\;\; \;\phi(t)\;\;\;, \;\;\;\;\;\;\text{ for } 0< t\leq M_1\;\;\;\vspace*{0.2cm}\\
\dfrac{\phi(M_1)}{M_1^{\beta-2}}t^{\beta-2}\;,\text{ for } t\geq M_1
\end{array}\right.,
\end{align*}
where $\beta$ is given in the hypothesis $(\phi_{5})$. Still by the condition $(\phi_5)$, there are $\alpha_1, \alpha_2>0$ satisfying
\begin{align}\label{2.23}
\varphi(|y|)|y|^2=	\phi(|y|)|y|^2\geq\alpha_1|y|^{\beta} \;\;\;\text{ and }\;\;\;|\varphi(|y|)y|\leq \alpha_2 |y|^{\beta-1},\;\;\;\forall y \in\mathbb{R}^N.
\end{align}
Now, consider the vector measurable functions $G:\Omega\times \mathbb{R} \times \mathbb{R}^N \longrightarrow \mathbb{R}^N$ given by $\linebreak G(x,t,p)=\frac{1}{\alpha_1}\varphi (|p|)p$. From \eqref{2.23},
\begin{align}
|G(x,t,p)|\leq\frac{\alpha_2}{\alpha_1}|p|^{\beta-1} \;\;\;\text{ and }\;\;\;pG(x,t,p)\geq |p|^{\beta-1} , \;\;\text{ for all }\; (x,t,p)\in \Omega\times \mathbb{R} \times \mathbb{R}^N.
\end{align}

We next will  consider the scalar measurable function $B:\Omega\times \mathbb{R} \times \mathbb{R}^N \longrightarrow \mathbb{R}$ given by $G(x,t,p)=\frac{1}{\alpha_1}\big(V(x)\phi (|t|)t-K(x)f(t)\big)$. Remember that from the inequality \eqref{11.16}, there will be a constant $C_1>0$ satisfying
\begin{align}
	K(x)|f(t)|\leq C_1V(x)\phi(|t|)|t|+ C_1\phi_*(|t|)|t|, \;\;\;\forall t\in\mathbb{R}^N\;\text{ and }\;x\in\mathbb{R}^N.
\end{align}

Fix $M\in(0,\infty)$. Through the condition $(\phi_{5})$ and by a simple computation yields there exists $C_2=C_2(M) > 0$ verifying
%
\begin{align*}
|B(x,t,p)|\leq C_2|t|^{\beta-1},\; \text{ for every }\; (x,t,p)\in \Omega\times (-M,M) \times \mathbb{R}^N  . 
\end{align*}
By the arbitrariness of $M$, we can conclude that functions $G$ and $B$ fulfill the structure required by Trudinger \cite{Trudinger}. Also, as $u$ is a weak solution of $(P)$, we infer that $u$ is a quasilinear problem solution
\begin{align*}
	-div\, G(x,u,\nabla u(x))+B(x,u,\nabla u(x))=0\;\text{ in }\Omega.
\end{align*}
By [\citenum{Trudinger}, Theorem 1.1], we deduce that $u>0$ in $\Omega$. As $
\Omega$ arbitrary, we conclude that $u>0$ in $\mathbb{R}^{N}$.

\qed


\subsection{\textbf{Proof of Theorem \ref{Teo1.1}}}
%
Assuming the assumptions of Theorem \ref{Teo1.1}, the arguments at the beginning of this section 4, guarantees the existence of a nontrivial weak solution for problem $(P)$. Then, to prove the Theorem \ref{Teo1.1}, it is enough to show the regularity of nonnegative solutions of the problem $(P)$. Following the same arguments as the proof of Lemma \ref{22.6}, we state the following result.
\begin{lemma}\label{22.00}
	Let  $u\in E$ be a nonnegative solution of $(P)$, $x_0 \in  \mathbb{R}^N$ and $R_0>0$. Then 
	\begin{align*}
	\int_{\mathcal{A}_{k,t}} |\nabla u|^\ell dx\leq C \left( \int_{\mathcal{A}_{k,s}}\left|\dfrac{u-k}{s-t}\right|^{\ell^*}dx +(k^{\ell^*}+1) |\mathcal{A}_{k,s}|\right)
	\end{align*}
	where $0<t<s<R_0$, $k>1$, $ \mathcal{A}_{k,\rho}=\{x\in B_{\rho}(x_0): u(x)>k\}$ and $C>0$ is a constant that does not depend on $k$.
\end{lemma}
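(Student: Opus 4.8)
The plan is to reproduce the Caccioppoli/De\,Giorgi estimate of Lemma~\ref{22.6} almost verbatim, keeping the cut-off power equal to $m$ throughout (the value forced by Lemma~\ref{0.51}, since $(m-1)\cdot\frac{m}{m-1}=m$), and changing only the two ingredients that were tied to $\Phi\in\mathcal{C}_m$ and to $(f_1)$. Fix $0<t<s<R_0$, $k>1$, $x_0\in\mathbb{R}^N$; choose $\zeta\in C^\infty_0(B_s(x_0))$ with $\zeta\equiv1$ on $B_t(x_0)$, $0\le\zeta\le1$, $|\nabla\zeta|\le 2/(s-t)$; test the weak formulation of $(P)$ with $\varphi=\zeta^m(u-k)^+\ge0$; and use $\ell\,\Phi(\tau)\le\phi(\tau)\tau^2$ from $(\phi_3)$. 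Setting $J:=\int_{\mathcal{A}_{k,s}}\Phi(|\nabla u|)\zeta^m\,dx$ and bounding the nonlinearity by the $(f_4)$-analogue of \eqref{11.16}, namely $K(x)f(\tau)\le V(x)\tau\phi(\tau)+C_\varepsilon\tau b(\tau)$ (obtained, as in the proof of Lemma~\ref{22.6}, by fixing the free small parameter so that the two $V$-integrals cancel), one arrives at
\begin{align*}
\ell J\le m\int_{\mathcal{A}_{k,s}}\zeta^{m-1}(u-k)^+\phi(|\nabla u|)|\nabla u||\nabla\zeta|\,dx+C_\varepsilon\int_{\mathcal{A}_{k,s}}u\,b(u)\zeta^m(u-k)^+\,dx.
\end{align*}
The gradient term is handled exactly as in \eqref{11.0}--\eqref{11.1} — Young's inequality with a small parameter, Lemma~\ref{0.51}, and $\tilde\Phi(\phi(r)r)=\tilde\Phi(\Phi'(r))\le\Phi(2r)\le C\Phi(r)$ — producing a multiple of $J$ with a coefficient that can be made small (absorbable since $m>1$) plus $C\int_{\mathcal{A}_{k,s}}\Phi(|(u-k)/(s-t)|)\,dx$; and the term $C_\varepsilon\int u\,b(u)\zeta^m(u-k)^+$ is estimated by Young's inequality for the pair $(B,\tilde B)$ together with $\tilde B(B'(r))=\tilde B(b(r)r)\le B(2r)\le CB(r)$ (valid because $B\in(\Delta_2)$ by $(B_3)$), giving $C\,B(|(u-k)/(s-t)|)+C\,B(k)$. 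Thus, as in \eqref{111},
\begin{align*}
J\le C\int_{\mathcal{A}_{k,s}}\Big[\Phi\Big(\Big|\tfrac{u-k}{s-t}\Big|\Big)+B\Big(\Big|\tfrac{u-k}{s-t}\Big|\Big)+B(k)\Big]\,dx.
\end{align*}

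The only genuinely new point is how to finish without $\mathcal{C}_m$. First, by Lemma~\ref{0.3} with $\rho=1$ one has $\Phi(r)\ge\Phi(1)\,r^{\ell}$ for $r\ge1$; splitting $\mathcal{A}_{k,t}$ according to whether $|\nabla u|\ge1$ or $|\nabla u|<1$ and using $\zeta\equiv1$ on $B_t(x_0)$,
\begin{align*}
\int_{\mathcal{A}_{k,t}}|\nabla u|^{\ell}\,dx\le\frac{1}{\Phi(1)}\int_{\mathcal{A}_{k,t}}\Phi(|\nabla u|)\,dx+|\mathcal{A}_{k,t}|\le\frac{1}{\Phi(1)}\,J+|\mathcal{A}_{k,s}|.
\end{align*}
Second, to turn the bound for $J$ into $\ell^*$-powers: Lemma~\ref{0.3} gives $\Phi(r)\le\Phi(1)\max\{r^{\ell},r^m\}\le\Phi(1)(r^{\ell^*}+1)$, the step $r^m\le r^{\ell^*}+1$ being exactly where $m<\ell^*$ of $(\phi_3)$ enters; and $(B_3)$ gives $B(r)\le B(1)\max\{r^{b_1},r^{\ell^*}\}\le B(1)(r^{\ell^*}+1)$, the step $\max\{r^{b_1},r^{\ell^*}\}\le r^{\ell^*}+1$ being exactly where $\sup_{r>0}b(r)r^2/B(r)\le\ell^*$ of $(B_3)$ enters; in particular $B(k)\le B(1)(k^{\ell^*}+1)$. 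Substituting these into the bound for $J$, then into the previous display, and absorbing $|\mathcal{A}_{k,s}|$ into $(k^{\ell^*}+1)|\mathcal{A}_{k,s}|$ (since $k>1$), yields the asserted inequality with a constant $C$ independent of $k$.

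The first paragraph is a literal transcription of \eqref{11.2}--\eqref{111} and presents no new difficulty; what needs to be watched is the exponent bookkeeping of the second paragraph, i.e.\ checking that the ``bad'' exponents produced on the right-hand side by $\Phi$ and by $B$ are both $\le\ell^*$ (never $m^*$), which is precisely what $m<\ell^*$ in $(\phi_3)$ and $\sup_{t>0}b(t)t^2/B(t)\le\ell^*$ in $(B_3)$ guarantee, together with the observation that trading the $\mathcal{C}_m$-step for the splitting argument costs only the harmless extra term $|\mathcal{A}_{k,s}|$.
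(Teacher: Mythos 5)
Your proposal is correct and follows exactly the route the paper intends: Lemma~\ref{22.00} is stated in the paper without proof, only with the remark that one should ``follow the same arguments as the proof of Lemma~\ref{22.6},'' and the adjustments you make are precisely the natural ones. Replacing $\phi_*$ by $b$ (forced by $(f_4)$ in place of $(f_1)$), using $\tilde B(B'(r))\le B(2r)\le CB(r)$ via the $\Delta_2$-condition of $B$ supplied by $(B_3)$, bounding the right-hand powers by $\ell^*$ through $m<\ell^*$ and $\sup b(t)t^2/B(t)\le\ell^*$, and trading $\mathcal{C}_m$ for the split $\{|\nabla u|\ge1\}\cup\{|\nabla u|<1\}$ together with $\Phi(r)\ge\Phi(1)r^\ell$ for $r\ge1$ (which costs only an extra $|\mathcal{A}_{k,s}|$, absorbed since $k>1$) all check out.
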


The following result is proved in an entirely analogous way to Lemma \ref{22.21}, it is enough to change $m$ for $\ell$ in every proof.

\begin{lemma}
	Let $u \in E$ be a nonnegative solution of $(P)$. Then, $u \in L^\infty_{loc} (\mathbb{R}^N)$.
\end{lemma}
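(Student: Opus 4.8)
The plan is to run a De~Giorgi iteration entirely parallel to the proof of Lemma~\ref{22.21}, with the exponents $\ell$ and $\ell^*$ replacing $m$ and $m^*$; the only ingredient that changes is the Caccioppoli-type estimate, which is now provided by Lemma~\ref{22.00}. First I would fix an arbitrary compact set $\Lambda\subset\mathbb{R}^N$, a point $x_0\in\Lambda$, a radius $R_1\in(0,1)$, and, for a parameter $K>1$, introduce the dyadic sequences
\begin{align*}
\sigma_n=\frac{R_1}{2}+\frac{R_1}{2^{n+1}},\qquad \overline{\sigma}_n=\frac{\sigma_n+\sigma_{n+1}}{2},\qquad K_n=\frac{K}{2}\Bigl(1-\frac{1}{2^{n+1}}\Bigr),
\end{align*}
together with cut-off functions $\xi_n$ exactly as in the proof of Lemma~\ref{22.21} (so $0\le\xi_n\le1$, $\xi_n\equiv1$ on $B_{\overline{\sigma}_n}(x_0)$, supported in $B_{\sigma_n}(x_0)$, with $|\nabla\xi_n|\le C2^{n}/R_1$), and set
\begin{align*}
J_n=\int_{\mathcal{A}_{K_n,\sigma_n}}\bigl((u-K_n)^+\bigr)^{\ell^*}dx ,\qquad \mathcal{A}_{k,\rho}=\{x\in B_\rho(x_0):u(x)>k\}.
\end{align*}

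Combining Lemma~\ref{22.00} on the pair $(\overline{\sigma}_n,\sigma_n)$ at level $K_{n+1}$ with the Sobolev inequality $\|v\|_{L^{\ell^*}}\le C\|\nabla v\|_{L^{\ell}}$ applied to $v=\xi_n(u-K_{n+1})^+$, and then Hölder's inequality to absorb the terms carrying $|\mathcal{A}_{K_{n+1},\overline{\sigma}_n}|$ (using $u-K_n>K_{n+1}-K_n=K2^{-(n+2)}$ on that set), one arrives --- exactly as for \eqref{22.25} --- at
\begin{align*}
J_{n+1}\le C\,D^{n}\,J_n^{1+\omega},\qquad n=0,1,2,\dots,
\end{align*}
with $D=2^{(\ell+\ell^*)\ell^*/\ell}$, $\omega=\tfrac{\ell^*}{\ell}-1>0$ and $C=C(N,\ell,R_1)>0$ independent of $K$ and $n$. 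To start the iteration, note that $u\in L^{\Phi_*}(\mathbb{R}^N)$, so by Lemma~\ref{0.5} $u\in L^{\ell^*}_{loc}(\mathbb{R}^N)$ and
\begin{align*}
J_0\le\int_{B_{R_1}(x_0)}\bigl((u-K_0)^+\bigr)^{\ell^*}dx\longrightarrow0\quad\text{as }K\to\infty
\end{align*}
by dominated convergence; hence there is $K^*\ge1$ (depending on $x_0$) with $J_0\le C^{-1/\omega}D^{-1/\omega^2}$ for all $K\ge K^*$. Fixing $K=K^*$ and invoking the fast-geometric-convergence lemma \cite{O.A} gives $J_n\to0$, and letting $n\to\infty$ in the definition of $J_n$ yields
\begin{align*}
\int_{\mathcal{A}_{K^*/2,\,R_1/2}}\Bigl(\bigl(u-\tfrac{K^*}{2}\bigr)^+\Bigr)^{\ell^*}dx=0 ,
\end{align*}
so $u\le K^*/2$ a.e.\ in $B_{R_1/2}(x_0)$. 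Since $x_0\in\Lambda$ was arbitrary and $\Lambda$ is compact, a finite subcover gives a constant $M>0$ with $u\le M$ a.e.\ on $\Lambda$, and the arbitrariness of $\Lambda$ yields $u\in L^\infty_{loc}(\mathbb{R}^N)$.

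The main obstacle is the derivation of the recursion $J_{n+1}\le CD^nJ_n^{1+\omega}$: one must couple the Caccioppoli bound of Lemma~\ref{22.00} with the embedding $W^{1,\ell}\hookrightarrow L^{\ell^*}$ and keep careful track of how the gaps $K_{n+1}-K_n$ and $\sigma_n-\overline{\sigma}_n$ --- both comparable to $2^{-n}$ up to the fixed constants --- generate the factor $D^n$ and raise the exponent of $J_n$ to $1+\omega$. However, this is line by line the computation already carried out in Lemma~\ref{22.21} with $m$ replaced by $\ell$; the single point at which the value of the exponent genuinely entered there, the Caccioppoli estimate of Lemma~\ref{22.6}, has been replaced by its $\ell$-version Lemma~\ref{22.00}, so no new difficulty arises. (Note that nonnegativity of $u$ is used throughout, both in forming $(u-K_n)^+$ and in dominating $(u-K_0)^+$ by $u$ itself.)
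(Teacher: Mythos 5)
Your proposal is correct and is exactly what the paper has in mind: the paper's own proof consists of the single remark that the argument is obtained from that of Lemma~\ref{22.21} by replacing $m$ with $\ell$ (and $m^*$ with $\ell^*$), the Caccioppoli input now being Lemma~\ref{22.00} instead of Lemma~\ref{22.6}. You have simply spelled out that substitution in full --- same dyadic levels $K_n$ and radii $\sigma_n$, same cut-offs $\xi_n$, same use of the Sobolev embedding $W^{1,\ell}\hookrightarrow L^{\ell^*}$ to close the recursion $J_{n+1}\le CD^nJ_n^{1+\omega}$, same use of $u\in L^{\Phi_*}\subset L^{\ell^*}_{loc}$ to make $J_0$ small, and the same fast-geometric-convergence lemma from \cite{O.A} --- so this is the same approach, not a different one.
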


Arguing as in the proof of Corollary \ref{22.2}, the nonnegative solution of $(P)$ is positive solution. Showing the Theorem \ref{Teo1.1}.

\begin{remark}
In the Theorems \ref{Teo1} and \ref{Teo1.1}, assuming that $\displaystyle\inf_{x\in\mathbb{R }^N}V(x)>0$, the nonnegatives solution for $(P )$ and they go to zero at infinity, that is, $\displaystyle\lim_{|x|\rightarrow\infty} u(x)=0$.
\end{remark}
%

\section{Proof of Theorems \ref{Teo3} and \ref{Teo3.1}}\label{21}
In the beginning, we are going to present the ardy Type Inequality for case $(K,V)\in \mathcal{K}_2 $. 

	\begin{remark}\label{5.02}
	The inequality \eqref{5.04} implies the following inequalities
	$$\xi_{0,A}(t)A(\rho)\leq A(\rho t)\leq\xi_{1,A}(t)A(\rho),~~\forall \rho,t\geq0$$
	when
	$$\xi_{0,A}(t)=\min\{t^{a_1},t^{a_2}\}~~\text{and }~~\xi_{1,A}(t)=\max\{t^{a_1},t^{a_2}\},~~\forall t\geq 0.$$ Besides by Lemma \ref{0.3} and Lemma \ref{0.5}, we have
	\begin{align*}
	\lim_{t\rightarrow0} \dfrac{A(t)}{\Phi(t)}=0\;\;\text{ and }\;\;\lim_{|t|\rightarrow\infty} \dfrac{A(t)}{\Phi_{*}(t)}=0
	\end{align*}
\end{remark}

	\begin{proposition}[Hardy Type Inequality]\label{44.01}
		If $(V,K)\in\mathcal{K}_2$, then $E$ is compactly embedded in$L^{A}_{K}(\mathbb{R}^{N})$ where $A$ is given in $K_3$.
	\end{proposition}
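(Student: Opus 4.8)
The plan is to mimic the structure of the proof of Proposition \ref{22.0}, replacing the $N$-function $Z$ by $A$ and using $(K_3)$ together with the auxiliary weight $H$ in place of the condition $(K_2)$. First I would record the consequence of Remark \ref{5.02}: from $A(t)/\Phi(t)\to 0$ as $t\to 0$ and $A(t)/\Phi_*(t)\to 0$ as $|t|\to\infty$, for each $\varepsilon>0$ there are $0<s_0<s_1$ so that $A(|s|)\le \varepsilon(\Phi(|s|)+\Phi_*(|s|))$ for $|s|\notin[s_0,s_1]$, and on the middle band $s_0\le|s|\le s_1$ one has $A(|s|)\le C\Phi_*(|s|)$ with $C$ depending only on $s_0,s_1$. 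Multiplying by $K(x)$ and using the definition $H(x)=\min_{s>0}\{V(x)\Phi(s)/A(s)+\Phi_*(s)/A(s)\}$, which gives $A(s)H(x)\le V(x)\Phi(s)+\Phi_*(s)$ for all $s>0$, I would derive the pointwise estimate
\begin{align*}
K(x)A(|s|)\le \varepsilon\,\frac{K(x)}{H(x)}\bigl(V(x)\Phi(|s|)+\Phi_*(|s|)\bigr)+C\,K(x)\chi_{[s_0,s_1]}(|s|)\Phi_*(|s|),
\end{align*}
valid for all $s\in\mathbb{R}$ and $x\in\mathbb{R}^N$. Since $(K_3)$ says $K(x)/H(x)\to 0$ as $|x|\to\infty$, for $r$ large enough $K(x)/H(x)$ is uniformly small on $B_r(0)^c$, which plays exactly the role that $(K_2)$ (i.e. $\|K/V\|_\infty<\infty$) played before.

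Next I would take a bounded sequence $(v_n)\subset E$, invoke Lemma \ref{0.1} to get $v\in E$ with $v_n\xrightharpoonup[\quad]{\ast} v$ in $D^{1,\Phi}(\mathbb{R}^N)$, and set $w_n=v_n-v$ so that $(w_n)$ is bounded in $E$ and hence $\int V(x)\Phi(|w_n|)\,dx$ and $\int \Phi_*(|w_n|)\,dx$ are bounded by some $M_1$. Defining $A_n=\{x:\ s_0\le|w_n(x)|\le s_1\}$, the bound $\Phi_*(s_0)|A_n|\le\int_{A_n}\Phi_*(|w_n|)\,dx\le M_1$ shows $\sup_n|A_n|<\infty$, so $(K_1)$ yields $\int_{A_n\cap B_r(0)^c}K(x)\,dx<\varepsilon$ for $r$ large, uniformly in $n$. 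Combining this with the pointwise estimate above and the smallness of $K/H$ on $B_r(0)^c$ gives
\begin{align*}
\limsup_{n\to\infty}\int_{B_r(0)^c}K(x)A(|w_n|)\,dx\le \varepsilon(CM_1+1).
\end{align*}
On the ball $B_r(0)$ I would use Lemma \ref{0.12} and Corollary \ref{0.8} to pass to a subsequence with $w_n\to 0$ a.e.\ in $B_r(0)$, then apply the compactness lemma of Strauss to the pair $P(t)=A(|t|)$, $Q(t)=\Phi_*(|t|)$ (note $P/Q\to 0$ at infinity by Remark \ref{5.02}, and $\int\Phi_*(|w_n|)\,dx$ is bounded because $\Phi_*\in(\Delta_2)$) to conclude $\int_{B_r(0)}K(x)A(|w_n|)\,dx\to 0$. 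Adding the two pieces and letting $\varepsilon\to 0$ gives $\int_{\mathbb{R}^N}K(x)A(|w_n|)\,dx\to 0$, and since $A\in(\Delta_2)$ (because $a_1,a_2\in(m,\ell^*)$ are finite, so $(B_3)$-type bounds hold), this forces $w_n\to 0$ in $L^A_K(\mathbb{R}^N)$, i.e.\ $v_n\to v$ in $L^A_K(\mathbb{R}^N)$. This establishes the compact embedding $E\hookrightarrow L^A_K(\mathbb{R}^N)$.

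The main obstacle I anticipate is the correct handling of the weight $H$: one must check that $H(x)>0$ for every $x$ (so that $K/H$ makes sense) and that the inequality $A(s)H(x)\le V(x)\Phi(s)+\Phi_*(s)$ really does follow from the definition of $H$ as a minimum over $s>0$ — here it is essential that the same $s$ appears on both sides, which is exactly why $H$ is defined as an infimum of the ratio rather than a product. A secondary technical point is verifying that the middle-band contribution $\int_{A_n\cap B_r(0)^c}K(x)\Phi_*(|w_n|)\,dx\le\Phi_*(s_1)\int_{A_n\cap B_r(0)^c}K(x)\,dx$ can be absorbed via $(K_1)$ exactly as in Proposition \ref{22.0}; this is routine once $\sup_n|A_n|<\infty$ is in hand. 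Everything else is a faithful transcription of the $(K_2)$ argument with $Z\leadsto A$ and $\|K/V\|_\infty\leadsto\|K/H\|_{L^\infty(B_r(0)^c)}\to 0$.
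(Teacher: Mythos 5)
Your key pointwise estimate is incorrect, and the error reveals a conflation of the two cases $(K_2)$ and $(K_3)$. You claim
\begin{align*}
K(x)A(|s|)\le \varepsilon\,\frac{K(x)}{H(x)}\bigl(V(x)\Phi(|s|)+\Phi_*(|s|)\bigr)+C\,K(x)\chi_{[s_0,s_1]}(|s|)\Phi_*(|s|),
\end{align*}
deriving the first term from ``$A(|s|)\le\varepsilon(\Phi(|s|)+\Phi_*(|s|))$ for $|s|\notin[s_0,s_1]$'' combined with the $H$-inequality $A(s)H(x)\le V(x)\Phi(s)+\Phi_*(s)$. But these two facts do not combine to give the $\varepsilon$-weighted $H$-bound: to pass from $\varepsilon(\Phi+\Phi_*)$ to $\varepsilon H(x)^{-1}(V\Phi+\Phi_*)$ you would need $\Phi+\Phi_*\le H(x)^{-1}(V\Phi+\Phi_*)$, i.e.\ $A\ge\Phi+\Phi_*$ pointwise, which is false. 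Moreover the minimizer in the definition of $H(x)$ depends on $x$ through $V(x)$ and is not confined to your fixed band $[s_0,s_1]$, so there is no uniform $\varepsilon$-gain from restricting $s$ outside the band. The $H$-inequality is an exact bound with no $\varepsilon$ to be had; the only source of smallness in the $(K_3)$ case is $(K_3)$ itself, namely $K(x)/H(x)\to 0$ as $|x|\to\infty$.

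Once this is recognized, the whole $Z$-style decomposition (middle band, the sets $A_n$, and condition $(K_1)$) becomes unnecessary, and the paper in fact bypasses it entirely. The paper's route is: (i) for $|x|\ge r$, $(K_3)$ plus the $H$-inequality gives directly $K(x)A(s)\le\varepsilon(V(x)\Phi(|s|)+\Phi_*(|s|))$ for all $s$, so $\int_{B_r(0)^c}K\,A(|w_n|)\,dx\le 2\varepsilon M_1$ with no recourse to $(K_1)$; (ii) for $x\in B_r(0)$, continuity and positivity of $V$ give $\|K/V\|_{L^\infty(B_r(0))}<\infty$, and the global bound $A\le C(\Phi+\Phi_*)$ from Remark \ref{5.02} then yields $K(x)A(t)\le C_3(V(x)\Phi(t)+\Phi_*(t))$, which together with (i) shows $E\subset L^A_K(\mathbb{R}^N)$; (iii) on $B_r(0)$ one applies the Strauss compactness lemma exactly as you do. Your inside-the-ball argument, your invocation of Lemma \ref{0.1}, Corollary \ref{0.8}, and the verification that $A\in(\Delta_2)$ are all correct, but the far-field step as written rests on a false inequality and on machinery that belongs to the $(K_2)$ proof, not the $(K_3)$ one.
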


	\noindent {\bf{Proof:}}  As $E$ is continuously embedded in $L^{\Phi_*}(\mathbb{R}^N)$, there exists $C_1>0$ such that 
		\begin{align}\label{2}
			\lVert u\lVert_{\Phi_{*}}\leq C_1\lVert u\lVert_{E},\;\;\forall u\in E.
		\end{align}
	 By $ (K_3) $ given $ \varepsilon> 0 $ there is $ r> 0 $ large enough such that
	\begin{align}\label{01.9}
	K(x)A(s)\leq \varepsilon(V(x)\Phi(|s|)+\Phi_{*}(|s|)),
	\end{align}
	for all $s\in\mathbb{R}$ and $|x|\geq r$. On the other hand, by the Remark \ref{5.02}, there is a constant $C_2>0$ such that
	\begin{align*}
	A(t)\leq C_2\Phi(t)+C_2\Phi_{*}(t),\;\;\;\; \forall t>0.
	\end{align*}
	Hence, for each $x\in B_{r}(0)$,
	\begin{align}\label{01.8}
	\begin{split}
	K(x)A(t)\leq C_2\left\lVert\frac{K}{V}\right\lVert_{L^\infty( B_{r}(0))}V(x)\Phi(t)+C_2\lVert K\lVert_{\infty}\Phi_{*}(t),\;\;\;\;\forall t>0.
	\end{split}
	\end{align}
	Combining \eqref{01.9} and \eqref{01.8},
	\begin{align}\label{3}
	\begin{split}
	K(x)A(t)\leq& C_3V(x)\Phi(t)+C_3\Phi_{*}(t),\;\;\;\;\forall t>0\;\text{ and }\;x\in\mathbb{R}^N
	\end{split}
	\end{align}
	with $C_3=\max\{1,C_2\lVert K\lVert_{\infty}, C_2\left\lVert\frac{K}{V}\right\lVert_{L^\infty( B_{r}(0))}\}$. By the inequalities \eqref{2} and \eqref{3}, we get
	{\small\begin{align*}
		\int_{\mathbb{R}^{N}}K(x)A\left(\dfrac{|u|}{C_3\lVert u\lVert_{E}+C_1\lVert u\lVert_{E}}\right)dx\leq C_3	\int_{\mathbb{R}^{N}}V(x)\Phi\left(\dfrac{|u|}{\lVert u\lVert_{V,\Phi}}\right)dx+C_3	\int_{\mathbb{R}^{N}}\Phi_{*}\left(\dfrac{|u|}{\lVert u\lVert_{\Phi_{*}}}\right)dx\leq C_4
		\end{align*}}
	\noindent where $C_4$ is a positive constant that does not depend on $u$. So we can conclude that $E\subset L^{A}_{K}(\mathbb{R}^{N})$.
	
Now, consider $ (v_n) $ a bounded sequence in $ E $. To see that the operator $
	i: E\longrightarrow L^{A}_{K} (\mathbb{R}^{N})$
	is compact just prove that $(v_n)$ has a convergent subsequence on $L^{A}_{K}(\mathbb{R}^{N})$. Since $(v_{n})$ is bounded in $E$, we have that $(v_n)$ is bounded in $D ^{1,\Phi}(\mathbb{R}^{N})$, so there is $ u \in E $ such that $ v_{n}\xrightharpoonup[\quad]{\ast}v$ in $D ^{1,\Phi}(\mathbb{R}^{N})$, or equivalently $w_{n}\xrightharpoonup[\quad]{\ast}0$ in $D ^{1,\Phi}(\mathbb{R}^{N})$, where $w_n = v_n-v$. By the limitation of $(v_n)$ in $E$ and $\Phi,\Phi_{*}\in(\Delta_{2})$, there is $M_1>0$ such that
	\begin{align}\label{5.6}
	\int_{\mathbb{R}^{N}}V(x)\Phi(|w_n|)dx\leq M_1,\;\text{ and }\;
	\int_{\mathbb{R}^{N}}\Phi_{*}(|w_n|)dx\leq M_1,\;\forall n\in\mathbb{N}.
	\end{align}
	Thus, by \eqref{01.9} and \eqref{5.6}, we obtain
	\begin{align}\label{5.8}
	\int_{B_r (0)^{c}}K(x)A(|w_n|)dx\leq \varepsilon M_1,\;\;\;\;\forall n\in\mathbb{N}.
	\end{align}
	Again, we use the Corollary \ref{0.8} and the fact that $ E $ is compactly embedded in $ L^{\Phi}_{loc}(\mathbb {R} ^ {N}) $ to ensure the existence of a subsequence of $(v_n)$, still denoted by itself, such that
	\begin{align*}
	v_n\longrightarrow v\;\;\;\;\text{ in }\;\;\;\;L^{\Phi}(B_{r}(0)).
	\end{align*}
	Thus, there is a subsequence of $(v_n)$, still denoted by itself, that such
	\begin{align*}
	v_n(x)\longrightarrow v(x)\;\;\;\;a.e.\;\text{ in }\;B_{r}(0),
	\end{align*}
	that is,
	\begin{align*}
	w_n(x)\longrightarrow 0\;\;\;\;a.e.\;\text{ in }\;B_{r}(0).
	\end{align*}
	Consider the functions 
	$P:\mathbb{R}\longrightarrow\mathbb{R}$ and $Q:\mathbb{R}\longrightarrow\mathbb{R}$ given by
	\begin{align*}
	P(t)=A(|t|)\;\;\;\;\text{ e }\;\;\;\;Q(t)=\Phi_{*}(|t|).
	\end{align*}
	Clearly $P$ and $Q$ are continuous, moreover
	\begin{align*}
	\lim_{|t|\rightarrow+\infty}\dfrac{P(t)}{Q(t)}=0.
	\end{align*}
	Finally, it follows from the limitation of $(v_n)$ in $E$ that there is $C_1>0$, such that
	\begin{align*}
	\int_{\mathbb{R}^{N}} Q(w_n)dx\leq \int_{\mathbb{R}^{N}}\Phi_{*}(|w_n|)dx<C_1.
	\end{align*}
	Then, by compactness Lemma of Strauss [\citenum{Strauss}, Theorem A.I, p. $338$],
	\begin{align*}
	\int_{B_r (0)}P(w_n)dx\longrightarrow 0.
	\end{align*}
	Therefore,
	\begin{align}\label{5.9}
	\lim_{n\rightarrow\infty}\int_{B_r (0)}K(x)A(|w_n|)dx=0.
	\end{align}
	By \eqref{5.8} e \eqref{5.9}, we have
	\begin{align*}
	\limsup_{n \to \infty}  \int_{\mathbb{R}^{N}}K(x)A(|w_n|)dx	\leq&\limsup_{n\rightarrow\infty}\int_{B_r (0)}A(|w_n|)dx\\
	&+\limsup_{n \to \infty}  \int_{B_r (0)^{c}}K(x)A(|w_n|)dx\\
	\leq& \varepsilon(C M_1+1).
	\end{align*}
	By the arbitrariness of $\varepsilon>0$, it follows that
	\begin{align*}
	\lim_{n\rightarrow\infty} \int_{\mathbb{R}^{N}}K(x)A(|w_n|)dx=0.
	\end{align*}
	As $ A \in (\Delta_{2}) $, 
	\begin{align*}
	w_n\longrightarrow0\;\;\;\;\text{ in }\;\;\;\;L^{A}_{K}(\mathbb{R}^{N}),
	\end{align*}
	in other words
	\begin{align*}
	v_n\longrightarrow v\;\;\;\;\text{ in }\;\;\;\;L^{A}_{K}(\mathbb{R}^{N}),
	\end{align*}
	which completes the proof.
	
	\qed

Note that the condition $(f_6)$ implies that
$\displaystyle\lim_{t\rightarrow+\infty}\frac{f(t)}{\phi_{*}(t)t}=0$. Then, by the conditions $(f_5)$ or $(f_6)$, given $\varepsilon>0 $ there exists $\delta_0>0$, $ \delta_1>0$ and $C_{\varepsilon}>0$ such that
\begin{align}\label{44.0}
K(x)|f(t)|\leq \varepsilon K(x)a(t)t+\varepsilon \lVert{K}\lVert_{\infty}\phi_{*}(t)t+C_{\varepsilon}K(x)\phi_{*}(t)t\chi_{[\delta_0, \delta_1]}(t),
\end{align}
for all $t\geq0$ and $x\in\mathbb{R}^N$. This inequality together with Lemma \ref{44.01} yields that the functional $\mathcal{F}:E\longrightarrow\mathbb{R}$, given by
\begin{equation}
\mathcal{F}(u)=\int_{\mathbb{R}^{N}}K(x)F(u)dx
\end{equation}
is well defined and belongs to $ C^1(E,\mathbb R)$ with derivative
$$
\mathcal{F}'(u)v=
\int_{\mathbb R^N}K(x)f(u)vdx, \quad \forall u,v \in E.
$$
Therefore, we can conclude that the energy functional $J:E\longrightarrow\mathbb{R}$ associated to problem $(P)$, which is given by
$$
J(u)=\int_{\mathbb{R}^{N}} \Phi(|\nabla u|)dx+\int_{\mathbb{R}^{N}}V(x)\Phi(|u|)dx-\int_{\mathbb{R}^{N}}K(x)F(u)dx.
$$
is a continuous and Gateaux-differentiable functional such that $J':E\longrightarrow E^*$ given by
\begin{align*}
J'(u)v=\int_{\mathbb{R}^{N}} \phi(|\nabla u|)\nabla u\nabla v dx+\int_{\mathbb{R}^{N} } V(x)\phi(|u|)uv dx-
\int_{\mathbb R^N}K(x)f(u)vdx
\end{align*}
is continuous from the norm topology of	$E$ to the weak$^*$-topology of $E^*$.
 Once that we intend to find nonnegative solutions, we will assume that
\begin{align}\label{44.22}
f(s)=0,\;\;\;\forall s\in (-\infty,0].
\end{align}
From \eqref{44.0} and $(f_3)$, it follows that $J$ satisfies the geometry of the mountain pass. Hence, there is a Cerami sequence $(u_n)\subset E$, such that,
\begin{align}\label{44.1}
J(u_n)\longrightarrow c\;\;\;\;\text{ and }\;\;\;\;
(1+\lVert u_n\lVert)\lVert J'(u_n)\lVert_{*}\longrightarrow 0
\end{align}
where $c$ is the mountain pass level given by
\begin{align*}
c = \inf_{\gamma \in \Gamma} \max_{t \in [0,1]} J (\gamma(t))
\end{align*}
with
\begin{align*}
\Gamma = \{\gamma \in C([0,1], X) : \; \gamma(0) = 0 \; \text{ and } \; J(\gamma(1)) \leq 0 \}.
\end{align*}
As in the previous section, the above sequence is obtained from the Ghoussoub-Preiss theorem, see [\citenum{Motreanu}, Theorem $5.46$].

In order to show that the Cerami sequence obtained in \eqref{44.1} is bounded, we present the following result.

\begin{lemma}\label{6.77}
	Let $(v_n)$ be a bounded sequence in $E$ such that $v_{n}\xrightharpoonup[\quad]{\ast} v$ in $D^{1,\Phi}(\mathbb{R}^{N})$ . Suppose that $f$ satisfies $(f_5)$ or $(f_6)$, then
	\begin{align}\label{6.771}
	\lim_{n\rightarrow\infty}\int_{\mathbb{R}^{N}}K(x)F(v_n)dx=\int_{\mathbb{R}^{N}}K(x)F (v)dx,
	\end{align}
	\begin{align}\label{6.772}
	\lim_{n\rightarrow\infty}\int_{\mathbb{R}^{N}}K(x)f(v_n)v_ndx=\int_{\mathbb{R}^{N}}K(x)f (v)vdx
	\end{align}
	and
	\begin{align}\label{6.773}
	\lim_{n\rightarrow\infty}\int_{\mathbb{R}^{N}}K(x)f(v_n)\psi dx=\int_{\mathbb{R}^{N}}K(x)f (v)\psi dx,\;\;\;\forall \psi \in C^{\infty}_{0}(\mathbb{R}^{N}).
	\end{align}
\end{lemma}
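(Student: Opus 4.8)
The plan is to repeat, almost verbatim, the proof of Lemma \ref{6.7}, replacing the $N$-function $B$ and the embedding of Proposition \ref{22.0} with the $N$-function $A$ from $(K_{3})$ and the compact embedding $E\hookrightarrow L^{A}_{K}(\mathbb{R}^{N})$ of Proposition \ref{44.01}, and replacing the growth control used there with \eqref{44.0}. First I would integrate \eqref{44.0} on $[0,|s|]$; using $A(s)=\int_{0}^{|s|}ta(t)\,dt$ and $\Phi_{*}(s)=\int_{0}^{|s|}t\phi_{*}(t)\,dt$, this yields, for every $\varepsilon>0$, constants $\delta_{0},\delta_{1},C_{\varepsilon}>0$ with
\begin{align*}
K(x)|F(s)|\leq \varepsilon K(x)A(|s|)+\varepsilon\|K\|_{\infty}\Phi_{*}(|s|)+C_{\varepsilon}\Phi_{*}(\delta_{1})K(x)\chi_{\{|s|\geq\delta_{0}\}},\qquad\forall s\in\mathbb{R},\ x\in\mathbb{R}^{N}.
\end{align*}
Since $(v_{n})$ is bounded in $E$ and $\Phi,\Phi_{*},A\in(\Delta_{2})$, there is $M_{1}>0$ with $\int_{\mathbb{R}^{N}}V(x)\Phi(|v_{n}|)\,dx\leq M_{1}$ and $\int_{\mathbb{R}^{N}}\Phi_{*}(|v_{n}|)\,dx\leq M_{1}$; Proposition \ref{44.01} gives $\int_{\mathbb{R}^{N}}K(x)A(|v_{n}|)\,dx\to\int_{\mathbb{R}^{N}}K(x)A(|v|)\,dx$, hence this quantity is bounded by some $M_{2}>0$; and $\Phi_{*}(\delta_{0})|\{|v_{n}|\geq\delta_{0}\}|\leq M_{1}$ shows $\sup_{n}|\{|v_{n}|\geq\delta_{0}\}|<\infty$, so by $(K_{1})$ one may fix $r_{0}>0$ with $\int_{\{|v_{n}|\geq\delta_{0}\}\cap B_{r_{0}}^{c}(0)}K(x)\,dx<\varepsilon(C_{\varepsilon}\Phi_{*}(\delta_{1}))^{-1}$ for all $n$. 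Substituting into the displayed inequality gives $\limsup_{n\to\infty}\int_{B_{r_{0}}^{c}(0)}K(x)|F(v_{n})|\,dx\leq\varepsilon(M_{2}+\|K\|_{\infty}M_{1}+1)$.

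On the ball $B_{r_{0}}(0)$ I would proceed exactly as in Lemma \ref{6.7}: boundedness of $(v_{n})$ in $E$ together with Lemma \ref{0.12} and Corollary \ref{0.8} gives, along a subsequence, $v_{n}\to v$ in $L^{\Phi}(B_{r_{0}}(0))$ and $v_{n}(x)\to v(x)$ a.e.\ in $B_{r_{0}}(0)$; then $(f_{2})$ and the compactness lemma of Strauss [\citenum{Strauss}, Theorem A.I, p.~338], applied with $P=F$ and $Q(t)=\Phi_{*}(|t|)$ (legitimate because $\lim_{|t|\to\infty}|F(t)|/\Phi_{*}(|t|)=0$ by $(f_{5})$ or $(f_{6})$, because $\sup_{n}\int_{B_{r_{0}}(0)}\Phi_{*}(|v_{n}|)\,dx<\infty$, and because $K\in L^{\infty}$), yield $\int_{B_{r_{0}}(0)}K(x)F(v_{n})\,dx\to\int_{B_{r_{0}}(0)}K(x)F(v)\,dx$. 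Combining this with the tail estimate and letting $\varepsilon\to0$ proves \eqref{6.771}.

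For \eqref{6.772} I would use \eqref{44.0} directly (un-integrated): \eqref{5.04} gives $a(t)t^{2}\leq a_{2}A(t)$ and Lemma \ref{0.5} gives $\phi_{*}(t)t^{2}\leq m^{*}\Phi_{*}(t)$, so multiplying \eqref{44.0} by $|v_{n}|$ and integrating over $B_{r_{0}}^{c}(0)$ produces, exactly as above, $\limsup_{n\to\infty}\int_{B_{r_{0}}^{c}(0)}K(x)f(v_{n})v_{n}\,dx\leq\varepsilon(a_{2}M_{2}+m^{*}\|K\|_{\infty}M_{1}+1)$, while on $B_{r_{0}}(0)$ the Strauss lemma again gives convergence of $\int K(x)f(v_{n})v_{n}\,dx$; letting $\varepsilon\to0$ yields \eqref{6.772}. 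Finally, \eqref{6.773} follows immediately from $(f_{5})$ or $(f_{6})$ and the non-autonomous version of the compactness lemma of Strauss, since $\psi\in C^{\infty}_{0}(\mathbb{R}^{N})$ has compact support and no tail estimate is needed. The step I expect to be the main obstacle is the bookkeeping in the first paragraph: in the $\mathcal{K}_{2}$ setting the near-zero part of $f$ is controlled by $K(x)a(t)t$ instead of by $\|K/V\|_{\infty}V(x)t\phi(t)$, so one must split \eqref{44.0} so that one piece is absorbed by the $L^{A}_{K}$-bound coming from Proposition \ref{44.01} while another piece, supported on sets of uniformly bounded measure, is handled by $(K_{1})$ — and one must check carefully that the (non-autonomous) Strauss lemma applies under $(f_{5})$/$(f_{6})$ at infinity.
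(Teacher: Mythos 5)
Your overall structure (ball/tail split, Strauss on the ball, $(K_1)$ and a growth estimate on the tail) matches the paper's, and your observation that the non-autonomous Strauss lemma handles \eqref{6.773} is correct. But there is a genuine gap in the tail estimate, and it lies exactly where you flagged the "bookkeeping" obstacle.

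The coefficient of $K(x)a(t)t$ in \eqref{44.0} cannot actually be made $\varepsilon$-small. Conditions $(f_5)$/$(f_6)$ only give $\limsup_{t\to 0}|f(t)|/(ta(t))<\infty$, so the correct inequality (this is what the paper's own \eqref{44.2} states) is
\begin{align*}
K(x)|f(t)|\leq C_1 K(x)a(t)t+\varepsilon\|K\|_\infty\phi_*(t)t+C_\varepsilon K(x)\phi_*(t)t\,\chi_{[\delta_0,\delta_1]}(t),
\end{align*}
with a \emph{fixed} constant $C_1$ in front of $K(x)a(t)t$. Integrating and carrying this through your tail bound gives $\limsup_n\int_{B_{r_0}^c}K(x)|F(v_n)|\,dx\leq C_1 M_2+\varepsilon(\|K\|_\infty M_1+1)$, and the $C_1 M_2$ piece does not vanish as $\varepsilon\to 0$. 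Appealing to Proposition \ref{44.01} only for a uniform bound $\int_{\mathbb{R}^N}K(x)A(|v_n|)\,dx\leq M_2$ is therefore not enough.

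The paper's proof closes this gap by a different mechanism than the one you borrow from Lemma \ref{6.7}: it invokes $(K_3)$ to get, for $|x|\geq r_0$ with $r_0$ large, $K(x)A(t)\leq\varepsilon\bigl(V(x)\Phi(t)+\Phi_*(t)\bigr)$, so that the fixed-constant piece $C_1 K(x)A(|v_n|)$ is converted into $\varepsilon C_1\bigl(V(x)\Phi(|v_n|)+\Phi_*(|v_n|)\bigr)$ on the tail and absorbed by the $E$-bound $M_1$. Your route can also be repaired, but it requires more than boundedness: you would need that $\sup_n\int_{B_{r_0}^c}K(x)A(|v_n|)\,dx\to 0$ as $r_0\to\infty$, which does follow from the full strength of Proposition \ref{44.01} (namely $v_n\to v$ in $L^A_K(\mathbb{R}^N)$, $A\in(\Delta_2)$) rather than from the single number $M_2$. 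As written, your argument uses only the bound and relies on the spurious $\varepsilon$, so it does not go through; the paper's use of $(K_3)$ is the more direct fix, and it is precisely the point where the $\mathcal{K}_2$ proof departs from the $\mathcal{K}_1$ proof you were mirroring. The same correction is needed in your treatment of \eqref{6.772}.
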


\noindent {\bf{Proof:}} As in \eqref{44.0}, given $\varepsilon>0$, there exists $\delta_0>0$, $ \delta_1>0$ $C_1>0$ and $C_\varepsilon>0$ such that
\begin{align}\label{44.2}
K(x)|f(t)|\leq C_1K(x)a(t)t+\varepsilon \lVert{K}\lVert_{\infty}\phi_{*}(t)t+C_{\varepsilon}K(x)\phi_{*}(t)t\chi_{[\delta_0,\delta_1]}(t),
\end{align}
 for all $t\geq0$ and $x\in\mathbb{R}^N$.
By the condition $(K_3)$, there is $r_0>0$ sufficiently large satisfying
\begin{align*}
K(x)A(t)\leq \varepsilon\left( V(x){\Phi(t)}+{\Phi_{*}(t)}\right),\;\;\;\;\forall t>0\;\text{ and }\;|x|\geq r_0.
\end{align*}
From the above inequalities, we have
\begin{align}\label{44.3}
K(x)F(t)\leq  \varepsilon C_{1}V(x)\Phi(t)+\varepsilon C_2\Phi_{*}(t)+C_\varepsilon K(x)\Phi_{*}(\delta_1)\chi_{[\delta_0,\delta_1]}(t),\;\forall t>0\text{ and }|x|\geq r_0.
\end{align}
Repeating the same arguments used in the proof of Proposition 2.1, it follows that
\begin{align}\label{44.4}
\limsup_{n\rightarrow+\infty} \int_{B_{r_0}^c (0)}K(x)F(v_{n})dx\leq \varepsilon C_3,
\end{align}
for some constant $C_3>0$ that does not depend on $n$ and $\varepsilon$. On the other hand, the compactness lemma of Strauss [\citenum{Strauss}, Theorem A.I, p. 338], guarantees that
\begin{align}\label{44.5}
\lim_{n\rightarrow+\infty} \int_{B_{r_0} (0)}K(x)F(v_{n})dx=\int_{B_{r_0} (0)}K(x)F(v)dx.
\end{align}
In light of this, we can conclude that
	\begin{align*}
	\lim_{n\rightarrow+\infty}\int_{\mathbb{R}^{N}}K(x)F(v_{n})dx=\int_{\mathbb{R}^{n}}K(x)F (v)dx.
	\end{align*}
In the same way, we can get the limit \eqref{6.772}. Related the limit \eqref{6.773}, it follows directly from the condition $(f_5)$ or $(f_6)$ together with a version of the compactness lemma of Strauss for non-autonomous problem.

\qed

Repeating the same arguments used in the proof of Lemma \ref{5} and of Proposition \ref{5.}, it follows that the Cerami sequence $(u_n)$ given in \eqref{44.1} is bounded, up to some subsequence, we can assume that there is $u \in   E$ such that
\begin{align}\label{44.8}
u_{n}\xrightharpoonup[\quad]{\ast} u\;\;\text{ in }D^{1,\Phi}(\mathbb{R}^{N}),\;\;\;\text{ and }\;\;\;
u_{n}(x)\longrightarrow u(x)\;\;a.e.\;\;\mathbb{R}^{N}.
\end{align}
As in the previous section, we can conclude that $u$ is a nonnegative solution and $u\neq0$. It remains to study the sign of the nonnegative solution for the problem $(P)$.

\subsection{\textbf{Proof of Theorem \ref{Teo3}}} Assuming the assumptions of Theorem \ref{Teo3}, the existence of a nontrivial weak solution for problem $(P)$ is posed by the arguments presented above. In this subsection, as in the subsection \ref{sec1}, we heavily use the fact that $\Phi\in \mathcal{C}_m$ to study the regularity and positiveness of nonnegative solutions of the problem $(P)$, therefore showing Theorem \ref{Teo3}.

We begin by presenting a technical result, but crucial for the study that follows.
\begin{lemma}\label{44.7}
	Let  $u\in E$ be a nonnegative solution of $(P)$, $x_0 \in  \mathbb{R}^N$ and $R_0>0$. Then 
	\begin{align*}
	\int_{\mathcal{A}_{k,t}} |\nabla u|^m dx\leq C \left( \int_{\mathcal{A}_{k,s}}\left|\dfrac{u-k}{s-t}\right|^{m^*}dx +(k^{m^*}+1) |\mathcal{A}_{k,s}|\right)
	\end{align*}
	where $0<t<s<R_0$, $k>1$, $ \mathcal{A}_{k,\rho}=\{x\in B_{\rho}(x_0): u(x)>k\}$ and $C>0$ is a constant that does not depend on $k$.
\end{lemma}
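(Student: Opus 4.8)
The plan is to establish this Caccioppoli-type inequality exactly as in the proof of Lemma \ref{22.6} — the statement is identical — so I would reproduce that scheme and only flag the one step that genuinely changes: the pointwise growth control on $K(x)f(t)$, which in the present class $(V,K)\in\mathcal{K}_2$ must be extracted from $(f_5)$ and $(K_3)$ rather than from \eqref{11.16}.

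Concretely: fix $x_0\in\mathbb{R}^N$, $0<t<s<R_0$, $k>1$, pick a cutoff $\zeta\in C^\infty_0(\mathbb{R}^N)$ with $0\le\zeta\le1$, $\operatorname{supp}\zeta\subset B_s(x_0)$, $\zeta\equiv1$ on $B_t(x_0)$, $|\nabla\zeta|\le 2/(s-t)$, and use $\varphi=\zeta^m(u-k)^+$ as a test function in $(P)$. Using $\ell\Phi(\sigma)\le\phi(\sigma)\sigma^2$ from $(\phi_3)$ and writing $J:=\int_{\mathcal{A}_{k,s}}\Phi(|\nabla u|)\zeta^m\,dx$, I obtain
\begin{align*}
\ell J\le m\int_{\mathcal{A}_{k,s}}\zeta^{m-1}(u-k)^+\phi(|\nabla u|)|\nabla u||\nabla\zeta|\,dx-\int_{\mathcal{A}_{k,s}}V(x)\phi(u)u\zeta^m(u-k)^+\,dx+\int_{\mathcal{A}_{k,s}}K(x)f(u)\zeta^m(u-k)^+\,dx.
\end{align*}
The next step is to absorb the last integral against the $V\phi(u)u$–integral. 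Since $V$ is continuous and strictly positive, it is bounded below on the compact ball $\overline{B_{R_0}(x_0)}$, so $K/V\in L^\infty(B_{R_0}(x_0))$; combining this with $(f_5)$, with Remark \ref{5.02} (which gives $\sigma a(\sigma)=o(\sigma\phi(\sigma))$ as $\sigma\to0$, via $a(\sigma)\sigma^2\le a_2A(\sigma)$, $A(\sigma)/\Phi(\sigma)\to0$ and $\ell\Phi(\sigma)\le\phi(\sigma)\sigma^2$), and with $\lim_{\sigma\to\infty}f(\sigma)/(\sigma\phi_*(\sigma))=0$, I get
\begin{align*}
K(x)f(\sigma)\le V(x)\sigma\phi(\sigma)+C_\varepsilon\sigma\phi_*(\sigma),\qquad\forall\,\sigma\ge0,\ x\in B_{R_0}(x_0),
\end{align*}
and the $V(x)\sigma\phi(\sigma)$ part cancels the $V\phi(u)u$–integral, leaving $\ell J\le m\int_{\mathcal{A}_{k,s}}\zeta^{m-1}(u-k)^+\phi(|\nabla u|)|\nabla u||\nabla\zeta|\,dx+C_2\int_{\mathcal{A}_{k,s}}\phi_*(u)u\zeta^m(u-k)^+\,dx$.

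From here the argument is verbatim that of Lemma \ref{22.6}: Young's inequality together with Lemma \ref{0.51} bounds the gradient integral by $C_4\tau^{m/(m-1)}\int_{\mathcal{A}_{k,s}}\Phi(|\nabla u|)\zeta^m+C_3\int_{\mathcal{A}_{k,s}}\Phi(|(u-k)/(s-t)|)$; taking $\tau$ small enough that $mC_4\tau^{m/(m-1)}<\ell$ absorbs the first term into $J$; a further Young inequality controls $\phi_*(u)u\zeta^m(u-k)^+$ by $C_6\Phi_*(|(u-k)/(s-t)|)+C_6\Phi_*(k)$; then Lemmas \ref{0.3} and \ref{0.5} (using $\ell\le m<m^*$) replace $\Phi,\Phi_*$ by $|(u-k)/(s-t)|^{m^*}+1$ and $\Phi_*(k)$ by $k^{m^*}+1$; and finally $\Phi\in\mathcal{C}_m$ turns $J$ restricted to $B_t(x_0)$ into $C\int_{\mathcal{A}_{k,t}}|\nabla u|^m\,dx$, giving the asserted inequality. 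The main obstacle is precisely the displayed pointwise bound on $K(x)f$: the subtlety is that $(f_5)$ only controls $f$ near $0$ by $\sigma a(\sigma)$, so one genuinely needs Remark \ref{5.02} to see that this is negligible against $\sigma\phi(\sigma)$, and one must keep the coefficient of $V(x)\sigma\phi(\sigma)$ equal to $1$ (not merely bounded) so that the cancellation with the $V\phi(u)u$–integral is exact. The fact that $K/V$ is only locally — not globally — bounded in the class $\mathcal{K}_2$ is harmless here because the entire estimate lives inside the fixed ball $B_{R_0}(x_0)$.
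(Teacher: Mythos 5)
Your proposal matches the paper's proof: the paper likewise reduces to a single new pointwise estimate $K(x)|f(\sigma)|\le \varepsilon C_1 V(x)\phi(\sigma)\sigma + C_\varepsilon\phi_*(\sigma)\sigma$ on $B_{R_0}(x_0)$ (obtained from $(f_5)$ and Remark \ref{5.02} with $C_1=\lVert K/V\rVert_{L^\infty(B_{R_0}(x_0))}$) and then invokes the arguments of Lemma \ref{22.6} verbatim. Your only slight overstatement is that the coefficient of $V(x)\sigma\phi(\sigma)$ need only be $\le 1$ (so that the $V$-term is dropped), not exactly $1$; otherwise the argument and the identification of the locally bounded $K/V$ as the key difference from the $\mathcal{K}_1$ case are precisely what the paper does.
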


\noindent{\bf{Proof:}} Let $u\in E$ be a weak solution nonnegative of $(P)$ and $x_0\in \mathbb{R}^{N}$. Moreover, fix $0 < t < s < R_0 $. By the condition $(f_5)$ together with the Remark \ref{5.02}, given $\varepsilon>0 $ there exists $C_{\varepsilon}>0$ such that
\begin{align}\label{44.6}
K(x)|f(t)|\leq \varepsilon C_1V(x)\phi(t)t+C_{\varepsilon}\phi_{*}(t)t,\;\;\;\forall x\in B_{R_0}(x_0)
\end{align}
where $ C_1=\Big\lVert\frac{K}{V}\Big\lVert_{L^{\infty} ({B_{R_0}(x_0)})}$. Now, we can repeat the same arguments used in the proof of Lemma \ref{22.6}.

\qed

We consider $u\in E$ a nonnegative solution of $(P)$, repeating the same arguments used in the proof of Lemma \ref{22.21}, it follows that $u\in C^{1,\alpha}_{loc} (\mathbb{R}^N)$. Also, $u>0$ in $\mathbb{R}^N$.Since the proof of this fact is done in an analogous way to Corollary \eqref{22.2}, we will omit their respective proof here.

\subsection{\textbf{Proof of Theorem \ref{Teo3.1}}} Here, we are considering the assumptions of Theorem \ref{Teo1.1}, the arguments at the beginning of this section 5, guarantees the existence of a nontrivial weak solution for problem $(P)$. Then, to prove the Theorem \ref{Teo1.1}, it is enough to show the regularity and positiveness of nonnegative solutions of the problem $(P)$.
In this case, by the condition $(f_6)$ together with the Remark \ref{5.02}, given $\varepsilon>0 $, $x_0\in\mathbb{R }^N$ and $R_0>0$, there exists $C_{\varepsilon}>0$ such that
	\begin{align}\label{44.9}
	K(x)|f(t)|\leq \varepsilon C_1V(x)\phi(t)t+C_{\varepsilon}b(t)t,\;\;\;\forall x\in B_{R_0}(x_0)
	\end{align}
where $ C_1=\Big\lVert\frac{K}{V}\Big\lVert_{L^{\infty} ({B_{R_0}(x_0)})}$. Therefore, without great difficulties, we can show the following result:
	\begin{lemma}\label{44.10}
		Let  $u\in E$ be a nonnegative solution of $(P)$, $x_0 \in  \mathbb{R}^N$ and $R_0>0$. Then 
		\begin{align*}
		\int_{\mathcal{A}_{k,t}} |\nabla u|^\ell dx\leq C \left( \int_{\mathcal{A}_{k,s}}\left|\dfrac{u-k}{s-t}\right|^{\ell^*}dx +(k^{\ell^*}+1) |\mathcal{A}_{k,s}|\right)
		\end{align*}
		where $0<t<s<R_0$, $k>1$, $ \mathcal{A}_{k,\rho}=\{x\in B_{\rho}(x_0): u(x)>k\}$ and $C>0$ is a constant that does not depend on $k$.
\end{lemma}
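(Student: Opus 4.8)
The plan is to reproduce the De Giorgi--Caccioppoli estimate of Lemma~\ref{22.6} (equivalently Lemma~\ref{44.7}), replacing only the growth control of the nonlinearity by \eqref{44.9} and keeping the inequality $\ell\Phi(t)\le\phi(t)t^{2}$ from $(\phi_{3})$. The decisive point is that the final passage to the $\ell$-th power of $|\nabla u|$ rests only on Lemma~\ref{0.3}, so the hypothesis $\Phi\in\mathcal{C}_{m}$, which is needed in Lemma~\ref{44.7}, can be dropped here. First I would fix a nonnegative weak solution $u$, the point $x_{0}$, radii $0<t<s<R_{0}$, a level $k>1$, and choose a cut-off $\zeta\in C^{\infty}_{0}(\mathbb{R}^{N})$ with $0\le\zeta\le1$, $\mathrm{supp}(\zeta)\subset B_{s}(x_{0})$, $\zeta\equiv1$ on $B_{t}(x_{0})$ and $|\nabla\zeta|\le 2/(s-t)$. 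Testing $(P)$ with $\varphi=\zeta^{m}(u-k)^{+}$ (which lies in $E$ and has compact support) and setting $J=\int_{\mathcal{A}_{k,s}}\Phi(|\nabla u|)\zeta^{m}\,dx$, the bound $\ell\Phi(t)\le\phi(t)t^{2}$ gives
\begin{align*}
\ell J\le{}&m\int_{\mathcal{A}_{k,s}}\zeta^{m-1}(u-k)^{+}\phi(|\nabla u|)|\nabla u||\nabla\zeta|\,dx-\int_{\mathcal{A}_{k,s}}V(x)\phi(u)u\,\zeta^{m}(u-k)^{+}\,dx\\
&+\int_{\mathcal{A}_{k,s}}K(x)f(u)\zeta^{m}(u-k)^{+}\,dx.
\end{align*}

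Next I would apply \eqref{44.9} with $\varepsilon$ so small that $\varepsilon C_{1}\le1$, where $C_{1}=\lVert K/V\rVert_{L^{\infty}(B_{R_{0}}(x_{0}))}$: the contribution $\varepsilon C_{1}V(x)\phi(u)u$ arising from $K(x)|f(u)|$ is absorbed by the negative potential integral, leaving
\begin{align*}
\ell J\le m\int_{\mathcal{A}_{k,s}}\zeta^{m-1}(u-k)^{+}\phi(|\nabla u|)|\nabla u||\nabla\zeta|\,dx+C_{\varepsilon}\int_{\mathcal{A}_{k,s}}b(u)u\,\zeta^{m}(u-k)^{+}\,dx.
\end{align*}
To the gradient--cut-off cross term I apply Young's inequality for the pair $(\Phi,\tilde\Phi)$ with a parameter $\tau\in(0,1)$,
\begin{align*}
\phi(|\nabla u|)|\nabla u||\nabla\zeta|\zeta^{m-1}(u-k)^{+}\le\tilde\Phi\!\left(\tau\zeta^{m-1}\phi(|\nabla u|)|\nabla u|\right)+C\,\Phi\!\left(\left|\frac{u-k}{s-t}\right|\right),
\end{align*}
and then, exactly as in \eqref{11.1}, Lemma~\ref{0.51} together with $\tilde\Phi(\Phi'(t))\le\Phi(2t)$ and $\Phi\in(\Delta_{2})$ yields $\tilde\Phi(\tau\zeta^{m-1}\phi(|\nabla u|)|\nabla u|)\le C\tau^{m/(m-1)}\zeta^{m}\Phi(|\nabla u|)$; choosing $\tau$ with $mC\tau^{m/(m-1)}<\ell$ absorbs this term into $\ell J$. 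For the lower-order term I would use Young's inequality for the pair $(B,\tilde B)$, the estimate $\tilde B(B'(\sigma))\le B(2\sigma)$, the $\Delta_{2}$-property of $B$ guaranteed by $(B_{3})$, and $u=(u-k)^{+}+k$ on $\mathcal{A}_{k,s}$ (cf.\ \eqref{11.7}), to get $b(u)u\,\zeta^{m}(u-k)^{+}\le C\,B\!\left(\left|\frac{u-k}{s-t}\right|\right)+C\,B(k)$.

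Collecting these estimates I obtain $J\le C\int_{\mathcal{A}_{k,s}}\big[\Phi+B\big]\!\left(\left|\frac{u-k}{s-t}\right|\right)dx+C\int_{\mathcal{A}_{k,s}}B(k)\,dx$. Using Lemma~\ref{0.3} for $\Phi$, the analogous growth estimates for $B$ (which follow from $(B_{1})$--$(B_{3})$ just as Lemma~\ref{0.3} follows from $(\phi_{1})$--$(\phi_{3})$), and the inequalities $m<\ell^{*}$ (from $(\phi_{3})$) and $\sup_{t>0}b(t)t^{2}/B(t)\le\ell^{*}$ (from $(B_{3})$), together with $k>1$, one has $\Phi(\sigma)\le C(\sigma^{\ell^{*}}+1)$, $B(\sigma)\le C(\sigma^{\ell^{*}}+1)$ and $B(k)\le C\,k^{\ell^{*}}$, whence
\begin{align*}
J\le C\left(\int_{\mathcal{A}_{k,s}}\left|\frac{u-k}{s-t}\right|^{\ell^{*}}dx+(k^{\ell^{*}}+1)\,|\mathcal{A}_{k,s}|\right).
\end{align*}
Finally, by Lemma~\ref{0.3} one has $\Phi(\sigma)\ge\Phi(1)\sigma^{\ell}$ for $\sigma\ge1$, so, splitting $\mathcal{A}_{k,t}$ over $\{|\nabla u|\ge1\}$ and $\{|\nabla u|<1\}$ and using $\zeta\equiv1$ on $B_{t}(x_{0})$,
\begin{align*}
\int_{\mathcal{A}_{k,t}}|\nabla u|^{\ell}\,dx\le\frac{1}{\Phi(1)}\int_{\mathcal{A}_{k,s}}\Phi(|\nabla u|)\zeta^{m}\,dx+|\mathcal{A}_{k,s}|=\frac{1}{\Phi(1)}\,J+|\mathcal{A}_{k,s}|,
\end{align*}
which together with the previous display proves the claim; note that $\Phi\in\mathcal{C}_{m}$ is nowhere used. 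The main obstacle is the bookkeeping in the Young-inequality absorptions --- in particular verifying that the $B$-contribution genuinely fits under the $\ell^{*}$-power via the $\Delta_{2}$-estimate for $\tilde B$ --- but conceptually everything is routine once this last elementary splitting takes over the role played by $\mathcal{C}_{m}$ in Lemma~\ref{44.7}.
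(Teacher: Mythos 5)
Your proof is correct, and it reconstructs the argument the paper leaves implicit (the paper states Lemma~\ref{44.10} immediately after \eqref{44.9} with only the remark ``without great difficulties, we can show the following result''). You follow exactly the template of Lemma~\ref{22.6}/Lemma~\ref{44.7}, with the right adaptations: \eqref{44.9} supplies the growth control and the $\varepsilon C_{1}V(x)\phi(u)u$ part is absorbed by the negative $V$-integral; the lower-order term is handled by Young's inequality for $(B,\tilde B)$ rather than $(\Phi_{*},\tilde\Phi_{*})$, and $(B_{3})$ (namely $\sup_{t>0} b(t)t^{2}/B(t)\le\ell^{*}$) is precisely what makes $B(\sigma)\le C(\sigma^{\ell^{*}}+1)$ and $B(k)\le C k^{\ell^{*}}$ available; and, most importantly, you correctly notice that the passage from $J=\int_{\mathcal{A}_{k,s}}\Phi(|\nabla u|)\zeta^{m}dx$ to $\int_{\mathcal{A}_{k,t}}|\nabla u|^{\ell}dx$ no longer needs $\Phi\in\mathcal{C}_{m}$, since Lemma~\ref{0.3} gives $\Phi(t)\ge\Phi(1)t^{\ell}$ only for $t\ge1$ and the elementary split $\{|\nabla u|\ge1\}\cup\{|\nabla u|<1\}$ covers the rest by absorbing $|\mathcal{A}_{k,s}|$ into the $(k^{\ell^{*}}+1)|\mathcal{A}_{k,s}|$ term. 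This is exactly how $\mathcal{C}_{m}$ can be dropped in passing from Theorem~\ref{Teo3} to Theorem~\ref{Teo3.1}, and your bookkeeping (the constant depending on $R_{0}$ through $s-t<R_{0}$, the $\zeta^{m}$ power coming out of Lemma~\ref{0.51} via the exponent $m/(m-1)$) is consistent with the paper's computations.
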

%
%
We consider $u\in E$ a nonnegative solution of $(P)$, the preceding lemma guarantees that $u\in C^{1,\alpha}_{loc} (\mathbb{R}^N)$. Also, $u>0$ in $\mathbb{R}^N$. Since the proof of these facts are made in an analogous way to Lemmas \ref{22.21} and Corollary \eqref{22.2}, we will omit their respective proof here.
\begin{remark}
	In the Theorems \ref{Teo3} and \ref{Teo3.1}, assuming that $\displaystyle\inf_{x\in\mathbb{R }^N}V(x)>0$, the nonnegatives solution for $(P )$ and they go to zero at infinity, that is, $\displaystyle\lim_{|x|\rightarrow\infty} u(x)=0$.
\end{remark}

\section{Zero mass case}

In this section, we consider the zero mass case, and give the proof of Theorem \ref{Teo2}. Here, the coefficient $K$ only satisfies the conditions $(K_0)$ and $(K_1)$, furthermore, we are considering $V=0$. For $(f_7)$, there is $C>0$ such that
\begin{align}\label{2.0}
	K(x)|f(t)|\leq C\phi_{*}(|t|)|t|,\quad x\in\mathbb{R}^N\;\text{ and }\;t\in\mathbb{R}.
\end{align}

The inequality above yields that $\mathcal{F}:D^{1,\Phi}(\mathbb{R}^N)\longrightarrow\mathbb{R}$ defined by
\begin{equation}
\mathcal{F}(u)=\int_{\mathbb{R}^{N}}K(x)F(u)dx
\end{equation}
is well defined, $\mathcal{F}\in C^{1}(D^{1,\Phi}(\mathbb{R}^N), \mathbb{R})$ with derivative
$$
\mathcal{F}'(u)v=
\int_{\mathbb R^N}K(x)f(u)vdx, \quad \forall u,v \in D^{1,\Phi}(\mathbb{R}^N).
$$

As in the previous sections, the energy functional $J:D^{1,\Phi}(\mathbb{R}^N)\longrightarrow\mathbb{R}$ associated with the problem $(P)$, which is given by
\begin{align*}
J(u)=\int_{\mathbb{R}^{N}} \Phi(|\nabla u|)dx-
\int_{\mathbb R^N}K(x)F(u)dx, 
\end{align*}
is continuous and Gateaux-differentiable with derivative $J': D^{1,\Phi}(\mathbb{R}^N)\longrightarrow (D^{1,\Phi}(\mathbb{R}^N))^*$ defined by
\begin{align*}
J'(u)v=\int_{\mathbb{R}^{N}} \phi(|\nabla u|)\nabla u\nabla v dx-
\int_{\mathbb R^N}K(x)f(u)vdx
\end{align*}
 continuous from the norm topology of	$D^{1,\Phi}(\mathbb{R}^N)$ to the weak$^*$-topology of $ (D^{1,\Phi}(\mathbb{R}^N))^*$. 
 
 Once that we intend to find nonnegative solutions for the problem $(P)$, we will assume that
 \begin{align}\label{2.22}
 f(s)=0,\;\;\;\forall s\in (-\infty,0].
 \end{align}
 
 We will say that $u\in D^{1,\Phi}(\mathbb{R}^N)$ is a critical point for the functional $J:D^{1,\Phi}(\mathbb{R}^N)\longrightarrow\mathbb{R} $ if
 \begin{align}\label{2.15}
 Q(v)-Q(u)\geq\int_{\mathbb{R}^{N}}{K(x)f(u)(v-u)}dx,\;\;\;\;\forall v\in D^{1,\Phi}(\mathbb{R}^N).
 \end{align}
 
Similar to the proof of [\citenum{AlvesandLeandro}, Lemma 4.1], we give the following result. 

 \begin{proposition} \label{2.16}
 	If $u\in D^{1,\Phi}(\mathbb{R}^N)$ is a critical point of $J$ in $D^{1,\Phi}(\mathbb{R}^N)$, then u is a weak solution to $(P)$.
 \end{proposition}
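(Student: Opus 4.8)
The plan is to exploit the convexity of $Q$ together with its Gateaux differentiability in order to upgrade the one-sided variational inequality \eqref{2.15} into the two-sided identity that defines a weak solution of $(P)$ with $V\equiv 0$. First I would fix an arbitrary $w\in D^{1,\Phi}(\mathbb{R}^N)$; for every $t>0$ the element $v=u+tw$ lies in $D^{1,\Phi}(\mathbb{R}^N)$, so plugging it into \eqref{2.15} and using $v-u=tw$ gives
\begin{align*}
Q(u+tw)-Q(u)\geq t\int_{\mathbb{R}^{N}}K(x)f(u)w\,dx .
\end{align*}
Since $\Phi$ is convex and nondecreasing on $[0,\infty)$ and $u\mapsto|\nabla u|$ is convex, the real function $g(t):=Q(u+tw)$ is convex, so the secant slope $t\mapsto\big(g(t)-g(0)\big)/t$ is nondecreasing on $(0,\infty)$; hence its limit as $t\downarrow 0^{+}$ exists and equals $\inf_{t>0}\big(g(t)-g(0)\big)/t$. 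Because $J$, and therefore $Q=J+\mathcal{F}$, is Gateaux differentiable with the derivative recalled above, that limit is exactly $Q'(u)w=\int_{\mathbb{R}^{N}}\phi(|\nabla u|)\nabla u\nabla w\,dx$. Dividing the displayed inequality by $t>0$ and letting $t\downarrow 0^{+}$ would then yield
\begin{align*}
\int_{\mathbb{R}^{N}}\phi(|\nabla u|)\nabla u\nabla w\,dx\geq\int_{\mathbb{R}^{N}}K(x)f(u)w\,dx .
\end{align*}

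Next I would apply the inequality just obtained with $-w$ in place of $w$: since $Q'(u)\in\big(D^{1,\Phi}(\mathbb{R}^N)\big)^{*}$ is linear and the right-hand side is linear in $w$, this produces the reverse inequality, and combining the two gives
\begin{align*}
\int_{\mathbb{R}^{N}}\phi(|\nabla u|)\nabla u\nabla w\,dx=\int_{\mathbb{R}^{N}}K(x)f(u)w\,dx,\qquad\forall\,w\in D^{1,\Phi}(\mathbb{R}^N),
\end{align*}
which is precisely the definition of a weak solution of $(P)$. This is the zero-mass version of the scheme used in [\citenum{AlvesandLeandro}, Lemma 4.1] and in Proposition \ref{0.4}.

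The one genuinely delicate point is the passage to the limit $\lim_{t\downarrow 0^{+}}\big(Q(u+tw)-Q(u)\big)/t=Q'(u)w$: this relies on the Gateaux differentiability of $Q$ on $D^{1,\Phi}(\mathbb{R}^N)$ — the $V=0$ analogue of the Gateaux differentiability established for $Q$ on $E$ in Section 3 — together with the monotonicity of the secant slopes of a convex function, which ensures the limit genuinely exists rather than being merely a $\liminf$. The remaining ingredients are routine: $v=u\pm tw$ is admissible because $D^{1,\Phi}(\mathbb{R}^N)$ is a vector space, and all the integrals are finite by \eqref{2.0} and the continuous embedding $D^{1,\Phi}(\mathbb{R}^N)\hookrightarrow L^{\Phi_{*}}(\mathbb{R}^N)$.
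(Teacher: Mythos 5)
Your proof is correct and is exactly the standard convex-analysis argument — perturb along rays $u+tw$, divide by $t$, pass to the Gateaux derivative, then replace $w$ by $-w$ to obtain the reverse inequality — which is the scheme the paper outsources to [\citenum{AlvesandLeandro}, Lemma 4.1] and is also the proof of the parallel Proposition \ref{0.4}. One minor remark: the monotone-secant-slope observation is superfluous here, since once Gateaux differentiability of $Q$ on $D^{1,\Phi}(\mathbb{R}^N)$ is granted the one-sided limit $\lim_{t\downarrow 0^{+}}\big(Q(u+tw)-Q(u)\big)/t=Q'(u)w$ exists by definition, and the inequality $\big(Q(u+tw)-Q(u)\big)/t\geq\int_{\mathbb{R}^{N}}K(x)f(u)w\,dx$ then passes to the limit without any appeal to convexity — the convexity of $Q$ earns its keep earlier, in deriving the one-sided inequality \eqref{2.15} from the Cerami sequence, not in this proposition.
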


Also due to the inequality \eqref{2.0}, it follows that $J$ satisfies the geometry of the mountain pass. Therefore, by [\citenum{Motreanu}, Theorem $5.46$], there is a sequence $(u_n)\subset D^{1,\Phi}(\mathbb{R}^N)$ such that
\begin{align}\label{2.1}
J(u_n)\longrightarrow c\;\;\;\;\text{ and }\;\;\;\;
(1+\lVert u_n\lVert)\lVert J'(u_n)\lVert\longrightarrow 0.
\end{align}
where $c$ is the mountain pass level given by
\begin{align*}
c = \inf_{\gamma \in \Gamma} \max_{t \in [0,1]} J (\gamma(t))
\end{align*}
with
\begin{align*}
\Gamma = \{\gamma \in C([0,1], X) : \; \gamma(0) = 0 \; \text{ and } \; \gamma(1) = e \}.
\end{align*}
Now, similar to the proof of Lemma \ref{5}, we give the following Lema.

\begin{lemma}\label{66}
Let $(u_n)$ the Cerami sequence given in \eqref{2.1}. There is a constant $M>0$ such that $J(t u_n)\leq M$ for every $t\in [0,1]$ and $n\in\mathbb{N}$.
\end{lemma}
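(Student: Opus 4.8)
The plan is to mimic the proof of Lemma~\ref{5}, with the simplification that in the zero mass case the potential term is absent. Fix $n\in\mathbb{N}$ and let $t_n\in[0,1]$ be a point at which the map $t\mapsto J(tu_n)$ attains its maximum on $[0,1]$. If $t_n=0$, then $J(tu_n)\le J(0)=0$ for every $t\in[0,1]$ and there is nothing to prove; if $t_n=1$, then $J(tu_n)\le J(u_n)$, which is bounded because $J(u_n)\to c$. Hence we may assume $t_n\in(0,1)$. Along the ray $t\mapsto tu_n$ the functional $t\mapsto\int_{\mathbb{R}^{N}}\Phi(t|\nabla u_n|)\,dx$ is convex and differentiable (since $\Phi\in C^1$, and $\Phi\in(\Delta_2)$ by $(\phi_3)$), while $\mathcal{F}\in C^1(D^{1,\Phi}(\mathbb{R}^N),\mathbb{R})$ by \eqref{2.0}; therefore $t\mapsto J(tu_n)$ is differentiable with derivative $J'(tu_n)u_n$, and the interior maximality of $t_n$ forces $J'(t_nu_n)u_n=0$, that is, $J'(t_nu_n)(t_nu_n)=0$.

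Next I would use the algebraic identity
\begin{align*}
mJ(t_nu_n)=mJ(t_nu_n)-J'(t_nu_n)(t_nu_n)=\int_{\mathbb{R}^{N}}\Big(m\Phi(|\nabla(t_nu_n)|)-\phi(|\nabla(t_nu_n)|)|\nabla(t_nu_n)|^2\Big)\,dx+\int_{\mathbb{R}^{N}}K(x)\mathcal{H}(t_nu_n)\,dx,
\end{align*}
where $\mathcal{H}(s)=sf(s)-mF(s)$. By $(f_2)$ the function $\mathcal{H}$ is nondecreasing on $(0,\infty)$, and by \eqref{2.22} we have $\mathcal{H}(s)=0$ for $s\le0$, so $\mathcal{H}$ is nondecreasing on all of $\mathbb{R}$; by $(\phi_4)$ together with $(\phi_3)$ the function $s\mapsto m\Phi(s)-\phi(s)s^2$ is nonnegative and nondecreasing for $s\ge0$. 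Since $0<t_n<1$ we have $t_n|\nabla u_n(x)|\le|\nabla u_n(x)|$ pointwise, and $\mathcal{H}(t_nu_n(x))\le\mathcal{H}(u_n(x))$ pointwise as well (both sides vanishing where $u_n(x)\le0$). Using $K\ge0$ we obtain
\begin{align*}
mJ(t_nu_n)\le\int_{\mathbb{R}^{N}}\Big(m\Phi(|\nabla u_n|)-\phi(|\nabla u_n|)|\nabla u_n|^2\Big)\,dx+\int_{\mathbb{R}^{N}}K(x)\mathcal{H}(u_n)\,dx=mJ(u_n)-J'(u_n)u_n.
\end{align*}

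Finally, the Cerami condition in \eqref{2.1} gives $|J'(u_n)u_n|\le\lVert J'(u_n)\rVert(1+\lVert u_n\rVert)\to0$, hence $mJ(t_nu_n)\le mJ(u_n)+o_n(1)$, and since $J(u_n)\to c$ the right-hand side is bounded. Thus there is $M>0$ with $J(t_nu_n)\le M$ for all $n$, and consequently $J(tu_n)\le J(t_nu_n)\le M$ for every $t\in[0,1]$ and $n\in\mathbb{N}$. \textbf{The main obstacle} is the point already used in Lemma~\ref{5}: one must justify that $t\mapsto J(tu_n)$ is genuinely differentiable in $t$ with derivative $J'(tu_n)u_n$ and that its interior maximum yields $J'(t_nu_n)u_n=0$, despite $J$ not being of class $C^1$ on $D^{1,\Phi}(\mathbb{R}^N)$. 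This is handled by differentiating $\int_{\mathbb{R}^N}\Phi(t|\nabla u_n|)\,dx$ under the integral sign, which is legitimate because $\Phi\in C^1$ and $(\phi_1)$--$(\phi_3)$ provide, along the compact segment $t\in[0,1]$, an integrable majorant for $\phi(t|\nabla u_n|)t|\nabla u_n|^2$; the nonlinear term is differentiable because $\mathcal{F}\in C^1$.
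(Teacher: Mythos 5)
Your proof is correct and follows exactly the same strategy the paper uses, namely it reproduces the argument of Lemma~\ref{5} with the $V$-term deleted (which is precisely what the paper intends when it says ``similar to the proof of Lemma~\ref{5}''): take the maximizer $t_n$, use $J'(t_nu_n)(t_nu_n)=0$ at an interior maximum, rewrite $mJ(t_nu_n)-J'(t_nu_n)(t_nu_n)$ via the Ambrosetti--Rabinowitz-type identity, and invoke the monotonicity of $s\mapsto m\Phi(s)-\phi(s)s^2$ from $(\phi_4)$ and of $\mathcal{H}(s)=sf(s)-mF(s)$ from $(f_2)$ together with the Cerami property. The extra care you take about differentiability of $t\mapsto J(tu_n)$ is sound but is actually immediate from Gateaux differentiability of $J$ (which the paper has already established), so the paper simply asserts $J'(t_nu_n)u_n=0$ without comment.
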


\begin{proposition}
	The Cerami sequence $(u_n)$ given in \eqref{2.1} is bounded in $D^{1,\Phi}(\mathbb{R}^N)$.
\end{proposition}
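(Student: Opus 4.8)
The plan is to adapt the contradiction argument from the proof of Proposition \ref{5.}, which is considerably simpler here because the norm of $D^{1,\Phi}(\mathbb{R}^N)$ is just $\|\cdot\|=\|\nabla\cdot\|_{\Phi}$, so there is only the single case $\|u_n\|=\|\nabla u_n\|_{\Phi}\to\infty$ to rule out. So suppose, up to a subsequence, that $\|u_n\|\to\infty$, and set $w_n=u_n/\|u_n\|$, so that $\|\nabla w_n\|_{\Phi}=1$ for all $n$. By Lemma \ref{0.6} we may assume $w_n\xrightharpoonup[\quad]{\ast}w$ in $D^{1,\Phi}(\mathbb{R}^N)$ for some $w$, and since $D^{1,\Phi}(\mathbb{R}^N)$ is compactly embedded in $L^{\Phi}_{loc}(\mathbb{R}^N)$ (Lemma \ref{0.12} with $V\equiv 0$), arguing as in the proof of Lemma \ref{0.1} we may also assume $w_n(x)\to w(x)$ a.e. in $\mathbb{R}^N$.

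First I would show $w=0$. Since $J(u_n)\to c$, the sequence $(J(u_n))$ is bounded; as $\|\nabla u_n\|_{\Phi}\geq 1$ for $n$ large, Lemma \ref{0.3} gives $\int_{\mathbb{R}^N}\Phi(|\nabla u_n|)\,dx\leq\|u_n\|^{m}$, whence
\[
\int_{\mathbb{R}^N}K(x)F(u_n)\,dx=\int_{\mathbb{R}^N}\Phi(|\nabla u_n|)\,dx-J(u_n)\leq\|u_n\|^{m}+C
\]
for a suitable constant $C$. Dividing by $\|u_n\|^{m}$ and writing $u_n=\|u_n\|w_n$, the integral $\int_{\mathbb{R}^N}K(x)\frac{F(u_n)}{|u_n|^{m}}|w_n|^{m}\,dx$ stays bounded; then, exactly as in Proposition \ref{5.}, condition $(f_3)$ together with Fatou's Lemma (using that $|u_n|=\|u_n\|\,|w_n|\to\infty$ a.e. on $\Omega:=\{w\neq0\}$) forces $\int_{\Omega}K(x)|w|^{m}\,dx=0$. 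Since $K>0$ a.e. by $(K_0)$, this means $w=0$.

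Next comes the contradiction. Fix $M>1$. For $n$ large, $M/\|u_n\|\in(0,1)$, so $Mw_n=(M/\|u_n\|)u_n$ and Lemma \ref{66} yields $J(Mw_n)\leq M_0$ with $M_0$ independent of $n$ and of $M$. On the other hand, Lemma \ref{0.3} and $\|\nabla w_n\|_{\Phi}=1$ give $\int_{\mathbb{R}^N}\Phi(M|\nabla w_n|)\,dx\geq M^{\ell}\int_{\mathbb{R}^N}\Phi(|\nabla w_n|)\,dx\geq M^{\ell}$, so that
\[
J(Mw_n)\geq M^{\ell}-\int_{\mathbb{R}^N}K(x)F(Mw_n)\,dx .
\]
Hence the whole argument reduces to proving $\int_{\mathbb{R}^N}K(x)F(Mw_n)\,dx\to0$: once this is known, $\liminf_n J(Mw_n)\geq M^{\ell}$, so $M^{\ell}\leq M_0$ for every $M>1$, which is absurd since $\ell\geq1$, and the proposition follows.

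I expect this last convergence to be the main obstacle, and it is where $(f_7)$ and $(K_1)$ really enter. I would split $\mathbb{R}^N=B_r(0)\cup B_r(0)^c$. From $(f_7)$ and $K\in L^{\infty}$ one derives, for each $\varepsilon>0$, numbers $0<\delta_0<\delta_1$ and $C_\varepsilon>0$ with $K(x)|F(t)|\leq\varepsilon\|K\|_{\infty}\Phi_{*}(|t|)+C_\varepsilon\Phi_{*}(\delta_1)K(x)\chi_{\{|t|\geq\delta_0\}}(t)$ for all $x,t$; since $\int_{\mathbb{R}^N}\Phi_{*}(|w_n|)\,dx$ is bounded (the continuous embedding $D^{1,\Phi}(\mathbb{R}^N)\hookrightarrow L^{\Phi_*}(\mathbb{R}^N)$ and $\Phi_*\in(\Delta_2)$) and the level sets $A_n:=\{|Mw_n|\geq\delta_0\}$ satisfy $\sup_n|A_n|<\infty$ by that same bound, condition $(K_1)$ makes $\int_{A_n\cap B_r(0)^c}K(x)\,dx$ arbitrarily small, uniformly in $n$, for $r$ large; together with the $\varepsilon$-term this gives $\limsup_n\int_{B_r(0)^c}K(x)|F(Mw_n)|\,dx\leq C(M)\varepsilon$ for $r$ large. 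Over the ball $B_r(0)$, using $w_n\to0$ a.e., $\sup_n\int_{B_r(0)}\Phi_*(|Mw_n|)\,dx<\infty$, and $K(x)|F(t)|/\Phi_{*}(|t|)\to0$ as $|t|\to\infty$ (again from $(f_7)$ and $K\in L^\infty$), a non-autonomous version of the compactness lemma of Strauss [\citenum{Strauss}, Theorem A.I] yields $\int_{B_r(0)}K(x)F(Mw_n)\,dx\to0$. Combining the two estimates and letting $\varepsilon\to0$ gives $\int_{\mathbb{R}^N}K(x)F(Mw_n)\,dx\to0$, which closes the argument.
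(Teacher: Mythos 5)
Your proposal is correct and follows essentially the same route as the paper: contradiction via $w_n=u_n/\|\nabla u_n\|_\Phi$, showing $w=0$ through $(f_3)$, Fatou, and $K>0$ a.e., then deriving $\liminf_n J(\sigma w_n)\geq\sigma^{\ell}$ (the paper uses convexity to get the cruder bound $\geq\sigma$, but the conclusion is the same) against the uniform bound from Lemma~\ref{66}, with the key convergence $\int_{\mathbb{R}^N}K(x)F(\sigma w_n)\,dx\to 0$ established by the same ball/complement decomposition using $(f_7)$, $(K_1)$, and the Strauss compactness lemma. The only cosmetic differences are that you apply Lemma~\ref{66} directly to $\sigma w_n$ rather than through the maximizer $t_n$, and you track the constant $C$ in $\int K F(u_n)\leq\|u_n\|^{m}+C$ rather than invoking $J(u_n)\geq 0$ for large $n$; neither changes the substance.
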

\noindent{\bf{Proof:}} Suppose for contradiction that, up to a subsequence, $\lVert u_n\lVert_{D^{1,\Phi}(\mathbb{R}^N)}\longrightarrow\infty$ and set $w_n =\frac{u_n } {\lVert \nabla u_n \lVert_{\Phi}}$. Since $\lVert \nabla w_n \lVert_{\Phi}=1 $, there is $w \in D^{1,\Phi}(\mathbb{R}^N)$ such that  $ w_n \xrightharpoonup[\quad]{\ast} w $ in $D^{1,\Phi}(\mathbb{R}^N)$ and $ w_n(x)\rightarrow w(x) $ almost everywhere in $\mathbb{R}^N$. As in the proof of Proposition \ref{5.}, for each $\tau >0$, there is $\xi>0$ sufficiently large such that
\begin{align*}
1\geq\tau \int_{\Omega\cap\{|u_n|\geq \xi\}} K(x)|w_n|^{m}dx,\;\;\;\forall n\geq n_0
\end{align*}
for some $n_0\in\mathbb{N}$ where $\Omega=\{x\in\mathbb{R}^{N}:\;w(x)\neq 0\}$. By Fatou's Lemma
\begin{align*}
1\geq\tau\int_{\Omega} K(x)|w|^{m}dx,\;\;\text{ for all }\tau>0.
\end{align*}
Therefore,
\begin{align*}
\int_{\Omega} K(x)|w(x)|^{m}dx=0.
\end{align*}
By $(K_0)$, we have $K(x)>0$ almost everywhere in $\mathbb{R}^N$, therefore $|\Omega|=0$, showing that $w= 0$.

Note that for every $\sigma>1$, there is $n_0\in \mathbb{N}$ such that $\dfrac{\sigma}{\lVert \nabla u_n\lVert_{\Phi}}\in[0,1]$, for all $n\geq n_0.
$
Given this, we get
\begin{align*}
J(t_n u_n)\geq\sigma\int_{\mathbb{R}^{N}}\Phi(|\nabla w_n|)dx-\int_{\mathbb{R}^{N}}K(x)F(\sigma w_n)dx=\sigma-\int_{\mathbb{R}^{N}}K(x)F(\sigma w_n)dx,\;\;\;\forall n\geq n_0.
\end{align*}
We claim that
\begin{align}\label{2.3}
\lim_{n\rightarrow\infty} \int_{\mathbb{R}^{N}}K(x)F(\sigma w_n)dx=0,
\end{align}
We postpone for a moment the verification of \eqref{2.3}. But if it were true, one would get
\begin{align*}
\liminf_{n\rightarrow\infty} J(t_n u_n)\geq \sigma,\;\text{ for every }\sigma\geq 1,
\end{align*}
which constitutes a contradiction with Lemma \ref{66}, once that $(J(t_n u_n))$ is bounded from above. Therefore $(u_n)$ is bounded.

Now, it remains to prove the limit \eqref{2.3}. From  $(f_7)$, given $\varepsilon>0$, there exists $\delta_0>0$, $ \delta_1>0$ and $C_1>0$ such that
\begin{align}\label{2.4}
|f(t)|\leq \varepsilon\phi_{*}(|t|)|t|+C_1\chi_{[\delta_0,\delta_1]}(t),\;\;\text{ for }t\in\mathbb{R}.
\end{align}
Note that $F_n = \{x \in \mathbb{R}^N : |\sigma w_n(x)| \geq \delta_0\}$ is such that
\begin{align*}
	\Phi_{*}(\delta_0)|F_n|\leq \int_{F_n}\Phi_{*}(|\sigma w_n(x)|)dx\leq \int_{\mathbb {R}^{N}} \Phi_{*}(|\sigma w_n(x)|)dx\leq C_2\int_{\mathbb {R}^{N}} \Phi_{*}(|w_n(x)|)dx,
\end{align*}
for some constant $C_2>0$ that does not depend on $n$. Since $(w_n)$ is bounded in $D^{1,\Phi}(\mathbb{R}^N)$, there is $C_3>0$ satisfying $\int_{\mathbb {R}^{N}} \Phi_{*}(|w_n|)dx< C_3$, for all $n\in\mathbb{N}$. Thus, $\displaystyle\sup_{n\in\mathbb{N}} |F_n|<+\infty$. From $(K_1)$, we have
\begin{align*}
	\lim_{r\rightarrow+\infty}\int_{F_n\cap B_r^c (0)} K(x)dx=0, \;\text{ uniformly in } n\in\mathbb{N},
	\end{align*}
thus, there is $r_0>0$, so that
\begin{align*}
\int_{F_n\cap B_{{r_{0}}}^c (0)} K(x)dx<\dfrac{\varepsilon}{C_1}, \;\;\;\forall n\in\mathbb{N}.
\end{align*}
By \eqref{2.4}, it follows that 
\begin{align*}
\int_{B_{r_0}^c ( 0)}K(x)F(\sigma w_n)dx&\leq \varepsilon\lVert K\lVert_{\infty} \int_{B_{r_0}^c ( 0)}\Phi_{*}(|\sigma w_n(x)|)dx+ C_1\int_{F_n\cap B_{r_0}^c ( 0)}K(x)dx\\
&\leq \varepsilon (\lVert K\lVert_{\infty}+1),
\end{align*}
for all $n\in\mathbb{N}$. Therefore 
\begin{align}\label{2.5}
\limsup_{n\rightarrow+\infty} \int_{B_{r_0}^c (0)}K(x)F(\sigma w_{n})dx\leq \varepsilon (\lVert K\lVert_{\infty}+1).
\end{align}
On the other hand, using $(f_2)$ and the compactness lemma of Strauss [\citenum{Strauss}, Theorem A.I, p. 338], we can conclude that
\begin{align}\label{2.6}
\lim_{n\rightarrow+\infty} \int_{B_{r_0} (0)}K(x)F(\sigma w_{n})dx=0.
\end{align}
Thus, the limit \eqref{2.3} is obtained from \eqref{2.5} and \eqref{2.6}.

\qed

Since that the Cerami sequence $(u_n)$ given in \eqref{2.1} is bounded in  $D^{1,\Phi}(\mathbb{R}^N)$, by Lemma \ref{0.6}, we can assume that for some subsequence, there is $u \in   D^{1,\Phi}(\mathbb{R}^N)$ such that
\begin{align}\label{2.7}
u_{n}\xrightharpoonup[\quad]{\ast} u\;\;\text{ in }D^{1,\Phi}(\mathbb{R}^{N}),\;\;\;\text{ and }\;\;\;
u_{n}(x)\longrightarrow u(x)\;\;a.e.\;\;\mathbb{R}^{N}.
\end{align}
Given these limits, we can obtain, as in Lemma \ref{0.2}, the inequality
\begin{align}\label{2.8}
	 \int_{\mathbb{R}^{N}} \Phi(|\nabla u|)dx\leq\displaystyle \liminf_{n\rightarrow\infty }\int_{\mathbb{R}^{N}} \Phi(|\nabla u_n|)dx.
\end{align}

To complete the proof of Theorem \ref{Teo2}, we need to show the following limits:
	\begin{align}\label{2.9}
\lim_{n\rightarrow\infty}\int_{\mathbb{R}^{N}}K(x)f(u_n)u_ndx=\int_{\mathbb{R}^{N}}K(x)f (u)udx
\end{align}
and
\begin{align}\label{2.10}
\lim_{n\rightarrow\infty}\int_{\mathbb{R}^{N}}K(x)f(u_n)\psi dx=\int_{\mathbb{R}^{N}}K(x)f (u)\psi dx,\;\;\;\forall \psi \in C^{\infty}_{0}(\mathbb{R}^{N}).
\end{align}
Consider $\varepsilon,\delta_0,\delta_1, C_1>0$ as in \eqref{2.4}. To verify \eqref{2.9}, consider $E_n = \{x \in\mathbb{R}^N : |u_n(x)| \geq \delta_0\}$ which satisfies $\sup_{n} |E_n| <\infty$. From $(K_1)$,  there is $r_0>0$, so that
\begin{align*}
\int_{E_n\cap B_{{r_{0}}}^c (0)} K(x)dx<\dfrac{\varepsilon}{\delta_1C_1}, \;\;\;\forall n\in\mathbb{N}.
\end{align*}
By \eqref{2.4}, it follows that 
\begin{align*}
\int_{B_{r_0}^c ( 0)}K(x)f(u_n)u_ndx&\leq \varepsilon m^*\lVert K\lVert_{\infty} \int_{B_{r_0}^c ( 0)}\Phi_{*}(|u_n|)dx+ C_1\delta_1\int_{F_n\cap B_{r_0}^c ( 0)}K(x)dx\\
&\leq \varepsilon (m^*\lVert K\lVert_{\infty}+1),
\end{align*}
for all $n\in\mathbb{N}$. Therefore 
\begin{align}\label{2.11}
\limsup_{n\rightarrow+\infty} \int_{B_{r_0}^c (0)}K(x)f(u_{n})u_n dx\leq \varepsilon (m^*\lVert K\lVert_{\infty}+1).
\end{align}
On the other hand, using $(f_7)$ and the compactness lemma of Strauss [\citenum{Strauss}, Theorem A.I, p. 338], we can conclude that
\begin{align}\label{2.12}
\lim_{n\rightarrow+\infty} \int_{B_{r_0} (0)}K(x)f(u_{n})u_n dx=0.
\end{align}
Thus, the limit \eqref{2.9} is obtained from {\eqref{2.11} and \eqref{2.12}}. Related the limit \eqref{2.10}, it follows directly from the condition $(f_7)$ together with a version of the compactness lemma of Strauss for non-autonomous problem.

Fix $v\in C^{\infty}_0(\mathbb{R}^N)$. By boundedness of Cerami sequence $(u_n)$, we have $J'(u_n)(v-u_n)=o_n(1)$, hence, since $\Phi$ is a convex function, it is possible to show that
\begin{align}\label{2.13}
\begin{split}
\int_{\mathbb{R}^{N}}\Phi(|\nabla v|)dx-\int_{\mathbb{R}^{N}}\Phi(|\nabla u_n|)dx\geq \int_{\mathbb{R}^{N}}K(x)f(u_n)(v-u_n)dx+o_n(1).
\end{split}
\end{align}
From \eqref{2.8}, \eqref{2.13} and \eqref{2.10}, we obtain
\begin{align*}
\int_{\mathbb{R}^{N}}\Phi(|\nabla v|)dx-\int_{\mathbb{R}^{N}}\Phi(|\nabla u|)dx\geq\int_{\mathbb{R}^{N}}K(x)f(u)(v-u)dx.
\end{align*}
By the arbitrariness of $v\in C^{\infty}_{0}(\mathbb{R}^{N})$, it follows that
\begin{align*}
\int_{\mathbb{R}^{N}}\Phi(|\nabla v|)dx-\int_{\mathbb{R}^{N}}\Phi(|\nabla u|)dx\geq\int_{\mathbb{R}^{N}}K(x)f(u)(v-u)dx,\;\;\;\;\forall\;v \in C^{\infty}_{0}(\mathbb{R}^{N}).
\end{align*}
Since $D^{1,\Phi}(\mathbb{R}^{N})=\overline{C^{\infty}_{0}(\mathbb{R}^{N})}^{\lVert\cdot\lVert_{D^{1,\Phi}(\mathbb{R}^{N})}}$ and $\Phi\in( \Delta_{2})$, we conclude that
\begin{align}\label{2.14}
\int_{\mathbb{R}^{N}}\Phi(|\nabla v|)dx-\int_{\mathbb{R}^{N}}\Phi(|\nabla u|)dx\geq\int_{\mathbb{R}^{N}}K(x)f(u)(v-u)dx,\;\;\;\;\forall\;v \in D^{1,\Phi}(\mathbb{R}^{N}).
\end{align}
In other words, $u$ is a critical point of the $J$ functional. And from Proposition  \ref{2.16}, we can conclude that $u$ is a weak solution for $(P)$. Now, we substitute $v=u^+:=\max\{0,u(x)\}$ in \eqref{2.14} and use \eqref{2.22} to get
\begin{align*}
-\int_{\mathbb{R}^{N}}\Phi(|\nabla u^-|)dx\geq\int_{\mathbb{R}^{N}}K(x)f(u)u^-dx=0,
\end{align*}
which leads to
\begin{align*}
\int_{\mathbb{R}^{N}}\Phi(|\nabla u^-|)dx=0,
\end{align*}
which yields $\lVert \nabla u^-\lVert_{\Phi}=0$. Hence, given the inequality \eqref{0.9} we can conclude that $u^-=0$, and therefore, $u$ is a weak nonnegative solution.

It remains to show that $u$ is nontrivial. For this, consider a sequence $(\varphi_k)\subset C^{\infty}_{0}(\mathbb{R}^{N})$ such that $\varphi_k \rightarrow u$ in $D^{1 ,\Phi}(\mathbb{R}^{N})$. Since $(u_n)$ is a limited Cerami sequence, it follows that $J'(u_n)(\varphi_k-u_n)=o_n(1)\lVert \varphi_k\lVert-o_n(1)$. As $\Phi$ is convex, it is possible to show that
{\small\begin{align}\label{2.17}
\int_{\mathbb{R}^{N}}\Phi(|\nabla \varphi_k|)dx-\int_{\mathbb{R}^{N}}\Phi(|\nabla u_n|)dx\geq \int_{\mathbb{R}^{N}} K(x) f(u_n)(\varphi_k-u_n)dx + o_n(1)\lVert \varphi_k\lVert-o_n(1).
\end{align}}
\noindent Because $(\lVert \varphi_k\lVert)_{k\in\mathbb{N}}$ is a bounded sequence, it follows from \eqref{2.9}, \eqref{2.10} and \eqref{2.17} that
\begin{align*}
\int_{\mathbb{R}^{N}}\Phi(|\nabla \varphi_k|)dx\geq \limsup_{n \to \infty}\int_{\mathbb{R}^{N}}\Phi(|\nabla u_n|)dx+\int_{\mathbb{R}^{N}} K(x) f(u)(\varphi_k-u)dx.
\end{align*}
Knowing that $\Phi\in (\Delta_{2})$ and $\varphi_k \rightarrow u$ in $D^{1,\Phi}(\mathbb{R}^{N})$, the last inequality ensures that
\begin{align} \label{2.19}
\int_{\mathbb{R}^{N}}\Phi(|\nabla u|)dx \geq \limsup_ {n \rightarrow \infty} \int_{\mathbb{R}^{N}}\Phi(|\nabla u_n|)dx.
\end{align}
From \eqref{2.8} and \eqref{2.19},
\begin{align}\label{2.20}
 \lim_ {n \rightarrow \infty} \int_{\mathbb{R}^{N}}\Phi(|\nabla u_n|)dx=\int_{\mathbb{R}^{N}}\Phi(|\nabla u|)dx.
\end{align}
Repeating the same arguments used in the proof of limit \eqref{2.3}, that
\begin{align}\label{2.21}
\lim_{n\rightarrow\infty} \int_{\mathbb{R}^{N}}K(x)F(u_n)dx=\int_{\mathbb{R}^{N}}K(x)F(u)dx,
\end{align}
Therefore, by \eqref{2.20} and \eqref{2.21}, we conclude
\begin{align*}
0<c=\lim_{n\rightarrow\infty} J(u_n)=\int_{\mathbb{R}^{N}} \Phi(|\nabla u|)dx-\int_{\mathbb{R}^{N}} K(x)F(u)dx=J(u),
\end{align*}
that is, $u\neq 0$.

\section{Acknowledgments} 
The authors are grateful to the Paraíba State Research Foundation (FAPESQ), Brazil, and the Conselho Nacional de Desenvolvimento Científico e Tecnológico (CNPq), Brazil, whose funds partially supported this paper. 
\section*{Declarations}

\begin{itemize}

	\item\textbf{Ethical Approval }. Not applicable.
	
	\item\textbf{Competing interests}. On behalf of all authors, the corresponding author states that there is no Competing interests.
	
	\item \textbf{Authors Contributions}. Lucas da Silva wrote the main manuscript text under supervision of Marco Souto. All authors reviwed the manuscript.
		
	\item\textbf{Funding}. This study was financed in part by the Paraíba State Research Foundation (FAPESQ), Brazil. Marco Souto was partially supported by the Conselho Nacional de Desenvolvimento Científico e Tecnológico (CNPq), Brazil, 309.692/2020-2.
		
	\item \textbf{Availability of data and materials }. Data sharing and materials not applicable to this article as no datasets and materials were generated or analysed during the current study.

\end{itemize}


\begin{thebibliography}{99}
	
	\bibitem{Adms} {A. Adams and J. F. Fournier,} {\it Sobolev Spaces,} Academic Press (2003).
	
	\bibitem{Ailton}{A.R.D. Silva and C.O. Alves} {\it Multiplicity and concentration of positive solutions for a class of quasilinear problems through Orlicz-Sobolev space}, J. Math. Phys. 57, 143-162 (2016)
	
	\bibitem{Szulkin}{A. Szulkin,} {\it Minimax principle for lower semicontinuous functions and applications to nonlinear boundary value problems}, Ann. Inst. H. Poincaré 3 (1986), 77-109.
	
	\bibitem{AlvesandMarco} {C. O. Alves and M. A. S. Souto,} {\it Existence of solutions for a class of nonlinear Schr\"odinger equations with potential vanishing at infinity}, J. Differential Equations 254 (2013) 1977–1991
	
	\bibitem{AlvesandLeandro}{C. O. Alves and  M. L. M. Carvalho,} {\it A Lions Type Result For a Large Class of Orlicz-Sobolev Space and Applications}, Moscow Mathematical Journal, 2021.
	
	\bibitem{Motreanu} {D. Montreano, D. Montreano, N. S. Papageorgiou,} {\it Topological and Variational Methods with Applications to Nonlinear Boundary Value Problems }, Springer, New York (2014)
	
	\bibitem{Edcarlos} {E. D. da Silva, M. L. M. Carvalho, K. Silva and J. V. A. Gonçalves}, {\it Quasilinear elliptic problems on non-reflexive Orlicz-Sobolev spaces,} Topol. Methods Nonlinear Anal. 54 (2019), 587–612 
	
	\bibitem{DB}{E. DiBenedetto,} {\it $C^{1,\gamma}$ local regularity of weak solutions of degenerate elliptic equations}, Nonlinear Anal. 7, no. 8, (1985) 827-850.
	
	\bibitem{FN}{Fukagai, M. Ito and K. Narukawa,} {\it Positive solutions of quasilinear elliptic equations with critical Orlicz-Sobolev nonlinearity on $\mathbb{R}^{N}$}, Funskcial. Ekvac. 49 (2006), 235-267.

	
	\bibitem{Bonanno1} {G. Bonanno, G. M. Bisci and V. Radulescu}, {\it Quasilinear elliptic non-homogeneous Dirichlet problems
	through Orlicz-Sobolev spaces}, Nonl. Anal. 75 (2012), 4441-4456.

	\bibitem{Bonanno2} {G. Bonanno, G. M. Bisci and V. Radulescu}, {\it Arbitrarily small weak solutions for a nonlinear eigenvalue
	problem in Orlicz-Sobolev spaces}, Monatshefte f\"ur Mathematik 165 (2012), 305-318.

	\bibitem{Figueiredo}{G. M. Figueiredo}, {\it Existence and multiplicity of solutions for a class of $p\&q$ elliptic
	problems with critical exponent}, Math. Nachr. 286, no. 11-12, (2013) 1129-1141.	
	
	\bibitem{Strauss}{ H. Berestycki, P. L. Lions,} {\it Nonlinear scalar field equations. I. Existence of a ground state}, Arch. Ration. Mech. Anal. 82 (1983) 313–346.
	
	\bibitem{Brezis}{H. Brezis,} {\it Functional Analysis, Sobolev Spaces and Partial Differential Equations}. Springer,
	New York (2011)
	
	
	\bibitem{ET}{I. Ekeland and R. Temam,} {\it Convex Analysis and Variational Problems}, North Holland, American Elsevier, New York, 1976.

	\bibitem{Gossez} {J. P. Gossez}, {\it Nonlineare Elliptic boundary value problems for equations with rapidly(or slowly) increasing coefficients}, Trans. Amer. Math. Soc. 190 (1974), 753–758.
	
	\bibitem{Li} {M. Fuchs and G. Li}, {\it Variational inequalities for energy functionals with nonstandard growth conditions},
	Abstr. Appl. Anal. 3 (1998), 405-412.

	\bibitem{Fuchs} {M. Fuchs and V. Osmolovski}, {\it Variational integrals on Orlicz Sobolev spaces}. Z. Anal. Anwendungen 17, 393-415 (1998) 6
	
	\bibitem{Mihailescu1}{M. Mihailescu and V. R\u{a}dulescu}, {\it Nonhomogeneous Neumann problems in Orlicz-Sobolev spaces}, C.R.
	Acad. Sci. Paris, Ser. I 346 (2008), 401-406. 
	
	\bibitem{Mihailescu2}{M. Mihailescu and V. R\u{a}dulescu}, {\it Existence and multiplicity of solutions for a quasilinear non-homogeneous problems: An Orlicz-Sobolev space setting}, J. Math. Anal. Appl. 330 (2007), 416-432
	
	\bibitem{Mihailescu3}{M. Mihailescu and D. Repovs}, {\it Multiple solutions for a nonlinear and non-homogeneous problems in Orlicz-Sobolev spaces}, Appl. Math. Comput. 217 (2011), 6624-6632.
	
	\bibitem{Mihailescu4}{M. Mihailescu, V. Radulescu and D. Repovs}, {\it On a non-homogeneous eigenvalue problem involving a	potential: an Orlicz-Sobolev space setting}, J. Math. Pures Appliquées 93 (2010), 132-148.

	\bibitem{Rao} {M. N. Rao and Z.D. Ren,} {\it Theory of Orlicz Spaces,} Marcel Dekker, New York (1985). 

%
	\bibitem{Tienari}{M. Tienari} {\it A degree theory for a class of mappings of monotone type in Orlicz-Sobolev spaces,} Annales Academiae Scientiarum Fennicae. Series A. I, Mathematica. Dissertationes ; 97
	
	\bibitem{FN1}{N. Fukagai and K. Narukawa}, {\it On the existence of multiple positive solutions of quasilinear elliptic eigenvalue problems}, Ann. Mat. Pura Appl. 186, no. 3, (2007) 539-564.

	\bibitem{Trudinger}{N.S. Trudinger}, {\it On Harnack type inequalities and their applicatoin
	to quasilinear elliptic equations}, Communication on Pure and Applied Mathematics, Vol. XX, 721-747, (1967).

\bibitem{O.A}{O.A. Ladyzhenskaya and N.N. Ural’tseva,} {\it  Linear and quasilinear elliptic equations}, Acad. Press (1968).

	\bibitem{PH} {PH. Clément, M. Garcia-Huidobro, R. Manásevich and K. Schmitt}, {\it Mountain pass type solutions for quasilinear elliptic equations}, Calc. Var. 11 (2000), 33-62.
	
	\bibitem{Cerny} {R. Cerný}, {\it Generalized Moser-Trudinger inequality for unbounded domains and its application }, Nonlinear Differ. Equ. Appl. DOI 10.1007/s00030-011-0143-0.

	\bibitem{Donald}{T. Donaldson,} {\it  Nonlinear elliptic boundary value problems in Orlicz-Sobolev spaces}, J. Diff. Equations 10 (1971), 507-528.
	
	\bibitem{Le}{V. K. Le and K. Schmitt}, {\it Quasilinear elliptic equations and inequalities with rapidly growing coefficients},
	J. London Math. Soc. 62 (2000) 852-872 
	
	\bibitem{Mustonen} {V. Mustonen and M. Tienari}, {\it An eigenvalue problem for generalized Laplacian in Orlicz-Sobolev spaces}, Proc. R. Soc. Edinburgh, 129A (1999), 153-163.
	
	\bibitem{W.Orlicz} { W. Orlicz}, {\it \"{U}ber konjugierte Exponentenfolgen}, Studia Math. 3 (1931), 200-211
	
	
	
	
	
	
%
%
%
%
%
%
%
%
%
%
%
%
%
%
%
%
%
%
%
%
%
%
%
%
%
%
%
%
	
	
	
	
	
	
	
	
	
	
	
	
	
\end{thebibliography}
\end{document}